\documentclass[a4paper,11pt]{article}

\usepackage{csquotes}

\usepackage{amsthm,stmaryrd}  
\usepackage{amsfonts, mathrsfs,amssymb}
\usepackage{amsmath,bbm}
\usepackage[numbers, sort&compress]{natbib}
\usepackage{graphicx}
\usepackage{vmargin, enumerate}
\usepackage{paralist}
\usepackage{times}
\usepackage{color,soul}
\usepackage{hyperref}
\usepackage{cleveref}

\usepackage[utf8]{inputenc}
\usepackage[labelfont=bf,font=it]{caption} 

\usepackage[normalem]{ulem}

\usepackage{pgf}
\usepackage{tikz}
\usetikzlibrary{arrows,shapes}

\usepackage{xcolor} 
\usepackage[many]{tcolorbox}
\tcbuselibrary{breakable}

\newtheorem{thm}{Theorem}[section]

\newtheorem{lem}[thm]{Lemma}
\newtheorem{prop}[thm]{Proposition}
\newtheorem{cor}[thm]{Corollary}
\theoremstyle{definition}
\newtheorem{rem}[thm]{Remark}



\linespread{1}

\newcommand{\Leb}{\operatorname{Leb}}

\newcommand{\cL}{{\mathcal L}}

\newcommand{\cP}{{\mathcal P}}

\newcommand{\cS}{{\mathcal S}}
\newcommand{\cV}{{\mathcal V}}

\newcommand{\sC}{\mathscr C}

\newcommand{\sL}{\mathscr L}

\newcommand{\cF}{\mathcal F}
\newcommand{\cC}{\mathcal C}

\newcommand{\sT}{\mathscr T}

\newcommand{\sM}{\mathscr M}

\newcommand{\BE}{\begin{eqnarray*}}
\newcommand{\EE}{\end{eqnarray*}}
\newcommand{\BEN}{\begin{eqnarray}}
\newcommand{\EEN}{\end{eqnarray}}

\newcommand{\bs}{{\mathbf s}}

\newcommand{\bu}{{\mathbf u}}

\newcommand{\llb}{\llbracket}
\newcommand{\rrb}{\rrbracket}

\newcommand{\Z}{\ensuremath{\mathbb Z}}

\newcommand{\N}{\ensuremath{\mathbb N}}
\newcommand{\C}{\ensuremath{\mathbb{C}}}
\newcommand{\R}{\ensuremath{\mathbb{R}}}



\newcommand{\p}[1]{{\mathbf P}\left(#1\right)}
\newcommand{\pc}[1]{{\mathbf P}(#1)}

\newcommand{\Ec}[1]{\ensuremath{\mathbf{E} [#1]}}

\newcommand{\eqdist}{\ensuremath{\stackrel{d}{=}}}

\newcommand{\E}[1]{\ensuremath{\mathbf{E} \left[#1 \right]}}

\newcommand{\I}[1]{\ensuremath{\mathbf{1}_{ { #1 } }}}

\newcommand{\Skel}{\ensuremath{\operatorname{Skel}}}

\hypersetup{
    bookmarks=true,         
    unicode=false,          
    pdftoolbar=true,        
    pdfmenubar=true,        
    pdffitwindow=true,      
    pdftitle={My title},    
    pdfauthor={Author},     
    pdfsubject={Subject},   
    pdfnewwindow=true,      
    pdfkeywords={keywords}, 
    colorlinks=true,       
    linkcolor=blue,          
    citecolor=blue,        
    filecolor=blue,      
    urlcolor=blue           
}

\newcommand{\DimH}{\operatorname{\dim_{\text{\textsc{h}}}}}

\setmarginsrb{2.5cm}{2cm}{2.5cm}{2cm}{.4cm}{4mm}{0cm}{7.5mm}

\def \sur#1#2{\mathrel{\mathop{\kern 0pt#1}\limits^{#2}}}

\usepackage{expdlist}

\newcounter{c}
\def \bir{\begin{itemize}\compact \setcounter{c}{0}}

\def \eir{\end{itemize}\vspace{-2em}~}

\newcounter{d}
\def \bia{\begin{itemize}\compact \setcounter{d}{0}}
\def \eia{\end{itemize}\vspace{-2em}~}

\usepackage{lineno}
\usepackage{todonotes}
\usepackage{lineno}
\usepackage{pbsi}
\usepackage{tocloft}



\definecolor{red-orange}{RGB}{255,40,0}

\usepackage{xcolor} 
\usepackage[many]{tcolorbox}
\tcbuselibrary{breakable}



\usepackage{amssymb}

\newcommand{\sP}{{\mathscr P}}

\newcommand{\sF}{{\mathscr F}}
\newcommand{\sE}{{\mathscr E}}
\newcommand{\bP}{{\mathbf P}}
\newcommand{\bn}{{\mathbf n}}
\newcommand{\bE}{{\mathbf E}}
\newcommand{\bU}{{\mathbf U}}
\newcommand{\bV}{{\mathbf V}}

\newcommand{\bW}{{\mathbf W}}
\newcommand{\cG}{{\mathcal G}}

\newcommand{\cE}{{\mathcal E}}

\newcommand{\cU}{{\mathcal U}}

\newcommand{\dP}{\operatorname{d}_{\text{\textsc{p}}}}
\newcommand{\dH}{\operatorname{d}_{\text{\textsc{h}}}}
\newcommand{\diam}{\operatorname{diam}}
\newcommand{\bgamma}{\boldsymbol{\gamma}}
\newcommand{\bbeta}{\boldsymbol{\beta}}
\newcommand{\skel}{\operatorname{Skel}}
\newcommand{\Br}{\operatorname{Br}}
\newcommand{\CMT}{\operatorname{CMT}}

\newcommand{\black}{\color{black}}
\newcommand{\ssp}{{\!\scalebox{.6}{$R$}}}
\newcommand{\ssm}{{\!\scalebox{.6}{$L$}}}
\newcommand{\tsp}{{\scalebox{.6}{$R$}}}
\newcommand{\tsm}{{\scalebox{.6}{$L$}}}
\newcommand{\exc}{{\textup{e}}}
\newcommand{\subW}{{\scalebox{.6}{$W$}}}
\newcommand{\subX}{{\scalebox{.6}{$X$}}}
\newcommand{\slo}{{{\text{\bsifamily s}}}}
\newcommand{\ju}{{{\text{\bsifamily j}}}}
\newcommand{\zi}{{{\text{\bsifamily r}}}}

\newcommand{\li}{{{\text{\bsifamily l}}}}
\newcommand{\ri}{{{\text{\bsifamily r}}}}
\newcommand{\Merge}{\operatorname{Merge}}

\newcommand{\Span}{\operatorname{Span}}
\newcommand{\Code}{\operatorname{Code}}
\newcommand{\surp}{\operatorname{surp}}


\def \floor#1{\lfloor #1 \rfloor}

\begin{document}
\title{\bf Convex minorant trees associated with Brownian paths and the continuum limit of the minimum spanning tree}

\author{
Nicolas Broutin
\thanks{LPSM, Sorbonne Universit\'e, 4 Place Jussieu, 75005 Paris and Institut Universitaire de France (IUF)}
\and 
Jean-François Marckert
\thanks{Univ.\ Bordeaux, CNRS, Bordeaux INP, LaBRI, UMR 5800, 33400 Talence, France}
} 

\maketitle

\begin{abstract}  
We give an explicit construction of the scaling limit of the minimum spanning tree of the complete graph. The limit object is described using a recursive construction involving the convex minorants of a Brownian motion with parabolic drift (and countably many i.i.d.\ uniform random variables); we call it the Brownian parabolic tree. 

Aside from the new representation, this point of view has multiple consequences. For instance, it permits us to prove that its Hausdorff dimension is almost surely 3. It also intrinsically contains information related to some underlying dynamics: one notable by-product is the construction of a standard metric multiplicative coalescent which couples the scaling limits of random graphs at different points of the critical window in terms of the same simple building blocks. 

The above results actually fit in a more general framework. They result from the introduction of a new family of continuum random trees associated with functions via their convex minorants, that we call convex minorant trees. We initiate the study of these structures in the case of Brownian-like paths. In passing, we prove that the convex minorant tree of a Brownian excursion is a Brownian continuum ranndom tree, and that it provides a coupling between the Aldous--Pitman fragmentation of the Brownian continuum random tree and its representation by Bertoin. 
\end{abstract}

\section{Introduction}


\subsection{Main results}

For a connected graph $G=(V,E)$, together with distinct positive weights associated to the edges, the minimum weight spanning tree is the unique connected spanning subgraph of $G$ that minimizes the total sum of the edge weights. 
The classical random model consists in taking the complete graph on $[n]:=\{1,2,\dots, n\}$ and independent and identically distributed (i.i.d.) random weights $w_e$, $e\in \binom{[n]}2$, uniform on $[0,1]$. 
Then let $M_n$ denote the corresponding minimum spanning tree (MST) rooted at $\rho_n=1$. It has been proved by \citet*{AdBrGoMi2013a} that, seen as a metric space, $M_n$ admits a scaling limit in the following sense: Let $d_n$ be the graph distance on $M_n$, 
let $\mu_n$ be the counting measure on $[n]$. Then, there exists a (non-trivial) compact measured metric space $(\sM,d)$, a point $\rho\in \sM$, and a Borel probability measure $\mu$ on $(\sM,d)$ such that
\begin{equation}\label{eq:scaling_limit_mst}
  (M_n, n^{-1/3} d_n, n^{-1}\mu_n, \rho_n) \xrightarrow[n\to\infty]{} (\sM,d,\mu, \rho)\,  
\end{equation}
in distribution, in the sense of Gromov--Hausdorff--Prokhorov. The main result of this paper is to provide an explicit representation of the measured metric space $(\sM,d,\mu)$ using a Brownian motion, and a countable collection of i.i.d.\ uniform random variables, and to initiate the study of some of its properties and consequences. To do so, we introduce a new general class of tree-like structures constructed from functions in a way that differs from the classical contour function encoding.


The study of trees and their encoding has a long history. A prominent example is the now classical encoding of trees from a height or contour function which defines a tree-like metric $d$ from a continuous function using the recursive structure of its level sets. 
The representation is intimately related to branching processes and fragmentations related to heights, and thus to the process of local times of the height function \cite{LeLe1998a,LeGall1991,Miermont2003}. 
Notable examples include the Brownian continuum random tree \cite{Aldous1991b} seen as encoded by a Brownian excursion \cite{Aldous1993a,Legall1993}, and Lévy trees \cite{LeLe1998a}. 

Our construction differs radically. The tree will be associated to a continuous function $\omega$ defined on an interval $D\subseteq \R$ using the tree-like structure of the family of greatest convex minorants of the graph of $\omega$ on the intervals $[0,x]$, $x\in D$. Furthermore, while the classical height function encoding provides a metric $d_\omega$ that is continuous on $D^2$, the metrics we construct are discontinuous at every local minimum, and the information contained in the encoding function is greatly shuffled when $\omega$ is irregular. We nonetheless hope to demonstrate that the proposed construction provides a convincing point of view for a number of natural problems involving dynamics, in particular those related to remarkable coalescent and fragmentation processes. 

In the present paper, we focus on the very specific case of Brownian like functions, but the reader will easily be convinced that the procedure should apply more generally to càdlàg functions that have only positive jumps such as spectrally positive Lévy processes which will be studied elsewhere. We will in particular define a \emph{convex minorant tree} $\CMT(\exc,\bU)$ from a Brownian excursion $\exc=(\exc_s)_{s\in[0,1]}$ and an independent family of uniform random variables $\bU=(U_i)_{i\ge 1}$. Formally, $\CMT(\exc,\bU)$ will be 
a compact pointed measure metric space that we initially define together with a metric $d$ on $[0,1]$. 

Defining $\CMT(\exc,\bU)$ is an important building block towards the definition of our main object of interest, where we replace the Brownian excursion $\exc$ by another Brownian-like path. Let $(W_s)_{s\ge 0}$ be a standard (linear) Brownian motion on $\R_+$ and for $\lambda\in \R$, and $s\geq 0$, define the Brownian motion with parabolic drift by 
\BEN\label{eq:BwPd}
X^\lambda_s=W_s - \frac{s^2}2 + \lambda s.\EEN 
We usually write $X:=X^0$ when $\lambda=0$. Our main result is the following: 
\begin{thm}\label{thm:limit_mst_Kn}
As $n\to\infty$, we have the following convergence in distribution for the Gromov--Hausdorff--Prokohorov topology:
\[(M_n, n^{-1/3} d_n, n^{-1}\mu_n, \rho_n) \xrightarrow[n\to\infty]{} \CMT(X,\bU)\,.\]
\end{thm}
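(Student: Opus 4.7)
The plan is to combine the known convergence \eqref{eq:scaling_limit_mst} with a direct identification of the limit, routed through the natural coupling between the MST and the Erdős--Rényi random graph process. Kruskal's algorithm, applied to i.i.d.\ uniform weights on the edges of $K_n$, builds $M_n$ by inspecting edges in increasing weight order. Writing $p = 1/n + \lambda n^{-4/3}$, the subgraph $G_n(p)$ induced by edges of weight $\le p$ is distributed as $G(n,p)$. By Aldous's classical theorem, the rescaled sizes of its components, together with their numbers of surplus edges, converge to the lengths and surpluses of the excursions of $X^\lambda$ above its running minimum; the exploration processes of the components themselves converge to the corresponding excursions, and the surplus edges, after rescaling, are placed at independent uniform positions.

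Starting from this, I would first construct, component by component at a slightly supercritical window parameter $\lambda$, a discrete analog $\CMT_n(\cdot,\cdot)$ of the continuum convex minorant tree, taking as input the rescaled exploration walk of a component and the positions of its surplus edges. The point here is to show at the finite level that the MST of a single component of $G_n(p)$, equipped with the graph distance, is isometric (or close, up to a negligible error in GHP distance) to $\CMT_n$ applied to the component's walk and its surplus data. This is where the convex minorant appears naturally: the core of the MST of a component is spanned by the surplus edges together with the shortest paths between their endpoints in the spanning tree, and in the slightly supercritical regime these paths are organized according to successive convex minorants of the exploration walk. Coupling this with the continuity of $\CMT$ under scaling limits for Brownian-like paths (which is part of the companion study of convex minorant trees initiated earlier in the paper) yields convergence of the component MSTs to $\CMT(\exc^\lambda_i,\bU^\lambda_i)$ for the $i$-th excursion of $X^\lambda$ with its associated uniforms.

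The final step is to pass $\lambda\to\infty$ and assemble the pieces into the MST of the entire $K_n$. At fixed $\lambda$, I would use the results of Addario-Berry--Broutin--Goldschmidt--Miermont to control the "long" paths connecting distinct critical components via twigs of negligible diameter on the $n^{-1/3}$ scale, so that the MST of $K_n$ is, up to small error, a tree assembled from the component MSTs via a standard metric multiplicative coalescent. On the continuum side, I would check that the excursions of $X^\lambda$ above its minimum, together with their surplus uniforms, coalesce in $\lambda$ in exactly the same way, and that this coalescent procedure, when applied to the family $\{\CMT(\exc^\lambda_i,\bU^\lambda_i)\}_i$, produces $\CMT(X,\bU)$ in the limit. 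The required consistency is essentially a statement about the convex minorants of $X^\lambda$ at different $\lambda$'s being compatible with those of $X$, together with uniqueness of the limit granted by \eqref{eq:scaling_limit_mst}.

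The main obstacle is the discontinuity of the CMT metric at local minima: standard Gromov--Hausdorff continuity arguments do not apply, so convergence must be established through more delicate estimates exploiting Brownian regularity (absence of degenerate local minima, transience of the convex minorant structure of $X$ at large times, fluctuation bounds for the exploration walk near its minima). A secondary difficulty is the joint $\lambda$-consistency as components merge: one must show that the uniforms attached to surplus edges in a given component of $G(n,p)$ transform into the uniforms $\bU$ of the ambient construction in a way that commutes with taking the scaling limit, which amounts to verifying that the CMT construction is stable under the standard multiplicative coalescent and not merely at a fixed value of $\lambda$.
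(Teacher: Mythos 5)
There is a genuine gap, and it lies in the mechanism you propose for identifying the limit of a single component's MST. You attach the randomness to the \emph{surplus edges} of a component of $G(n,p)$ and claim that the component MST is (close to) a discrete convex minorant tree built from ``the component's walk and its surplus data''. This is not how $\CMT$ is built, and no such discrete statement holds: the surplus edges are precisely the edges \emph{removed} by cycle breaking, so they carry no metric information about the MST of the component; in the paper they enter only through a separate Poisson process ($\Xi_n$, resp.\ $\cS$) and only for the graph limits $\mathfrak G^\lambda$, never for the forests. The uniforms $\bU$ in $\CMT(\tilde e^\lambda_i,\bU|_{\gamma^\lambda_i})$ have a completely different meaning: they are attached to the local minima of the path, i.e.\ to merge events, and encode the attachment endpoint $u_i$ of each MST edge, which is uniform in the component lying to the left \emph{in the Prim order} (Lemma~\ref{lem:discrete_merges}, Proposition~\ref{prop:random_forest}, Lemma~\ref{lem:dist_left-most-node}). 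The convex minorant structure does not organize surplus-edge paths inside a component at fixed $\lambda$; it arises from the history of merges as $\lambda$ varies, read off the Prim-order walk $X^{n,\lambda}$ whose excursions are the components. Without the Prim-order representation (components are intervals, merges identify a uniform point of the left interval with the leftmost point of the right interval) your key step ``the MST of a component is a CMT of its exploration walk'' is an unsupported assertion, and with surplus data as input it is false.

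Your assembly step (fix a very sub- or supercritical $\lambda$, control the gluing, let $\lambda\to\infty$, and use the known convergence \eqref{eq:scaling_limit_mst} for uniqueness/tightness) is much closer to what the paper actually does: it sets up a global Skorokhod coupling in which $(X^{n,\lambda})_{\lambda\in\R}$, $\Merge_n$ and $\Xi_n$ converge a.s., replaces the metric inside the (tree) components at a very negative time by idealized Rayleigh-distributed distances, proves convergence in probability of pairwise distances between i.i.d.\ uniform points (Proposition~\ref{pro:convergence_distances}), upgrades Gromov--Prokhorov to Gromov--Hausdorff--Prokhorov convergence using the previously known GHP convergence of the rescaled MSTs, and finally shows the component containing the vertex of Prim rank $\lfloor n^{2/3}\rfloor$ is eventually the largest one. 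But that machinery presupposes the correct discrete-to-continuum dictionary (Prim order, merge events, uniform attachment points, and a separate bookkeeping of surplus edges), which is exactly the piece missing from your proposal.
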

We call $\CMT(X,\bU)$ the Brownian parabolic tree. In particular, the limit appearing in \eqref{eq:scaling_limit_mst} is such~that
\[(\sM,d, \mu, \rho) \eqdist \CMT(X,\bU)\,.\]
Its structure and properties provide a way to make explicit computations. For instance, the Hausdorff dimension of $(\sM,d)$ was still unknown, and we show directly
\begin{thm}\label{thm:compact_dimH}Almost surely, the space $\CMT(X,\bU)$ is compact and has Hausdorff dimension $3$. 
\end{thm}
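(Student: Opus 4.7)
The proof naturally splits into two independent parts: establishing compactness, and pinning down the Hausdorff dimension at exactly $3$.

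For compactness, the plan is to exploit the parabolic drift $-s^2/2$, which forces $X_s \to -\infty$. This should ensure that the convex minorant of $X$ on $\R_+$ has vertices whose slopes tend to $+\infty$, and $\CMT(X,\bU)$ decomposes into a countable collection of subtrees, each associated with an excursion of $X$ above a face of its convex minorant. Each such subtree should be a scaled copy of $\CMT(\se,\bU)$ built from a Brownian excursion, hence itself compact. What remains is to prove that the diameters of these subtrees form a null sequence: if $L_k$ denotes the length of the $k$-th excursion, then by Brownian scaling the diameter of the $k$-th subtree is of order $L_k^{1/2}$, and summability of the lengths (together with their rate of decay induced by the drift) ensures that at every scale $\epsilon>0$ only finitely many subtrees contribute macroscopically.

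For the Hausdorff dimension I would prove the upper and lower bounds separately. For the upper bound $\DimH(\CMT(X,\bU)) \le 3$, the idea is a box-counting argument: the CMT of a Brownian excursion of length $L$ should be coverable by $O(L^{3/2}/\epsilon^{3})$ balls of radius $\epsilon$ (an analog of the covering estimate for the Brownian CRT, with exponent $3$ reflecting the specific convex minorant encoding). Summing over the subtree decomposition and using $\sum_k L_k^{3/2}<\infty$ almost surely (a straightforward moment bound under the parabolic drift) yields $N(\epsilon)\le C\,\epsilon^{-3}$, hence $\DimH\le 3$. For the lower bound $\DimH\ge 3$, the plan is to apply the mass distribution principle to the natural mass measure $\mu$ on $\CMT(X,\bU)$ (the pushforward of Lebesgue measure on the time domain of $X$): for every $\delta>0$, it suffices to show that $\mu(B(x,r))\le C_\delta\, r^{3-\delta}$ uniformly in $x$. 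The preimage of $B(x,r)$ under the CMT projection is a union of intervals whose structure is governed by the convex minorants encountered along the branch from the root to $x$, and the task reduces to estimating the total Lebesgue measure of this preimage by combining Hölder-regularity of $X$ with precise control of the local geometry of its convex minorants.

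The main obstacle is the lower bound on the Hausdorff dimension. Whereas the upper bound reduces to scaling arguments plus a summability estimate, the lower bound requires uniform-in-$x$ control on the preimage of small balls under the CMT projection. This control is delicate because, as the authors emphasize in the introduction, the CMT metric is \emph{discontinuous} at every local minimum of $X$, so no single Hölder modulus of $X$ transfers directly into a Hölder modulus for the CMT metric. Concretely, the main technical ingredient to establish is a uniform tail bound of the form: for all $\delta>0$ there exists a random $C<\infty$ such that, simultaneously for every $x\in\CMT(X,\bU)$ and every $r>0$, $\mu(B(x,r))\le C\, r^{3-\delta}$. I expect this to follow from a self-similar decomposition of the convex minorant process together with the law of the iterated logarithm for $X$, but formalising it uniformly over the uncountably many points of the tree is the crux of the argument.
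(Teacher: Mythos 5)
Your plan has genuine gaps in both halves. For compactness, the premise is already shaky: because of the parabolic drift there is no (real-valued) greatest convex minorant of $X$ on all of $\R_+$, so "the excursions of $X$ above the faces of its convex minorant" is not a well-defined global decomposition; the relevant structure is the family of minorants on $[0,x]$ as $x$ varies, equivalently the coalescent $Z^\lambda$, under which the space is built by attaching the excursion pieces $q_\lambda$ at uniformly random points of the previously grown part. More seriously, even with the correct decomposition, your argument "diameters $\sim m_\lambda^{1/2}$ and summability of the lengths, so only finitely many pieces are macroscopic" does not give total boundedness: the sum $\sum_{\lambda\in\Lambda^\ssp} m_\lambda^{1/2}$ over the pieces attached on the right is almost surely infinite (the excursion-length intensity behaves like $x^{-3/2}dx$ near $0$, so each dyadic scale contributes a constant amount), so one must rule out macroscopic distances accumulating along chains of microscopically small pieces. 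This is exactly where the real work lies: the paper's proof controls, for each annulus $\lambda\in(k^3,(k+1)^3]$, the number of successive attachments ("hops") a point makes before reaching the already-grown tree, using that each attachment point is uniform over an interval of length of order $k^3$, and only then sums the level-by-level diameter bounds. Nothing in your sketch replaces this chaining step.

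For the dimension, your upper-bound covering estimate is incorrect as stated: the convex minorant tree of an excursion of mass $L$ is a Brownian CRT (Theorem~\ref{thm:limit_mst_surplus} with $s=0$), so its covering number at scale $\epsilon$ is of order $L/\epsilon^{2}$, not $L^{3/2}/\epsilon^{3}$; the exponent $3$ comes from how pieces across different times $\lambda$ are glued, not from a single piece, and summing your per-piece bounds would not close (the paper sidesteps this entirely by invoking the known Minkowski dimension of the MST scaling limit, which however requires the identification $\CMT(X,\bU)\eqdist\sM$ of Theorem~\ref{thm:limit_mst_Kn}). For the lower bound, you ask for a bound on $\mu(B(x,r))$ uniformly in $x$, which is stronger than needed and is not what the paper proves: a bound at a $\mu$-typical point suffices for the mass distribution principle, and the paper obtains it by first showing (via the Prim-order exchangeability inherited from the discrete model) that $\mu(B_\zeta(r))$ for $\zeta\sim\mu$ has the same law as $\mu(B_0(r))$, and then bounding the ball around the root using the explicit left-hand structure of the tree, a Girsanov comparison with the convex minorant of Brownian motion, and a supermartingale (Azuma) concentration for the mass attached near the root as $\lambda$ grows. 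Your proposed route through Hölder regularity of $X$ cannot work on its own, because the $\mu$-mass of a ball is not a functional of the path $X$ alone: it depends on the uniform attachment variables $\bU$, whose fluctuations are precisely what the martingale argument controls, and, as you note yourself, the metric is discontinuous at every local minimum, so no modulus of continuity of $X$ transfers to the metric. The missing ideas are thus the reduction to the root and the concentration of the mass process, not a refined study of the preimages of balls under the projection.
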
 

$\CMT(\exc, \bU)$ is one of the central objects of this paper, together with its variants. 
The following results can be seen as consequences of Theorem~\ref{thm:limit_mst_Kn}. For a natural number $s\ge 0$, we let $C_n^s$ denote a uniformly random connected graph on $[n]$ with $n-1+s$ edges. Assuming that the edge weights on this component are i.i.d.\ uniform on $[0,1]$, $C_n^s$ possesses an a.s.\ unique minimum spanning tree that we denote by $T_n^s$. It is a consequence of \cite{AdBrGoMi2013a} that, for any $s\ge 0$, the graphs $T_n^s$ considered as metric spaces equipped with the graph distance $d_n^s$ and the counting measure on the nodes $\mu_n^s$ have a limit when suitably rescaled. The following theorem provides an explicit representation of these limits. 
For $s\ge 0$, let $\exc^{(s)}$ be a process on $[0,1]$ whose distribution is characterized by (for all $f:\cC([0,1])\to \R$ bounded continuous) 
\[\Ec{f(\exc^{(s)})} = \frac{\Ec{f(\exc) \cdot (\int_0^1 \exc(u)du)^s}} {\Ec{(\int_0^1 \exc(u) du)^s}}\,,\]
where $\exc$ is a standard normalized Brownian excursion. Let $\rho_n^s=1$ be the root of $T_n^s$.
\begin{thm}\label{thm:limit_mst_surplus}
For any natural number $s\ge 0$, we have the following convergence in distribution for the Gromov--Hausdorff--Prokhorov topology:
\[(T_n^s, n^{-1/2} d_n^s, \mu_n^s, \rho_n^s) \xrightarrow[n\to\infty]{} \CMT(\exc^{(s)}, \bU)\,.\]
In particular, for $s=0$, this implies that $\CMT(\exc, \bU)$ is a Brownian continuum random tree.
\end{thm}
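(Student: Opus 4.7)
The plan is to deduce Theorem~\ref{thm:limit_mst_surplus} from Theorem~\ref{thm:limit_mst_Kn} by matching an excursion-type decomposition on both sides. The existence of a Gromov--Hausdorff--Prokhorov limit for $(T_n^s, n^{-1/2} d_n^s, \mu_n^s, \rho_n^s)$ is already part of \cite{AdBrGoMi2013a}, so the real task is to identify this limit as $\CMT(\exc^{(s)}, \bU)$.

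First, I would exploit the Kruskal coupling. Adding the edges of $K_n$ in increasing order of weight, the configuration at weight $t=(1+\lambda n^{-1/3})/n$ is a critical Erd\H{o}s--R\'enyi graph whose components have sizes of order $n^{2/3}$ and $O(1)$ surpluses in the limit. On a tagged component of size $N$ and surplus $s$, the restriction of $M_n$ is distributed as $T_N^s$ up to a uniformly random relabeling; moreover, the ambient scaling $n^{-1/3}$ coincides with the intrinsic scaling $N^{-1/2}$, since $N\asymp n^{2/3}$. Thus one tagged component of $M_n$, rescaled by $n^{-1/3}$, is exactly $T_N^s$ rescaled by $N^{-1/2}$, which is the object of the theorem.

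Second, I would set up the parallel decomposition on the continuum side. The parabolic Brownian motion $X$ decomposes through its running infimum into a countable family of excursions, mirroring Aldous's excursion description of the critical random graph. The key property to verify is the \emph{locality} of the CMT construction: the restriction of $\CMT(X,\bU)$ to an excursion interval of $X$ above its running infimum, together with the subset of the $U_i$'s that fall beneath the graph of that excursion, should be metrically independent of the rest of the tree and distributed as the CMT built from the excursion (a rescaled Brownian excursion, biased by its area) together with those marks. A Poisson counting argument would then give that, conditionally on exactly $s$ marks falling under the excursion, the rescaled excursion has law $\exc^{(s)}$ and the marks are i.i.d.\ uniform on its time interval, so the local piece is $\CMT(\exc^{(s)}, \bU)$ in law.

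Matching the two decompositions yields $T_N^s \to \CMT(\exc^{(s)}, \bU)$ as $N\to\infty$. The $s=0$ part then follows because $T_n^0$ is a uniformly random labelled tree on $[n]$, whose GHP scaling limit at rate $n^{-1/2}$ is the Brownian continuum random tree by Aldous; by uniqueness of the limit, $\CMT(\exc,\bU)$ must coincide in law with the BCRT.

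The main obstacle is establishing the locality of the CMT construction with respect to the excursion decomposition of $X$, together with the identification of the excursion law as $\exc^{(s)}$ after conditioning on the number of marks. Concretely, one must show that the convex minorants of $X$ on intervals $[0,x]$ respect the splitting of time into excursions above the running infimum, so that the induced metric on such an excursion depends only on that excursion, and that the uniforms falling below the excursion are exactly what play the role of the $U_i$'s in the definition of $\CMT$. A secondary, more routine issue is to check that the GHP convergence of Theorem~\ref{thm:limit_mst_Kn} descends to a tagged component uniformly over the conditioning on its size and surplus, which should follow from standard continuity arguments once the discrete-to-continuum correspondence between components and excursions is in place.
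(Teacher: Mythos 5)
Your overall strategy --- freeze the Kruskal/Erd\H{o}s--R\'enyi process at a point of the critical window, identify a tagged component of size $N\asymp n^{2/3}$ and surplus $s$ as a uniform connected graph whose minimum spanning tree is $T_N^s$, match it with the excursion of $X^\lambda$ above its running infimum, and identify the conditional law of that excursion given surplus $s$ as the area-biased law $\exc^{(s)}$ --- is exactly the route the paper takes (the conditional-law identification is the computation quoted from Section~2.1 of \cite{AdBrGo2010}, and the $s=0$ endgame via Aldous' theorem and uniqueness of limits is the remark following the theorem). The locality you flag as the main obstacle is indeed needed, and it is supplied in the paper by the restriction/consistency statements (Lemma~\ref{lem:def_consistency} and the first assertion of Theorem~\ref{thm:dynamics_X}).

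The genuine gap is in the step you dismiss as secondary: the claim that the convergence of Theorem~\ref{thm:limit_mst_Kn} ``descends to a tagged component by standard continuity arguments''. Gromov--Hausdorff--Prokhorov convergence of the rescaled MST to $\CMT(X,\bU)$ carries no information about the decomposition into Kruskal components at time $p_n(\lambda)$, nor about which part of the limit a given component occupies: the component structure lives in $(Z^{n,\lambda})_{\lambda}$ and in the surplus marks, which are not functions of the abstract limit metric space. In the paper the logic runs the other way around: one first builds a global coupling in which $(X^{n,\lambda})_{\lambda\in\R}$, the merge structure, the surplus point process $\Xi_n$ and the uniform marks all converge, then proves convergence in probability of pairwise distances restricted to matching components (Proposition~\ref{pro:convergence_distances}, resting on Proposition~\ref{pro:distribution_coupling}, Lemma~\ref{lem:dist_left-most-node} and the Rayleigh one-point law of Remark~\ref{rem:one-point_function}) together with convergence of surpluses (Corollary~\ref{cor:convergence_surplus}); both Theorem~\ref{thm:limit_mst_Kn} and Theorem~\ref{thm:limit_mst_surplus} are deduced from this component-level statement, not the reverse. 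So your plan only goes through if you redo that coupling, which is the main technical content rather than a continuity afterthought. A smaller point: you conflate the marks $\bU$ (attached to local minima and used to build the metric) with the Poisson points under the excursion that count the surplus; it is the latter, independent, point process whose count you condition on to tilt the excursion law to $\exc^{(s)}$, while the entries of $\bU$ associated with the interval remain i.i.d.\ uniform and unaffected by that conditioning.
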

The last claim when $s=0$ follows from simple observations: first $\exc^{(0)}$ is simply a standard Brownian excursion; second $T_n^0=C_n^0$ since the latter is already a tree, which must then be uniform, and it is well-known that such trees converge to the Brownian continuum random tree \cite{Aldous1991b,Aldous1993a,Legall1993}. 

Let us to back to the case of the Brownian motion with parabolic drift $X$. The construction of the convex minorant tree inherently captures some hidden dynamics. The explanation shall come later, and we will for now only present some facts. For $\lambda\in \R$ and $t\ge 0$, let
\[B^\lambda_t:=X^\lambda_t - \underline X^\lambda_t \qquad \textrm{ and }\qquad Z^\lambda=\{s\in \R_+: B^\lambda_s=0\}\,.\]
The process $(Z^\lambda)_{\lambda \in \R}$ is non-increasing for the inclusion, and therefore induces a coalescent of $\R_+$: the intervals of $\R_+\setminus Z^\lambda$ can be a.s.\ indexed in decreasing order of their lengths as $\bgamma^\lambda=(\gamma^\lambda_1, \gamma_2^\lambda, \dots)$. It is known \cite{BrMa2015a,Armendariz2001} that the process of the lengths of the intervals $(|\bgamma^\lambda|)_{\lambda \in \R}$ is the standard multiplicative coalescent constructed by Aldous \cite{Aldous1997}. However, the space $\CMT(X,\bU)$ being constructed as $(\sM,d,\mu,\rho)$ from the completion of a random metric $d$ on $\R_+$, it comes with a canonical injection $\pi:\R_+\to \sM$ that allows to transport $Z^\lambda$ into $\sM$. As a consequence, as $\lambda$ varies, the points of $Z^\lambda$ actually also induce a coalescent/fragmentation of $\CMT(X,\bU)$ in the sense that $\pi(Z^\lambda)$ is a non-increasing set of points in $\sM$. We shall now explore more precisely this process. 

In the construction of $\CMT(X,\bU)$, the entries in $\bU$, which are i.i.d.\ uniform random variables, are assigned to the local minima of $X$. For an interval $I\subseteq \R_+$, let $\bU|_I$ denote the sequence of those entries that are assigned to local minima lying in $I$ (in the same order as in $\bU$). For each $i\ge 1$, let 
\[\tilde e^\lambda_i(s):=B^\lambda(s+\inf \gamma_i^\lambda) \I{0\le s \le |\gamma^\lambda_i|}\,.\]
Let $\mathfrak F^\lambda=(\CMT(\tilde e^\lambda_i, \bU|_{\gamma_i^\lambda}), i\ge 1)$ be the collection of convex minorant trees of the excursions $\tilde e^\lambda_i$, $i\ge 1$. Let now $\cS$ be an independent Poisson point process with intensity a half on $\R_+\times \R_+ \times \R$. There exists a measurable function of $(X, \bU, \cS)$ that yields, for each $\lambda\in \R$, a collection of measured metric spaces $\mathfrak G^\lambda$ obtained from $\mathfrak F^\lambda$ by identifying the points $\pi(x)$ and $\pi(y)$ for each $(x,y,t)\in \cS$ such that $t\le \lambda$ and no point of $Z^t$ lies in the closed interval between $x$ and $y$ (i.e., $x$ and $y$ are in the same interval of the fragmentation at time $t$). Almost surely, there are only finitely many points of $\cS$ satisfying these constraints for each $\lambda\in\R$ and $i\ge 1$. 

We now define some discrete analogs, which are more classical. Let $E^n$ denote $\binom{[n]}{2}$. For each $p\in [0,1]$ write $E^n_p:=\{e\in E^n: w_e\le p\}$ so that the graph $G(n,p)=([n], E^n_p)$ is a classical Erd\H{o}s--Rényi random graph, and the process $(G_n^p)_{p\in [0,1]}$ is non-decreasing (in the sense of inclusion of edge sets). The regime of interest is the one when 
\begin{equation}\label{eq:def_pnlambda}
p=p_n(\lambda):=\frac 1n+\frac{\lambda} {n^{4/3}}.
\end{equation} 
Let $C^{n,\lambda}_i$ be the $i$-th largest connected component of $G(n,p_n(\lambda))$, breaking ties using the minimum label. Let $\mathfrak G^{n,\lambda}=(\mathfrak G^{n,\lambda}_i, i\ge 1)$, where
\[\mathfrak G^{n,\lambda}_i=(C^{n,\lambda}_i,n^{-1/3} d^{n,\lambda}_i, n^{-2/3} \mu^{n,\lambda}_i)\]
denotes the corresponding measured metric space, where $d^{n,\lambda}_i$ is the graph distance, and $\mu_i^{n,\lambda}$ denote the counting measure on (the vertex set of) $C^{n,\lambda}_i$. One may similarly define the minimum spanning forest $\mathfrak F^{n,\lambda}=(\mathfrak F^{n,\lambda}_i, i\ge 1)$, where
\[\mathfrak F^{n,\lambda}_i= (C_i^{n,\lambda}, n^{-1/3} \delta^{n,\lambda}_i, n^{-2/3} \mu^{n,\lambda}_i)\]
and $\delta^{n,\lambda}_i$ is the graph distance on the minimum spanning tree of $C_i^{n,\lambda}$ (constructed from the same collection of weights $(w_e)$).


Then the processes $(\mathfrak F^\lambda)_{\lambda \in \R}$ and $(\mathfrak G^\lambda)_{\lambda\in \R}$ enjoy some continuum Kruskal and Erd\H{o}s--Rényi dynamics reflecting the evolution of $\mathfrak F^{n,\lambda}$ and $\mathfrak G^{n,\lambda}$, respectively, in following sense:
\begin{thm}\label{thm:dynamics_X}
For each $\lambda$, and each $i\ge 1$, $\mathfrak F^\lambda_i$ is isometric to the subet of $\sM$ induced by $\gamma^\lambda_i$. Furthermore we have, for any $k\ge 1$ and $\lambda_1<\lambda_2<\dots<\lambda_k$, jointly
\begin{align*}
  (\mathfrak G^{n,\lambda_1}, \mathfrak G^{n,\lambda_2},\dots, \mathfrak G^{n,\lambda_k}) &\xrightarrow[n\to\infty]{} (\mathfrak G^{\lambda_1}, \mathfrak G^{\lambda_2}, \dots, \mathfrak G^{\lambda_k}) \quad and,\\
  (\mathfrak F^{n,\lambda_1}, \mathfrak F^{n,\lambda_2},\dots, \mathfrak F^{n,\lambda_k}) &\xrightarrow[n\to\infty]{} (\mathfrak F^{\lambda_1}, \mathfrak F^{\lambda_2}, \dots, \mathfrak F^{\lambda_k})\,,
\end{align*}
in distribution, where, in each case, the convergence holds with respect to the product Gromov--Hausdorff--Prokhorov topology on sequences of measured metric spaces. 
\end{thm}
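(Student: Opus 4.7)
The plan is to obtain part (i) from the recursive definition of the CMT, and to deduce parts (ii) and (iii) from Theorem~\ref{thm:limit_mst_Kn} via a Skorokhod coupling combined with the known single-time convergence of the critical Erd\H{o}s--R\'enyi components.

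For part (i), the key observation is that the convex minorant construction is local. If $\gamma^\lambda_i=(a,b)$ is an excursion interval of $B^\lambda$, then $\underline X^\lambda_s=X^\lambda_a$ for all $s\in(a,b)$, so the greatest convex minorant of $X^\lambda$ on $[a,s]$ differs from the greatest convex minorant of the shifted excursion $\tilde e^\lambda_i(\cdot)=B^\lambda(\cdot+a)$ on $[0,s-a]$ only by an affine function (its slope being determined by the parabolic drift at $a$). Consequently the recursive description of the metric $d$ restricted to $[a,b]$ agrees with the metric used to define $\CMT(\tilde e^\lambda_i,\bU|_{\gamma^\lambda_i})$, the marks being precisely the entries of $\bU$ attached to local minima of $X$ lying in $(a,b)$. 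Passing to completions yields the announced isometry between $\pi(\gamma^\lambda_i)$ and $\mathfrak F^\lambda_i$.

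For parts (ii) and (iii), I would work under the natural coupling in which the edges of $K_n$ are equipped with i.i.d.\ uniform weights once and for all. Then, for each $n$, the processes $(\mathfrak G^{n,\lambda})_\lambda$ and $(\mathfrak F^{n,\lambda})_\lambda$ are deterministic monotone functionals of these weights, and their joint law at $\lambda_1<\dots<\lambda_k$ is determined by the edges of weight at most $p_n(\lambda_k)$. The marginal convergence at $\lambda=\lambda_k$ is provided by \cite{AdBrGoMi2013a}, and one must identify the limit with our $\mathfrak G^{\lambda_k}$ and $\mathfrak F^{\lambda_k}$. This identification uses part (i) on the MST side and the known joint convergence of the exploration process of $G(n,p_n(\lambda_k))$ together with its surplus edges to $B^{\lambda_k}$ marked by an independent Poisson process; one then checks that this Poisson process has the same law as the restriction of $\cS$ to the large excursions at scale $\lambda_k$. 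Applying Skorokhod's representation to the joint convergence of the discrete exploration and surplus data at the times $\lambda_1,\dots,\lambda_k$, the desired joint convergence reduces to an almost sure continuity property of the map sending $(X,\bU,\cS)$ to the tuple $(\mathfrak F^{\lambda_j},\mathfrak G^{\lambda_j})_{j\leq k}$.

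The main obstacle is establishing this continuity property. Two genericity statements are needed. First, the excursion decomposition of $B^\lambda$ must be continuous in $\lambda$ around each $\lambda_j$, in the sense that each large excursion at $\lambda_j$ can be uniquely identified with its ancestor excursion at $\lambda_{j+1}$; this relies on the fact that almost surely $X$ has no flat local minima and no atom of $\cS$ is located at a point of any $Z^t$. Second, the assembly of $\mathfrak G^\lambda$ from $\mathfrak F^\lambda$ via the identifications along the finitely many atoms of $\cS$ falling inside the large excursions at scale $\lambda_k$ must be continuous for the Gromov--Hausdorff--Prokhorov topology, which follows once one checks that the pairs of points involved almost surely lie at positive distance in $\mathfrak F^\lambda$. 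Once these two genericity statements are established, part (i) reduces the componentwise bookkeeping to a routine exercise.
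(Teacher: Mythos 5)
Your part (i) is essentially the paper's own argument: the locality of the recursive convex minorant construction is exactly the content of Lemma~\ref{lem:def_consistency} (together with Lemma~\ref{lem:W_decomp} and absolute continuity of $X$ with respect to $W$), and the paper treats the identification of $\mathfrak F^\lambda_i$ with the subspace of $\sM$ induced by $\gamma^\lambda_i$ in the same way.

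For the convergence statements, however, there is a genuine gap. Your plan takes the marginal convergence at $\lambda_k$ from \cite{AdBrGoMi2013a} and claims the limit can be ``identified'' with $\mathfrak F^{\lambda_k}$ and $\mathfrak G^{\lambda_k}$ using part (i); but \cite{AdBrGoMi2013a} only establishes existence of the limit (by a Cauchy argument) and the scaling limit of \cite{AdBrGo2012a} is built from the \emph{height-process} tree of the tilted excursion, which the paper explicitly points out is a genuinely different tree from $\CMT(\tilde e^\lambda_i,\bU|_{\gamma^\lambda_i})$. Identifying the limit of the rescaled MST components with the convex minorant tree is precisely the main new content of the paper, and it cannot be cited away: part (i) is a statement about the continuum object alone and says nothing about what the discrete forests converge to. The paper carries out this identification through a specific discrete representation in the Prim order (Lemma~\ref{lem:discrete_merges}, Propositions~\ref{prop:random_forest} and~\ref{prop:random_graph}), a coupling of the merge events and of the attachment uniforms via the $\Code$ map together with $\Merge_n\to\Merge(X)$ and $\Xi_n\to\Xi$ (Propositions~\ref{prop:large_merges} and~\ref{pro:cyclic_edges}), and then the ``ideal forest'' device: distances inside the components at a very negative time $\underline\lambda$ are replaced by idealized distances built from shared uniforms, which have exactly the right law on a high-probability event (Proposition~\ref{pro:distribution_coupling}) and converge in probability (Proposition~\ref{pro:convergence_distances}); the Gromov--Prokhorov limit is then upgraded to Gromov--Hausdorff--Prokhorov using known tightness. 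Your proposal contains no counterpart to this quantitative step, and in particular never explains how the discrete attachment points $u_i$ (uniform in the left component) are coupled to the continuum marks $\bU$, which is indispensable because the limit objects are specific functionals of $(X,\bU,\cS)$.

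Relatedly, your final reduction to ``an almost sure continuity property of the map sending $(X,\bU,\cS)$ to the tuple $(\mathfrak F^{\lambda_j},\mathfrak G^{\lambda_j})_{j\le k}$'' is not a safe foundation: the CMT metric is discontinuous at every local minimum of the driving path (see Lemma~\ref{lem:continuity_distance}), the discrete forests are not functionals of $(X,\bU,\cS)$ but of their own driving data, and no continuity of this assembly map in the relevant topology is available or proved in the paper --- which is exactly why the paper replaces any continuity argument by the explicit coupling and the convergence in probability of pairwise distances. The two ``genericity statements'' you isolate (no flat minima, atoms of $\cS$ avoiding $Z^t$, finitely many identifications per component) are true but minor; the real work, which is missing from your proposal, is the convergence of the component metrics themselves to the convex minorant metrics.
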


Theorem~\ref{thm:dynamics_X} provides an explicit coupling for the standard metric coalescent dynamics constructed by Rossignol in \cite{Rossignol2017a} (see also \cite{AdBrGoMi2019a}). In particular, this shows that $\CMT(X,\bU)$ is the right object to lift the multiplicative coalescent defined by Aldous to the level of metric spaces (as well as its augmented version \cite{BhBuWa2014a}). We are not interested here in verifying that there is indeed a natural Markov semigroup acting on measured metric spaces that formalizes these dynamics; such a Markov processes is constructed and studied in \cite{AdBrGoMi2019a} (see also \cite{Frilet2021}). 

There is also an analog to Theorem~\ref{thm:dynamics_X} replacing $\CMT(X,\bU)$ by $\CMT(\exc,\bU)$ which is relevant to the additive coalescent. The following notation are intentionally similar to that used previously; it shall always be clear to which case we refer. For each $\lambda\le 0$, let $\exc^\lambda(s)=\exc(s)+\lambda s$, and $\underline \exc^{\lambda}(s)=\inf\{\exc^\lambda(r): 0\le r\le s\}$. Write $Z^\lambda=\{s\in [0,1]: \exc^\lambda(s)=\underline \exc^\lambda(s)\}$. The process $Z^\lambda$ is non-increasing in $\lambda$ and induces a coalescent of $[0,1]$, as $\lambda$ varies in $(-\infty,0]$. Let $(\gamma^\lambda_i)_{i\ge 1}$ denote the sequence of lengths of the intervals of $[0,1]\setminus Z^\lambda$, in decreasing order. It is known since the results of \citet{Bertoin2000a} that the process $t\mapsto (\gamma^{-t}_i)_{i\ge 1}$ for $t\geq 0$ is the fragmentation dual to the standard additive coalescent introduced by \citet{AlPi1998a}. Just as before $\CMT(\exc,\bU)$ is a random measured real tree $\mathfrak T=(\sT,d, \mu)$ defined through the completion of a random metric $d$ on $[0,1]$, and we let $\pi:[0,1]\to \sT$ denote the canonical injection. 
 This allows one to transport $Z^\lambda$ in $\sT$ and therefore, to see $\sT\setminus \pi(Z^{-t})$, $t\ge 0$, as a fragmentation of $\CMT(\exc,\bU)$. 

Formally, for each $\lambda\le 0$ and $i\ge 1$, let
\[\exc^\lambda_i(s):=(\exc^\lambda(\inf \gamma^\lambda_i+s)-\underline \exc^\lambda(s))\I{0\le s\le |\gamma^\lambda_i|}\,.\]
Let $\mathfrak T^\lambda_i:=\CMT(\exc_i, \bU^\lambda_i)=(\sT^\lambda_i, d_i^\lambda, \mu_i^\lambda)$. The following theorem provides an explicit coupling between the representations of the fragmentation that is dual to the additive coalescent due to Aldous \& Pitman on the one hand \cite{AlPi1998a}, and to Bertoin \cite{Bertoin2000a} on the other. It also provides another point of view on some recent results of \citet{KoTh2023a}. Define $\cP:=\{(\pi(x), - \lambda): x\in Z^{\lambda-}\setminus Z^\lambda, \pi(x)\in \Skel(\sT), x\in [0,1], \lambda\le 0\}$, where $\Skel(\sT)$ is the skeleton of $\sT$ that we define here as the set of points $u\in \sT$ such that $\sT\setminus \{u\}$ has at least two connected components. 
 
\begin{thm}[Aldous--Pitman vs Bertoin]\label{thm:additive_coalescent}
Let $\exc$ be a normalized Brownian excursion and recall that $\mathfrak T=(\sT,d,\mu)=\CMT(\exc,\bU)$.
Almost surely, for all $\lambda\leq 0$ and all $i\ge 1$, $\mathfrak T_i^\lambda$ is isometric to the subtree of $\sT$ induced by $\gamma^\lambda_i$. Furthermore
\begin{compactenum}[i)]
  \item for each $\lambda\le 0$ and each $i\ge 0$, $|\gamma^\lambda_i|=\mu(\sT^\lambda_i)$,
  \item conditionally on $(\sT,d)$, $\cP$ is a Poisson point process of unit intensity on $\Skel(\sT)\times \R_+$.
\end{compactenum}
Hence, the process $(\bgamma^{-t})_{t\ge 0}$ is precisely the Aldous--Pitman fragmentation of the Brownian CRT $\mathfrak T$. 
\end{thm}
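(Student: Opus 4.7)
I would treat the three assertions in order; the final conclusion on the Aldous--Pitman fragmentation then follows from (ii) together with the identification of $\CMT(\exc, \bU)$ as a Brownian CRT (Theorem~\ref{thm:limit_mst_surplus} for $s=0$), via the standard Poissonian characterization of the fragmentation dual to the additive coalescent on the Brownian CRT. Bertoin's result describes only the \emph{mass} process $(|\bgamma^{-t}|)_{t\ge 0}$; the present theorem lifts that identification to the tree level by exhibiting the cut-point process explicitly.

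\textbf{Isometry and (i).} The starting point is the translation invariance of convex minorants: if $\ell$ is affine, the convex minorant of $f+\ell$ on any interval equals the convex minorant of $f$ plus $\ell$. Applied to $\ell(s)=\lambda s$, this shows that the convex minorant of $\exc^\lambda$ restricted to $\gamma^\lambda_i=[\alpha_i,\beta_i]$ coincides, up to an affine shift in time and height, with the convex minorant of the sub-excursion $\exc^\lambda_i$ on $[0,|\gamma^\lambda_i|]$. Since $\CMT$ is defined recursively from convex minorants and the labels $\bU|_{\gamma^\lambda_i}$ are by definition those assigned to local minima lying inside $\gamma^\lambda_i$, induction on the recursion depth shows that the restriction of $d$ to $\gamma^\lambda_i$ is exactly $d^\lambda_i$; hence $\mathfrak T^\lambda_i$ is isometric to the subtree of $\sT$ induced by $\gamma^\lambda_i$. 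Item (i) is then immediate: by construction, $\mu=\pi_*\Leb$, so $\mu(\sT^\lambda_i)=\Leb(\gamma^\lambda_i)=|\gamma^\lambda_i|$.

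\textbf{The PPP property (ii).} Writing $\ell_\sT$ for the length measure on $\Skel(\sT)$, I would show that, conditionally on $(\sT,d)$, the random set $\cP$ is a Poisson point process on $\Skel(\sT)\times\R_+$ of intensity $\ell_\sT\otimes d\lambda$. The strategy is twofold. \emph{Localisation:} for each $\lambda<0$, a new zero of $\exc^\lambda-\underline\exc^\lambda$ appears inside a current excursion interval $\gamma^{\lambda-}_i$ precisely at the vertex of the convex minorant whose adjacent slope just crosses the threshold $-\lambda$; such a vertex is a local minimum of $\exc$, and by the $\CMT$ construction its image under $\pi$ lies in $\Skel(\sT)$. \emph{Intensity matching:} inside a fixed current fragment, the rate at which new zeros appear as $\lambda$ decreases by $d\lambda$ is the total horizontal length of those linear pieces of the iterated convex minorants whose slopes lie in $[-\lambda-d\lambda,-\lambda]$. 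The $\CMT$ construction is set up precisely so that, along any skeletal arc, arc length is parametrised by the increments of this slope; hence this horizontal length equals $d\ell_\sT$ on the corresponding skeletal arcs. Independence across disjoint $\lambda$-intervals and disjoint fragments follows from the Markov property of convex-minorant increments for Brownian motion and, at the top level, from the well-known Poissonian description of the slopes of the convex minorant of a Brownian excursion.

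\textbf{Main obstacle.} The delicate step is matching the slope-based measure produced by the fragmentation dynamics with the intrinsic length measure $\ell_\sT$ on $\Skel(\sT)$. This requires carefully tracking how each skeletal edge of $\sT$ decomposes into linear pieces of the iterated convex minorants of $\exc$, and verifying from the explicit definition of $\CMT$ that the length of an edge equals the total range of slopes swept along it. Once this correspondence is in hand, both the unit intensity and the Poisson independence follow, and the concluding statement is immediate: cutting the Brownian CRT along a unit-intensity PPP on $\Skel\times\R_+$ is exactly the Aldous--Pitman construction of the fragmentation dual to the standard additive coalescent.
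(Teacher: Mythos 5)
Your first block (the isometry statement and item \emph{i)}) is fine and is essentially the paper's own route: it is the restriction/consistency property of the construction (Lemma~\ref{lem:def_consistency}, combined with Lemma~\ref{lem:fragment_of_x}, which identifies each fragment $\gamma^\lambda_i$ with an interval of the form $[t_j,z_j)$ whose left endpoint is a vertex of a convex minorant), and \emph{i)} is indeed immediate from $\mu=\pi_*\Leb$. The genuine gap is in part \emph{ii)}, which is where the whole content of the theorem lies. Your ``intensity matching'' step rests on the claim that along a skeletal arc of $\sT$ the arc length is parametrised by the increments of the slope $\slo$, so that the faces with slopes in $[\tau,\tau+d\tau]$ contribute exactly $d\ell$ of skeleton. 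This is false: in $\CMT(\exc,\bU)$ the metric is defined by the martingale limits $d(0,x)=\lim_n\sqrt{\pi/2}\sum_{|u|=n}m_u^{1/2}$, equivalently by a $\psi$-Hausdorff measure of the Cantor set $\llb 0,x\rrb$ (Lemma~\ref{lem:measure_geodesic}); it has no direct relation to slopes. Brownian scaling already rules out your identity (on an excursion of duration $\sigma$, distances scale like $\sigma^{1/2}$ while slopes scale like $\sigma^{-1/2}$), and more bluntly, for a typical point $x$ the slopes $\slo(t_i(x))$ of the vertices along $\llb 0,x\rrb$ increase to $+\infty$ as $t_i(x)\uparrow x$, whereas $d(0,x)$ is finite: the total ``slope increment'' along an arc of finite length is infinite. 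So the proposed mechanism cannot produce the intensity $\ell\otimes dt$, and no substitute computation is offered; similarly, the claimed conditional independence \emph{given} $(\sT,d)$ — an object built from the very same excursion and the same uniforms $\bU$ — is exactly the delicate point and does not follow from the Markov property of convex-minorant increments alone.

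For comparison, the paper does not attempt a direct intensity computation. Sections~\ref{sec:dynamics}--\ref{sub:distribution_CMT} build the cut tree $\sC$ of Bertoin's fragmentation, show via a canonical spinal exploration that the local minima of $\exc$ are in bijection with the branch points of $\sC$ (Lemma~\ref{lem:vertices_branchpoints}), prove that the pair of marks attached to each branch point (the images in $\sC$ of $a_{ui}$ and of the join point $\ju(a_{ui})$) has, conditionally on $\sC$, the product law \eqref{eq:def_distribution_hook-points} (Lemmas~\ref{lem:images_marks} and~\ref{lem:full_dist_marked_cuttree}), and identify $\CMT(\exc,\bU)$ almost surely with the inverse cut-tree transform $\Phi(\mathfrak C,\bbeta)$. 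Part \emph{ii)} is then obtained by writing $\cP=\{(\pi(\kappa(\zeta_i,\zeta_j)),\tau(\zeta_i,\zeta_j))\}$ and invoking the reconstruction results of \cite{AdDiGo2019a} (Theorem~16 and Corollaries~17--18 there), which state precisely that the cut points and times form a unit-intensity Poisson process on $\Skel(\sT)\times\R_+$ conditionally on the reconstructed tree. To make your direct approach work you would need to establish, from scratch, the conditional law of the cut locations given the tree (e.g.\ that within each fragment the $\pi$-image of the next cut point is distributed according to the normalized length measure on suitable spanned subtrees, with the matching exponential rates), which is essentially the content of that machinery and is missing from the proposal.
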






\subsection{Motivation and history of related results} 
\label{sec:motivation_and_history}


It was already known from the work of \citet*{AdBrGoMi2013a} that $\mathfrak M_n = (M_n,n^{-1/3} d_n, n^{-1}\mu_n, \rho_n)$ converges in distribution. The proof relies on a Cauchy sequence argument for the distribution of $\mathfrak M_n$, and is thus essentially existential. In particular, it does not provide an explicit construction of the limit. The novelty of Theorem~\ref{thm:limit_mst_Kn} lies in the identification of the limit as the convex minorant tree $\CMT(X,\bU)$. Note that, by results of \citet{AdSe2021a}, this is also the scaling limit of random 3-regular graphs.

\medskip
\noindent\textsc{About the scaling limit of $\mathfrak M_n$.}
In order to understand the underlying issues, let us be more specific about the approach used in \cite{AdBrGoMi2013a}. The general idea is to analyse the minimum spanning tree using Kruskal's algorithm \cite{Kruskal1956}. This algorithm proceeds by adding the edges by increasing order of weights to an initially empty graph, provided doing so does not create a cycle. Since in the random setting, the order is uniformly random, the (conditional) distribution according to which the edges are added at each step is straightforward, and the difficulty consists in avoiding the cycles. So one may try to first add edges regardless of whether they create cycles or not, with the hope to be able to deal with that issue later on. One shall do this up to a threshold for the weights that ensures that there are not too many cycles (or dealing with them would be hard), but that the connected components are already fairly large (or we have basically gathered no information). These two competing constraints lead to the choice of keeping only edges with weight at most $p_n(\lambda)=\tfrac 1 n + \lambda n^{-4/3}$ with $\lambda\in \R$ large.  

This $p_n(\lambda)$ happens to be precisely the critical window of the random graphs.  The scaling limit of $G(n,p_n(\lambda))$, seen as the sequence of compact metric spaces $\mathfrak G^{n,p_n(\lambda)}$ is known from the results of \citet*{AdBrGo2012a} who built on the pioneering work of Aldous who had previously obtained the scaling limit for the vector of the sizes of the connected components \cite{Aldous1997}. The analysis in \cite{AdBrGoMi2013a} relies on the fact that, (1) given a connected component of the random graph, one may obtain a tree distributed like its minimum spanning tree by breaking cycles randomly (removing uniformly edges, unless they disconnect the component), and (2) that a similar procedure works on the scaling limit. 
This forward/backward procedure provides some geometric information but it is inherently tricky to track it precisely. This explains why it does not lead to an explicit construction of the limit in terms of simple building blocks, or also why the Hausdorff dimension (Proposition~\ref{pro:space_compact}) remained unknown. Furthermore, this approach is fundamentally incapable of providing any result about the behavior at different times, since the cycle breaking procedure removes all cycles. 

\medskip
\noindent\textsc{Related results on the MST.}
Let us mention that Angel and Senizergues are currently finishing a paper in which they study the scaling limit of the local limit of $M_n$, that was described by \citet{Ad2013a}. As the local weak limit of $M_n$ is an infinite tree, their object $\cal M$ is not compact; still $\cal M$ has Hausdorff dimension 3, and it also seems to be the local limit of $\CMT(X,\bU)$. It is our understanding that they also plan to study a ``mesoscopic'' limit that would be an analog to the self-similar CRT of Aldous \cite{Aldous1991} for~$\mathfrak M_n$. 

\medskip
\noindent\textsc{About the scaling limit of random graphs.} It is known that the critical random graphs have a scaling limit \cite{AdBrGo2012a}, which has been constructed for each $\lambda\in \R$ in \cite{AdBrGo2012a} (see also \cite{AdBrGo2010}): for each $i\ge 1$, a connected component is built as the tree with height process $\tilde e^\lambda_i$, in which cycles are created by identifying pairs of points whose locations are given by a Poisson point process under $\tilde e^\lambda_i$. The tree is genuinely different from $\CMT(\tilde e^\lambda_i, \bU|_{\gamma^\lambda_i})$ that we use here. However, the marginals described in Theorem~\ref{thm:dynamics_X} of course correspond. For instance, the number of pairs of points that are identified must have the same distribution conditionally on the excursion. One quickly verifies that (Lemma~\ref{lem:area}), for $\lambda\in \R$ and $i\ge 1$, the average number of pairs given $\tilde e^\lambda_i$ (which also determines all the $\gamma^r_j$ which are subsets of $\gamma^\lambda_i$), is
\[\int_{\gamma^\lambda_i} \tilde e^\lambda_i(s)ds = \frac 1 2 \int_{-\infty}^\lambda \sum_{j\ge 1} |\gamma^{r}_j|^2 \I{\gamma^r_j\subseteq \gamma^\lambda_i} dr\,.\]
The spanning subtree used in \cite{AdBrGo2012a} is discovered by a depth-first search; quite recently, \citet{MiSe2022a} have studied the construction of these scaling limits from a breadth-first exploration. The procedures used in \cite{Aldous1997,AdBrGo2012a,MiSe2022a} are not consistent as $\lambda\in \R$ varies, and the objects obtained for two different values of $\lambda$ have no reason to be close, and do not relate simply to any dynamics.


\medskip
\noindent\textsc{About the limit Erd\H{o}s--Rényi and Kruskal dynamics.} 
Since there is an obvious process version for the entire structure at the discrete level, the question of the dynamics for the limit objects (continuum forests or graphs) is quite natural. 
First it is known from results of \citet{Armendariz2001} and \citet{BrMa2015a} that the process $X=X^0$ defined in \eqref{eq:BwPd} encodes the standard multiplicative coalescent, and thus permits to obtain a coupling of the limit of the sizes of the connected components (see also \cite{MaRa2017a}). A minor modification also yields a coupling of both the sizes and the number of extra edges via an explicit construction of the augmented multiplicative coalescent constructed by \citet*{BhBuWa2014a} (see also the recent point of view by \citet{CoLi2023a,CoLi2023b}). The metrics require the new point of view of the convex minorant tree. We emphasize that what we mean here by dynamics is a process in $\lambda$ whose marginals are the scaling limits for fixed $\lambda$, and that we do not consider the question of the existence of nice Markov semigroup acting on sequences of compact measured metric spaces; this question is addressed in \cite{AdBrGoMi2019a,Rossignol2017a}. 

Let us now say a few words about the case of the (standard) additive coalescent. It was first introduced by Aldous and Pitman \cite{AlPi1998a} as the time reversal of the fragmentation process where a Brownian continuum random tree is split as time goes using a Poisson point process. Bertoin \cite{Bertoin2000a} then observed that one obtains the same fragmentation process by cutting the unit interval at the times where a Brownian excursion plus an increasing linear drift touches its running infimum. These two constructions have been connected in a number of ways at the discrete level, starting with \citet{ChLo2002} who used a representation based on hashing with linear probing \cite{KoWe1966a,Knuth1973b}; the construction of $\CMT(\exc,\bU)$ can be seen as a scaling limit for the tree appearing there. \citet{BrMa2015a} and \citet{MaWa2018a} provide alternative approaches. Quite recently, the two processes have been coupled directly in the continuous by \citet{KoTh2023a}, just as Theorem~\ref{thm:additive_coalescent}. Let us also emphasize the fact that Theorem~\ref{thm:additive_coalescent} is a by-product of the same construction used for Theorem~\ref{thm:dynamics_X}: this shows that the standard additive and multiplicative coalescent are, even when considered at the enriched metric level, very strongly related since they are two versions of the same construction applied two different functions ($\exc$ and $X$, respectively). 

\subsection{Intuition and techniques} 
\label{sub:intuition_and_techniques}









We shall now try to convey the main ideas that underlie the construction of our scaling limits. The intuition comes from the discrete setting, and we shall explain why the relevant objects should have continuum analogs, and how these limits could be formally defined. There is no very simple axiomatic definition of the minimum spanning tree, at least none that seems suitable to a direct analysis, and one is lead to track the evolution of a construction algorithm in order to obtain the minimum spanning tree. While the construction in \cite{AdBrGoMi2013a} relies on Kruskal's algorithm which grows a forest \cite{Kruskal1956}, our approach is based on a combination of algorithms by Kruskal and Prim \cite{Prim1957}, which grows a tree containing a given vertex. 

Fix $n\ge 1$. Prim's algorithm proceeds as follows. Let $v_1=1$. We define the order of vertices $v_2,v_3,\dots, v_n$ iteratively. For every $j=1,\dots, n$, we let $V_j=\{v_1,\dots, v_j\}$. For $i=2,\dots, n$, let $e_i$ be the edge between $V_{i-1}$ and $[n]\setminus V_{i-1}$ that has the smallest weight. Write $e_i=\{u_i,v_i\}$ with $u_i\in V_{i-1}$ and $v_i\in [n]\setminus V_{i-1}$. Then the minimum spanning tree $M_n$ is the graph on $[n]$ with edge set $\{e_2,\dots, e_n\}$. The order $v_1,v_2,\dots, v_n$ is called the Prim order. It turns out that, for any $p\in [0,1]$, the connected components of $G(n,p)=([n], E_p^n)$, where $E_p^n=\{e\in E^n: w_e\le p\}$ are intervals in the Prim order (that is, the vertex set of each connected components is $\{v_a,v_{a+1},\cdots,v_b\}$ for some $1\leq a\leq b \leq n$). In particular, as $p$ increases, only adjacent intervals may merge. 

Now, consider the graph consisting of edges with weights (strictly) lower than $w_{e_i}$, $\{e: w_e<w_{e_i}\}$, that is just before the edge $e_i$ is added. Let $L_i$ be the connected component containing $v_{i-1}$ in this graph. These are precisely the connected components that merge when $e_i$ is added. For $p\in [0,1]$, let $\cF_p$ denote the sigma-algebra generated by the events $\{w_e\le p, e\in E\}$. The following is straightforward:
\begin{lem}\label{lem:discrete_merges}
For each $2\le i\le n$, conditionally on $\cF_{w_{e_i}-}$, the vertex $u_i$ is uniformly random in $L_i$. 
\end{lem}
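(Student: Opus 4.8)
\emph{Proof plan.} This is a symmetry (exchangeability) statement, so I do not expect any real difficulty; the only thing that needs care is the bookkeeping of the conditioning, which I isolate at the end.

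First I would set up the picture. Applying the fact that the components of $G(n,p)$ are intervals of the Prim order to a $p$ just below $w_{e_i}$, the set $L_i$ is such an interval; and $L_i\subseteq V_{i-1}$, since a path leaving $V_{i-1}$ from $v_{i-1}$ and using only edges lighter than $w_{e_i}$ would contain an edge of the cut $(V_{i-1},V_{i-1}^c)$ of weight $<w_{e_i}$, contradicting the minimality defining $e_i$. Hence $L_i=\{v_a,\dots,v_{i-1}\}$ for some $a\le i-1$. Next I would check that $u_i\in L_i$: if instead $u_i=v_b$ with $b<a$, then $e_i=\{v_b,v_i\}$ is a cut edge at Prim step $a$, so $w_{e_a}\le w_{e_i}$; but $e_a$ has exactly one endpoint ($v_a$) in $L_i$, and since $L_i$ is a connected component of $\{e:w_e<w_{e_i}\}$ every edge with exactly one endpoint in $L_i$ has weight $\ge w_{e_i}$, a contradiction. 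So $e_i$ has exactly one endpoint, namely $u_i$, inside $L_i$.

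The one observation that matters is then: $e_i$ is the lightest of the $|L_i|\,(n-i+1)$ edges joining $L_i$ to $V_{i-1}^c$. Indeed, all of these are cut edges at step $i$, hence have weight $\ge w_{e_i}$, while $e_i$ itself is one of them, of weight $w_{e_i}$. To conclude I would condition on $\mathcal F_{w_{e_i}-}$ enriched by the Prim history $v_1,\dots,v_{i-1}$, which makes $L_i$ measurable; the assertion for $\mathcal F_{w_{e_i}-}$ alone then follows by averaging over $L_i$. Under this conditioning the weights of the not-yet-revealed edges are exchangeable (given the revealed sub-threshold graph $H=\{e:w_e<w_{e_i}\}$, they are in fact i.i.d.\ uniform above $\max_{e\in H}w_e$), so the lightest of the edges from $L_i$ to $V_{i-1}^c$ — which by the previous sentence is $e_i$ — is uniformly distributed among them; since each vertex of $L_i$ carries exactly $n-i+1$ of these edges, the endpoint $u_i$ of $e_i$ inside $L_i$ is uniform on $L_i$.

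The step I expect to be the only delicate one is the last assertion: that enriching $\mathcal F_{w_{e_i}-}$ by $v_1,\dots,v_{i-1}$ does not spoil the exchangeability of the edges from $L_i$ to $V_{i-1}^c$. The earlier Prim choices do compare some of these edges against previously selected edges, but every Prim edge $e_j$ lying strictly inside $L_i$ (i.e.\ $a<j\le i-1$) is lighter than $w_{e_i}$ — otherwise at threshold $w_{e_j}\ge w_{e_i}$ the component of $v_{j-1}$ would swallow all of $L_i$, including $v_j\notin V_{j-1}$, whereas it is contained in $V_{j-1}$ — so these comparisons impose nothing beyond the already known bound $w_e\ge w_{e_i}$, and they never compare two such boundary edges to one another. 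Granting this, the lemma follows.
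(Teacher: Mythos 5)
The paper gives no proof of this lemma (it is asserted as ``straightforward''), so there is no argument of the authors' to compare against; your proof is a correct and essentially complete justification, and it formalizes exactly what needs formalizing. The skeleton is right: $L_i=\{v_a,\dots,v_{i-1}\}$ is a Prim interval contained in $V_{i-1}$; $u_i\in L_i$; $e_i$ is the minimum of the $|L_i|(n-i+1)$ edges of the cut between $L_i$ and $[n]\setminus V_{i-1}$; and the conditioning constrains these cut edges only through one-sided comparisons against strictly lighter Prim edges, so their weights remain exchangeable and the $L_i$-endpoint of the argmin is uniform on $L_i$. The step you flag as delicate is indeed the only delicate one, and your resolution is correct: for $a<j\le i-1$ your ``swallowing'' argument gives $w_{e_j}<w_{e_i}$, so every constraint $w_f\ge w_{e_j}$ imposed on a boundary edge $f$ by the Prim history is already implied by $w_f\ge w_{e_i}$ (known from $\cF_{w_{e_i}-}$), and no two boundary edges are ever compared to each other since no $e_j$ with $j\le i-1$ can itself be a boundary edge ($e_j\subseteq V_{i-1}$). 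Two minor imprecisions, neither damaging: (1) the parenthetical ``i.i.d.\ uniform above $\max_{e\in H}w_e$'' should read ``above $w_{e_i}$'' --- the conditioning pins down the value $w_{e_i}$ and forces all unrevealed edges to exceed it, with exactly one (the one you are locating) achieving it; what your argument actually uses, and what is true, is only the invariance of the conditioning event under permutations of the weights on the cut $(L_i,[n]\setminus V_{i-1})$. (2) The identification of ``$e_j$ lies strictly inside $L_i$'' with ``$a<j\le i-1$'' needs the extra observation that $u_j\in L_i$ for such $j$ (which does follow, since $w_{e_j}<w_{e_i}$ forces $e_j\in H_i$ and hence both endpoints in the component $L_i$), but your argument never actually needs $e_j\subseteq L_i$, only the weight bound $w_{e_j}<w_{e_i}$, which you prove directly.
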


In other words, in this discrete representation in which the vertices are placed in the Prim order $v_1,\cdots,v_n$, conditionally on the sequence of intervals that merge, the edges that are part of the minimum spanning tree precisely connect a uniform random vertex in the left interval to the left-most vertex in the right interval. Still in this discrete representation, determining the distribution of the sequence of pairs of intervals that merge together is not quite as easy any longer. Fortunately, in the limit, it is given explicitly by the rather nice process $\R_+\setminus Z^\lambda$.
One might thus hope that, in the limit, one should be able to construct the scaling limit of the minimum spanning tree as follows: for each $\lambda\in \R$, each interval $\gamma^\lambda_i$ should be associated to a continuum random tree, and as $\lambda$ increases, these trees should merge using an analog of the discrete dynamics: each time two intervals merge, a uniformly random point in the left interval and the left-most point of the right one should be identified; the minimum spanning tree should then be the limit as $\lambda\to \infty$ (which would indeed be a tree since $\gamma^\lambda_1\uparrow (0,\infty)$ as $\lambda\to\infty$).

While these dynamics are reasonable, they do not really provide a clear path towards a construction: while at the discrete level, the addition of edges does create some length, identifying points in the limit does not, and it remains to understand from what the length emerges. A natural idea consists in constructing the length using some kind of local time arising from the process $(Z^\lambda)_{\lambda \in \R}$. With this objective in mind, let us go back to the discrete setting.  
For any $i,j\in [n]$, we may find all the nodes on the path between $i$ and $j$ in the minimum spanning tree as follows. For some $p\in [0,1]$, it is convenient to write $i\sim_{n,p} j$ if $i$ and $j$ lie in the same connected component of the graph with edges of weight at most $p$: let $i<j\in [n]$ and let $p(i,j)=\inf\{p: i\sim_{n,p} j\}$. The path between $i$ and $j$ must go through the unique edge $e_k=\{u_k,v_k\}$ with weight $p(i,j)$; then, at time $p(i,j)-$ we are left with two connected components, each containing a pair of points ($u_k$ and $i$ on the one hand, and $v_k$ and $j$ on the other) that should each be connected by a path. Proceeding recursively, the process eventually terminates and yields precisely the collection of nodes which are on the path between $i$ and $j$, and the distance $d_n(i,j)$ is then simply the cardinality of that set (minus one). 

This approach is amenable to an extension to the continuous setting, that we expose here informally. For $x,y\in \R_+$, let $x\sim_\lambda y$ if there is no point of $Z^\lambda$ in the closed interval between $x$ and $y$. Let $I^\lambda(x):=\{y\in \R_+: x\sim_\lambda y\}$. Take now $x<y$ for convenience. Let $\lambda(x,y)=\inf\{\lambda \in \R: x\sim_\lambda y\}$. It turns out that $Z^{\lambda(x,y)}$ almost surely contains a single point in $[x,y]$, that we denote by $\kappa(x,y)$. Then, just before $x$ and $y$ get connected, we have two distinct intervals $I^{\lambda(x,y)}(x)$, and $I^{\lambda(x,y)}(y)$, which are separated by the point $\kappa(x,y)$. The discrete setting suggests that one should choose a uniformly random point $\eta(x,y)$ in $I^{\lambda(x,y)}$ (this is where the uniforms in $\bU$ are used). Then, the two points $\eta(x,y)$ and $\kappa(x,y)$ should be the continuous analog of the extremities of the maximum weight edge on the path between $x$ and $y$. Proceeding recursively by looking for the path between $x$ and $\eta(x,y)$ in $I^\lambda(x,y)(x)$ on the left, and the path between $\kappa(x,y)$ and $y$ in $I^{\lambda(x,y)}(y)$ on the right should yield a random subset of $\R_+$ containing all the points used to go from $x$ to $y$, that should resemble some kind of random Cantor set, and the distance between $x$ and $y$ should be some Hausdorff measure of that set. Our main objective is now to verify that this intuition can be turned into formal definitions,
but also that the objects constructed are indeed the ones we are looking for.


\subsection{Organization of the paper} 
\label{sec:plan_of_the_paper}

The paper is organized as follows. In Section~\ref{sec:recursive_convex_minorants}, we discuss recursive convex minorants, the associated trees and their properties. In particular, it is there that we define the convex minorant trees $\CMT(\exc,\bU)$ and $\CMT(X,\bU)$. In Section~\ref{sec:a_new_point_of_view_on_the_continuum_random_tree}, we prove that the tree  $\CMT(\exc,\bU)$ is a Brownian CRT, and we exhibit the coupling mentioned above between the representations of the fragmentation dual to the additive coalescent by Adous--Pitman \cite{AlPi1998a} on the one hand, and by Bertoin \cite{Bertoin2000a} on the other. In Section~\ref{sec:compactness}, we prove that $\CMT(X,\bU)$ is almost surely compact. In Section~\ref{sec:mass_hausdorff}, we construct the mass measure and use it to lower bound the Hausdorff dimension. Finally, Section~\ref{sec:coupling} is devoted to proving that the Brownian parabolic tree $\CMT(X,\bU)$ is distributed like scaling limit of the minimum spanning tree. 


{\small
\setlength{\cftbeforesecskip}{3pt}
\tableofcontents
}

\section{Notation}
\label{sec:notation}

Let $\bW$ be the Wiener measure on $\cC(\R_+,\R)$, the set of continuous functions $f:\R_+\to \R$; this is the law of standard Brownian motion $(W_t)_{t\ge 0}$ starting at $0$. For a continuous process $\omega = (\omega_t)_{t\ge 0}$, we let $\underline \omega$ and $\overline \omega$ denote respectively the running infimum and supremum processes: $\underline \omega_t :=\inf\{\omega_s : 0\le s\le t\}$ and $\overline \omega_t = \sup\{\omega_t: 0\le s \le t\}$.


Let $\N=\{1,2,\dots\}$ and $\N_0=\N=\cup\{0\}$. Let $\cU=\bigcup_{n\ge 0} \N^n$ be the set of finite words on $\N$. The empty word, denoted by $\varnothing$, is the only element of $\N^0$.
We see the elements of $\cU$ as words on $\N$. For $u\in \N^n$ and $i\in \N$, we let $ui$ denote the element of $\N^{n+1}$ obtained by appending $i$ after $u$, so if $u=(u_1, u_2, \dots, u_k)$, $ui=(u_1,\dots, u_n, i)$. We see $\cU$ as a tree rooted at $\varnothing$, where the natural genealogical order denoted by $\preceq$ is such that we have $u\preceq v$ if $u$ is a prefix of $v$, potentially $u=v$. 
Similarly, we let $\cU_2= \bigcup_{n\ge 0} \{0,1\}^n$.

 

\section{Recursive convex minorants and their associated trees} 
\label{sec:recursive_convex_minorants}

\subsection{Convex minorants of continuous functions}
\label{sec:convex_minorants_continuous}

Let $D\subseteq \R$ be an interval containing $0$ that will in general be $[0,1]$ or $\R_+$ in the sequel. Let $\cC(D,\R)$ be the set of continuous functions on $D$ equipped with the uniform distance. 
For $\omega\in \cC(D,\R)$ such that $\omega(0)=0$ and $x\in D$, the (greatest) convex minorant of $\omega$ on $[0,x]$ is the maximum convex function $c_x(\cdot, \omega)$ defined on $[0,x]$ such that $c_x(t,\omega)\le \omega(t)$ for all $t\in [0,x]$. We let $\cV_x(\omega)=\{t\in [0,x]: c_x(t,\omega)=\omega(t)\}$, and call the elements of $\cV_x(\omega)\setminus\{x\}$ the vertices of the convex minorant of $\omega$ on $[0,x]$. Observe that $0\in \cV_x(\omega)$. We shall see shortly why the extremity $x$ ought to be treated differently. 

Up to now, the literature has mainly focused on properties of the convex minorant of a function on a fixed domain (for a fixed $x$). Of prime importance to us, is instead the structure of the different convex minorants $c_x(\cdot, \omega)$ of a fixed function $\omega$ as $x\in D$ varies. We start with the following straightforward (deterministic) geometric observation: 

\begin{lem}\label{lem:convex_minorants_intersection}
Let $\omega\in \cC(D,\R)$ be such that $\omega(0)=0$ and let $x,y\in D$ with $0\le y<x$. For any $t\in \cV_x(\omega) \cap [0,y]$, we have $\cV_y(\omega) \cap [0,t] = \cV_x(\omega) \cap [0,t]$.
\end{lem}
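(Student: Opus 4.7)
The plan is to prove a slightly stronger intermediate statement from which the lemma follows at once: if $t \in \cV_x(\omega)$ with $t \leq x$, then the restriction $c_x(\cdot, \omega)|_{[0,t]}$ coincides with the greatest convex minorant $c_t(\cdot, \omega)$ on $[0,t]$. Once this is established, one observes that for any $y$ with $t \leq y \leq x$, the inequality $c_x \leq c_y \leq \omega$ on $[0,y]$ (which holds because $c_x|_{[0,y]}$ is one of the convex minorants of $\omega$ on $[0,y]$) forces $c_y(t) = \omega(t)$, so that $t \in \cV_y(\omega)$ as well; applying the intermediate claim both at $x$ and at $y$ yields $c_x|_{[0,t]} = c_t = c_y|_{[0,t]}$, and the claimed equality $\cV_x(\omega)\cap [0,t] = \cV_y(\omega)\cap [0,t]$ then follows from the definition of $\cV$.

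To prove the intermediate claim, the inequality $c_x|_{[0,t]} \leq c_t$ is immediate from maximality: the restriction of the convex function $c_x$ to $[0,t]$ is itself convex, sits below $\omega$, and $c_t$ is the greatest such function. For the reverse inequality, I would glue: define $\tilde c: [0,x] \to \R$ by $\tilde c = c_t$ on $[0,t]$ and $\tilde c = c_x$ on $[t,x]$. Since $c_x(t) = \omega(t) = c_t(t)$, the two pieces agree at $t$, and $\tilde c \leq \omega$ throughout $[0,x]$. The only nontrivial point is convexity at $t$, which amounts to showing that the left slope of $c_t$ at $t$ does not exceed the right slope of $c_x$ at $t$. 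This follows from the already-established inequality $c_x|_{[0,t]} \leq c_t$ together with the fact that both agree at $t$: for $s \uparrow t$,
\[\frac{c_t(t) - c_t(s)}{t-s} \;\leq\; \frac{c_x(t) - c_x(s)}{t-s}\,,\]
whose limits give left slope of $c_t$ at $t$ $\leq$ left slope of $c_x$ at $t$, which in turn is at most the right slope of $c_x$ at $t$ by convexity of $c_x$. Hence $\tilde c$ is a convex minorant of $\omega$ on $[0,x]$, and maximality of $c_x$ yields $\tilde c \leq c_x$, i.e.\ $c_t \leq c_x$ on $[0,t]$, completing the proof of the intermediate claim.

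The main obstacle here is the slope comparison at the junction point $t$: one has to be a little careful because $c_x$ and $c_t$ are only guaranteed to be convex (hence have one-sided derivatives), and the argument genuinely relies on the hypothesis that $t$ is a contact point -- without it, the gluing could fail to be convex. Everything else is bookkeeping that follows cleanly once the intermediate claim is in hand.
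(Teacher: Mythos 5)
Your proof is correct. The paper states this lemma without proof (it is offered as a ``straightforward geometric observation''), and your argument --- establish $c_x|_{[0,t]}=c_t$ whenever $t$ is a contact point of $c_x$ via the gluing/maximality argument with the slope comparison at the junction, then deduce $t\in\cV_y(\omega)$ and apply the intermediate claim at both $x$ and $y$ --- is exactly the natural way to fill it in. The only point worth making explicit is the equality $c_t(t)=\omega(t)$ used when gluing: it needs no separate endpoint property of convex minorants, since it follows at once from the inequalities $c_x|_{[0,t]}\le c_t\le\omega$ that you have already established together with $c_x(t)=\omega(t)$.
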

In other words, traversing them from the left to the right, the convex minorants $c_x(\cdot, \omega)$ and $c_{y}(\cdot, \omega)$ on $[0,x]$ and $[0,y]$ coincide on a non-empty closed interval, and then split for good. This induces a natural branching structure for $\{\cV_x(\omega), x\in D\}$ that is depicted in Figure~\ref{fig:CM_branching} that is central to the paper. This also justifies that for $t\in \cV_x(\omega)$, the slope of the convex minorant $c_x(\cdot, \omega)$ to the left of $t$, defined by  
\[\slo(t,\omega)=\sup\bigg\{\frac{c_x(t) - c_x(t-s)}{s}: t-s\in \cV_x(\omega) \bigg\}\,,\]
is well-defined intrinsically since for any $x'$ such that $t\in \cV_{x'}(\omega)$ would yield the same value. 
If $t\in \cV_x(\omega)$ for some $x$, let 
\[\zi(t,\omega)=\inf\{s>t: \omega(s) \le \omega(t) +\slo(t,\omega) (s-t)\}\,.\]
We call $\zi(t,\omega)$ the \emph{intercept associated to $t$}; this is defined independently of the choice of $x$ for which $t\in \cV_x(\omega)$. (The notation $\zi(t,\omega)$ comes from ``right''.) The following is clear by construction: 
\begin{lem}\label{lem:intercept}Suppose that $t\in \cV_x(\omega)$ for some $x\in D$. 
\begin{compactitem}[i)]
    \item If $y\in [t,\zi(t,\omega)]$, then $t\in \cV_y(\omega)$.
    \item If $y<t$ or $y>\zi(t,\omega)$ then $t\not\in \cV_y(\omega)$.
\end{compactitem}
\end{lem}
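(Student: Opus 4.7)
\emph{Proof plan.} My plan is to prove the two inclusions $[t,\zi(t,\omega)] \subseteq \{y : t \in \cV_y(\omega)\}$ and $\{y : t \in \cV_y(\omega)\} \subseteq [t,\zi(t,\omega)]$, relying throughout on the identification supplied by Lemma~\ref{lem:convex_minorants_intersection}: for every $x$ with $t \in \cV_x(\omega)$, the left-slope of $c_x(\cdot,\omega)$ at $t$ equals the intrinsic quantity $\slo(t,\omega)$.

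For part~(i), I would fix $y \in (t, \zi(t,\omega)]$ (the case $y=t$ is automatic, since the right endpoint of $[0,t]$ is always a contact point of its own convex minorant) and exhibit an explicit convex competitor to $c_y(\cdot,\omega)$. Define $\tilde c$ on $[0,y]$ to equal $c_t(\cdot,\omega)$ on $[0,t]$ and the affine function $s \mapsto \omega(t) + \slo(t,\omega)(s-t)$ on $[t,y]$. Because the left-slope of $c_t(\cdot,\omega)$ at $t$ is precisely $\slo(t,\omega)$, there is no downward kink at $t$ and $\tilde c$ is convex on $[0,y]$. Because $y \leq \zi(t,\omega)$, the defining property of $\zi(t,\omega)$ gives $\omega(s) \geq \omega(t) + \slo(t,\omega)(s-t)$ on $[t,y]$, so $\tilde c \leq \omega$ there, and hence on all of $[0,y]$. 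Maximality of $c_y(\cdot,\omega)$ then forces $c_y(t,\omega) \geq \tilde c(t) = \omega(t)$, and with the reverse inequality automatic, this yields $t \in \cV_y(\omega)$.

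For part~(ii), the case $y < t$ is immediate from $\cV_y(\omega) \subseteq [0,y]$, so the real content lies in the case $y > \zi(t,\omega)$, which I would handle by contradiction. Suppose $t \in \cV_y(\omega)$. Applying Lemma~\ref{lem:convex_minorants_intersection} to the pair $(x,y)$ (taking the larger in the role of $x$ in that lemma), the vertex sets of $c_x(\cdot,\omega)$ and $c_y(\cdot,\omega)$ on $[0,t]$ coincide, so the left-slope of $c_y(\cdot,\omega)$ at $t$ is $\slo(t,\omega)$. Convexity of $c_y(\cdot,\omega)$ then gives the left-tangent inequality $c_y(s,\omega) \geq \omega(t) + \slo(t,\omega)(s-t)$ for every $s \in [t,y]$, which, combined with $c_y \leq \omega$, forces $\omega(s) \geq \omega(t) + \slo(t,\omega)(s-t)$ throughout $[t,y]$. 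The main obstacle is then to turn this weak inequality into a genuine contradiction with $y > \zi(t,\omega)$: the infimum defining $\zi(t,\omega)$ only provides points $s \in (\zi(t,\omega), y]$ where $\omega(s) \leq \omega(t) + \slo(t,\omega)(s-t)$, and I need a strict undershoot at at least one such point. For the Brownian-like paths that drive the paper this holds almost surely by standard oscillation properties of Brownian motion just past any tangent line, and this is presumably what underlies the ``clear by construction'' remark.
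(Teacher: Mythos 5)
Your part \emph{i)} (the competitor $\tilde c$ glued from $c_t(\cdot,\omega)$ and the left-tangent line, using that the left slope of $c_t$ at $t$ is the intrinsic $\slo(t,\omega)$) and the trivial half of \emph{ii)} are correct and complete; since the paper offers no proof at all for this lemma (it is declared clear by construction), there is no argument of the paper to compare against, and your deterministic verification of \emph{i)} is sound, including the boundary case $y=\zi(t,\omega)$, where continuity gives $\omega(\zi(t,\omega))\ge \omega(t)+\slo(t,\omega)(\zi(t,\omega)-t)$.

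The case $y>\zi(t,\omega)$ of \emph{ii)} is, however, left genuinely open in your write-up, and the appeal to ``oscillation of Brownian motion just past any tangent line'' does not close it: that is a fixed-line, fixed-time statement, whereas the lemma is invoked simultaneously for uncountably many pairs $(t,y)$. Your diagnosis that no purely deterministic argument can finish is accurate — with the paper's weak inequality in the definition of $\zi$, the statement fails for degenerate paths (e.g.\ $\omega\equiv 0$, $t=1$, $y=2$ gives $\zi(t,\omega)=t$ yet $t\in\cV_y(\omega)$) — but the completion should be made with the paper's own regularity lemma rather than left as a heuristic. Concretely: suppose $t\in\cV_y(\omega)$ and $y>\zi(t,\omega)$, and write $\ell(s)=\omega(t)+\slo(t,\omega)(s-t)$. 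Since the infimum defining $\zi(t,\omega)$ is $<y$, there is $s_0\in(t,y)$ with $\omega(s_0)\le \ell(s_0)$; your tangent bound gives $c_y(s_0)\ge \ell(s_0)$, hence $\omega(s_0)=\ell(s_0)=c_y(s_0)$, so $s_0\in\cV_y(\omega)$, and since $c_y\ge\ell$ on $[t,s_0]$ while convexity puts $c_y$ below the chord $\ell$ joining $(t,\omega(t))$ and $(s_0,\omega(s_0))$, we get $c_y\equiv\ell$ on $[t,s_0]$. Thus $t$ and $s_0$ are two distinct points of $\cV_y(\omega)\setminus\{0,y\}$ with the same left slope $\slo(t,\omega)=\slo(s_0,\omega)$ (the case $t=0$ being vacuous, as $\zi(0,\omega)=\infty$), which is excluded simultaneously for every $y$ on the almost sure event of Lemma~\ref{lem:no_exception_convex}~\emph{iv)} (and its excursion analogue), i.e.\ exactly the event on which the paper's construction is carried out. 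Equivalently, part \emph{ii)} is fully deterministic if one reads the infimum defining $\zi$ with a strict inequality; as stated, it should be understood as holding on that almost sure event, which is the regularity your argument was missing.
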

Observe that if $\sL(\omega)$ denotes the set of local minima of $\omega$, then $\slo(t,\omega)$ and $\zi(t,\omega)$ are well-defined for every $t\in \sL(\omega)\setminus \{0\}$. 

\begin{figure}[tb]
  \centering
  \includegraphics[width=.7\textwidth]{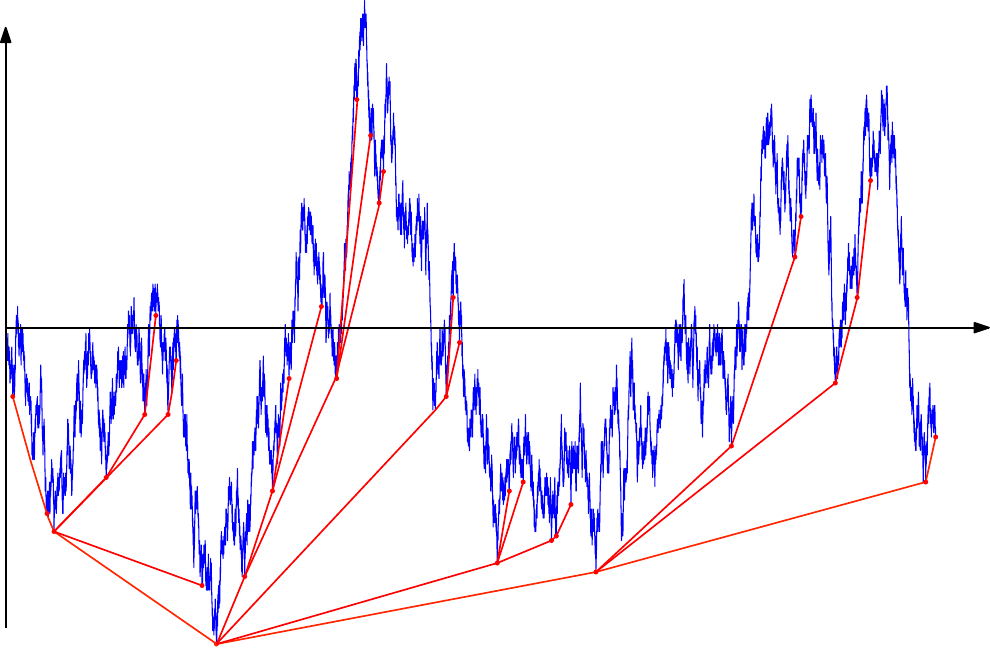}
  \begin{minipage}{.8\linewidth}
  \caption{The tree-like structure of convex minorants: in red, portions of the convex minorants of a fixed given function on finitely many intervals $[0,x]$, for $x\in \R_+$. }
  \label{fig:CM_branching}
  \end{minipage}
\end{figure}

\subsection{Convex minorants of Brownian paths} 
\label{sec:convex_minorant_of_brownian_paths}

We are interested in convex minorants of various Brownian-like paths such as Brownian motion or the Brownian excursion, the latter being more essential because of classical path decompositions. Such convex minorants have been studied for instance by \citet{Groeneboom1983a} and \citet{Pitman1982a}; in the following, we will mostly rely on the work of \citet{PiRo2011a} that provides means to do explicit calculations; more information about related studies and references can be found there. We therefore now focus on these cases.

Since we should focus on the structure of $\cV_x(\omega)$ as $x$ varies, the following lemma is crucial. 

\begin{lem}[No exceptional point]\label{lem:no_exception_convex}There exists a Borel set $\Omega^\star$ of $\cC(\R_+,\R)$ with $\bW(\Omega^\star)=1$, such that if $\omega\in \Omega^\star$, then for every $x\in \R_+$, it holds that: 
\begin{compactenum}[i)]
  \item $\cV_x(\omega)$ is countable; 
  \item 
  $\cV_x(\omega)$ has no accumulation point in $(0,x)$; 
  \item the elements of $\cV_x(\omega)\setminus \{0,x\}$ are all local minima;
  \item the slopes $\slo(t,\omega)$ at the points $t\in \cV_x(\omega)\setminus \{0,x\}$ are all distinct.
\end{compactenum}
\end{lem}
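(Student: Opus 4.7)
The plan is to handle the quantifier ``for every $x\in\R_+$'' by first establishing the four properties almost surely for each \emph{rational} endpoint $y>0$, and then propagating to arbitrary $x$ using the tree-like inclusion $\cV_x(\omega)\cap[0,y]\subseteq\cV_y(\omega)$ (valid for $y<x$), which is a direct consequence of Lemma~\ref{lem:convex_minorants_intersection} applied with $t^\star=\max(\cV_x(\omega)\cap[0,y])$. For fixed rational $y$, I would rely on the classical description of the convex minorant of Brownian motion on $[0,y]$ via an explicit Poisson point process of slopes and lengths as in \citet{PiRo2011a} (see also \citet{Groeneboom1983a,Pitman1982a}). From this representation one reads off that, almost surely, $c_y(\cdot,\omega)$ is strictly convex and piecewise linear, its countably many slopes are all distinct, its vertices accumulate only at $\{0,y\}$, and every interior touching point is a local minimum of $\omega$. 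Intersecting these full-measure events over rational $y$ yields $\Omega^\star$.

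The deduction of (i)--(iv) on $\Omega^\star$ for arbitrary $x$ is then bookkeeping. For (i), decompose $\cV_x(\omega)\setminus\{x\}=\bigcup_n \cV_x(\omega)\cap[0,y_n]$ with rational $y_n\uparrow x$; each piece sits inside the countable set $\cV_{y_n}(\omega)$. For (ii), an accumulation point $t_0\in(0,x)$ would, after picking rational $y\in(t_0,x)$, induce an interior accumulation in $\cV_y(\omega)$, a contradiction. For (iii), given $t\in\cV_x(\omega)\setminus\{0,x\}$, Lemma~\ref{lem:intercept} places $t$ in $\cV_y(\omega)$ for any rational $y\in(t,\zi(t,\omega))$ --- with $\zi(t,\omega)>t$ ensured by strict convexity at the rational scale --- which transfers the local-minimum property. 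For (iv), equality of slopes at two interior vertices of $\cV_x$ would force $c_x$ to be linear between them, producing a fake vertex which, after enlarging the endpoint to a rational $y$ containing both in $\cV_y$, contradicts strict convexity of $c_y$.

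The one genuinely delicate input, used inside the fixed-$y$ step, is the claim that interior vertices of $c_y$ coincide with local minima of $\omega$. The naive cone inequality $\omega(u)\ge \omega(t)+\sigma(u-t)$ implied by a supporting slope $\sigma$ at a vertex $t$ does \emph{not} by itself force $t$ to be a local minimum when $\sigma\neq 0$. I expect to circumvent this by writing $\tilde\omega(u)=\omega(u)-c_y(u)$ between two consecutive vertices, which almost surely behaves like a Brownian excursion and in particular enjoys $\sqrt{u-t}$-order growth near its endpoints (of Bessel-$3$ type). Since the linear correction $\sigma(u-t)$ coming from a nonzero supporting slope is of strictly smaller order than $\sqrt{u-t}$, one recovers $\omega(u)>\omega(t)$ for $u$ near $t$ on both sides, giving the local minimum. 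Once this BM-specific fact is secured, the remainder of the proof is purely topological and relies solely on the tree-like inclusion from Lemma~\ref{lem:convex_minorants_intersection}.
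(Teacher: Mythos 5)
Your proposal is correct and follows essentially the same route as the paper: both take the fixed-endpoint description of the convex minorant of Brownian motion from Groeneboom/Pitman--Ross and use the nesting property of Lemma~\ref{lem:convex_minorants_intersection} to transfer a hypothetical exceptional endpoint $x$ back to well-behaved endpoints --- you to a rational $y<x$ from a countable full-measure skeleton, the paper to a positive-Lebesgue-measure interval of endpoints, contradicting the ``a.e.\ endpoint is good'' statement obtained from the same references. Your additional excursion/Bessel-3 growth argument showing that interior vertices are genuine local minima despite a nonzero supporting slope is a correct filling-in of a point the paper attributes to the cited literature, and the detour through $\zi(t,\omega)$ in your step (iii) is harmless but unnecessary, since the inclusion $\cV_x\cap[0,y]\subseteq\cV_y$ already puts $t$ in $\cV_y\setminus\{0,y\}$ for any rational $y\in(t,x)$.
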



\begin{rem}Note that a version of Lemma~\ref{lem:no_exception_convex} holds for a Brownian excursion $\exc$ on $[0,1]$ instead of a Brownian motion: in this case, the claims in \emph{i)--iv)} also hold, even with $[0,x)$ instead of $(0,x)$ in \emph{ii)}. 
\end{rem}

Let $(W_t)_{t\ge 0}$ be a Brownian motion on $\R_+$. The results of \cite{PiRo2011a,Groeneboom1983a} imply that for $W$, the set of exceptional points $x$, for which one of the properties in \emph{i)}--\emph{iii)} might fail has Lebesgue measure zero. We verify that with probability one, there is no exceptional point by showing that if there were an exceptional point, then a.s.\ the set of such points would be of positive Lebesgue measure. In the following, we drop the dependence in $W$. 

\begin{proof}[Proof of Lemma~\ref{lem:no_exception_convex}]
\emph{i)} Suppose that, with positive probability, there is some $x$ is such that $\cV_x$ is uncountable. Since there are countably many intervals $[a,b]$ with rational endpoints $0\le a<b<x$, one of them must be such that $\cV_x\cap [a,b]$ is uncountable. By Lemma~\ref{lem:convex_minorants_intersection}, for any point $y\in [b,x]$ we have $\cV_x\cap [a,b] \subseteq \cV_y$, so that the set of exceptional points would have positive Lebesgue measure. 

\emph{ii)} Suppose that, with positive probability, there exists some $x\in \R_+$ such that $\cV_x$ has a an accumulation point $y$ in $(0,x)$. By construction, for any $w\in (y,x]$, we have $\cV_x \cap [0,y] \subseteq \cV_{w} \cap [0,y]$, so that the set of points $w$ for which \emph{ii)} fails has positive Lebesgue measure, a contradition.

\emph{iii)} Finally, suppose that there is some $x\in \R_+$ and $t\in \cV_x\setminus \{0,x\}$ which is not a local minimum. Then, by Lemma~\ref{lem:convex_minorants_intersection}, for any $y\in [t,x]$ we have $t\in \cV_y$, and the proof is complete since $t<x$.

\emph{iv)} If the slopes at $t$ and $t'$ such that $t'<t<x$ are identical, then the same holds for the convex minorants $c_y$ on the intervals $[0,y]$ for every $y\in [t,x]$, so that the set of exceptional points has positive Lebesgue measure.
\end{proof}

The typical situation is that $\cV_x(W)$ has accumulation points at both $0$ and $x$, but it may also happen that $x$ is not an accumulation point: this happens for instance when $x$ is a local minimum or $x=\zi(t,W)$ for some local minimum $t$. 
Let $(t_i)_{i\in \Z}=(t_i(x))_{i\in \Z}$ denote the vertices in $\cV_x(W)\setminus \{x\}$, indexed in such a way that $t_i\le t_{i+1}$ and $t_0=\arg\min\{W_s:s \in [0,x]\}$. In the case where $x$ is not an accumulation point of $\cV_x(W)$, it is understood that the sequence is only defined for $i\le k$ for some $k \ge 0$. The intervals $[t_i,t_{i+1}]$ where the slope of $c_x(W)$ is constant are called the \emph{faces} of the convex minorant. Let $\gamma_i$ denote the slope of the convex minorant on $[t_i,t_{i+1}]$, and $z_i$ be the intercept associated to $t_i$:
\[\gamma_i=\slo(t_{i+1},W)=\frac{W(t_{i+1})-W(t_i)}{t_{i+1}-t_i}
\quad \text{and} \quad
z_i=\zi(t_i,W)=\inf\{s>t_i:W_s\le W_{t_i}+\gamma_{i-1} (s-t_i)\}\,.
\]
It is possible that $z_i=\infty$ for some $i\le 0$, but a.s.\ $z_i<\infty$ for $i\ge 1$. For every $i\ge 1$, such that $z_i<\infty$, and for $s\ge 0$ let 
\begin{equation}\label{eq:decomp_in_excursions}
g_i(s):=(W_{t_i+s}-W_{t_i}-\gamma_i s) \I{t_i+s\le t_{i+1}}
\quad \text{and}\quad 
h_i(s)=(W_{t_i+s}-W_{t_i}-\gamma_{i-1} s ) \I{t_i+s\le z_i}
\,.
\end{equation}
Let $\bn_\sigma$ be the law of a Brownian excursion of duration $\sigma>0$. The following decomposition lemma is straighforward from Theorem~2.2 of \cite{Groeneboom1983a} (see also Theorem~2 and Corollary~2 of \cite{PiRo2011a}):
\begin{lem}\label{lem:W_decomp}For any $i\in \Z$ such that $z_i<\infty$, conditionally on $(t_j,\gamma_j)_{j<i}$, and $(t_i,z_i)$, the collection of functions $g_j$, $j<i$, and $h_i$ form an independent family with law given respectively by $\bn_{t_{j+1}-t_j}$, $j<i$, and $\bn_{z_i-t_i}$.
\end{lem}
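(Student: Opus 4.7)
The plan is to reduce the statement to two classical ingredients: the excursion decomposition of a Brownian path above its convex minorant on a fixed interval, and the strong Markov property at the vertex $t_i$. Since $i$ is fixed with $z_i<\infty$, I first pick $x>z_i$ large enough so that $t_i\in\cV_x(W)\setminus\{0,x\}$ together with all $t_j$, $j<i$; by Lemma~\ref{lem:intercept}, the vertex/slope data $(t_j,\gamma_j)_{j<i}$ to the left of $t_i$ is intrinsic and does not depend on such a choice of $x$.

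For the excursions $g_j$, $j<i$: conditionally on the completed face structure $(t_j,\gamma_j)_{j\le i}$ of $c_x(\cdot,W)$ on $[0,t_i]$, Theorem~2.2 of \cite{Groeneboom1983a} (see also Theorem~2 and Corollary~2 of \cite{PiRo2011a}) asserts that the excursions of $W-c_x(\cdot,W)$ over the completed faces $[t_j,t_{j+1}]$, $j<i$, form a mutually independent family of Brownian excursions with respective durations $t_{j+1}-t_j$. Via the definition \eqref{eq:decomp_in_excursions}, this yields the claimed joint law of $(g_j)_{j<i}$.

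For the incomplete piece $h_i$, I apply the strong Markov property at the stopping time $t_i$: the post-$t_i$ process $B_s:=W_{t_i+s}-W_{t_i}$ is a standard Brownian motion independent of $(W_u)_{u\le t_i}$. The fact that $t_i\in\cV_x(W)$ with left-slope $\gamma_{i-1}$ and right-slope $\gamma_i>\gamma_{i-1}$ forces the path $h_i(s)=B_s-\gamma_{i-1}s$ to be strictly positive on $(0,z_i-t_i)$, and $z_i-t_i$ is by definition the first return of $h_i$ to zero. Invoking the classical Williams-type decomposition of a drifted Brownian motion at its first passage through a linear level (equivalently, the fact that a drifted Brownian bridge from $0$ to $0$ conditioned to be positive on the open interval has the same law as the undrifted Brownian excursion of the same length), the conditional law of $h_i$ given $\sigma:=z_i-t_i$ is exactly~$\bn_\sigma$.

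Combining both steps, the family $(g_j)_{j<i}$ is measurable with respect to $(W_u)_{u\le t_i}$ while $h_i$ is a functional of $(B_s)_{s\ge 0}$; the strong Markov property at $t_i$ therefore upgrades the two marginal statements into the asserted joint conditional independence. The main subtlety is formalizing the conditioning on the vertex/slope structure $(t_j,\gamma_j)_{j<i}$, which is not an adapted filtration but rather a point process indexed by the decreasing slopes of the successive faces; this is precisely what is encoded in the Groeneboom--Pitman--Ross decomposition, which I would invoke as a black box rather than reprove.
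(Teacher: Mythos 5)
There is a genuine gap, and it is concentrated in your treatment of $h_i$ and of the joint independence. The time $t_i$ is \emph{not} a stopping time: it is a vertex of the convex minorant of $W$ on $[0,x]$, so whether $t_i$ is a vertex with left slope $\gamma_{i-1}$ is determined only by the path on all of $[0,x]$ (and knowing $z_i$ even uses the path beyond $x$). Consequently the assertion ``the post-$t_i$ process $B_s=W_{t_i+s}-W_{t_i}$ is a standard Brownian motion independent of $(W_u)_{u\le t_i}$'' is unjustified and, under the conditioning relevant here, false: conditioning on $(t_j,\gamma_j)_{j<i}$ and $(t_i,z_i)$ forces the post-$t_i$ path to stay above the line of slope $\gamma_{i-1}$ through $(t_i,W_{t_i})$ up to time $z_i$, so it is certainly not an unconditioned Brownian motion. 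Both the identification of the conditional law of $h_i$ and the final ``upgrade'' to joint conditional independence rest on this invalid strong Markov step, so neither is established by your argument. The further difficulty you gesture at (conditioning on null events such as ``$t_i$ is a vertex with slope $\gamma_{i-1}$'') is exactly why the correct route is to read the \emph{entire} statement --- the laws of the $g_j$'s, the law of $h_i$, and their joint conditional independence --- out of the Groeneboom/Pitman--Ross description of the convex minorant (faces as a point process parameterized by slope, with conditionally independent Brownian excursions above them), which is precisely what the paper does: its proof is the citation of Theorem~2.2 of Groeneboom and Theorem~2/Corollary~2 of Pitman--Ross, with no separate Markov argument at the vertex. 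Invoking that black box only for the complete faces and then improvising at $t_i$ is where your proof breaks.

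A secondary but real error: your opening step ``pick $x>z_i$ large enough so that $t_i\in\cV_x(W)\setminus\{0,x\}$'' contradicts Lemma~\ref{lem:intercept}\,ii), which says that $t_i\notin\cV_y(W)$ as soon as $y>\zi(t_i,W)=z_i$. The vertices $(t_j)$, the slopes, and $z_i$ in the lemma are all taken relative to the $x$ fixed at the start of Section~3.2 (and one necessarily has $z_i\ge x$ for the indices of interest); enlarging the window past $z_i$ destroys the vertex $t_i$ and changes the indexing, so this normalization cannot be used to set up the argument.
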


Together with the previous considerations about the decomposition, we are thus let to studying convex minorants of Brownian excursions, which is the subject of the next section.

\subsection{Convex minorants of a Brownian excursion} 
\label{sub:convex_minorant_of_a_brownian_excursion}

\begin{figure}[tb]
  \centering
  \includegraphics[width=.7\textwidth]{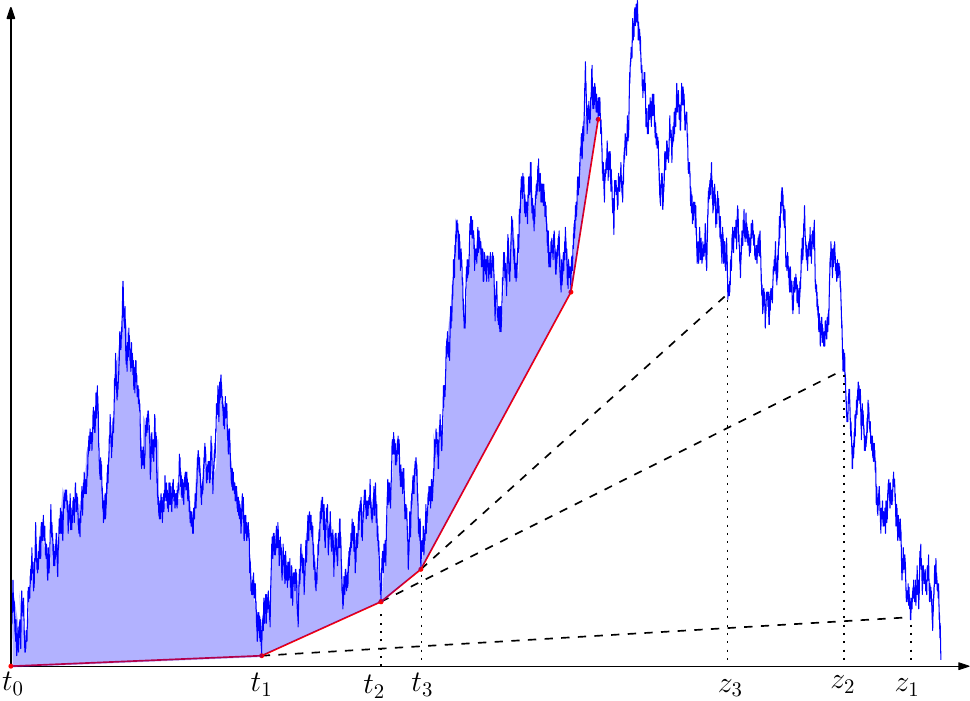}
  \begin{minipage}{.8\linewidth}
  \caption{The greatest convex minorant of a Brownian excursion $\exc$ on some interval $[0,x]$, the sequence of vertices $t_0=0<t_1<\dots < x$ and intercepts $z_1>z_2>\dots > x$. The distribution of these quantities are given in Lemmas~\ref{lem:convex_minorant_law1} and~\ref{lem:convex_minorant_law2} when $x$ is uniform in $[0,1]$.}
  \label{fig:convex_minorant_excursion}
  \end{minipage}
\end{figure}

In this section, we consider a Brownian excursion $\exc$ on $[0,1]$. We use the notation of the previous section with $\omega=\exc$, up to the obvious modifications:
The vertices of the convex minorant of $\exc$ on $[0,x]$ are denoted by $\cV_x=\cV_x(\exc)$, and can be enumerated in increasing order as $(t_i)_{i\ge 0}$ with $t_0=0$, where $t_i=t_i(x)=t_i(x,\exc)$. The slopes $\gamma_i=\gamma_i(x,\exc)$ are defined as before. For each $i\ge 1$, let $z_i=z_i(x)=z_i(x,\exc)=\inf\{s>t_i: \exc_s=\exc_{t_i}+\gamma_{i-1} (s-t_i)\}$. We define $z_0=1$ for convenience. See Figure~\ref{fig:convex_minorant_excursion}. 
 
A simple induction yields the description of the restriction of the excursion $\exc$ to the interval $[0,x]$ as a collection of Brownian excursions above the graph of $c_x(\cdot, \exc)$.

\begin{lem}\label{lem:convex_minorant_law1}Let $\exc$ be a standard Brownian excursion on $[0,1]$, and let $x=V$ be an independent random variable uniform on $[0,1]$. Then consider the convex minorant $c_V(\cdot, \exc)$ of $\exc$ on $[0,V]$, with vertices $(t_i)_{i\ge 0}$. Define the functions $\exc_0$ and $\exc_1$ by
\[\exc_0(s):=(\exc(s)-s\cdot \exc(t_1))\I{s\le t_1} 
\qquad \text{and} \qquad 
\exc_1(s):=(\exc(t_1+s)-\exc(t_1)-s\cdot \exc(t_1))\I{t_1+s\le z_1}\,.\]
Then $(t_1,z_1-t_1, 1-z_1)$ is a Dirichlet$(\tfrac 12, \tfrac 12, \tfrac 12)$ random vector,
and conditionally on $(t_1,z_1)$, $(\exc_0,\exc_1, V)$ are independent, $\exc_0$ and $\exc_1$ are Brownian excursions of durations $t_1$ and $z_1-t_1$, respectively, and $V$ is uniform on $(t_1,z_1)$.
\end{lem}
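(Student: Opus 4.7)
My plan is to prove the lemma via a geometric reformulation of $(t_1, z_1, V)$, the $3$-dimensional Bessel bridge representation of the Brownian excursion, and an explicit density computation.

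\textbf{Step 1 (Intrinsic characterization of $(t_1, z_1, V)$).}
First, I would reformulate $(t_1, z_1)$ as the unique pair $0 < t_1 < z_1 < 1$ for which the line $L(s) = \gamma s$, with common slope $\gamma = \exc(t_1)/t_1 = \exc(z_1)/z_1$, stays below $\exc$ on $[0, z_1]$ and touches the graph only at $\{0, t_1, z_1\}$, with the additional requirement that $V\in (t_1, z_1)$. This equivalence follows from the definitions of $\cV_V(\exc)$, $\slo$ and $\zi$ in Section~\ref{sec:convex_minorants_continuous}, together with Lemmas~\ref{lem:convex_minorants_intersection} and~\ref{lem:intercept}: $t_1$ is the right-most minimizer of $s\mapsto \exc(s)/s$ on $(0, V]$, and the inequality $V < z_1$ is forced by convexity, for otherwise $t_1$ would fail to be a vertex of $c_V(\cdot, \exc)$.

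\textbf{Step 2 (Conditional path decomposition).}
The Brownian excursion $\exc$ of duration $1$ is distributed as a $3$-dimensional Bessel bridge from $0$ to $0$. Conditioning on the two values $(\exc(t_1), \exc(z_1)) = (\gamma t_1, \gamma z_1)$, the three pieces $\exc|_{[0, t_1]}$, $\exc|_{[t_1, z_1]}$, and $\exc|_{[z_1, 1]}$ are conditionally independent by the Markov property. On the event identified in Step~1, the first two pieces are Bessel bridges with the prescribed endpoint values, conditioned to stay above the affine line $L$. A standard reflection / $h$-transform argument then identifies the processes $\exc_0$ and $\exc_1$ (after subtracting $L$) with independent Brownian excursions of durations $t_1$ and $z_1 - t_1$, respectively.

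\textbf{Step 3 (Dirichlet density and uniformity of $V$).}
To pin down the density of $(t_1, z_1, V)$, I would combine the explicit two-point finite-dimensional density of the $3$d Bessel bridge at $(t_1, \gamma t_1)$ and $(z_1, \gamma z_1)$ with the first-passage probabilities, computed via reflection, that each of the three pieces remains above $L$. After integrating out $\gamma > 0$ (a Gaussian-type integral), the joint density of $(t_1, z_1, V)$ reduces to
\[
\frac{\Gamma(3/2)}{\Gamma(1/2)^3}\, t_1^{-1/2}\,(z_1 - t_1)^{-1/2}\,(1 - z_1)^{-1/2}\, dt_1\, dz_1 \cdot \frac{\I{v \in (t_1, z_1)}}{z_1 - t_1}\, dv,
\]
which factors as the Dirichlet$(\tfrac12, \tfrac12, \tfrac12)$ density for $(t_1, z_1 - t_1, 1 - z_1)$ times the uniform density of $V$ on $(t_1, z_1)$. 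The conditional independence of $(\exc_0, \exc_1, V)$ given $(t_1, z_1)$ then follows from the unconditional independence of $V$ from $\exc$, combined with the decomposition of Step~2.

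\textbf{Main obstacle.}
The central technical challenge is the density computation of Step~3, where several Gaussian factors arising from the Bessel bridge density and the reflection probabilities must cancel cleanly to produce the Dirichlet shape. A possible shortcut bypassing this calculation is to invoke the Brownian-motion convex minorant decomposition of \cite{PiRo2011a} (cf.\ Lemma~\ref{lem:W_decomp}) together with an absolute continuity argument relating Brownian motion on $[0, T]$ for an appropriate random time $T$ to the Brownian excursion of duration $1$, from which the excursion statement can be deduced by conditioning.
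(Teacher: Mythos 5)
Your proposal is correct and follows essentially the same route as the paper's proof: a geometric (rotating-line) characterization of $(t_1,z_1)$ with $V\in(t_1,z_1)$, a path decomposition into conditionally independent pieces with the two parts above the line identified as Brownian excursions, and an explicit joint density of (slope, $t_1$, $z_1$, $V$) built from Gaussian factors, integrated over the slope to produce the Dirichlet$(\tfrac12,\tfrac12,\tfrac12)$ law and the conditional uniformity of $V$. The Bessel-bridge/$h$-transform phrasing of the middle step is only a cosmetic difference from the paper's direct appeal to the Markov property and the excursion-type transition densities.
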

\begin{proof}The claimed properties follow from the decomposition of the Brownian excursion $\exc$ using the line linking $(0,0)$ to  $(t_1,\exc(t_1))$ (notice that $0<t_1<1$ a.s.). For $s\ge 0$, consider the straight line $\{(t,st): t\in [0,1]\}$, and increase the value of $s$ from $0$ until the first value $s=\gamma_1$ at which the location of the first intersection $i(s)=\inf\{t\ge 0: \exc(t)=st\}$ is at most $V$: $\gamma_1=\inf\{s\ge 0: i(s)\le V\}$, $t_1=i(\gamma_1)$. Now, by strong Markov property, $(\exc_{1-t}, z_1\leq t \leq 1)$ is a Brownian meander of duration $1-z_1$ conditioned to end at $\exc_{z_1}$; the path $\exc_0$ coincides with the path above the first face, and is a Brownian excursion; the path $\exc_1$ is the path above the same line, between $t_1$ and $z_1$. Let $\varphi_t(x)=e^{-x^2/(2t)}/\sqrt{t2\pi}$. The vector $(\gamma_1,t_1,z_1,V)$ has a distribution which is absolutely continuous with respect to Lebesgue measure on the set $D:=\R_+\times \{(t,z,u): 0<t<v<z<1\}$, with density $f(s,t,z,v)$ given by 
\begin{align*}
f(s,x,z,v)
&= \I{(s,t,z,v)\in D} \cdot 
       \frac{\varphi_t(ts)}t \cdot  
       \frac{\varphi_{z-t}( (z-t) s)}{z-t} \cdot  
       \frac{\varphi_{1-z}(z s) zs}{1-z} \cdot  
t   
\sqrt{2\pi} 
\\
&=\I{(s,t,z,v)\in D}\cdot 
\frac{sze^{- \frac{s^2}{2(1-z)/z}}}{1-z} \times \frac{1} {2\pi\sqrt{t(z-t)(1-z)}} \times \frac 1 {z-t}\,.
\end{align*}  
Integrating for $s\in \R_+$, this yields the claimed distribution.
\end{proof}

\begin{rem}We point out that the distribution of $(t_1,z_1-t_1,1-z_1)$ may also be obtained, without any calculation, using the correspondence with the cut tree and the decomposition of a Brownian continuum random tree into tree pieces that is induced by removing the branch point at the intersection of the geodesics between three random points (see~Section~\ref{sec:dynamics} and \cite{Aldous1994a}). 
\end{rem}

A straightforward induction yields the distribution of the vector of lengths of the faces of the convex minorant $c_V(\cdot, \exc)$ of $\exc$ on $[0,V]$ for an independent uniform point $V$ in $[0,1]$:
\begin{lem}\label{lem:convex_minorant_law2}Let $\exc$ be a standard Brownian excursion on $[0,1]$, and let $x=V$ be an independent random variable uniform on $[0,1]$. Let $(\Delta_{i,1}, \Delta_{i,2}, \Delta_{i,3})_{i\ge 0}$ denote a family of independent Dirichlet$(\tfrac 12, \tfrac 12, \tfrac 12)$ random vectors. Then, for $(t_i)_{i\ge 0}=(t_i(V))_{i\ge 0}$ the sequence of vertices, we have
\[(t_{i}-t_{i-1})_{i\ge 1} \eqdist \bigg(\Delta_{i,1} \cdot \prod_{1\le j<i} \Delta_{j,2}\bigg)_{i\ge 1}\,.\]
\end{lem}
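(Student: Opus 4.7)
The plan is to prove this by induction on the index, iterating Lemma~\ref{lem:convex_minorant_law1}. The base case $i=1$ is immediate: Lemma~\ref{lem:convex_minorant_law1} directly gives $t_1 \eqdist \Delta_{1,1}$ since $t_1$ is the first coordinate of a Dirichlet$(\tfrac12,\tfrac12,\tfrac12)$ vector.

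For the inductive step, the key geometric observation is that the convex minorant of $\exc$ on $[t_1,V]$ is obtained from the convex minorant of $\exc_1$ on $[0,V-t_1]$ by adding the line with slope $\gamma_1$. In particular, the vertices satisfy $t_i = t_1 + \tilde t_{i-1}(V-t_1, \exc_1)$ for $i\ge 2$, where $\tilde t_{j}(\cdot, \exc_1)$ denote the vertices of the convex minorant of $\exc_1$ on the indicated interval. By Brownian scaling, the rescaled excursion $\hat \exc_1(s) := (z_1-t_1)^{-1/2} \exc_1((z_1-t_1)s)$ is a standard Brownian excursion on $[0,1]$, and by Lemma~\ref{lem:convex_minorant_law1}, conditionally on $(t_1,z_1)$, the point $\hat V := (V-t_1)/(z_1-t_1)$ is uniform on $(0,1)$ and independent of $\hat\exc_1$. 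Writing $\hat t_i$ for the vertices of the convex minorant of $\hat\exc_1$ on $[0,\hat V]$, one obtains $t_i - t_1 = (z_1-t_1)\,\hat t_{i-1}$.

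I would then apply the inductive hypothesis to $\hat\exc_1$ and $\hat V$, which yields independent Dirichlet$(\tfrac12,\tfrac12,\tfrac12)$ vectors $(\tilde\Delta_{j,1},\tilde\Delta_{j,2},\tilde\Delta_{j,3})_{j\ge 1}$ (which by construction are jointly independent of $(t_1,z_1)$) such that $\hat t_j - \hat t_{j-1} \eqdist \tilde\Delta_{j,1}\prod_{k<j}\tilde\Delta_{k,2}$ jointly in $j\ge 1$. Setting $(\Delta_{1,1},\Delta_{1,2},\Delta_{1,3}) := (t_1, z_1-t_1, 1-z_1)$ and $(\Delta_{i,1},\Delta_{i,2},\Delta_{i,3}) := (\tilde\Delta_{i-1,1},\tilde\Delta_{i-1,2},\tilde\Delta_{i-1,3})$ for $i\ge 2$, the relation $t_i - t_{i-1} = \Delta_{1,2}(\hat t_{i-1}-\hat t_{i-2}) \eqdist \Delta_{1,2}\cdot \Delta_{i,1}\prod_{1<j<i}\Delta_{j,2}$ gives exactly the claimed joint distribution.

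The only real point requiring a little care is the joint aspect of the statement, namely that the Dirichlet vectors extracted at successive levels of the recursion are mutually independent. This is handled by the conditional independence built into Lemma~\ref{lem:convex_minorant_law1}: conditionally on $(t_1,z_1)$, the triple $(\exc_0,\exc_1,V)$ is independent with $\exc_1$ a Brownian excursion and $V$ uniform, so the Dirichlet vector produced at the next level (from $\hat\exc_1$ and $\hat V$) is independent of $(\Delta_{1,1},\Delta_{1,2},\Delta_{1,3})$. This is the main step; the rest is bookkeeping.
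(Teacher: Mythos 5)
Your argument is correct, and it is exactly the "straightforward induction" from Lemma~\ref{lem:convex_minorant_law1} that the paper invokes (the paper states Lemma~\ref{lem:convex_minorant_law2} without a detailed proof): restrict the minorant past the vertex $t_1$, remove the affine part to get $\exc_1$, rescale by Brownian scaling, and use the conditional independence of $(\exc_1,V)$ given $(t_1,z_1)$ to make the successive Dirichlet vectors independent. The only loose point is phrasing the recursion as "applying the inductive hypothesis" to the full infinite sequence; to be airtight one should induct on the number of increments $k$ (finite-dimensional marginals), which determines the law of the whole sequence — exactly as your bookkeeping implicitly does.
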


\subsection{Recursive convex minorants of a Brownian excursion} 
\label{sub:recursive_convex_minorants}

The results of the previous section point out the recursive structure of convex minorants of a Brownian excursion. Here we will use it to construct the tree $\CMT(\exc,\bU)$. This is the first building block of our construction of $\CMT(X,\bU)$ the scaling limit of the minimum spanning tree, and it already reveals some of the main ingredients. Before proceeding to the details, let us explain roughly the strategy: 
\begin{itemize}
  \item for all $x,y\in [0,1]$, we define the set $\llb x,y\rrb$ which is meant to be the collection of points used to go from $x$ to $y$ (somewhat pre-arcs or pre-branches);
  \item we also show that it is possible to assign a ``measure'' $d(x,y)$ to $\llb x,y\rrb$ that induces a $0$-hyperbolic metric space.
\end{itemize}
We will see that the metric space induced by $d$ on $[0,1]$ is connected if we restrict our attention to points at finite distance from $0$, so that the subset of $[0,1]$ with this property, endowed with $d$ is thus an $\R$-tree (in the sense of Section~2.2 of \cite{AdBrGoMi2013a}). Later on, we will show that the metric completion of $([0,1],d)$ is compact, so that no point is put aside. 

The definition of $\llb x,y\rrb$ will be done in stages: first $\llb 0,x\rrb$ with $x$ restricted to some suitable dense subset of $[0,1]$; then, we extend the definition of $\llb 0, x\rrb$ to all $x\in [0,1]$; finally, $\llb x,y\rrb$ is defined in Section~\ref{sec:the_branching_structure} using a notion of common ancestor of $x$ and $y$.

\begin{rem}\label{rem:association_map}
Let $\bn_\sigma$ denote the law of a standard Brownian excursion of duration $\sigma>0$. We will define $\CMT(\exc,\bU)$ as a proper random variable for $\bn_1$-almost every function $\omega$, and almost all sequences $\bU=(U_1,U_2,\dots)$ of independent random variables, uniform on $[0,1]$. For this, the components of $\bU$ are associated to the local minima of $\exc$. This can be done by defining a canonical bijection between $\N$ and the set $\sL(\exc)$ of local minima of $\exc$. For instance, consider an enumeration $I=(I_j,j\geq 0)$ of the (countable) set of all intervals with rational extremities on $[0,1]$. Since a.s.~each local minimum of $\exc$ is a global minimum on at least one interval of $I$, associate with each local minimum $t \in \sL$ the index $j(t)$ of the first interval of $I$ on which $t$ is a global minimum; after that associate with $t$, the uniform random variable $U_{j(t)}$. In the sequel, $j$ is called the association map of $\exc$. The proofs of convergence in Section~\ref{sec:coupling} will need a different, more complex association, but we believe it is not necessary until then. 
\end{rem}

Let $\cU=\bigcup_{n\ge 0} \N^n$ (see Section~\ref{sec:notation}). For any $x\in \sL=\sL(\exc)$, we define recursively a collection $(t_u,\xi_u,\gamma_u, e_u)$, $u\in \cU$, that a priori depends on $x$. Lemma~\ref{lem:no_exception_convex} ensures that, with probability one, the following definition makes sense for all $x\in \sL$.

We first let $t_\varnothing = 0$, $\xi_\varnothing=x$, $\gamma_\varnothing=0$, and $e_\varnothing = \exc$. Almost surely, there are only finitely many vertices of the convex minorant of $\exc$ on $[0,x]$, and they are all elements of $\sL$ and denoted by $t_0=0<t_1< t_2 < \dots < t_k = x$ for some $k\in \N$. For each $i=0,\dots, k-1$, let $\xi_i=t_i + U_{t_{i+1}} |t_{i+1}-t_i|$, $\gamma_i=\slo(t_{i+1}, \exc)=(\exc(t_{i+1})-\exc(t_i))/|t_{i+1}-t_i|$ and let $e_i$ be defined by, for $s\ge 0$,
\[e_i(s)=(\exc(t_i+s)-\exc(t_i)-s\cdot \gamma_i)\I{t_i+s\le t_{i+1}}\,.\]

More generally, suppose now that we have defined $(t_u,\xi_u, \gamma_u, e_u)$ for some $u \in \N^n$, $n\ge 1$. Let $\theta_0^u=0<\theta_1^u<\theta_2^u<\dots$ be the vertices of the convex minorant of $e_u$ on the interval $[0,\xi_u-t_u]$, and set $t_{ui}=t_u+\theta_i^u$ for all $i\ge 0$; observe that the $t_{ui}=t_u+\theta_i^{u}$, $i\ge 0$, are precisely the elements of $(t_j(\xi_u))_{j\ge 0}$ lying in $[t_u, \xi_u]$. Then let $\varphi^u_i=(e_u(\theta^u_{i+1})-e_u(\theta^u_i))/|\theta^u_{i+1}-\theta^u_i|$ be the slope of the convex minorant of $e_u$ on $[\theta^u_i,\theta^u_{i+1}]$. For each $i\in \N$, we let $m_{ui}=|\theta^u_{i+1}-\theta^u_i|=|t_{u(i+1)}-t_{ui}|$, $\xi_{ui}=t_{ui} + U_{t_{u(i+1)}} m_{ui}$, $\gamma_{ui}=(e(t_{u(i+1)})-e(t_{ui}))/m_{ui}=\gamma_u+\varphi_i^u$ and define the function $e_{ui}:[0,m_{ui}]\to \R_+$ by 
\[e_{ui}(s)=(\exc(t_{ui}+s)-\exc(t_{ui})-s\cdot \gamma_{ui})\I{t_{ui}+s\le t_{u(i+1)}}\,.\]
We then define
\begin{equation}
\label{def:0x}\llb 0,x\rrb:=\{x\} \cup \bigcap_{n\ge 0} \overline {\bigcup_{|u|=n} [t_u,\xi_u]}\,,
\end{equation}
which is then a non-empty closed subset of $[0,x]$. 
For each $n\ge 1$, we also let 
\begin{equation}\label{def:distance} d_n(0,x):=\sqrt{\frac \pi 2} \cdot \sum_{|u|=n} m_u^{1/2} \qquad \text{and} \qquad d(0,x):=\limsup_{n\to\infty} d_n(0,x)\,.\end{equation}
\begin{lem}\label{lem:dist_martingale}For each $x\in \sL$, the sequence $(d_n(0,x))_{n\ge 1}$ is a non-negative martingale. As a consequence, with probability one, the sequences $d_n(0,x)$ converge for all $x\in \sL$ to finite limits $d(0,x)$. 
\end{lem}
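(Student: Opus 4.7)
The strategy is to show that $(d_n(0,x))_{n\ge 1}$ is a non-negative martingale with respect to the filtration $\mathcal F_n:=\sigma((m_u)_{|u|\leq n})$ generated by the face lengths up to level $n$, and then invoke the martingale convergence theorem. By linearity, it suffices to establish, for every $u$ with $|u|=n\ge 1$, the identity
\[
\mathbf E\!\left[\sum_{i\ge 0} m_{ui}^{1/2}\,\Big|\,\mathcal F_n\right] = m_u^{1/2}.
\]
The input is the branching structure obtained by iterating Lemma~\ref{lem:convex_minorant_law1}: conditionally on $\mathcal F_n$, for each $u$ with $|u|=n$, the process $e_u$ is a Brownian excursion of duration $m_u$, and the uniform weight from $\bU$ attached to $u$ produces $\xi_u-t_u=U\cdot m_u$ with $U$ uniform on $[0,1]$ independent of $e_u$; moreover the pairs $(e_u,U)$ are independent across distinct nodes at level $n$. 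Brownian scaling then gives
\[
\sum_{i\ge 0}m_{ui}^{1/2}\;\stackrel{d}{=}\;\sqrt{m_u}\cdot\sum_{i\ge 0}\sqrt{m_i'},
\]
where $(m_i')$ are the face lengths of the convex minorant of a standard Brownian excursion on $[0,V]$ with $V$ an independent uniform on $[0,1]$, so matters reduce to verifying $L:=\mathbf E[\sum_i\sqrt{m_i'}]=1$.

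For this, Lemma~\ref{lem:convex_minorant_law1} describes $(\Delta_1,\Delta_2,\Delta_3):=(t_1,z_1-t_1,1-z_1)$ as $\mathrm{Dirichlet}(\tfrac12,\tfrac12,\tfrac12)$ and says that, conditionally on this vector, the faces past the first are those of the convex minorant of an independent Brownian excursion of duration $\Delta_2$ evaluated at an independent uniform point in $[0,\Delta_2]$. Brownian scaling yields the distributional fixed-point identity $\sum_i\sqrt{m_i'}\stackrel{d}{=}\sqrt{\Delta_1}+\sqrt{\Delta_2}\cdot L'$ with $L'$ an independent copy of $\sum_i\sqrt{m_i'}$, whence $L=\mathbf E[\sqrt{\Delta_1}]+\mathbf E[\sqrt{\Delta_2}]\,L$. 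Each marginal $\Delta_i$ is $\mathrm{Beta}(\tfrac12,1)$ with density $\tfrac12\, x^{-1/2}$ on $(0,1)$, so $\mathbf E[\sqrt{\Delta_i}]=\tfrac12$ and $L=1$; the prefactor $\sqrt{\pi/2}$ in the definition of $d_n$ plays no role here and is merely a normalisation.

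For the a.s.\ convergence we need $d_1(0,x)<\infty$ a.s.\ for each $x\in \sL$. By Fubini combined with $L=1$, $\mathbf E[d_1(0,V)]<\infty$ when $V$ is uniform on $[0,1]$, so almost surely $d_1(0,V)<\infty$ for Lebesgue-a.e.\ $V\in [0,1]$. Given $x\in \sL$, any such $V\in (x,\zi(x,\exc))$ (an interval of positive Lebesgue measure since $x$ is a local minimum) satisfies $x\in \cV_V$; Lemma~\ref{lem:convex_minorants_intersection} then gives $\cV_V\cap[0,x]=\cV_x\cap[0,x]$, so the faces of the convex minorant on $[0,x]$ coincide with those of the one on $[0,V]$ lying in $[0,x]$ and therefore $d_1(0,x)\le d_1(0,V)<\infty$. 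Since $\sL$ is a.s.\ countable, this finiteness holds simultaneously for every $x\in \sL$, and martingale convergence yields the a.s.\ convergence to a finite limit $d(0,x)$. The main technical care lies in the branching identification of the conditional distribution of $(e_u,U)$ given the face lengths $\mathcal F_n$; both the fixed-point computation for $L$ and the finiteness step are then routine.
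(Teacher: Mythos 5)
Your overall strategy is the same as the paper's: exhibit $(d_n(0,x))_{n\ge1}$ as a non-negative martingale for the filtration generated by the data of the recursive convex minorant decomposition up to level $n$, using that conditionally the $e_u$ are independent excursions of durations $m_u$ with fresh uniforms, reduce by Brownian scaling to showing that the expected sum of square roots of the face lengths of the minorant of a standard excursion at an independent uniform point equals $1$, and finish with martingale convergence plus countability of $\sL$. The one place where you genuinely diverge is how you compute that expectation $L$: you derive the one-step distributional fixed point $S\eqdist\sqrt{\Delta_1}+\sqrt{\Delta_2}\,S'$ from Lemma~\ref{lem:convex_minorant_law1} and solve $L=\mathbf{E}[\sqrt{\Delta_1}]+\mathbf{E}[\sqrt{\Delta_2}]\,L$, whereas the paper uses the explicit product-of-Dirichlet representation of the face lengths (Lemma~\ref{lem:convex_minorant_law2}) and sums $\mathbf{E}\bigl[\sqrt{\Delta_{i,1}\prod_{j<i}\Delta_{j,2}}\bigr]=2^{-i}$ term by term.

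That divergence is exactly where your argument has a gap as written: the equation $L=\tfrac12+\tfrac12 L$ only forces $L=1$ if you already know $L<\infty$, and it is equally satisfied by $L=\infty$; nothing in your text rules that out before you solve for $L$. The fix is easy and worth stating: either iterate/truncate (e.g.\ $L_k:=\mathbf{E}[\sum_{i\le k}\sqrt{m_i'}]$ satisfies $L_k=\tfrac12+\tfrac12 L_{k-1}$, so $L_k\le1$ for all $k$ and $L\le1$ by monotone convergence, after which the fixed point gives $L=1$), or simply compute $L=\sum_{i\ge1}2^{-i}=1$ by Tonelli from Lemma~\ref{lem:convex_minorant_law2}, which is the paper's route and yields finiteness for free. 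Two smaller remarks: your closing paragraph establishing $d_1(0,x)<\infty$ via a uniform point $V\in(x,\zi(x,\exc))$ is correct but unnecessary, since for $x\in\sL$ the set $\cV_x$ is a.s.\ finite (Lemma~\ref{lem:no_exception_convex} and the construction), so $d_1(0,x)$ is a finite sum; and, as in the paper, since $x\in\sL$ is a random point the ``martingale'' property is really the a.s.\ conditional-expectation identity, so the convergence should be invoked conditionally (e.g.\ given $\cF_1$), which your finiteness observation indeed supports.
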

\begin{proof}Fix $x\in \sL$. Let $\cF_n$ denote the sigma-algebra generated by the random variables $\{(t_u, \gamma_u), |u|\le n\}$; in particular, $(m_u)_{|u|\le n}$ is $\cF_n$-measurable. Conditionally on $\cF_n$, the functions $e_u$, $u\in \N^n$, are independent Brownian excursions of respective durations $m_u$. It follows that 
\begin{equation}\label{eq:recursive_convex_martingale}
d_{n+1}(0,x)=\E{\sum_{|u|=n+1} m_u^{1/2}~\bigg|~\cF_n}=\sum_{|u|=n} m_u^{1/2} \cdot \E{\sum_{i\ge 0}(m_{ui}/m_u)^{1/2}~\Bigg|~\cF_n}\,.
\end{equation}
Let $(\Delta_{i,1},\Delta_{i,2},\Delta_{i,3})_{i\ge 0}$ be iid Dirichlet$(\tfrac 12, \tfrac 12, \tfrac 12)$ random vectors. Then, by Lemma~\ref{lem:convex_minorant_law2}, we have conditionally on $m_u$: 
\begin{equation}\label{eq:dn_conditional}
\Big(\frac{m_{ui}}{m_u}\Big)_{i\ge 0} \eqdist \bigg(\prod_{1\le j<i} \Delta_{j,2} \cdot \Delta_{i,1}\bigg)_{i\ge 0}\end{equation}
From there, it is straightforward to verify by induction that, since $\Ec{\Delta_{i,1}^{1/2} + \Delta_{i,2}^{1/2}}=1$, for each $i\ge 1$, the expectation of the square root of the right-hand side of \eqref{eq:dn_conditional} equals $2^{-i}$. As a consequence, the conditional expectations in the right-hand side of \eqref{eq:recursive_convex_martingale} all equal one almost surely, so that $d_n(0,x)$ is indeed a martingale.
Since $\sL$ is countable, the convergence is almost surely for all $x\in \sL$. 
\end{proof}

The function $d(0, \cdot)$ can be extented to $[0,1]$ as follows. First let $\llb 0,0\rrb = \{0\}$ and $d(0,0)=0$. Then, for each point $x\in (0,1]$, any $t\in \cV_x\setminus \{0,x\}$ is a local minimum, and therefore $\llb0,t\rrb$ and $d(0,t)$ has already been defined in \eqref{def:0x} and \eqref{def:distance}, respectively.  We rely on those to define, for $x\in (0,1]$, 
\begin{equation}\label{eq:def_geodesic_to_zero}
  \llb 0,x\rrb = \overline{\bigcup_{t\in \cV_x\cap \sL} \llb 0,t\rrb} \cup \{x\} \qquad \text{and } \qquad d(0,x)= \sup_{t\in \cV_x\cap \sL} d(0,t)\in \R_+ \cup \{\infty\}\,.
\end{equation}

\begin{rem}\label{rem:points_absolute}
 {\bf i)} The slight subtlety in the definition in \eqref{eq:def_geodesic_to_zero}, where the union is taken on $t\in \cV_x\cap \sL$ rather than $\cV_x\setminus \{x\}$ is to ensure that the definion in \eqref{eq:def_geodesic_to_zero} is consistent with the one in \eqref{def:0x} in the case that $x\in \sL$. For instance, if $t\in \sL$ and $x=\zi(t)$ then $x\in \cV_x$ but almost surely not in $\sL$. 

\noindent {\bf ii)} The recursive construction yields a collection of ``join points'' associated to the local minima. First there is a well-defined face to the left of $t$: almost surely for $t\in [0,1]$, $\li(t)= \sup \cV_t < t$, and $\li(t)\in \sL$, so that $[\li(t), t]$ is a face of the convex minorant $c_t$. The slope $\slo(t)$ is precisely the slope of this face. Furthermore, $U_t$ is used to define a uniform random point in $[\li(t), t]$ that we denote by $\ju(t)$. In the previous decomposition, for any $x\in [0,1]$ and any $u\in \cU$, $i\in \N$ such that $t_{u(i+1)}=t$, we have $t_{ui}=\li(t)$ and $\xi_{ui}=\ju(t)$. 
\end{rem}

Before going further, let us prove the following lemma, that will be useful later (Lemma~\ref{lem:geodesic_restriction}). Observe first that \eqref{eq:def_geodesic_to_zero} allows to extend the definition of $t_u(x), \xi_u(x), m_u(x)$, $u\in \cU$, to all $x\in [0,1]$: let $t_u(x),\xi_u(x)$ and $m_u(x)$ coincide with $t_u(t_i),\xi_u(t_i)$, $m_u(t_i)$ for all $u$ of the form $u=jv$, with $j<i$ and $v\in \cU$. When $\cV_x$ is a finite set, it is understood $m_{jv}(x)$ is only defined for the relevant values of $j$.  

\begin{lem}\label{lem:geodesic_to_zero}Almost surely, for every $x\in [0,1]$ we have $\cV_x\subseteq \llb 0, x\rrb$ and furthermore:
\begin{compactenum}[i)] {}
    \item for every $u\in \cU$, $t_u(x),\xi_u(x)\in \llb 0,x\rrb$,
    \item $\sup_{|u|=n} m_u(x) \to 0$, as $n\to\infty$, and thus
    \item $\llb 0,x\rrb$ is the closure of $\{t_u(x): u\in \cU\}\cup \{x\}$, in particular, if $x=\ri(t)$, then $\llb 0,x\rrb = \llb 0,t\rrb \cup \{x\}$.
\end{compactenum}
\end{lem}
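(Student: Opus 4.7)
The three claims are tightly linked, so I would prove them together, first establishing everything for $x\in\sL$, where the definition~\eqref{def:0x} applies directly, and then transferring to arbitrary $x\in[0,1]$ via the extension~\eqref{eq:def_geodesic_to_zero}.

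\emph{Part (i).} For $x\in\sL$, I would argue by induction on $|u|$ that for any prefix $u'$ of $u$, $[t_u(x),\xi_u(x)]\subseteq[t_{u'}(x),\xi_{u'}(x)]$; this is immediate from the recursive construction since the children intervals $[t_{u'i},\xi_{u'i}]$ lie inside $[t_{u'},\xi_{u'}]$. Combined with the identity $t_{u0}=t_u$ (from $\theta_0^u=0$), this gives $t_u\in[t_{u0\cdots0},\xi_{u0\cdots0}]$ at every level $\ge|u|$, whence $t_u\in\bigcap_n\overline{\bigcup_{|v|=n}[t_v,\xi_v]}\subseteq\llb0,x\rrb$. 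For $\xi_u$, the point $\xi_u$ is the accumulation limit of $(t_{ui})_{i\ge0}$ (which accumulate at $\xi_u$ by Lemma~\ref{lem:no_exception_convex}, applied to the rescaled excursion $e_u$ with endpoint not a local minimum a.s.), and each $t_{ui}$ lies in the closed set at every deeper level, so $\xi_u\in\llb0,x\rrb$ as well. For $x\notin\sL$, the extension formula directly reduces to the case just treated, once one reads off $t_u(x)=t_u(t_{j+1}(x))$ for $u=jv$.

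\emph{Part (ii), fixed $x$.} The key quantity is $\Sigma_n(x):=\sum_{|u|=n}m_u(x)$. Two facts drive everything: first, summing the children lengths gives $\sum_i m_{ui}=\xi_u-t_u=U'_u\,m_u$, where $U'_u$ is the uniform random variable used to define $\xi_u$, independent of the excursion (cf.~Remark~\ref{rem:association_map}). This immediately yields $\Sigma_{n+1}(x)\le\Sigma_n(x)$ a.s.\ and, conditioning on the sigma-algebra generated by the construction up to level $n$, $\E{\Sigma_{n+1}\mid\mathcal F_n}=\tfrac12\Sigma_n$. Hence $\E{\Sigma_n(x)}=x(1/2)^{n-1}$ for $n\ge1$, and the monotone decreasing sequence $\Sigma_n(x)$ converges to $0$ a.s.\ (and in $L^1$). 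Since $\sup_{|u|=n}m_u(x)\le\Sigma_n(x)$, we get $\sup_{|u|=n}m_u(x)\to0$ a.s.\ for each fixed $x$. Note the same expectation computation applies verbatim when $x\notin\sL$, because $\Sigma_1(x)=\sum_j(t_{j+1}(x)-t_j(x))=x$ still telescopes, and the same Dirichlet/uniform identities apply inside each face.

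\emph{Part (ii), simultaneously in $x$.} First, the countable family $\{x\in\sL\}\cup\{0,1\}$ is handled by a countable union of the previous almost-sure statements. For general $x\in[0,1]\setminus\sL$, the extension gives, for $u=jv$ with $|v|=n-1$,
\[
\sup_{|u|=n}m_u(x)=\sup_{j\ge0}\sup_{|v|=n-1}m_v(t_{j+1}(x)),
\]
with $t_{j+1}(x)\in\sL$. Since $\sum_j(t_{j+1}(x)-t_j(x))=x<\infty$, the contributions from large $j$ are controlled by the trivial bound $m_v(t_{j+1}(x))\le t_{j+1}(x)-t_j(x)\to0$ as $j\to\infty$; the contributions from bounded $j$ go to zero by the countable version and finite-maximum. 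Combining: given $\varepsilon>0$, pick $J$ with $\sup_{j>J}(t_{j+1}(x)-t_j(x))<\varepsilon$; then pick $N$ so that for $n>N$, $\max_{j\le J}\sup_{|v|=n-1}m_v(t_{j+1}(x))<\varepsilon$; the full supremum is then $<\varepsilon$.

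\emph{Part (iii).} By (i), $\overline{\{t_u(x):u\in\cU\}\cup\{x\}}\subseteq\llb0,x\rrb$. Conversely, any $p\in\llb0,x\rrb\setminus\{x\}$ lies, for every $n$, in $[t_{u_n},\xi_{u_n}]$ for some $u_n$ with $|u_n|=n$, so $|p-t_{u_n}|\le\xi_{u_n}-t_{u_n}\le m_{u_n}\le\sup_{|u|=n}m_u(x)\to0$ by (ii), proving the reverse inclusion. Finally, if $x=\ri(t)$ then $x$ is not a local minimum but $\cV_x\cap\sL=\cV_t\cap\sL$ (all elements below $x$ coincide by Lemma~\ref{lem:convex_minorants_intersection}), so \eqref{eq:def_geodesic_to_zero} gives $\llb0,x\rrb=\llb0,t\rrb\cup\{x\}$ using that $\llb0,t\rrb$ is closed.

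\textbf{Main obstacle.} The delicate step is the uniform-in-$x$ statement in (ii) for $x\notin\sL$: the number of branches at the top level is infinite, so one cannot directly invoke a finite maximum. The resolution leans crucially on the $\ell^1$ summability $\sum_j m_j(x)=x$ coming from telescoping the first-level face lengths, which lets the tail-in-$j$ and the head-in-$j$ be controlled by different mechanisms.
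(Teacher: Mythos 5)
Your proof is correct and follows essentially the same route as the paper's: part (i) via $t_{u0}=t_u$ and the accumulation of the points $t_{uk}$ at $\xi_u$, part (ii) by reducing the uncountably many $x$ to the countable set $\sL$ together with a tail estimate on the first-level faces (your bound $m_{jv}(x)\le t_{j+1}(x)-t_j(x)\to 0$ plays the role of the paper's choice of $i$ with $\sup\cV_x\cap\sL\le t_i+\epsilon$), and part (iii) as a direct consequence. The only notable difference is that you make explicit, via the bound $\E{\Sigma_{n+1}\mid\cF_n}=\tfrac12\Sigma_n$ and monotonicity of $\Sigma_n$, the fixed-point statement $\sup_{|u|=n}m_u(y)\to 0$ for $y\in\sL$, which the paper simply invokes as holding on a set of probability one — a useful addition rather than a divergence.
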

\begin{proof}The first claim is clear from \eqref{eq:def_geodesic_to_zero}. We first prove \emph{i)} for $x\in \sL$. 
For $u\in \cU$, $t_u\in \llb 0,x\rrb$ by definition: indeed, for each $v\in \cU$, $t_{v0}=t_v$, and thus $t_u\in \cap_{n\ge |u|} \cup_{|v|=n}[t_v, \xi_v]$. For $\xi_u$, note that, almost surely $\cV_{\xi_u}$ has an accumulation point at $\xi_u$. It follows that $\xi_u$ lies in the closure of $\{t_{uk}: k\ge 0\}$. By the previous argument, all these points lie in $\llb 0, x\rrb$ which is closed, and thus $\xi_u\in \llb0,x\rrb$ as well. Now, since $\cU$ is countable, this is true for every $u$, and, hence, for every $x\in \sL$. Finally, this is true for all $x\in [0,1]$ by definition of $\llb 0, x\rrb$ in \eqref{eq:def_geodesic_to_zero}. 

\emph{ii)} We restrict our attention to the set of probability one where $m_u(y)\to 0$ as $|u|\to\infty$ for all $y\in \sL$. Fix any $x\in [0,1]$ and $\epsilon>0$. There is an $i\in \N$ large enough that $\sup \cV_x\cap \sL \le t_i+\epsilon$. Then, for any $u$ of the form $jv$ with $j\ge i$ and $v\in \cU$ either $m_{jv}(x)\le \epsilon$, or $m_{jv}(x)$ is not defined. On the other hand, for $u$ of the form $jv$ with $j<i$ and $v\in \cU$, we have $m_{jv}(x)=m_{jv}(t_i)$. It follows that $\sup\{m_u(x): |u|=n\}\le \epsilon$, which completes the proof since $\epsilon>0$ was arbitrary. 

\emph{iii)} follows readily from \emph{i)}, \emph{ii)} and the definition.
\end{proof}

\subsection{The branching structure and the convex minorant tree} 
\label{sec:the_branching_structure}

We now move on to the branching structure. Let $x,x'\in [0,1]$. With the ultimate objective of defining $d(x,x')$ we first define $x\wedge x':=\sup (\llb 0,x\rrb \cap \llb 0,x'\rrb)$. It should be understood as the closest common ancestor of $x$ and $x'$, when $0$ is seen as the root. It follows readily that the definition that the sets $\llb 0,x\rrb$ enjoy the following restriction property:
\begin{lem}\label{lem:geodesic_restriction}
Almost surely, for any $x\in [0,1]$ and $y\in \llb 0,x\rrb$, we have $y=x\wedge y$, $\llb0,y\rrb = \llb 0,x\rrb \cap [0,y]$, and $d(0,y)\le d(0,x)$
\end{lem}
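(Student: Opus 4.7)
The plan is to verify the three conclusions in order on the almost-sure event where Lemmas~\ref{lem:no_exception_convex} and~\ref{lem:geodesic_to_zero} hold. The underlying principle is that, by Lemma~\ref{lem:convex_minorants_intersection}, the recursive construction of $\llb 0,\cdot\rrb$ is local: for $y\le x$ the convex minorants of $\exc$ on $[0,y]$ and $[0,x]$ agree on a common initial segment, and the relevant Brownian sub-excursions are shared at each subsequent level. As a preliminary, $\llb 0,z\rrb\subseteq[0,z]$ for every $z\in[0,1]$: this is immediate from \eqref{def:0x} when $z\in\sL$ (all $t_u(z),\xi_u(z)\in[0,z]$), and extends to general $z$ via \eqref{eq:def_geodesic_to_zero}. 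For $y=x\wedge y$, observe that $y\in\llb 0,x\rrb$ by hypothesis and $y\in\llb 0,y\rrb$ by construction, while $\llb 0,y\rrb\subseteq[0,y]$; hence $\llb 0,x\rrb\cap\llb 0,y\rrb$ contains $y$ and is contained in $[0,y]$, so its supremum equals $y$.

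The bulk of the work is the equality $\llb 0,y\rrb=\llb 0,x\rrb\cap[0,y]$. Using \eqref{eq:def_geodesic_to_zero}, I would reduce to the case $y\in\sL$. By Lemma~\ref{lem:geodesic_to_zero} iii, such a $y$ is either of the form $t_{u_0}(x)$ for some $u_0\in\cU$, or a limit of such. For $y=t_{u_0}(x)$ with $u_0=i_1i_2\ldots i_k$, the vertices of $\cV_y$ in $[0,t_{i_1}(x)]$ coincide with those of $\cV_x$ by Lemma~\ref{lem:convex_minorants_intersection}, while the additional vertices of $\cV_y$ in $(t_{i_1}(x),y]$ are vertices of the convex minorant of the sub-excursion $e_{i_1}$ used in the construction of $\llb 0,x\rrb$, restricted to $[0,y-t_{i_1}(x)]$; applying Lemma~\ref{lem:convex_minorants_intersection} to $e_{i_1}$ identifies these with an initial segment of the level-2 vertices of $\llb 0,x\rrb$ within face $i_1$. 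Iterating along $(i_1,\ldots,i_k)$, one identifies the level-1 vertices of $\llb 0,y\rrb$ with a ``staircase prefix'' of the tree of $\llb 0,x\rrb$, and the higher-level structure of $\llb 0,y\rrb$ is then inherited from sub-trees of $\llb 0,x\rrb$ rooted at the corresponding nodes. When $y$ is only a limit of $t_{u_n}(x)$'s, the equality follows by a closure argument using Lemma~\ref{lem:geodesic_to_zero} ii ($m_u(x)\to 0$) and the previous case applied to each $t_{u_n}(x)$.

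For the inequality $d(0,y)\le d(0,x)$, introduce $S_u^{(k)}(x):=\sqrt{\pi/2}\sum_{|v|=k}m_{uv}(x)^{1/2}$ and $S_u(x):=\lim_k S_u^{(k)}(x)$, which is well-defined almost surely as in Lemma~\ref{lem:dist_martingale}; Fatou's lemma yields $\sum_l S_{ul}(x)\le S_u(x)$. For $y=t_{u_0}(x)\in\sL$ with $u_0=i_1\ldots i_k$, the matching from the previous paragraph yields
\[d(0,y)=\sum_{j_1<i_1}S_{j_1}(x)+\sum_{j_2<i_2}S_{i_1 j_2}(x)+\cdots+\sum_{j_k<i_k}S_{i_1\ldots i_{k-1} j_k}(x),\]
and iterating $\sum_l S_{ul}(x)\le S_u(x)$ gives $d(0,y)\le\sum_{j_1}S_{j_1}(x)=d(0,x)$. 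For general $y\in[0,1]$, every $t\in\cV_y\cap\sL$ lies in $\llb 0,y\rrb\subseteq\llb 0,x\rrb$ by the previous paragraph, so the local-minimum case gives $d(0,t)\le d(0,x)$, and taking the supremum over such $t$ via \eqref{eq:def_geodesic_to_zero} yields the claim.

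The main obstacle is the level-by-level identification when $y=t_{u_0}(x)$: the correspondence is subtle because the level-1 vertices of $\llb 0,y\rrb$ correspond to vertices at multiple levels of the tree of $\llb 0,x\rrb$, forming a ``staircase'' rather than mapping onto a single sub-tree. Handling the case where $y$ is a limit of $t_{u_n}(x)$'s but not of the form $t_u(x)$ itself then adds an additional layer of closure arguments relying on the face-width decay from Lemma~\ref{lem:geodesic_to_zero} ii.
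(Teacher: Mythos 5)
Your set-theoretic part is sound and is in the same spirit as the paper, but you take a heavier route than necessary. The paper does not build the explicit ``staircase'' identification of $\cV_y$ inside the tree of $\llb 0,x\rrb$: it simply observes that for every $n$ there is $u\in\cU$ with $|u|=n$ and $y\in[t_u(x),t_u(x)+m_u(x)]$, deduces $\llb 0,x\rrb\cap[0,t_u]\subseteq\llb 0,y\rrb$, and lets $m_u(x)\to 0$ (Lemma~\ref{lem:geodesic_to_zero}~\emph{ii)}) to squeeze out the equality $\llb 0,y\rrb=\llb 0,x\rrb\cap[0,y]$. This handles vertices $t_{u_0}(x)$ and their limit points in one stroke, so your separate closure argument (and the reduction to $y\in\sL$) can be dispensed with; your treatment of $y=x\wedge y$ is fine.

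The genuine gap is in the distance bound. What the face-matching actually gives is the level-$n$ identity $d_n(0,y)=\sum_{w\in W}S_w^{(n-1)}(x)$, where $W$ is the (countably infinite) set of staircase faces; your two outer steps, $d(0,y)=\sum_{w\in W}S_w(x)$ and $\sum_{j_1}S_{j_1}(x)=d(0,x)$, require interchanging this limit with the infinite sum, and the only tool you invoke, Fatou, gives precisely the opposite inequalities: $d(0,y)\ge\sum_{w}S_w(x)$ and $d(0,x)\ge\sum_{j_1}S_{j_1}(x)$. Combined with your (correct) inner collapses $\sum_l S_{ul}(x)\le S_u(x)$, the free directions yield nothing about $d(0,y)$ versus $d(0,x)$, so as written the chain does not close. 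To repair it you must show the interchange is exact for the $y$-decomposition, e.g.\ by domination: conditionally on the faces, each $(S_w^{(m)})_m$ is a nonnegative martingale which one checks is bounded in $L^2$, so Doob's maximal inequality bounds $\mathbf{E}[\sup_m S_w^{(m)}\mid \text{faces}]$ by a constant times $m_w^{1/2}$; since $\sum_{w\in W}m_w^{1/2}<\infty$ a.s.\ (finitely many staircase levels, each with an a.s.\ finite level-one sum), dominated convergence gives $\lim_n\sum_w S_w^{(n-1)}(x)=\sum_w S_w(x)$ and hence the needed $d(0,y)\le\sum_w S_w(x)$. The same device is needed where you invoke ``the local-minimum case'' for $t\in\cV_y\cap\sL$, since such a $t$ need not be of the form $t_u(x)$ but may only be a limit of vertices. (The paper itself is silent here --- it says the distance claim ``follows readily'' --- but your write-up makes the limit interchange explicit, so it has to be supplied.)
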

\begin{proof}We restrict our attention to the set of probability one on which the events of Lemma~\ref{lem:geodesic_to_zero} all occur. 
If $y=x$, the claim is clear, so suppose that $y<x$, which implies that $y\le \sup(\cV_x\cap \sL)$. If $y=\sup(\cV_x \cap \sL)$ then $\llb 0,y\rrb$ is the closure of $\bigcup_{i\ge 1}\llb 0,t_i(x)\rrb$, so that the claim holds by \eqref{eq:def_geodesic_to_zero}. Finally consider the last case $y<\sup(\cV_x\cap \sL)$, and let $t_u=t_u(x)$, and $m_u=m_u(x)$, $u\in \cU$, defined in the previous section. For any $n\ge 1$ there exists some $u\in \cU$ with $|u|=n$ such that $y\in [t_u,t_u+m_u]$. It follows easily that $\llb 0,x\rrb \cap [0,t_u] \subseteq \llb 0,y\rrb$. Since $m_u\to 0$ by Lemma~\ref{lem:geodesic_to_zero}, we have $\llb 0,y\rrb=\llb 0,x\rrb \cap [0,y]$. The claim about the distance follows readily. 
\end{proof}

The extension of $\llb \cdot, \cdot\rrb$ and $d(\cdot,\cdot)$ to $[0,1]^2$ will require the following lemma, that will allow us to bring the (nice) points of $\sL$ back in the game:
\begin{lem}\label{lem:path_to_ancestors}
With probability one, for every $x,y\in [0,1]$ with $x>y$, there exists some $t\in \cV_x\cap \sL$ such that $x\wedge y = t \wedge y \in \llb 0, t\rrb$. 
\end{lem}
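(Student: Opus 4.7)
The idea is to locate a vertex $t \in \cV_x\cap \sL$ such that $\llb 0, x\rrb \cap [0,y] = \llb 0,t\rrb \cap [0,y]$; once this is established, intersecting with $\llb 0,y\rrb$ (which lies in $[0,y]$) and taking suprema yields $x\wedge y = t\wedge y$, and this common value lies in the closed set $\llb 0,t\rrb$. I would work on the almost sure event on which the conclusions of Lemmas~\ref{lem:no_exception_convex}, \ref{lem:geodesic_to_zero} and~\ref{lem:geodesic_restriction} all hold, and would rely throughout on the crucial identity $\llb 0, s\rrb = \llb 0, x\rrb \cap [0, s]$ valid for any $s \in \cV_x \cap \sL$ by Lemma~\ref{lem:geodesic_restriction}.

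First, I would split according to whether $\cV_x \cap \sL$ has an element $\geq y$. If yes, I pick the smallest such $t$; since $\cV_x$ has no accumulation point in $(0,x)$ by Lemma~\ref{lem:no_exception_convex}(ii), the minimum is attained. The identity $\llb 0,t\rrb = \llb 0,x\rrb \cap [0,t]$ then gives $\llb 0, x\rrb \cap [0,y] \subseteq \llb 0, t\rrb$, and the reverse inclusion is automatic since $\llb 0, t\rrb \subseteq \llb 0, x\rrb \cap [0, y]$. Note that if $x \in \sL$ this case applies trivially with $t = x$.

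The harder case is when every element of $\cV_x \cap \sL$ is strictly less than $y$. Then $x \notin \sL$, and since Lemma~\ref{lem:no_exception_convex}(iii) forces $\cV_x \setminus \{0, x\} \subseteq \sL$, we deduce $\cV_x \cap [y, x) = \emptyset$. Using again the absence of accumulation points in $(0,x)$, the set $\cV_x \cap [0, y)$ has a maximum, call it $t_+$. It lies in $\sL$ by Lemma~\ref{lem:no_exception_convex}(iii) when $t_+ > 0$, and by the convention that $0 \in \sL$ in the degenerate subcase $\cV_x = \{0, x\}$. I then argue that $\llb 0, x\rrb = \llb 0, t_+\rrb \cup \{x\}$: applying Lemma~\ref{lem:geodesic_restriction} twice (first with $t_+$ to see that every $s \in \cV_x \cap \sL$ lies in $\llb 0, t_+\rrb$, then with such $s$ to get $\llb 0, s\rrb \subseteq \llb 0, t_+\rrb$), the union in the definition~\eqref{eq:def_geodesic_to_zero} of $\llb 0, x\rrb$ is contained in the closed set $\llb 0, t_+\rrb$, and the reverse inclusion is trivial. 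Taking $t = t_+$ then yields $\llb 0, x\rrb \cap [0, y] = \llb 0, t\rrb$, as required.

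The main obstacle is this second case, specifically proving $\llb 0, x\rrb = \llb 0, t_+\rrb \cup \{x\}$ and handling the degenerate subcase $\cV_x = \{0, x\}$ (which genuinely occurs, e.g., at $x=1$ for the normalized excursion), for which one must treat $0$ as a local minimum. The rest of the argument amounts to combining this set-theoretic identity with the closedness of the sets $\llb 0,\cdot \rrb$ to extract the conclusion $x \wedge y = t \wedge y \in \llb 0, t\rrb$.
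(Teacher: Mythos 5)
Your argument is correct and follows essentially the same route as the paper's proof: it splits according to whether $\cV_x$ has a vertex in $\sL$ at or beyond $y$ (the paper's accumulation case, where it picks $t_{i+1}$ with $y\in[t_i,t_{i+1})$) or only finitely many vertices, all below $y$ (where the paper simply takes the last vertex $t_k$), and then concludes via the restriction property of the sets $\llb 0,\cdot\rrb$ and their closedness — and your explicit handling of the degenerate subcase $\cV_x=\{0,x\}$ via the convention $0\in\sL$ is exactly the convention the paper uses silently when it asserts $t_0=0\in\sL$. The only blemish is the literally false inclusion $\llb 0,t\rrb\subseteq \llb 0,x\rrb\cap[0,y]$ in your Case 1 (when $t\ge y$ the set $\llb 0,t\rrb$ need not lie in $[0,y]$); intersecting the identity $\llb 0,t\rrb=\llb 0,x\rrb\cap[0,t]$ with $[0,y]$ yields the desired equality directly, so this is a one-line repair rather than a gap.
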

\begin{proof}We work on set $\Omega^\star$ of probability one where all the events of Lemma~\ref{lem:no_exception_convex} all occur.
Since $x>y$, let $(t_i)_{i\ge 0}$ be the vertices of $\cV_x\setminus \{x\}$, which might be a finite sequence. Then $[t_i,t_{i+1})$, $i\ge 0$, together with $[\sup_i t_i, x)$ forms a partition of $[0,x)$. On the event $\Omega^\star$, it suffices to consider the following two cases. (a) If $\sup_i t_i=x$, then there exists some $i\in \N$ for which $y\in [t_i, t_{i+1})$. By definition, $x\wedge y = t_{i+1} \wedge y \in \llb 0, t_{i+1}\rrb$, and $t_{i+1}\in \sL$. (b) Otherwise there are only finitely many vertices $t_0, t_1,\dots, t_k$, all of which are in $\sL$; then since $y<x$, we have $x\wedge y = t_k \wedge y \in \llb 0,t_k\rrb$. 
\end{proof}

The following lemma makes formal the branching structure of the sets $\llb 0,x\rrb$, $x\in [0,1]$.

\begin{lem}\label{lem:intersection_arcs}
There exists a set of probability one on which for any $x,x'\in (0,1)$, $x\wedge x'>0$ and 
\[\llb 0,x\rrb \cap [0, x\wedge x' ] = \llb 0, x'\rrb \cap [0,x\wedge x'] 
\qquad \text{and} \qquad 
\llb 0,x\rrb \cap \llb 0,x'\rrb \cap (x\wedge x',1] = \varnothing\,.\]
\end{lem}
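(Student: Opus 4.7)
The first equality and the disjointness claim follow readily from the restriction property. Both $\llb 0,x\rrb$ and $\llb 0,x'\rrb$ are closed in $[0,1]$ by \eqref{def:0x} and \eqref{eq:def_geodesic_to_zero}, so $I:=\llb 0,x\rrb \cap \llb 0,x'\rrb$ is closed and contains $0$; hence $a := \sup I \in I$. Applying Lemma~\ref{lem:geodesic_restriction} with $y = a$ to each of $\llb 0,x\rrb$ and $\llb 0,x'\rrb$ gives $\llb 0,a\rrb = \llb 0,x\rrb \cap [0,a] = \llb 0,x'\rrb \cap [0,a]$, which is the first equality. The disjointness $I \cap (a,1] = \varnothing$ is immediate from $a = \sup I$.

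The substantive content is the positivity $a = x \wedge x' > 0$. Assume without loss of generality $x \le x'$. If $\cV_{x'} \cap (0,x] \neq \varnothing$, then Lemma~\ref{lem:convex_minorants_intersection} places the smallest such vertex in $\cV_x$ as well, giving a common positive element. In the remaining case $t_1(x') > x$, I would descend into the recursive construction of $\llb 0,x'\rrb$ inside its first face $[0, t_1(x')]$: the construction picks the uniform sub-target $\xi_0 = U_{t_1(x')}\cdot t_1(x')$ and then recurses on the convex minorant of the detrended excursion $\exc - \gamma_0 s$ on $[0,\xi_0]$, whose vertices coincide with those of $\cV_{\xi_0}(\exc)$. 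Iterating produces a decreasing sequence of targets $\tau_n$ with $\tau_{n+1}$ uniform on $[0, t_1(\tau_n)]$, so $\log\tau_n$ is a random walk with drift $\E{\log U} = -1$; hence $\tau_n \to 0$ almost surely.

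At the first level $n$ for which the leftmost vertex $t_1(\tau_n)$ (which lies in $\llb 0,x'\rrb$ by construction) falls in $[0,x]$, I would appeal to Lemma~\ref{lem:convex_minorants_intersection} to conclude $t_1(\tau_n) \in \cV_x$ and hence $t_1(\tau_n) \in \llb 0,x\rrb$. To upgrade the resulting ``almost surely for each pair'' statement to ``almost surely for all pairs'', the plan is to first establish positivity for $(t,t') \in \sL^2$ via a union bound over the countable local minima (indexed by the association map of Remark~\ref{rem:association_map}), and then extend to arbitrary $x, x' \in (0,1)$ by picking $t \in \cV_x \cap \sL$, $t' \in \cV_{x'} \cap \sL$ and invoking $\llb 0,t\rrb \cap \llb 0,t'\rrb \subseteq \llb 0,x\rrb \cap \llb 0,x'\rrb$, granted by Lemma~\ref{lem:geodesic_restriction}.

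The main obstacle will be the ``overshoot'' scenario in which the recursion jumps from $\tau_{n-1} \ge x$ directly to $\tau_n < x$ in one step (which occurs with positive probability $x/t_1(\tau_{n-1})$), because then $t_1(\tau_n) \le \tau_n < t_1(x)$ need not itself be a vertex of $\cV_x$: Lemma~\ref{lem:convex_minorants_intersection} only yields the inclusion $\cV_x \cap [0,\tau_n] \subseteq \cV_{\tau_n}$, which is the wrong direction. Handling this cleanly will require either a refined descent that at the critical step branches into whichever face of the current convex minorant contains $x$ (rather than only following the leftmost sub-face), or a symmetric iteration inside $\llb 0,x\rrb$ to intercept the point produced by the $\llb 0,x'\rrb$ side.
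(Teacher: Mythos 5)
Your handling of the two displayed identities is correct and in fact shorter than the paper's: both bracket sets are closed, so $a=x\wedge x'$ belongs to $\llb 0,x\rrb\cap\llb 0,x'\rrb$, and Lemma~\ref{lem:geodesic_restriction} applied at $y=a$ gives $\llb 0,x\rrb\cap[0,a]=\llb 0,a\rrb=\llb 0,x'\rrb\cap[0,a]$, while disjointness beyond $a$ is just the definition of the supremum. (The paper instead runs the finite-level approximation $x\wedge_n x'$ and proves $\sup_n x\wedge_n x'=x\wedge x'$ using $\sup_{|u|=n}m_u\to 0$; your route is legitimate since Lemma~\ref{lem:geodesic_restriction} does not depend on the present lemma.)

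The positivity $x\wedge x'>0$ is the substantive content, and you have not proved it: you set up the leftmost-spine descent and then explicitly leave open the ``overshoot'' case, where at the critical level the smaller point falls strictly between the uniform sub-target $\xi_{0^{(n)}}$ and the preceding vertex $t_{0^{(n-1)}1}$. This case cannot be brushed aside: the statement must hold simultaneously for all pairs, and in that configuration the first vertex of $\cV$ of the smaller point may lie strictly above $\xi_{0^{(n)}}$, in which case $t_{0^{(n)}1}$ need not belong to the smaller point's bracket set at all (below its own first vertex that set is driven by a different uniform), so Lemma~\ref{lem:convex_minorants_intersection} gives nothing. The paper closes exactly this step with the restriction device you already used in your first paragraph: since $\llb 0,x'\rrb\subseteq[0,x']$ and $\llb 0,x\rrb\cap[0,t_1(x)]=\llb 0,t_1(x)\rrb$ by Lemma~\ref{lem:geodesic_restriction}, one has $x\wedge x'=t_1(x)\wedge x'$ whenever $x'<t_1(x)$; this simultaneously absorbs the overshoot (the larger point is replaced by a vertex, hence a local minimum) and converts the quantifier over uncountably many larger points into one over the countable set $\sL$, after which the descent along the points $t_{0^{(n)}1}$ --- strictly positive a.s.\ by the Beta$(\tfrac12,1)$-product representation and tending to $0$ by Lemma~\ref{lem:geodesic_to_zero} --- is applied, with Lemma~\ref{lem:convex_minorants_intersection} transferring $t_{0^{(n)}1}$ into $\cV_{x'}$ when $x'$ lands in $[t_{0^{(n)}1},\xi_{0^{(n)}}]$. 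Your countability reduction via $t\in\cV_x\cap\sL$, $t'\in\cV_{x'}\cap\sL$ does not by itself repair the overshoot (it has positive probability even for a fixed pair of local minima), so as written the proof of $x\wedge x'>0$ is missing; your two suggested repairs point in the right direction but stop short of an argument.
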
 
\begin{proof}
We work on a set $\Omega^\star$ of probability one where the events of Lemmas~\ref{lem:no_exception_convex} and~\ref{lem:geodesic_to_zero} all occur.
Let $(t_u)_{u\in \cU}$ and $(t'_u)_{u\in \cU}$ denote the recursive collections of points introduced before, for the points $x$ and $x'$, respectively. For $n\ge 1$, define 
\[x\wedge_n x':=\sup\{\{t_u: |u|\le n\}\cap \{t_u': |u|\le n\}\}\ge 0\,.\] 
Then Lemma~\ref{lem:geodesic_to_zero} \emph{iii)} implies that for every $n\ge 1$ and all $y<x\wedge_n x'$, we have $y\in \llb 0,x\rrb$ if and only if $y\in \llb 0,x'\rrb$. The sequence $(x\wedge_n x')_{n\ge 1}$ is non-decreasing and taking the limit as $n\to\infty$, it follows that $\llb 0, x\rrb$ and $\llb 0,x'\rrb$ coincide on $[0,\sup_n x\wedge_n x')$.

On the other hand, by definition of $x\wedge x'=\sup \llb 0,x\rrb \cap \llb 0,x'\rrb$, the sets $\llb 0,x\rrb$ and $\llb 0,x'\rrb$ are disjoint on $(x\wedge x', \infty)$. So to complete the proof, it suffices to prove that $\sup_n x\wedge_n x'=x\wedge x'$. For every $n\ge 1$, we have $x\wedge_n x'\in \llb 0,x\rrb \cap \llb 0,x'\rrb$ so that $x\wedge_n x' \le x\wedge x'$. To prove the converse inequality, consider an arbitrary point $y\in \llb 0,x\rrb \cap \llb 0,x'\rrb$, and observe that for any $n\ge 1$,  
there exists $u,v\in \cU$ with $|u|=|v|=n$ such that $y\in [t_u,\xi_u]$ and $y\in [t'_v, \xi'_v]$, and necessarily $t_u,t'_v\le x\wedge_n x'$. It follows that 
\[y\le x\wedge_n x' + \sup\{m_u(x): |u|=n \}\,.\] 
It follows that from Lemma~\ref{lem:geodesic_to_zero} that $y\le \sup_n x\wedge_n x'$. Since $y\in \llb 0,x\rrb \cap \llb 0,x'\rrb$ was arbitrary, we may take it as close to $x\wedge x'$ as we want, which proves that $\sup_n x \wedge_n x' = x\wedge x'$.  

Finally, we show that $x\wedge x'>0$. Without loss of generality, we assume that $x'<x$. If $x'\ge t_1$, then $x\wedge x'\ge t_1>t_0=0$. More generally, for any $n\ge 1$, if $x'\ge t_{0^{(n)}1}$, then $x'\wedge x \ge t_{0^{(n)}1}$, where $0^{(n)}1$ is the sequence formed by $n$ consecutive $0$ followed by a $1$. But for every $n\ge 1$, $t_{0^{(n)}1}$ is distributed like $m_1\times \prod_{2\le i\le n} \Delta_i$, where $(\Delta_i)_{i\ge 2}$ is a family of i.i.d.\ random variables with distribution Beta$(\tfrac 12,1)$; as a consequence, $t_{0^{(n)}1}>0$ a.s.\ for every $n\ge 1$ and $t_{0^{(n)}1}\le \sup\{m_u(x): |u|=n+1\} \to0$ by Lemma~\ref{lem:geodesic_to_zero}. Since $x'>0$, there is some $n\ge 1$ for which $0<t_{0^{(n)}1}\le x'$ which proves that $x\wedge x'>0$. The latter decomposition depends on $x$, but either $x'\ge t_1(x)>0$, or $x\wedge x'=t_1(x) \wedge x'$ so that it suffices to consider the decomposition at the set of local minima, which is countable; it follows that, almost surely, for every $x,x'\in (0,1)$, $x\wedge x'>0$.
\end{proof}

We are now ready to define $\llb x,y\rrb$ and $d(x,y)$ for all $x,y\in [0,1]$. Observe first that, by Lemma~\ref{lem:path_to_ancestors}, almost surely, for all $x\ne y$, we have $x\wedge y \in \llb 0,t\rrb$ for some $t\in \sL$, so that $d(0,x\wedge y)<\infty$. Now, if $x=y$, we set $d(x,y)=0$, and otherwise
\begin{equation}\label{eq:def_distance_xy}
d(x,y):=d(0,x) + d(0,y) - 2 d(0,x\wedge y) 
\qquad \text{and} \qquad 
\llb x,y\rrb := (\llb 0,x \rrb \cup \llb 0, y \rrb) \cap [x\wedge y, 1]\,.
\end{equation}
By the previous remark, both $d(\cdot, \cdot)$ and $\llb \cdot, \cdot\rrb$ are well-defined and symmetric on $[0,1]^2$. When necessary, we write $\llb x,y\llb=\llb x, y\rrb \setminus \{y\}$; $\rrb x,y\rrb$ and $\rrb x,y\llb$ are defined similarly. 

Finally, we verify now that $d$ induces a metric space that has the topology of a tree. In the following, we let $(x\cdot y)_0:=\tfrac 1 2 (d(0,x)+d(0,y)-d(x,y))$. Observe that, by definition, we have $(x\cdot y)_0 = d(0,x\wedge y)$. 

\begin{lem}[Triangle inequality and four-point condition]\label{lem:pseudo_metric}
A.s., for every $x,y,z\in [0,1]$, we have 
\begin{compactenum}[i)]
  \item $0\le d(x,y)\le d(x,z)+d(z,y)$, and 
  \item $(x\cdot y)_0 \ge \min \{(x\cdot z)_0, (z\cdot y)_0\}$.
\end{compactenum}
\end{lem}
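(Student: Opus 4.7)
The whole proof would rest on a single tripod lemma: almost surely, for every $x,y,z\in[0,1]$, among the three ancestor points $x\wedge y$, $x\wedge z$, $y\wedge z$, two coincide and are $\le$ the third. To establish it I would split on whether or not $x\wedge y$ lies in $\llb 0,z\rrb$. If it does, then $x\wedge y\in\llb 0,x\rrb\cap\llb 0,z\rrb$ and $x\wedge y\in\llb 0,y\rrb\cap\llb 0,z\rrb$ force $x\wedge y\le\min(x\wedge z,y\wedge z)$; supposing WLOG $x\wedge z\le y\wedge z$, the coincidence statement of Lemma~\ref{lem:intersection_arcs} applied to $(y,z)$ on $[0,y\wedge z]$ shows $x\wedge z\in\llb 0,y\rrb$, whence $x\wedge z\le x\wedge y$ and so $x\wedge y=x\wedge z$. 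If, on the contrary, $x\wedge y\notin\llb 0,z\rrb$, the analogous coincidence statement for $(x,z)$ and $(y,z)$ forces $x\wedge z<x\wedge y$ and $y\wedge z<x\wedge y$, and one final application of Lemma~\ref{lem:intersection_arcs} to $(x,y)$ on $[0,x\wedge y]$ yields $x\wedge z\in\llb 0,y\rrb$, hence $x\wedge z\le y\wedge z$, and by symmetry $x\wedge z=y\wedge z$.

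The second ingredient is a monotonicity of $d(0,\cdot)$: whenever $p\in\llb 0,q\rrb$, Lemma~\ref{lem:geodesic_restriction} gives $d(0,p)\le d(0,q)$ and $\llb 0,p\rrb=\llb 0,q\rrb\cap[0,p]$. As a direct consequence, if $M$ denotes whichever of the three ancestors is maximal (necessarily $x\wedge y$ in the second case of the tripod lemma), then both other ancestors lie in $\llb 0,M\rrb$, so $d(0,\cdot)$ at those points is bounded by $d(0,M)$. Applied to a single pair, this also yields $d(0,x\wedge y)\le\min(d(0,x),d(0,y))$, which already gives $d(x,y)\ge 0$, the first inequality in \emph{i)}.

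For the four-point condition \emph{ii)}, rewritten as $d(0,x\wedge y)\ge\min(d(0,x\wedge z),d(0,y\wedge z))$, I would then split by the tripod lemma: either $x\wedge y$ is one of the two coinciding minima, in which case $d(0,x\wedge y)$ equals one of the right-hand quantities and the inequality is trivial; or $x\wedge y$ is the (strict) maximum, in which case the monotonicity step above gives $d(0,x\wedge y)\ge d(0,x\wedge z)=d(0,y\wedge z)$. Either way the inequality holds.

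Finally, the triangle inequality $d(x,y)\le d(x,z)+d(z,y)$ rearranges to $d(0,x\wedge z)+d(0,z\wedge y)\le d(0,z)+d(0,x\wedge y)$. In the case $x\wedge y=x\wedge z\le y\wedge z$ (and its mirror image) this reduces to $d(0,y\wedge z)\le d(0,z)$, which is monotonicity since $y\wedge z\in\llb 0,z\rrb$; in the case $x\wedge z=y\wedge z< x\wedge y$ it splits as $d(0,x\wedge z)\le d(0,z)$ together with $d(0,x\wedge z)\le d(0,x\wedge y)$, both immediate from monotonicity ($x\wedge z\in\llb 0,z\rrb$ for the first, $x\wedge z\in\llb 0,x\wedge y\rrb$ for the second). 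The main obstacle is writing the tripod lemma cleanly: once it and the monotonicity of $d(0,\cdot)$ are in hand, both inequalities are formal consequences, with no probabilistic content remaining.
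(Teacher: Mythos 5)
Your proof is correct and follows essentially the same route as the paper: the paper also splits into the two cases determined by the relative position of $x\wedge z$ and $x\wedge y$, and uses Lemma~\ref{lem:intersection_arcs} together with the monotonicity from Lemma~\ref{lem:geodesic_restriction} to conclude. Your packaging of that case analysis as an explicit tripod lemma (two of the three wedge points coincide and precede the third), followed by the algebraic rearrangement of the triangle inequality into statements about $d(0,\cdot)$ at the ancestors, is just a cleaner organization of the same argument, not a different method.
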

\begin{proof}We prove \emph{i)} and \emph{ii)} simultaneously. Note that $d(x,y)\ge 0$ by Lemma~\ref{lem:geodesic_restriction}.
By definition, $x\wedge z\in \llb 0, x\rrb$. Suppose first that $x\wedge z\in \llb 0,x\wedge y\llb$. Then, $z\wedge y\in \llb 0, x\wedge y\llb$ as well by Lemma~\ref{lem:intersection_arcs}. It follows readily that $d(0,x\wedge y) \ge d(0,x \wedge z), d(0,y\wedge z)$. Furthermore, by definition,
\[d(x,y)=d(x,x\wedge y)+ d(x\wedge y, y)\le d(x,x\wedge z)+d(x\wedge y,y)\le d(x,z)+d(z,y)\,.\]
If on the other hand, we have $x\wedge z \in \llb x\wedge y, x\rrb$, then Lemma~\ref{lem:intersection_arcs} implies that $z\wedge y = x \wedge y$. As a consequence, we have $d(0,x\wedge y)=d(0,z\wedge y) = \min \{d(0,x\wedge z), d(0,z\wedge y)\}$. Moreover
\begin{equation}\label{eq:triangle_ineq_case2}
d(x,y)=d(x,x\wedge y)+ d(x\wedge y,y) = d(x,x\wedge z)+d(x\wedge z, x\wedge y)+ d(x\wedge y, y)\, , 
\end{equation}
and 
\begin{align*}
  d(x,z)+d(z,y)
  &= d(x,x\wedge z)+d(x\wedge z,z) + d(z,y\wedge z) + d(y\wedge z, y) \\
  &= d(x,x\wedge z)+d(x\wedge z,z) + d(z,x\wedge y) + d(x\wedge y,y)\,,
\end{align*}
which is easily seen to be at least as large as the right-hand side of \eqref{eq:triangle_ineq_case2}.
\end{proof}

By Lemma~\ref{lem:pseudo_metric}, $d$ satisfies the triangle inequality and thus induces a metric on the quotient space: Let $x\sim y$ if $d(x,y)=0$. Let $\sT^\circ:=\{x\in [0,1]: d(0,x)<\infty\}$, and write $\sT$ for the metric completion of the quotient $\sT^\circ/_\sim$; we still write $d$ for the induced metric on $\sT$. Writing $\pi$ for the canonical projection, we let $\rho=\pi(0)$ be the root of $\sT$ and $\mu$ be the push-foward of the Lebesgue measure on $[0,1]$ by $\pi$. We define $\CMT(\exc,\bU)$ as $\frak T:=(\sT,d,\mu, \rho)$.

We will later on identify exactly the distribution of $\CMT(\exc,\bU)$ the Brownian CRT (Theorem~\ref{thm:limit_mst_surplus}); however since the proof requires to introduce a number of additional concepts, it is interesting to first verify that:

\begin{prop}\label{pro:cmt_real-tree}
With probability one, the metric space $(\sT,d)$ is a real tree.
\end{prop}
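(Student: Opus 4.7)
The plan is to invoke the standard characterization: a complete metric space is a real tree if and only if it is geodesic and satisfies the four-point condition. Since $(\sT,d)$ is complete by construction as a metric completion, two tasks remain: establishing the four-point condition and exhibiting geodesics.

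First, I would derive the four-point condition from Lemma~\ref{lem:pseudo_metric}, which supplies the triangle inequality and the $3$-point Gromov inequality at basepoint $0$. The $3$-point inequality at a single basepoint is equivalent via elementary algebra to the four-point inequality for quadruples that contain that basepoint, and, by the symmetry of the four-point condition in its four arguments, this propagates to the full four-point condition. Both the triangle inequality and the four-point condition then pass from $([0,1],d)$ to the quotient $\sT^\circ/\sim$ and, by continuity, to the completion $\sT$.

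Next, for geodesicity, I would show that for each $x\in[0,1]$ with $d(0,x)<\infty$, the closure of $\pi(\llb 0,x\rrb)$ in $\sT$ is isometric to the segment $[0,d(0,x)]$. The natural parametrization is $\phi:\llb 0,x\rrb\to[0,d(0,x)]$, $\phi(y)=d(0,y)$. By Lemma~\ref{lem:geodesic_restriction}, for $y\le z$ in $\llb 0,x\rrb$ we have $y\in\llb 0,z\rrb$, hence $y\wedge z=y$ and $d(y,z)=\phi(z)-\phi(y)$; therefore $\phi$ descends to an isometric embedding of $\llb 0,x\rrb/\sim$ into $[0,d(0,x)]$, and its closure in $\sT$ is isometric to the closure of $\phi(\llb 0,x\rrb)$ in $[0,d(0,x)]$. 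This produces a geodesic as soon as $\phi(\llb 0,x\rrb)$ is dense in $[0,d(0,x)]$. The case $x\notin \sL$ reduces to $x\in \sL$ by (\ref{eq:def_geodesic_to_zero}), which expresses $\llb 0,x\rrb$ as an increasing union of $\llb 0,t\rrb$ for $t\in \cV_x\cap \sL$.

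The hard part will be establishing this density. For $x\in\sL$, I would use the recursive collection $\{t_u,\xi_u:u\in\cU\}\subseteq\llb 0,x\rrb$ from Section~\ref{sub:recursive_convex_minorants}, which induces at each level $n$ a refining subdivision of $[0,d(0,x)]$ via $\phi$. Two types of gaps must vanish as $n\to\infty$. The intra-subtree widths $\phi(\xi_u)-\phi(t_u)=d(t_u,\xi_u)$ for $|u|=n$ I would control via Brownian scaling: conditionally on $m_u$, $d(t_u,\xi_u)$ is distributed as $\sqrt{m_u}$ times a fixed positive random variable of finite mean (by the self-similarity of the construction and Lemma~\ref{lem:dist_martingale}), so combined with $\max_{|u|=n}m_u\to 0$ a.s.\ (Lemma~\ref{lem:geodesic_to_zero}) the maximum intra-subtree width vanishes almost surely. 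The inter-subtree widths $\phi(t_{u'(j+1)})-\phi(\xi_{u'j})$, which persist through all finer levels, must equal zero; here I would use (\ref{eq:def_geodesic_to_zero}) applied at $\xi_{u'j}\notin\sL$, noting that $\{t_{u'jk}\}_{k\ge 1}\subseteq\cV_{\xi_{u'j}}\cap\sL$ accumulates at $\xi_{u'j}$ from below to give $d(0,\xi_{u'j})=\lim_k d(0,t_{u'jk})$, and pair this with a matching identity at $t_{u'(j+1)}$ obtained from the nested structure of the recursion to force $d(0,\xi_{u'j})=d(0,t_{u'(j+1)})$. Once density is secured, $\overline{\pi(\llb 0,x\rrb)}$ is an isometric copy of $[0,d(0,x)]$; geodesicity extends to general $x$ via (\ref{eq:def_geodesic_to_zero}), and combined with the four-point condition this shows that $(\sT,d)$ is a real tree.
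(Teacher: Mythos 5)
Your overall architecture (complete $+$ four-point condition $+$ geodesic, with the four-point condition propagated from the basepoint-$0$ inequality of Lemma~\ref{lem:pseudo_metric}) is sound and essentially parallel to the paper's reduction (0-hyperbolic $+$ connected, via Evans). The real divergence is in how you produce the geodesic from $0$ to $x$: you try to prove density of $\{d(0,y):y\in\llb 0,x\rrb\}$ in $[0,d(0,x)]$ directly from the recursion, whereas the paper deliberately postpones this to Section~\ref{sub:geodesics_length}, proving that $d(x,y)=\ell(\llb x,y\rrb)$ for a genuine (countably additive, non-atomic) Borel measure $\ell$ obtained from the $\psi$-Hausdorff measure via Graf--Mauldin--Williams (Lemma~\ref{lem:measure_geodesic}, Proposition~\ref{pro:length_measure}); continuity of $s\mapsto\ell(\llb 0,x\rrb\cap[0,s])$ then rules out gaps for free.

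The gap in your sketch is exactly at the point where the paper invokes that measure. Your ``inter-subtree'' claim is the statement $d(0,t_{u(j+1)})=\lim_k d(0,t_{ujk})$, equivalently $d(\ju(t),t)=0$ for every local minimum $t$. The left-hand side is defined by the martingale limit \eqref{def:distance}, the right-hand side by \eqref{eq:def_geodesic_to_zero}, and the ``nested structure of the recursion'' only yields the inequality $\lim_k d(0,t_{ujk})\le d(0,t_{u(j+1)})$: writing $D_v$ for the limiting mass of the subtree at node $v$, the identity you need is $D_{uj}=\sum_{l\ge 0}D_{ujl}$, an interchange of a limit with an infinite sum over the countably many children, and Fatou gives only $\ge$ for $D_{uj}$. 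This is precisely a ``no mass loss'' / uniform-integrability statement for the recursive martingale of Lemma~\ref{lem:dist_martingale} (a.s.\ convergence of a nonnegative martingale does not provide it), so calling it a ``matching identity forced by the recursion'' assumes the crux rather than proving it. It can be repaired — e.g.\ an $L^2$-boundedness computation for $d_n(0,x)$ (using $\E{\sum_{|u|=n}m_u}=2^{-n}$) gives UI, and applying conditional UI at every node yields $D_{uj}=\sum_l D_{ujl}$ a.s.\ simultaneously for all nodes — but some such argument, or the paper's measure-theoretic route, must be supplied. A secondary, lesser issue: your ``intra-subtree'' step, $\sup_{|u|=n}d(t_u,\xi_u)\to 0$, is a supremum over countably many nodes per level, so ``finite mean plus $\max_{|u|=n}m_u\to 0$'' is not enough; you need tail bounds and Borel--Cantelli (as in the proof of Lemma~\ref{lem:continuity_distance}) or a domination by subtree masses combined with the additivity above.
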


\begin{rem}
{\bf i)} We define $\sT^\circ$ to ensure that $(\sT,d)$ is connected. We will see later that a.s.~$\sT^\circ=[0,1]$.

\noindent {\bf ii)} It is plausible that $(\sT^\circ,d)$ is already complete; we do not have a short argument for either direction, and we did not try to investigate further since there is no real influence on what follows.
\end{rem}

\begin{proof}[Proof of Proposition~\ref{pro:cmt_real-tree}] By the four-point condition in Lemma~\ref{lem:pseudo_metric} \emph{ii)} and Lemma 3.10 of \cite{Evans2005}, $\sT$ is $0$-hyperbolic \cite[see also][]{Chiswell2001}. Then, by Theorem 3.40 of \cite{Evans2005}, it suffices to prove that $\sT$ is connected to complete the proof.

We show that $\sT$ is path-connected; this relies on the fact, proved in Section~\ref{sub:geodesics_length}, that there exists a measure $\ell$ on $\sT$ such that for all $x,y\in [0,1]$ we have $d(x,y)=\ell(\llb x,y\rrb)$. Let $\pi$ denote the canonical projection from $[0,1]$ onto $\sT$. For any $r\in [0,d(0,x)]$, let $x_r:=\sup\{s\in \llb 0,x\rrb: d(0,s)\le r\}$. Then we claim that the map $\phi$ given by $\phi(r)=\pi(x_r)$ is an isometry from $[0,d(0,x)]$ to $\sT$. To see this, note first that since $d(0,s)$ is non-decreasing for $s\in \llb 0,x\rrb$, and the set  
$\{s\in \llb 0,x\rrb: d(0,s)\le r\}$ is closed, we have $d(0,\phi(r))=d(0,x_r)\le r$. On the other hand, for $s\in \llb 0,x\rrb$, we have $d(0,s)=\ell(\llb 0,s\rrb) = \ell(\llb 0,x\rrb \cap [0,s])$; since $\ell(\llb 0,x\rrb)<\infty$ the right-hand side is continuous if we consider $s\in [0,1]$. 
It follows that $d(0,\phi(r))=r$, and that $\phi$ is an isometry. Therefore, for every $x$, there is a geodesic from $0$ to $x$, and $\sT$ is path-connected and then connected.
\end{proof}

The following consistency property will be useful. It implies in particular that the pairwise distances may be defined using only certain suitable sub-excursions of $\exc$. 

\begin{lem}[Restriction and consistency]\label{lem:def_consistency}
For $x\in [0,1]$, let $(t_i)_{i\ge 0}$ denote the vertices of $\cV_x\cap \sL$, and $z_i=z_i(x)$, $i\ge 0$, the corresponding intercepts. Then
\begin{compactenum}[i)]
  \item for any $i\ge 0$, we have 
  \[\llb 0,x\rrb = \bigcup_{0\le j<i} \llb t_j,t_{j+1}\rrb \cup \llb t_i,x\rrb 
  \qquad \text{and} \qquad 
  d(0,x)=\sum_{0\le j<i} d(t_j,t_{j+1})+ d(t_i,x)\,.\]
  \item for every $i\ge 0$ there exists a vector $\bU_i$ constructed from $\bU$ such that, almost surely, the restriction of $\CMT(\exc, \bU)$ to $\pi([t_i,z_i])$ is isometric to $\CMT(h_i, \bU_i)$, where $h_i$ is the excursion defined in \eqref{eq:decomp_in_excursions}. 

\end{compactenum}
\end{lem}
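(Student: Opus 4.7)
The plan is to derive both parts from the recursive definition of $\CMT(\exc,\bU)$, leveraging the restriction properties established in Lemmas~\ref{lem:geodesic_restriction} and~\ref{lem:intersection_arcs}.

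For \emph{i)}, I would first observe that every $t_j\in\cV_x\cap\sL$ lies in $\llb 0,x\rrb$ by Lemma~\ref{lem:geodesic_to_zero}, and, applying the same lemma to $t_{j+1}$, that $t_j\in\llb 0,t_{j+1}\rrb$ as well. Lemma~\ref{lem:geodesic_restriction} then gives $t_j\wedge t_{j+1}=t_j$, $\llb 0,t_j\rrb=\llb 0,x\rrb\cap[0,t_j]$, and $\llb 0,t_{j+1}\rrb=\llb 0,x\rrb\cap[0,t_{j+1}]$. Plugging these into the definition~\eqref{eq:def_distance_xy} yields $\llb t_j,t_{j+1}\rrb=\llb 0,x\rrb\cap[t_j,t_{j+1}]$ and $d(t_j,t_{j+1})=d(0,t_{j+1})-d(0,t_j)$, and similarly $\llb t_i,x\rrb=\llb 0,x\rrb\cap[t_i,x]$ and $d(t_i,x)=d(0,x)-d(0,t_i)$. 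Taking the union of the sets over $0\le j<i$ and telescoping the distances gives the two identities.

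For \emph{ii)}, the crucial structural observation is that adding an affine function to $\exc|_{[t_i,z_i]}$ does not alter the set of vertices of any of its convex minorants; it only shifts the slope of every face by the same constant. Since $h_i(s)=\exc(t_i+s)-\exc(t_i)-\gamma_{i-1}s$ is exactly such an affine modification of $\exc|_{[t_i,z_i]}$ translated to start at $0$, the convex minorant of $h_i$ on $[0,y-t_i]$ for any $y\in[t_i,z_i]$ has vertices equal to the $-t_i$ translates of the vertices of $c_y(\cdot,\exc)$ lying in $[t_i,y]$. The slopes produced by the construction applied to $h_i$ differ from those produced from $\exc$ at the corresponding vertices by the constant $\gamma_{i-1}$, but this constant gets absorbed at the very next level of recursion: the sub-excursion above each face, being $\exc(t_i+s)-\exc(t_i)-\gamma_i s$ on the $\exc$ side and $h_i(s)-(\gamma_i-\gamma_{i-1})s$ on the $h_i$ side, is the same function on both sides (modulo the shift by $t_i$). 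The uniform vector $\bU_i$ is defined by transferring uniforms via the association maps: if $j$ and $j_i$ denote the association maps (Remark~\ref{rem:association_map}) for $\exc$ and $h_i$, set $(\bU_i)_{j_i(s)}:=U_{j(t_i+s)}$ for each local minimum $s$ of $h_i$, which is a measurable function of $\bU$ and the underlying paths.

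The main obstacle, though more bookkeeping than conceptual, is to turn the above matching into a clean induction: at each depth $n$, one must verify that the vertices $t_u$, intercepts, uniform-random-point positions $\xi_u$, and sub-excursions $e_u$ produced by the two recursions agree under the shift $s\mapsto s-t_i$. Granted this, the shift sends $\llb t_i,y\rrb$ bijectively onto its counterpart for $h_i$, and the equality of distances follows first on $\sL\cap[t_i,z_i]$ from~\eqref{def:distance} and Lemma~\ref{lem:dist_martingale} (the partial sums $d_n$ are built from the $m_u^{1/2}$, which match), then for all $y\in[t_i,z_i]$ via the extension~\eqref{eq:def_geodesic_to_zero}. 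Combined with the invariance of Lebesgue measure under translation, this yields the desired measure-preserving isometry between the restriction of $\CMT(\exc,\bU)$ to $\pi([t_i,z_i])$ and $\CMT(h_i,\bU_i)$.
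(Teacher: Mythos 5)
Your proposal is correct and follows essentially the same route as the paper: part \emph{i)} by decomposing $\llb 0,x\rrb$ at the vertices $t_j$ and telescoping via Lemma~\ref{lem:geodesic_restriction}, and part \emph{ii)} by transferring the uniforms through the association maps of $\exc$ and $h_i$ and using that removing the linear drift $\gamma_{i-1}$ preserves the convex-minorant recursion. Your explicit verification that the slope shift is absorbed into identical sub-excursions at the next level is the detail the paper leaves implicit, but it is the same argument.
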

\begin{proof}\emph{i)} By definition, $t_i\in \llb 0,x\rrb$ so that decomposing $\llb 0,x\rrb$ on $[0,t_i] \cup [t_i,x]$, it follows immediately that $\llb 0,x\rrb= \llb 0, t_i\rrb \cup \llb t_i, x\rrb$. A straightforward induction yields the claim. 

\emph{ii)} By Lemma~\ref{lem:intercept}, $t_i\in \cV_y$ so that $t_i\in \llb 0,y\rrb\cap \llb 0, x\rrb$. It follows that $t_i\le x\wedge y$. For the interval $[t_i,z_i]\subseteq [0,1]$, we now define a sequence $\bU_i$ from $\bU$ as follows. Recall Remark~\ref{rem:association_map} about the association map, and let $(I_j)_{j\ge 1}$ the enumeration of the intervals with rational end points there. Recall also the definition of $h_i$ in \eqref{eq:decomp_in_excursions}. We denote by $j(\exc, \cdot)$ and $j(h_i, \cdot)$ the association maps of $\exc$ and $h_i$ respectively. For every $t\in \sL(\exc)$, then $t-t_i\in \sL(h_i)$ (the set of local minima is a.s.\ preserved by the removal of a linear drift). Let 
\[
U_{i,k}:=
\left\{
\begin{array}{ll}
U_{j(\exc,t+t_i)} & \text{ if } k=j(h_i,t) \text{ for some } t\in \sL(h_i)\\
0            & \text{ otherwise}\,.     
\end{array}  
\right.
\]
Let $\bU_i=(U_{i,k})_{k\ge 1}$. Then, the restriction of $\CMT(\exc,\bU)$ to $\pi([t_i,z_i])$ is isometric to $\CMT(h_i, \bU_i)$. Note that the components of $\bU_i$ that we have set to $0$ above are never used in the construction; if one wants to enforce that $\bU_i$ has the same distribution as $\bU$, one can instead use independent uniform random variables to complete the definition of $\bU_i$.  
\end{proof}

\subsection{Geodesics and the length measure} 
\label{sub:geodesics_length}

In this section, we show that the distance $d(x,y)$ is actually a measurable function of the set $\llb x,y\rrb$. Let $\psi$ be the function defined by $\psi(r)=\sqrt{r|\log|\log r||}$ for $r>0$, and let $m^\psi$ denote the Hausdorff measure constructed on $\R$ using $\psi$ as a gauge function. Recall that the $\psi$-Hausdorff measure $m^\psi$ of a Borel set $E\subseteq \R$ is defined by \cite{Falconer1986,Falconer1990a,Mattila1999a}
\[m^\psi(E):=\lim_{\delta\to 0+} \inf\bigg\{\sum_{i\ge 1} \psi(A_i): E\subseteq \bigcup_{i\ge 1} A_i, |A_i|<\delta\bigg\}\,,\]
where the $A_i$ are intervals and $|A_i|$ are their lengths. 

For any $x,y\in [0,1]$, the distances between pairs of points of $\llb x,y\rrb$ naturally define a measure as follows: for any $z,t\in \llb x,y\rrb$, we have $\llb z,t\rrb\subseteq \llb x,y\rrb$ and we let $\ell_{x,y}(\llb z,t\rrb)=d(z,t)$. More generally, for any compact interval $A\subseteq [0,1]$, we let $\ell^\circ_{\llb x,y\rrb}(A)=\ell^\circ_{\llb x,y\rrb}(\llb x,y\rrb \cap A)=d(\inf \llb x,y\rrb \cap A, \sup \llb x,y\rrb \cap A)$. This defines $\ell^\circ_{\llb x,y\rrb}$ uniquely as a Borel measure on $[0,1]$. 

\begin{lem}\label{lem:measure_geodesic}Let $V$ be a random variable with uniform distribution independent of $(\exc,\bU)$. There exists a constant $a>0$ such that,
 with probability one, for any Borel set $A\subseteq [0,1]$, we have $\ell_{\llb 0,V\rrb}(A)=a \cdot m^\psi(A \cap \llb 0,V\rrb)$. In particular, $d(0,V)=a \cdot m^\psi(\llb 0,V\rrb)$.
\end{lem}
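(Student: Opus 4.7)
The plan is to first verify that $\ell_{\llb 0,V\rrb}$ extends to a well-defined finite Borel measure on $[0,1]$ supported on $\llb 0,V\rrb$ with total mass $d(0,V)$, and then identify it with $a\cdot m^\psi$ restricted to $\llb 0,V\rrb$ via a Hausdorff-measure density argument. Well-definedness follows from the consistency property of Lemma~\ref{lem:def_consistency}\,i), which makes the set function $\ell^\circ_{\llb 0,V\rrb}$ additive on the algebra of compact intervals, after which Carath\'eodory extension produces the desired Borel measure. Finiteness of the total mass amounts to $d(0,V)<\infty$ a.s., which is guaranteed by the martingale convergence in Lemma~\ref{lem:dist_martingale}.

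For the identification, the natural tool is the Rogers--Taylor density comparison theorem for Hausdorff measures: it is enough to prove that for $\ell_{\llb 0,V\rrb}$-almost every $x\in \llb 0,V\rrb$,
\[D(x):=\lim_{r\to 0^+} \frac{\ell_{\llb 0,V\rrb}([x-r,x+r])}{\psi(r)}\]
exists and equals a fixed deterministic constant $a>0$. The gauge $\psi(r)=\sqrt{r|\log|\log r||}$ is the Taylor--Wendel gauge for stable-$\tfrac12$ subordinator ranges, which is the expected scaling since $d(0,V)$ is built as a limit of sums of half-powers $m_u^{1/2}$ of the face lengths, and the one-level Dirichlet$(\tfrac12,\tfrac12,\tfrac12)$ splitting from Lemma~\ref{lem:convex_minorant_law2} makes $m_u$ decay geometrically in $|u|$ with the exponent of a stable-$\tfrac12$ random walk in logarithmic coordinates.

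To evaluate the density, I would use a size-biased sampling argument: picking $x$ according to $\ell_{\llb 0,V\rrb}$ corresponds to selecting a uniform point along the geodesic from $0$ to $V$ in $\sT$. A Bismut-type decomposition of $\exc$ at the chosen point then describes the nested sequence of convex-minorant faces containing $x$, each splitting according to the Dirichlet$(\tfrac12,\tfrac12,\tfrac12)$ law of Lemma~\ref{lem:convex_minorant_law1}; the density $D(x)$ is read off from the resulting multiplicative cascade along the branch through $x$. The main obstacle is the technical execution: establishing sharp two-sided density bounds at the iterated-logarithm scale requires delicate moment and concentration estimates on the cascade martingale along a random branch (in the spirit of Perkins' identification of Brownian local time with a Taylor-type Hausdorff measure), and the Palm/size-biased picture must be controlled uniformly across scales in order to exchange the radius-$r$ limit with the $\ell$-a.e. quantifier.
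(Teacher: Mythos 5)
Your overall strategy (extend $\ell_{\llb 0,V\rrb}$ to a Borel measure, then identify it with $a\cdot m^\psi$ through a density comparison) diverges from the paper, which instead observes that, by Lemma~\ref{lem:convex_minorant_law1}, the law of $\llb 0,V\rrb$ is that of a random recursive Cantor set built from i.i.d.\ Dirichlet$(\tfrac 12,\tfrac 12,\tfrac 12)$ splittings, and then invokes Theorems~5.1 and~5.5 of \cite{GrMaWi1988a} together with the construction measure of \cite{MaWi1986a}: these give at once $m^\psi(C)\in(0,\infty)$ a.s.\ and the exact proportionality between the construction measure (which is $\ell^\circ_{\llb 0,V\rrb}$) and $m^\psi$ restricted to $C$, with an unidentified constant $a$. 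Your route is not a shortcut to the same place: it is essentially an attempt to reprove a Perkins-type exact Hausdorff measure theorem from scratch, and as written it contains a genuine gap.

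The gap is the density step. With the gauge $\psi(r)=\sqrt{r|\log|\log r||}$, the pointwise limit $D(x)=\lim_{r\to 0^+}\ell_{\llb 0,V\rrb}([x-r,x+r])/\psi(r)$ should not be expected to exist: at this iterated-logarithm scale the ratio oscillates, and only the upper density $\limsup_{r\to 0^+}\ell([x-r,x+r])/\psi(r)$ is governed by a law-of-the-iterated-logarithm--type statement (this is exactly the situation for Brownian local time on the zero set, where Perkins' theorem \cite{Perkins1981a} concerns the a.e.\ value of the upper density, not a limit). Replacing your limit by a limsup does not rescue the argument cheaply: the Rogers--Taylor comparison theorems then only yield two-sided comparability $c_1 m^\psi \le \ell \le c_2 m^\psi$ with covering-type constants, whereas the lemma requires exact proportionality $\ell = a\, m^\psi$ for a single constant. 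To get that by your method you would need (i) a uniform-across-scales identification of the exact a.e.\ value of the upper $\psi$-density under the size-biased (Palm) measure along the branch, and (ii) a density theorem with sharp constant $1$ adapted to centered intervals and this gauge -- together this is the full content of a Perkins-level result, not the ``technical execution'' your proposal defers. The paper's reduction to the random recursive construction framework of \cite{GrMaWi1988a,MaWi1986a} is precisely what makes these delicate density estimates unnecessary (the only adjustment needed there being that the two offspring intervals share an endpoint, which is harmless since $m^\psi$ does not charge countable sets).
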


\begin{rem}It would be possible to identify the constant $a$ using Theorem~1 of \citet{Perkins1981a} who strengthened the results of \citet{TaWe1966a} by (among others) identifying the multiplicative constant between the $\psi$-Hausdorff measure and the local time for the zero set of Brownian motion. However, we did not pursue this further.
\end{rem}

\begin{proof}For $V$ uniform on $[0,1]$, the Cantor set $\llb 0,V\rrb$ has a recursive structure that is tractable with the tools developed by \citet*{GrMaWi1988a} and \citet{MaWi1986a}, which will allow us to compare $m^\psi(\llb 0,V\rrb)$ and $d(0,V)$.

Let $t_1=t_1(V)$ be the location of the first vertex of the convex minorant of $\exc$ on the interval $[0,V]$, and let $z_1=z_1(V)$. Then, by Lemma~\ref{lem:convex_minorant_law1}, 
\[(t_1,z_1-t_1, 1-z_1) \sim \text{Dirichlet}(\tfrac 12, \tfrac 12, \tfrac 12)\,,\]
so that, conditionally on $(t_1,z_1)$, $V$ is uniform in $(t_1,z_1)$. On the other hand, the random jump $\xi_1=\ju(t_1)$ is uniform in $(0,t_1)$ and independent of the rest. This implies that the random Cantor set $\llb 0,V \rrb$ has the same distribution as $C$ constructed as follows. 
Let $\cU_2:=\bigcup_{n\ge 0}\{1,2\}^n$, and let $(\Delta_1(u),\Delta_2(u), \Delta_3(u))$, $u\in \cU_2$, be i.i.d.\ copies of a Dirichlet$(\tfrac 12, \tfrac 12, \tfrac 12)$ random vector $(\Delta_1,\Delta_2,\Delta_3)$.  Set $C_\varnothing=[0,1]$ and, for each $u\in \cU_2$, let 
\[C_{u1}:=[\inf C_u, \inf C_u + |C_u|\cdot \Delta_1(u)],
\qquad \text{and} \qquad 
C_{u2}:= [\sup C_{u1}, \sup C_{u1}+ |C_u| \cdot \Delta_2(u)] \,.
\]

Observe that, for each $u$, $C_{u1}$ and $C_{u2}$ are two intervals in $C_u$, with disjoint interior. Then, for $n\ge 0$, we set $C^n=\bigcup_{|u|=n} C_u$ and $C=\bigcap_{n\ge 0} C^n$. Note in particular that no additional randomness is needed, that would correspond to the point $V$: with this definition $\sup C$ is uniformly distributed on $[0,1]$. 

The law of $(\Delta_1,\Delta_2)$ is explicit and its density $\rho(x_1,x_2)$ is given by
\[\rho(x_1,x_2)=x_1^{-1/2}x_2^{-1/2}(1-x_1-x_2)^{-1/2} \frac{\Gamma(3/2)}{\Gamma(1/2)^3} \cdot \I{x_1+x_2\le 1}\,.\] 
Theorem~5.1 of \cite{GrMaWi1988a} applies: one easily verifies that for $\alpha=1/2$ we have $\Ec{\Delta_1^\alpha+\Delta_2^\alpha}=1$, $\pc{\Delta_1^\alpha+\Delta_2^\alpha = 1} =0$, and $\Ec{1/\min\{\Delta_1^\nu,\Delta_2^\nu\}}\le 2 \Ec{\Delta_1^{-\nu}}<\infty$ for all $\nu\in (0,\tfrac 12)$. Furthermore, Condition (5.1) of \cite{GrMaWi1988a} is satisfied for the point $(x_1,x_2)=(\tfrac 12, \tfrac 12)$, since the density $\rho$ is bounded away from zero uniformly. It follows that with probability one, $m^\psi(C)\in (0,\infty)$. Now, Theorem~5.5 there does not directly apply since the $C_{u1}$ and $C_{u2}$ intersect for every $u$, but this is only at one point, and one easily verifies that the proof there still holds since $m^\psi$ assigns measure zero to any countable collection of points. We conclude that there exists a constant $a>0$ such that $d(0,V) = a \cdot m^\psi(C)$. 

Furthermore, the measure $a \cdot m^\psi(\,\cdot \cap C)$ coincides with the construction measure $\nu$ of \citet{MaWi1986a}, which is easily seen to correspond here to the measure $\ell^\circ_{\llb 0,V\rrb}$. 
For an interval $A\subseteq [0,1]$, the sequence 
\[\nu_n(A):=\sum_{|u|=n, C_u\cap A\neq \varnothing } |C_u|^{1/2}\]
almost surely converges to a limit value $\nu(A)$. This defines the Borel measure $\nu$ on $[0,1]$ of total mass $d(0,V)$.
The fact that $\nu(A)=\ell^\circ_{\llb 0,V\rrb}(A)$ should by now be straightforward. 
\end{proof}

The measures $\ell^\circ_{\llb x,y\rrb}$, $x,y\in [0,1]$, are actually the restrictions of a general measure on $[0,1]$ which projects to the length measure on the convex minorant tree. There is a pre-skeleton on $[0,1]$ which is defined by $\Skel([0,1])=\cup_{x\in \sL}\rrb 0,x\llb = \cup_{x,y\in \sL} \rrb x,y\llb$. 
Let $\ell^\circ$ be the Borel sigma-finite measure on $[0,1]$ uniquely defined by
\begin{enumerate}[i)]
  \item $\ell^\circ(\Skel([0,1])^c)=0$, and 
  \item for every $x,y\in \sL$, and every interval $A$ of $[0,1]$, $\ell^\circ(A \cap \llb x,y\rrb)=\ell^\circ_{\llb x,y\rrb}(A)$.
\end{enumerate}
Then the push-foward measure $\ell=\pi_* \ell^\circ$ is the length measure on the convex minorant tree $\CMT(\exc,\bU)$. Finally, we verify that this corresponds to the push-forward of $m^\psi$ (up to a multiplicative constant). This is essentially just the fact that the skeleton $\Skel([0,1])$ is a countable union of segments, that we can rewrite in terms of a sequence of i.i.d.\ uniform points on $[0,1]$

\begin{prop}\label{pro:length_measure}
Let $(V_i)_{i\ge 1}$ be i.i.d.\ uniform on $[0,1]$, also independent of $(\exc,\bU)$. Then
\begin{compactenum}[i)]
  \item $\skel([0,1]) \subseteq \cup_{i\ge 1} \llb 0, V_i\rrb$, and 
  \item with the constant $a>0$ of Lemma~\ref{lem:measure_geodesic},  the measures $\ell^\circ$ and $a\cdot m^\psi$ almost surely coincide.
\end{compactenum}
\end{prop}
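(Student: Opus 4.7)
My plan is to prove \emph{i)} by a density argument on the uniformly distributed points $(V_i)$, and \emph{ii)} by patching together the identity from Lemma~\ref{lem:measure_geodesic} across the skeleton.

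For \emph{i)}, fix $x \in \sL$. By Lemma~\ref{lem:intercept}, $x \in \cV_y$ for every $y \in [x, \zi(x)]$, and Lemma~\ref{lem:geodesic_to_zero} then gives $x \in \llb 0, y \rrb$. Since $\sL$ is countable, $\zi(x) > x$ a.s.\ for each $x \in \sL$, and $(V_i)$ is a.s.\ dense in $[0,1]$, we have a.s.\ that for every $x \in \sL$ there is some index $i$ with $V_i \in [x, \zi(x)]$, whence $x \in \llb 0, V_i \rrb$. Now an arbitrary $y \in \Skel([0,1])$ satisfies $y \in \rrb 0, x \llb$ for some $x \in \sL$; for this $x$ and the associated $V_i$, Lemma~\ref{lem:geodesic_restriction} applied to $V_i$ and $x$ gives $\llb 0, x \rrb \subseteq \llb 0, V_i \rrb$, and so $y \in \llb 0, V_i \rrb$. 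This proves the inclusion.

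For \emph{ii)}, the strategy is to apply Lemma~\ref{lem:measure_geodesic} to each $V_i$ separately and then patch. By that lemma, a.s.\ for every $i$ and every Borel set $A$, $\ell^\circ_{\llb 0, V_i \rrb}(A) = a \cdot m^\psi(A \cap \llb 0, V_i \rrb)$. I then introduce a Borel partition $(D_i)_{i \ge 1}$ of $\bigcup_i \llb 0, V_i \rrb$ by setting $D_i := \llb 0, V_i \rrb \setminus \bigcup_{j < i} \llb 0, V_j \rrb$, which by part \emph{i)} covers all of $\Skel([0,1])$. The main step is to verify that the restriction of $\ell^\circ$ to each $D_i$ agrees with the restriction of $\ell^\circ_{\llb 0, V_i \rrb}$ to $D_i$; this uses the characterization of $\ell^\circ$ via its values on $\llb x, y \rrb$ for $x, y \in \sL$, together with the decomposition of $\llb 0, V_i \rrb$ into pieces $\llb t_j, t_{j+1} \rrb$ with endpoints in $\sL$ provided by Lemma~\ref{lem:def_consistency}\emph{i)}. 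Once this identification is established, summing over $i$ yields, for every Borel $A$,
\[ \ell^\circ(A) = \sum_{i \ge 1} \ell^\circ_{\llb 0, V_i \rrb}(A \cap D_i) = \sum_{i \ge 1} a \cdot m^\psi(A \cap D_i) = a \cdot m^\psi\!\left(A \cap \bigcup_i \llb 0, V_i \rrb\right). \]
Combined with part \emph{i)} and the fact that $\ell^\circ$ is supported on $\Skel([0,1])$, this gives the desired equality of measures.

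The main obstacle will be the identification of $\ell^\circ\big|_{D_i}$ with $\ell^\circ_{\llb 0, V_i \rrb}\big|_{D_i}$. This requires handling the boundary point $0 \notin \sL$ via an approximation argument exploiting the density of $\sL \cap \llb 0, V_i \rrb$ in $\llb 0, V_i \rrb$ (following from Lemma~\ref{lem:geodesic_to_zero}\emph{iii)}), and checking the consistency of the family $\{\ell^\circ_{\llb 0, V_i \rrb}\}_{i \ge 1}$ on overlaps using the branching structure from Lemma~\ref{lem:intersection_arcs}.
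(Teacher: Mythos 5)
Your proposal is correct and follows essentially the same route as the paper: part \emph{i)} by showing each local minimum $x\in\sL$ is a vertex of the convex minorant of $[0,V_i]$ for some $i$ (the paper phrases this as $x=t_j(V_i)$ for $V_i\in[t_j,z_j)$, you phrase it via $V_i\in[x,\zi(x)]$ and Lemma~\ref{lem:intercept}) plus the restriction property, and part \emph{ii)} by the same disjointification $D_i=\llb 0,V_i\rrb\setminus\bigcup_{j<i}\llb 0,V_j\rrb$ and summing the identity of Lemma~\ref{lem:measure_geodesic} over $i$. The identification of $\ell^\circ|_{D_i}$ with $\ell^\circ_{\llb 0,V_i\rrb}|_{D_i}$ that you defer as the ``main obstacle'' is exactly the step the paper treats as immediate from the defining property of $\ell^\circ$ (its restrictions to $\llb x,y\rrb$, $x,y\in\sL$) together with the decomposition of Lemma~\ref{lem:def_consistency}\,\emph{i)}, so your outline of how to close it is the intended argument.
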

\begin{proof}
\emph{i)} Fix any $x\in \sL$. Almost surely, there is an $i\in \N$, such that $x=t_i(x)$ and $z_i(x)>t_i(x)$. For any $y\in [t_i,z_i)$ we have $x=t_i(x) = t_i(y)$. In particular, a.s.\ there exist infinitely many $n\ge 1$ such that $x=t_i(V_n)$ for some $i\ge 1$. It follows that $\Skel([0,1])\subset \cup_{n\ge 1} \llb 0, V_n\rrb$. 

\emph{ii)} For each $n\ge 1$, let $B_n := \llb 0, V_n\rrb \setminus \bigcup_{1\le j<n} \llb 0,V_j\rrb$. Then, $\Skel([0,1])$ is contained in the union of the $B_n$, $n\ge 1$, which are disjoint sets, and for any interval $A\subseteq [0,1]$, we have
\begin{align*}
\ell^\circ(A) = \sum_{n\ge 1} \ell^\circ(A \cap B_n) = \sum_{n\ge 1} a\cdot m^\psi(A \cap B_n) = m^\psi(A)\,,
\end{align*}
so that the measures $\ell^\circ$ and $a\cdot m^\psi$ indeed concide.
\end{proof}

\begin{rem}\label{rem:one-point_function}
We note the decomposition for the distance $d(0,V)$ identifies its distribution: indeed, by Brownian scaling if $D$denotes the random variable $d(0,V)$, and $D_{1}$ and $D_2$ are two independent copies of $D$, then we have 
\[D \eqdist \sqrt{\Delta_1} D_1 + \sqrt{\Delta_2} D_2\,\]
which implies that, up to a deterministic multiplicative constant, $D$ has the Rayleigh distribution (see for instance Proposition~2.1 of \cite{AlGo2015a}). This can be seen as a first step towards the identification of the law of $\CMT(\exc,\bU)$; see Section~\ref{sec:a_new_point_of_view_on_the_continuum_random_tree} for a full proof of this fact.
\end{rem}


\subsection{Recursive convex minorants of Brownian motion with parabolic drift} 
\label{sub:recursive_convex_minorants_BPT}

We now move on to the definition of the main object of the paper, the tree $\CMT(X,\bU)$. A straightforward application of the Girsanov Theorem shows that, for any $x\in \R_+$, the law of $(X_{s\in [0,x]})$ is absolutely continuous with respect to that of $(W_s)_{s\in [0,x]}$.
As a consequence, ``local properties'' that hold almost surely for $W$ also hold almost surely for $X$ as well.
Since we are only interested in a definition in this section, we may focus on the case of a Brownian motion. In the following, we use the same notation as for the Brownian excursion, we believe that it should not cause any confusion.

\medskip
\noindent
\noindent\textbf{The convex minorant tree associated with a Brownian motion.} We consider $(W_s)_{s\ge 0}$ a standard Brownian motion. 
Fix $x\in \R_+$ and consider the recursive convex minorants of $W$ on $[0,x]$. Recall that $\cV_x(W)$ a.s.\ has an accumulation point at $0$; let $(t_i)_{i\in \Z}=(t_i(x,W))_{i\in \Z}$ as defined in Section~\ref{sec:convex_minorant_of_brownian_paths}. 
The sequence $(t_i)_{i\in \Z}$ is bi-infinite, but one can write for a fixed $i$
\begin{equation}\label{eq:def_distance_brownian-motion}
\llb 0,x\rrb := \{0\} \cup \bigcup_{j:j\le i} \llb t_{j-1},t_{j}\rrb \cup \llb t_i,x\rrb\,.  
\end{equation}
By Lemma~\ref{lem:W_decomp}, the sets $\llb t_{j-1},t_j\rrb$, for $j$ such that $j\le i$ and $\llb t_i,x\rrb$ are well-defined by the construction of Section~\ref{sec:recursive_convex_minorants} (for the Brownian excursion). Furthermore, Lemma~\ref{lem:def_consistency} ensures that the value of $\llb 0,x\rrb$ is independent of $i\in \Z$, so that $\llb 0,x\rrb$ is well-defined as well. 
To define the distance $d(0,x)=d_\subW(0,x)$, we shall verify that the sum of distances given by the decomposition in \eqref{eq:def_distance_brownian-motion} converges (a priori, 0 could be at infinite distance from every point $x>0$). Observe that, still from Lemma~\ref{lem:W_decomp}, conditionally on $(t_i)_{i\in \Z}$, for any $i\in \Z$, $d(t_i,t_{i+1})$ is distributed like $|t_{i+1}-t_i|^{1/2}$ times the distance between $0$ and $1$ in $\CMT(\exc,\bU)$. Also by Theorem~1, Corollary~1 of \cite{PiRo2011a} and Brownian scaling, $(|t_{i+1}-t_i|)_{i \in \Z}$ has the same distribution as $(x_i/\sum_{j\in \mathbb{Z}} x_j)_{i\in \Z}$,  where $(x_j)_{j\in \Z}$ denote the points of a Poisson point process of intensity $e^{-x}/x dx$  on $\R_+$.
Straightforward calculation shows that $\pc{\sum_i x_i >0} = 1$ and $\Ec{\sum_i x_i^p}=\int_{0}^{+\infty} x^{p-1}e^{-x} dx<\infty$ for $p\in(0,1]$ so that a.s. $\sum_i \sqrt{x_i}<+\infty$ and  $\sum_i {x_i}<+\infty$ which implies that $\sum_{i} |t_{i+1}-t_i|^{1/2} < \infty$ almost surely. 

So, for the distance we may define $d(0,x)=d_\subW(0,x)$ by 
\[d_\subW(0,x):= \sum_{j\le i} d(t_{j-i}, t_j) + d(t_i,x) = \sum_{j\in \Z} d(t_{j-1},t_j)\,,\]
which is almost surely finite for almost all $x\in \R_+$. In particular, with probability one $d_\subW(0,x)<\infty$ for every $x\in \sL(W)$ by Lemma~\ref{lem:no_exception_convex}. 

Finally, for any $x,y>0$, we can define $d(x,y)=d_\subW(x,y)$ as follows. Writing $x\wedge y=\sup \llb 0,x\rrb \cap \llb 0,y\rrb$ as before, we have $x\wedge y \in \llb0,t\rrb$ for some $t\in \sL$, by the obvious extension of Lemma~\ref{lem:path_to_ancestors} to the case of Brownian motion. Therefore, $d(0,x\wedge y)<\infty$. We may thus define
\[d_\subW(x,y):=d_\subW(0,x)+d_\subW(0,y)-2 d_\subW(0,x\wedge y)\,,\]
and we will prove that it is a.s.\ finite for all $x,y$. 
Note also that, assuming without loss of generality that $y<x$, there exists some $i\in \Z$ such that $y\in [t_i,t_{i+1})$. In particular, $x,y\in [t_i,z_i]$ and we may equivalently define $d(x,y)$ by
\[d(t_i,x)+d(t_i,y)- 2d(t_i,x\wedge y)\,,\]
and any $i$ of which $t_i<y$ would yield the exact same value. 

\medskip
\noindent\textbf{The convex minorant tree associated with $X$: The Brownian parabolic tree. } 
At last, we consider $X$, the Brownian motion with parabolic drift. 
By absolute continuity, the sets $\llb x,y\rrb=\llb x,y\rrb_\subX$ and $d(x,y)=d_\subX(x,y)$ are also well-defined for every $x,y\in \R_+$. The triangle inequality and four-point condition are satisfied by construction (Lemma~\ref{lem:pseudo_metric}). Let $x\sim y$ if $d_\subX(x,y)=0$, and let $(\sM,d)$ denote the metric completion of the quotient metric space; define $\rho=\pi(0)$. For the mass measure, one needs some rescaling and we shall admit for now that the collection of measures $(x^{-1} \pi_* \Leb|_{[0,x]})_{x>0}$ conve{}rges weakly with probability one to a probability measure $\mu$. The proof of this fact is the topic of Section~\ref{sub:mass_measure}. Finally, we let $\CMT(X,\bU)$ denote the pointed measured complete metric space $\frak M:=(\sM,d,\mu,\rho)$, and we call it the Brownian parabolic tree.




\section{A dynamic point of view and the law of $\CMT(\exc,\bU)$} 
\label{sec:a_new_point_of_view_on_the_continuum_random_tree}





In this section, we study the convex minorant tree of a standard Brownian excursion. We prove Theorem~\ref{thm:limit_mst_surplus} in the case where $s=0$ which says that $\CMT(\exc,\bU)$ is distributed like the Brownian continuum random tree, and Theorem~\ref{thm:additive_coalescent} which relates $\CMT(\exc,\bU)$ to the additive coalescent. We are interested here in the case of excursions, and the natural range of interest for $Z^\lambda(\omega)$ is then $\lambda \in (-\infty, 0]$, and we shall therefore rather work with $(Z^{-\tau}(\omega))_{\tau \ge 0}$, which also turns out to be a cadlag process (see Lemma~\ref{lem:properties_Z}). We still occasionally use the parameterization with $\lambda$.


\subsection{A fragmentation connected to Brownian motion} 
\label{sub:a_fragmentation_connected_to_brownian_motion}

The properties of $Z^\lambda$ is intimately related to the following operators. For $\lambda \in \R$, define the operator $\Psi_\lambda$ as follows: for a function $f$ continuous on an interval $D\subseteq \R_+$, and $t \in D$ 
\begin{equation}\label{eq:def_shear}
  \Psi_\lambda f(t):=f(t)+\lambda t -\inf\{f(s)+\lambda s: s\in D, s\le t\}\,.
\end{equation}
Then, $Z^\lambda(\omega)=\{s\in D: \Psi_\lambda \omega (s) = 0\}$. 
The family of operators $(\Psi_\lambda)_{\lambda \in \R}$ enjoys the following composition property, which is a straightforward reformulation of the arguments leading to Theorem~1 i) of \cite{Bertoin2000a}. For $t\ge 0$, let $\frak S_t$ denote the shift operator defined by $\frak S_t f(s)= f(t+s)$, for all $s\ge 0$. 
\begin{lem}\label{lem:shear_composition}
  Let $f$ be a continuous function on $D\subseteq \R_+$ and suppose that, for some $\lambda \in \R$ and $t\in D$, we have $\Psi_\lambda f(t)=0$. Then, for all $h,s\ge 0$ with $t+s\in D$ one has
  \[\frak S_t \Psi_{\lambda-h} f (s) = \Psi_{\lambda-h}f(t+s) = \Psi_{-h} \frak S_t \Psi_\lambda f(s)\,.\]
  In particular, $\Psi_{\lambda-h}f(t)=0$ for all $h\ge 0$. 
\end{lem}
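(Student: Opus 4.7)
The first equality $\mathfrak{S}_t \Psi_{\lambda-h} f(s) = \Psi_{\lambda-h} f(t+s)$ is just the definition of the shift $\mathfrak{S}_t$, so the work lies in the middle equality $\Psi_{\lambda-h} f(t+s) = \Psi_{-h} \mathfrak{S}_t \Psi_\lambda f(s)$. The ``in particular'' clause will drop out by evaluating this at $s = 0$: the left side becomes $\Psi_{\lambda-h} f(t)$, while on the right $\mathfrak{S}_t \Psi_\lambda f(0) = \Psi_\lambda f(t) = 0$ by hypothesis, so $\Psi_{-h}$ of this function vanishes at $0$. My plan for the middle equality is to combine two auxiliary facts.

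Fact~1 will be a \emph{general composition identity}: for any continuous $g$ on an interval $D' \ni 0$ and any $h \ge 0$,
\[ \Psi_{-h} \Psi_\lambda g = \Psi_{\lambda - h} g. \]
Setting $m(r) := \inf_{u \le r}[g(u) + \lambda u]$, $A(r) := g(r) + (\lambda-h) r$ and $n := \inf_{[0, s]} A$, this reduces to $\inf_{r \le s}[A(r) - m(r)] = n - m(s)$. The key structural observation is that for any minimizer $r^* \in \arg\min_{[0, s]} A$, one has $m(r^*) = m(s)$: otherwise there would exist $u \in (r^*, s]$ with $g(u) + \lambda u < g(r^*) + \lambda r^*$, and since $u > r^*$ and $h \ge 0$ give $-hu \le -hr^*$, one would also obtain $A(u) < A(r^*)$, contradicting minimality. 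From $m(r^*) = m(s)$ one gets $A(r^*) - m(r^*) = n - m(s)$, which settles the $\le$ direction. For $\ge$, a short case analysis shows $A(r) - m(r) \ge n - m(s)$ for every $r \le s$: trivial when $m(r) = m(s)$ (since $A(r) \ge n$); when $m(r) > m(s)$, choose $u_s \in (r, s]$ with $g(u_s) + \lambda u_s = m(s)$, so that $n \le A(u_s) = m(s) - h u_s \le m(s) - hr$, and hence $n - m(s) \le -hr \le A(r) - m(r)$ by the pointwise bound $\Psi_\lambda g \ge 0$.

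Fact~2 will be a \emph{shift-compatibility} consequence of the hypothesis $\Psi_\lambda f(t) = 0$: for every $\mu \le \lambda$ and every $s \ge 0$ with $t + s \in D$,
\[ \mathfrak{S}_t \Psi_\mu f(s) = \Psi_\mu(\mathfrak{S}_t f)(s). \]
Indeed, $\Psi_\lambda f(t) = 0$ means $f(u) + \lambda u \ge f(t) + \lambda t$ for $u \in D \cap [0, t]$; writing $\mu = \lambda - h'$ with $h' \ge 0$ and using $u \le t$, the same inequality upgrades to $f(u) + \mu u \ge f(t) + \mu t$. Hence the running infimum of $f + \mu \cdot \mathrm{id}$ over $D \cap [0, t+s]$ is attained somewhere in $D \cap [t, t+s]$, and factoring the additive constant $\mu t$ out of the infimum recovers $\Psi_\mu(\mathfrak{S}_t f)(s)$.

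Combining: Fact~2 with $\mu = \lambda$ gives $\mathfrak{S}_t \Psi_\lambda f = \Psi_\lambda \mathfrak{S}_t f$; applying $\Psi_{-h}$ to both sides and invoking Fact~1 with $g = \mathfrak{S}_t f$ yields $\Psi_{-h} \mathfrak{S}_t \Psi_\lambda f = \Psi_{-h} \Psi_\lambda \mathfrak{S}_t f = \Psi_{\lambda-h} \mathfrak{S}_t f$; and a second use of Fact~2 with $\mu = \lambda - h$ rewrites the right-hand side as $\mathfrak{S}_t \Psi_{\lambda-h} f = \Psi_{\lambda-h} f(t + \cdot)$. The main obstacle will be Fact~1: the sign condition $h \ge 0$ is indispensable, both for the observation $m(r^*) = m(s)$ and for the bound $n \le m(s) - h r$ used in the $\ge$ direction, and the composition identity itself fails without it.
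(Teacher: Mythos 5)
Your proof is correct, and it is actually more self-contained than what the paper offers: the paper gives no argument for this lemma at all, simply describing it as a reformulation of the reasoning behind Theorem~1~i) of Bertoin's paper on the fragmentation connected to Brownian motion. Your decomposition into two facts is sound. Fact~1, the unconditional semigroup identity $\Psi_{-h}\Psi_\lambda g=\Psi_{\lambda-h}g$ for $h\ge 0$, reduces correctly to $\inf_{r\le s}[A(r)-m(r)]=n-m(s)$, and both directions check out: the observation $m(r^\ast)=m(s)$ for a minimizer $r^\ast$ of $A$ uses $h\ge 0$ exactly as you say, and the reverse inequality via $n\le m(s)-hr$ together with $\Psi_\lambda g\ge 0$ is valid (the attainment of the infima is guaranteed by continuity on the compact interval $[0,s]$). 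Fact~2 is the correct localization statement: the hypothesis $\Psi_\lambda f(t)=0$ upgrades, for any $\mu\le\lambda$, to $f(u)+\mu u\ge f(t)+\mu t$ on $[0,t]\cap D$, so the running infimum of $f+\mu\,\mathrm{id}$ over $[0,t+s]$ equals its infimum over $[t,t+s]$, which is precisely $\mathfrak S_t\Psi_\mu f=\Psi_\mu(\mathfrak S_t f)$ on the relevant domain. The chaining (Fact~2 at $\mu=\lambda$, then Fact~1 applied to $g=\mathfrak S_t f$, then Fact~2 at $\mu=\lambda-h$) yields the middle equality, the first equality is indeed just the definition of the shift, and the ``in particular'' clause follows at $s=0$ (indeed $\Psi_{-h}$ of any function vanishes at $0$, so your evaluation is even slightly more than needed). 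In substance this is the same infimum-splitting computation that underlies Bertoin's argument, but written out in full; there is no gap.
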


We now go back to the case where $f=\exc$ is a Brownian excursion and write $Z^{\lambda}=Z^\lambda(\exc)$ (in this case, $D=[0,1]$). Lemma~\ref{lem:shear_composition} implies for instance that $Z^{-\tau}=Z^{-\tau}(\exc)$ is non-decreasing in $\tau$ for the inclusion, and thus induces a fragmentation in the sense that the connected components of its complement split as $\tau$ increases. For any $x\in [0,1)$ let $I^\tau(x)$ be the maximal interval of the form $[a,b)$ containing $x$ such that for $(a,b)\cap Z^{-\tau} = \varnothing$. For $x,y\in [0,1)$, we let $x\sim_\tau y$ if $I^\tau(x)=I^\tau(y)$. Observe that, for every $\tau\ge 0$, the collection of $I^\tau(x)$ forms a partition of $[0,1)$. 

By Lemma~\ref{lem:properties_Z}, a.s, for every $\tau\ge 0$, $[0,1]\setminus Z^{-\tau}$ consists in countably many open intervals, whose lengths we denote by $F_1(\tau), F_2(\tau), \dots$ in the decreasing order. Then, let $F(\tau)=(F_i(\tau))_{i\ge 1}$. 
The main result of \citet{Bertoin2000a} is that the process $(F(\tau), \tau\ge 0)$ has the same distribution as another remarkable fragmentation introduced by Aldous and Pitman \cite{AlPi1998a}, where a Brownian continuum random tree is logged along its skeleton at the points of an (independent) Poisson point process of unit intensity; the process of interest is the sequence of sorted masses of the fragments. This shows in particular that, up to a time change, the time reversal of $(F(\tau))_{\tau\ge 0}$ is the classical standard additive coalescent. 

Although there is no obvious coupling between the two representations directly in the continuous, this shows that the fragmentation of $[0,1]$ constructed by Bertoin corresponds to a fragmentation of a certain Brownian continuum random tree. This section will show that (one choice for) this tree is the convex minorant tree $\CMT(\exc,\bU)$. We will also identify the collection of points/times where/when it should be cut and thereby, provide a coupling between the two representations. 

The rest of the section is organized as follows. In Section~\ref{sec:dynamics}, we make explicit the correspondence between $\CMT(\exc,\bU)$ and the dynamics related to the process $Z^{-\tau}$ described above. In Section~\ref{sub:the_genealogy_and_the_marked_cut_tree}, following \citet{BeMi2013a}, we introduce the cut tree which encodes the genealogy of the fragmentation $(F(\tau))_{\tau\ge 0}$. The cut tree is a crucial ingredient since it provides the link between the fragmentation and the recovery of ``the tree what was logged'' through the \emph{inverse cut tree transform} that has been studied in \cite{AdBrGo2010,BrWa2017b}. In Section~\ref{sub:distribution_CMT}, we make the connection between the cut tree, the inverse transform and $\CMT(\exc,\bU)$ and complete the proofs of Theorem~\ref{thm:limit_mst_surplus} (with $s=0$) and Theorem~\ref{thm:additive_coalescent}.

\subsection{Making the dynamics explicit}
\label{sec:dynamics}

In this section, we provide another point of view on the convex minorant tree that makes explicit its relation with the fragmentation of $[0,1]$ induced by $Z^{-\tau}=Z^{-\tau}(\exc)$.  

\begin{lem}\label{lem:fragment_of_x}Almost surely, the following holds for every point $x\in [0,1]$. Let $(t_i)_{i\ge 0}$ and $(\gamma_i)_{i\ge 0}$ be the vertices and the slopes of the convex minorant of $\exc$ on $[0,x]$. Then, setting $\gamma_{-1}=0$ for convenience,  we have for all $i\ge 0$, 
\begin{compactenum}[i)]
  \item $\inf I^{\tau}(x) = t_i$ for all $\tau \in [\gamma_{i-1}, \gamma_{i})$, and 
  \item $\sup I^{\tau}(x) = z_i$ for $\tau = \gamma_{i-1}$. 
\end{compactenum}
\end{lem}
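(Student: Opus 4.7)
The plan is to reinterpret membership in $Z^{-\tau}$ in terms of supporting lines, and then read off everything from the convex-minorant structure. Throughout I work on the almost-sure event where the Brownian-excursion version of Lemma~\ref{lem:no_exception_convex} holds uniformly in $x$. Setting $g_\tau(s) := \exc(s) - \tau s$, one has $s \in Z^{-\tau}$ iff $g_\tau(s) = \inf_{r \in [0,s]} g_\tau(r)$, equivalently iff the line of slope $\tau$ through $(s, \exc(s))$ supports $\exc|_{[0,s]}$ from below. Since $c_x$ is convex with left slope $\gamma_{j-1}$ and right slope $\gamma_j$ at each vertex $t_j$ (using $\gamma_{-1} = 0$), its subdifferential at $t_j$ equals $[\gamma_{j-1}, \gamma_j]$; combined with $\exc \ge c_x$ on $[0, x]$ and $\exc(t_j) = c_x(t_j)$, this yields the key equivalence
\begin{equation*}
t_j \in Z^{-\tau} \iff \tau \in [\gamma_{j-1}, \gamma_j].
\end{equation*}

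For part i), fix $\tau \in [\gamma_{i-1}, \gamma_i)$. The equivalence above gives $t_i \in Z^{-\tau}$, so it remains to show $(t_i, x] \cap Z^{-\tau} = \varnothing$. Since the convex minorant of $\exc$ has slopes at least $\gamma_i$ on $[t_i, x]$, any $s \in (t_i, x]$ satisfies $\exc(s) \ge c_x(s) \ge \exc(t_i) + \gamma_i(s - t_i)$, hence
\begin{equation*}
g_\tau(s) \ge g_\tau(t_i) + (\gamma_i - \tau)(s - t_i) > g_\tau(t_i) = \inf_{r \le t_i} g_\tau(r),
\end{equation*}
which forces $s \notin Z^{-\tau}$. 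Consequently $t_i$ is the largest element of $Z^{-\tau} \cap [0, x]$, so $\inf I^\tau(x) = t_i$. For part ii), take $\tau = \gamma_{i-1}$; part i) already gives $\inf I^\tau(x) = t_i$, so I only need $\sup I^\tau(x) = z_i$. By the definition of the intercept and continuity of $\exc$, $\exc(z_i) = \exc(t_i) + \gamma_{i-1}(z_i - t_i)$, i.e.\ $g_\tau(z_i) = g_\tau(t_i)$, while $g_\tau(r) > g_\tau(t_i)$ for $r \in (t_i, z_i)$ by the very definition of $z_i$. Since $t_i \in Z^{-\tau}$ controls $[0, t_i]$, this shows $z_i \in Z^{-\tau}$ and $(t_i, z_i) \cap Z^{-\tau} = \varnothing$. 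The face argument of part i), with $\gamma_i > \gamma_{i-1}$, also gives $\exc(r) > \exc(t_i) + \gamma_{i-1}(r - t_i)$ on $(t_i, x]$, whence $z_i > x$. Therefore $z_i$ is the first point of $Z^{-\tau}$ strictly to the right of $t_i$, and $\sup I^\tau(x) = z_i$.

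The only real obstacle is bookkeeping rather than substance: one must check the argument is robust when vertices accumulate at $x$, when $(\gamma_i)$ is a finite sequence, and at the boundary index $i = 0$ (where $\gamma_{-1} = 0$ and the analogous computation yields $z_0 = 1$, consistent with the definition of an excursion). All of these are handled uniformly on the good event of Lemma~\ref{lem:no_exception_convex}, since only the convex-geometric properties of $c_x$ and the elementary identity $\exc \ge c_x$ are used; the geometric argument above then goes through verbatim for every $x \in [0,1]$ simultaneously.
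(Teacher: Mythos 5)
Your argument is correct, and it takes a genuinely different route from the paper. The paper proves this lemma by induction on $i$: it checks the case $i=0$ directly from the definition of the convex minorant, and then uses the composition property of the operators $\Psi_\lambda$ (Lemma~\ref{lem:shear_composition}) applied to the shifted process $\frak S_{t_{j+1}}\Psi_{-\gamma_j}\exc$, observing that the vertices and slopes of the convex minorant of the sheared path are $(t_{j+k})_{k\ge1}$ and $(\gamma_{j+k}-\gamma_j)_{k\ge1}$, so that the base case can be re-run after each re-rooting. You instead argue directly and non-inductively: membership of $s$ in $Z^{-\tau}$ is the statement that the line of slope $\tau$ through $(s,\exc(s))$ supports $\exc$ from below on $[0,s]$, and then the supporting-line (subdifferential) property of $c_x$ at $t_i$, together with $\exc\ge c_x$ and the strict slope gap $\gamma_{i-1}<\gamma_i$ (guaranteed on the good event of Lemma~\ref{lem:no_exception_convex}), identifies exactly which points of $[0,x]\cup[x,z_i]$ lie in $Z^{-\tau}$ for $\tau\in[\gamma_{i-1},\gamma_i)$ and for $\tau=\gamma_{i-1}$. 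Your approach is more elementary and self-contained (pure convex geometry, no appeal to the shear semigroup), while the paper's induction buys consistency with the dynamic point of view it develops around $\Psi_\lambda$ and recycles the same computation at every vertex; both handle the boundary and finite-vertex cases at the same level of informality.

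One small blemish: your ``key equivalence'' $t_j\in Z^{-\tau}\iff\tau\in[\gamma_{j-1},\gamma_j]$ is false as an equivalence. Since $Z^{-\tau}$ is non-decreasing in $\tau$, once $t_j\in Z^{-\gamma_{j-1}}$ it stays in $Z^{-\tau}$ for every $\tau\ge\gamma_{j-1}$ (the correct statement is $t_j\in Z^{-\tau}\iff\tau\ge\gamma_{j-1}$; the constraint $\tau\le\gamma_j$ plays no role, because membership only tests the path on $[0,t_j]$). This does not affect your proof, since you only ever use the implication $\tau\in[\gamma_{j-1},\gamma_j]\Rightarrow t_j\in Z^{-\tau}$, which is exactly what your supporting-line computation establishes; but you should restate that display as a one-directional implication (or as the corrected equivalence).
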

\begin{proof}We work on a set $\Omega^\star$ of probability one where all the events of Lemma~\ref{lem:no_exception_convex} occur, in particular, the slopes $(\gamma_i)_{i\ge 0}$ are strictly increasing for every $x\in [0,1]$. The rest of the proof is deterministic, and we proceed by induction on $i\ge 0$. 

Write $\exc^\lambda$ for the function $s\mapsto \exc(s)+\lambda s$. 
For $i=0$, by construction of the convex minorant, for every $\tau \in [0,\gamma_0)$, $\exc^{-\tau}$ is positive on $(0,x]$ and thus $\inf I^{\tau}(x)=0=t_0$. For $\tau=\gamma_0$ we have $\exc^{-\gamma_0}(t_1)=\exc^{-\gamma_0}(z_1)=0$, and $\exc^{-\gamma_0}(s)>0$ for $s\in (t_1,z_1)$. It follows that $t_1,z_1\in Z^{-\gamma_0}$ and that $\sup I^{\gamma_0}(x)=z_1$. 

Suppose now that, for some $j\ge 0$, the claims in \emph{i)} and \emph{ii)} both hold for all $0\le i\le j$, and that $t_{j+1},z_{j+1}\in Z^{-\gamma_{j}}$. By expressing $\Psi_{-\gamma_j-h}\exc$ for $h\ge 0$ in terms of $\Psi_{-\gamma_j}\exc$, Lemma~\ref{lem:shear_composition} allows us to proceed. First note that the vertices of the convex minorant of $\Psi_{-\gamma_{j}}\exc$ on $[0,x]$ that are in $[t_{j+1},1]$ are precisely $(t_{j+k})_{k\ge 1}$ and the corresponding slopes are $(\gamma_{j+k}-\gamma_j)_{k\ge 1}$. The argument we have just used for $j=0$ applies to $\frak S_{t_{j+1}}\Psi_{-\gamma_j}\exc$ and yields that for all $h\in [0, \gamma_{j+1}-\gamma_j)$, we have $\inf I^{\gamma_j +h}(x) = t_{i+1}$ and $\sup I^{\gamma_{i+1}}(x)=z_{i+1}$. Furthermore, for $h=\gamma_{j+1}-\gamma_j$, $t_{j+2}$ and $z_{j+2}$ are both zeros of $\Psi_{-\gamma_{j+1}}\exc$, while the latter is positive on $(t_{j+2},z_{j+2})$. This completes the proof.
\end{proof}

For $x,y\in [0,1)$, define $\tau(x,y)=\sup\{\tau\ge 0: x\sim_{\tau} y\}$. Note that, by the left-continuity of $Z^\lambda$, we have $Z^{-\tau(x,y)}\cap [x,y]\ne \varnothing$. Recall the definition of $\xi_m$ from Section~\ref{sub:recursive_convex_minorants}, which is also the point $\ju(t_m)$ as defined in Remark~\ref{rem:points_absolute}. 

\begin{lem}\label{lem:cut_point-time}
Almost surely for every $x\ne y\in [0,1)$, we have the following: let $(t_i)_{i\ge 0}$ be the vertices of the convex minorant of $\exc$ on $[0,\max\{x,y\}]$. Then, $m:=\min\{i\ge 1: t_i>\min\{x,y\}\}<\infty$, and:
\begin{compactenum}[i)]
  \item $Z^{-\tau(x,y)}\cap [x,y]$ consists of the single point $\kappa(x,y)=t_m$ that we call a cut point;
  \item $\tau(x,y)=\gamma_{m-1}$;
  \item $I^{\tau(x,y)}(\min\{x,y\})=[t_{m-1}, t_m)$ and $I^{\tau(x,y)}(\max\{x,y\})=[t_m, z_m)$.
\end{compactenum} 
Furthermore, we let $\eta(x,y)=\xi_m=\ju(t_m) \in (t_{m-1},t_m)$; conditionally on $I^{\tau(x,y)}(\min\{x,y\})=S$, $\eta(x,y)$ is uniformly distributed on $S$.
\end{lem}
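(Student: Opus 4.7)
Without loss of generality assume $x<y$, and let $(t_i)_{i\ge 0}$, $(\gamma_i)_{i\ge 0}$ and $(z_i)_{i\ge 1}$ denote the vertices, slopes and intercepts of the convex minorant of $\exc$ on $[0,y]$. The proof is essentially a direct deduction from Lemma~\ref{lem:fragment_of_x}, together with one deterministic observation about the behaviour of $\exc^{-\gamma_{m-1}}$ between two consecutive vertices of the convex minorant. I work on the a.s.\ event of Lemma~\ref{lem:no_exception_convex}, so that $\cV_y$ has no accumulation point in $(0,y)$ and its elements other than $0,y$ are strict local minima.

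I would first dispatch the finiteness of $m$. Either $\cV_y$ is infinite, in which case its only possible accumulation point is $y>x$ so $t_i>x$ for all but finitely many $i$; or it is finite with largest vertex $t_k=y>x$; in either case $m=\min\{i\ge 1:t_i>x\}<\infty$. Next, I would apply Lemma~\ref{lem:fragment_of_x} to the point $y$ to pin down $\tau(x,y)$ and the interval $I^{\tau(x,y)}(y)$. For $\tau\in[0,\gamma_{m-1})$, part (i) of that lemma gives $\inf I^\tau(y)\le t_{m-1}\le x$, while $y\in I^\tau(y)$ forces $\sup I^\tau(y)>y>x$, so $x\in I^\tau(y)$ and hence $x\sim_\tau y$. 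For $\tau=\gamma_{m-1}$, combining (i) and (ii) of that lemma yields $I^{\gamma_{m-1}}(y)=[t_m,z_m)$, and since $x<t_m$ we have $x\not\sim_{\gamma_{m-1}} y$. This gives (ii), namely $\tau(x,y)=\gamma_{m-1}$, and the second half of (iii).

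For (i) and the first half of (iii), the main deterministic ingredient is the claim
\[Z^{-\gamma_{m-1}}\cap[t_{m-1},t_m]=\{t_{m-1},t_m\}.\]
To prove this, note that since $t_{m-1}$ and $t_m$ are consecutive vertices of the convex minorant with slope $\gamma_{m-1}$ between them, and the excursion lies strictly above the interpolating chord on $(t_{m-1},t_m)$, the function $\exc^{-\gamma_{m-1}}(s)=\exc(s)-\gamma_{m-1}s$ is strictly greater than its common value at $t_{m-1}$ and $t_m$ on the open interval $(t_{m-1},t_m)$. Moreover, writing the convex minorant as a piecewise linear function with slopes $\gamma_i-\gamma_{m-1}$ (negative for $i<m-1$, zero on $[t_{m-1},t_m]$), I check that $\inf_{r\le s}\exc^{-\gamma_{m-1}}(r)=\exc^{-\gamma_{m-1}}(t_{m-1})$ for every $s\in[t_{m-1},t_m]$, which proves the claim. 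Combined with $x\in[t_{m-1},t_m)$ and $t_{m-1}\in Z^{-\gamma_{m-2}}\subseteq Z^{-\gamma_{m-1}}$, this yields $I^{\gamma_{m-1}}(x)=[t_{m-1},t_m)$, and intersecting with $(x,y]\subseteq(t_{m-1},z_m)$ isolates the unique cut point $\kappa(x,y)=t_m$.

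For the last statement, by Remark~\ref{rem:points_absolute}, $\ju(t_m)=t_{m-1}+U_{t_m}(t_m-t_{m-1})$ where $U_{t_m}$ is the i.i.d.\ uniform variable associated to the local minimum $t_m$ through the association map; conditionally on $\exc$ (which determines the $t_i$, $\gamma_i$, $z_i$, and hence the event $\{I^{\tau(x,y)}(x)=[t_{m-1},t_m)\}$), $U_{t_m}$ is still uniform on $[0,1]$ by independence of $\bU$ from $\exc$, giving the required uniformity of $\eta(x,y)=\ju(t_m)$ on $[t_{m-1},t_m)$.

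\textbf{Main obstacle.} I expect the deterministic claim $Z^{-\gamma_{m-1}}\cap[t_{m-1},t_m]=\{t_{m-1},t_m\}$ to be the one that needs the most care, since it requires tracking the running infimum of $\exc^{-\gamma_{m-1}}$ back through $[0,t_{m-1}]$ rather than only comparing values on $[t_{m-1},t_m]$. A minor related subtlety is handling the boundary case where $x$ itself happens to coincide with a vertex $t_{m-1}$; the statement remains valid on (the a.s.\ event containing) every $x\ne y$ once one interprets ``the single point in $[x,y]$'' as the only cut point strictly separating $x$ from $y$ in the fragmentation, and this is the reading forced by Lemma~\ref{lem:fragment_of_x}.
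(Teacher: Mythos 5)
Your proof is correct and follows essentially the same route as the paper's: both deduce i)--iii) from Lemma~\ref{lem:fragment_of_x} applied to the fragment containing $\max\{x,y\}$, tracked until it no longer contains $\min\{x,y\}$, and obtain the uniformity of $\eta(x,y)$ from the fact that $\ju(t_m)$ is uniform on $[t_{m-1},t_m]$ conditionally on $(t_{m-1},t_m)$; your deterministic claim $Z^{-\gamma_{m-1}}\cap[t_{m-1},t_m]=\{t_{m-1},t_m\}$ is simply the detail the paper leaves implicit as ``straightforward''. Your remark about the boundary case $x=t_{m-1}$ (where $x$ itself lies in $Z^{-\tau(x,y)}$) points to an imprecision already present in the statement as quantified over all pairs, not to a gap in your argument.
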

\begin{proof}The set of probability one is $\Omega^\star$ where all the events of Lemma~\ref{lem:no_exception_convex} occur for every point of $[0,1]$. The points \emph{i)} to \emph{iii)} are straightforward consequences of Lemma~\ref{lem:fragment_of_x}, applied to the fragment containing $I^\tau(\max\{x,y\})$ until the time when it does not contain $\min\{x,y\}$ any longer. The statement concerning the distribution of $\eta(x,y)$ is a consequence of fact that $\eta(x,y)$ is then $\ju(t_m)$, which is uniform in $[t_{m-1},t_m]$ conditionally on $t_{m-1},t_m$.
\end{proof}

Observe that Lemma~\ref{lem:cut_point-time} implies that, almost surely for every $x\ne y$, we have
\begin{equation}\label{eq:frag_at_cuttime}
I^{\tau(x,y)-}(x)=\bigcap_{\tau<\tau(x,y)} I^{\tau}(x) = I^{\tau(x,y)}(x) \sqcup I^{\tau(x,y)}(y)\,. 
\end{equation}

We are now ready to move on to the main objective of this section, namely proving that both $\llb x,y\rrb$ and $d(x,y)$ may be defined using an alternative binary decomposition where the intervals containing a pair of marked points are split at the corresponding cut point, just as in \eqref{eq:frag_at_cuttime} above.

Let $\cU_2=\bigcup_{n\ge 0} \{0,1\}^n$, where it is understood that $\{0,1\}^0=\{\varnothing\}$. 
Fix now $x,y\in (0,1)$. We define recursively $(\Pi_u,\tau_u,\kappa_u, A_u, B_u)_{u\in \cU_2}$, where $\Pi_u$ is an interval,  $A_u\le B_u$ are two points in the closure of $\Pi_u$, and the values $\tau_u\in \R_+$, $\kappa_u\in [0,1]$ are always such that $\tau_u=\tau(A_u,B_u)$, $\kappa_u=\kappa(A_u,B_u)$. It is understood that all these random variables depend on $x,y$, so we actually have $\Pi_u(x,y), \tau_u(x,y), \kappa_u(x,y)$, $A_u(x,y)$, $B_u(x,y)$, for $u\in \cU_2$, but we usually omit the reference to $x,y$. Set $\Pi_\varnothing=(0,1)$, $\tau_\varnothing(x,y)=\tau(x,y)$, $\kappa_\varnothing(x,y)=\kappa(x,y)$ and $A_\varnothing=\min\{x,y\}$, $B_\varnothing = \max\{x,y\}$. Let $\Pi_0=I^{\tau_\varnothing}(A_\varnothing)$ and $\Pi_1=I^{\tau_\varnothing}(B_\varnothing)$. 

Assuming that we have defined $(\Pi_u, \tau_u, \kappa_u, A_u, B_u)$ for some $u\in \cU_2$ we then set $\Pi_{u0}=I^{\tau_u}(A_u)$, $\Pi_{u1}=I^{\tau_u}(B_u)$, $A_{u0}=\min\{A_u,\eta(A_u,B_u)\}$, $B_{u0}=\max\{A_u,\eta(A_u,B_u)\}$ and $A_{u1}=\kappa_u=\inf \Pi_{u1}$, $B_{u1}=B_u$. We finally define $\tau_{ui}=\tau(A_{ui},B_{ui})$ and $\kappa_{ui}=\kappa(A_{ui},B_{ui})$ for $i\in \{0,1\}$. 

\begin{lem}\label{lem:cut_points}Almost surely for every $x,y\in[0,1]$, for every $u\in \cU_2$, we have $A_u,B_u\in \llb 0,x\rrb \cup \llb 0,y\rrb$.
\end{lem}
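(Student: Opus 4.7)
The plan is to proceed by induction on $|u|$, working on the almost sure event where Lemmas~\ref{lem:geodesic_to_zero}, \ref{lem:geodesic_restriction} and~\ref{lem:cut_point-time} all hold simultaneously for every $x,y\in[0,1]$. The base case $u=\varnothing$ is immediate: $A_\varnothing=\min\{x,y\}$ and $B_\varnothing=\max\{x,y\}$ both belong to $\{x,y\}\subseteq \llb 0,x\rrb\cup\llb 0,y\rrb$, since $x\in\llb 0,x\rrb$ and $y\in\llb 0,y\rrb$ by definition.

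For the inductive step, fix $u\in\cU_2$ with $A_u,B_u\in\llb 0,x\rrb\cup\llb 0,y\rrb$, and assume $A_u<B_u$ (if $A_u=B_u$ the recursion is trivial). Inspecting the definitions, the pairs at level $|u|+1$ satisfy $\{A_{u0},B_{u0}\}=\{A_u,\eta(A_u,B_u)\}$ and $\{A_{u1},B_{u1}\}=\{\kappa_u,B_u\}$. Hence it is enough to prove that both $\kappa_u$ and $\eta(A_u,B_u)$ lie in $\llb 0,x\rrb\cup\llb 0,y\rrb$. Without loss of generality assume $B_u\in\llb 0,x\rrb$; by Lemma~\ref{lem:geodesic_restriction} this yields $\llb 0,B_u\rrb=\llb 0,x\rrb\cap[0,B_u]\subseteq\llb 0,x\rrb$, and it therefore suffices to show that both new points lie in $\llb 0,B_u\rrb$.

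For this, I invoke Lemma~\ref{lem:cut_point-time} applied to the pair $(A_u,B_u)$: it identifies $\kappa_u=\kappa(A_u,B_u)=t_m$ as a vertex of the greatest convex minorant of $\exc$ on $[0,B_u]$, so that $\kappa_u\in\cV_{B_u}\subseteq\llb 0,B_u\rrb$ by Lemma~\ref{lem:geodesic_to_zero}. The same lemma also gives $\eta(A_u,B_u)=\ju(t_m)$, which is precisely the point $\xi_{m-1}$ in the first level of the recursive construction of $\llb 0,B_u\rrb$ associated to $B_u$ (this is the content of Remark~\ref{rem:points_absolute}~(ii), which aligns the notations $\xi_u(\cdot)$ of Section~\ref{sub:recursive_convex_minorants} with the join points $\ju(t)$). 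Lemma~\ref{lem:geodesic_to_zero}~(i) then gives $\eta(A_u,B_u)\in\llb 0,B_u\rrb$, concluding the induction.

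I do not expect a genuine obstacle here: the argument is a clean consequence of the restriction property in Lemma~\ref{lem:geodesic_restriction} together with the fact that the vertices of the convex minorants and their associated join points already lie in $\llb 0,B_u\rrb$. The only mildly delicate point is a bookkeeping one, namely matching the cut/join point pair $(\kappa(A_u,B_u),\eta(A_u,B_u))$ produced by Lemma~\ref{lem:cut_point-time} with the first-level recursive data $(t_m(B_u),\xi_{m-1}(B_u))$ used in Section~\ref{sub:recursive_convex_minorants}, which is exactly what Remark~\ref{rem:points_absolute}~(ii) is designed to handle.
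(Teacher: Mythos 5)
Your proof is correct and follows essentially the same route as the paper's: induction on $u$, with the inductive step reducing to showing $\kappa(A_u,B_u)$ and $\eta(A_u,B_u)$ lie in $\llb 0,\max\{A_u,B_u\}\rrb$ via Lemma~\ref{lem:cut_point-time} (cut point is a vertex of the convex minorant, join point is the associated $\xi$-point) and then Lemma~\ref{lem:geodesic_restriction}. Your extra bookkeeping via Remark~\ref{rem:points_absolute}~ii) and Lemma~\ref{lem:geodesic_to_zero}~i) (and the index $\xi_{m-1}$ rather than the paper's $\xi_m$) only makes explicit what the paper leaves implicit.
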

\begin{proof}This is a straightforward induction. For $u=\varnothing$, we have $\{A_\varnothing,B_\varnothing\}=\{x,y\}$ and the claim holds by definition. Assume now that it holds for some $u\in \cU$. We have $\{A_{u0}, B_{u0}, A_{u1}, B_{u1}\}=\{A_u,B_u, \kappa(A_u,B_u), \eta(A_u,B_u)\}$. By Lemma~\ref{lem:cut_point-time}, $\kappa(A_u,B_u)$ is a vertex on the convex minorant of $\exc$ on the interval $[0,\max\{A_u,B_u\}]$ and thus lies in $\llb 0, \max\{A_u,B_u\}\rrb \subseteq \llb 0,x\rrb \cup \llb 0,y \rrb$ by the induction hypothesis and Lemma~\ref{lem:geodesic_restriction}. The same holds for $\eta(A_u,B_u)$ by Lemma~\ref{lem:cut_point-time} and the definition of $\llb \cdot, \cdot \rrb$.
\end{proof}

To avoid any difficulties, we define $\hat d_\lambda$ only for almost every pair of points. This will be enough to exhibit the dynamic properties we have in mind, and settle the foundations for the coupling of Section~\ref{sec:coupling} that allows to identify the law of $\CMT(X,\bU)$.  
In the following, $\overline{\Pi}_u$ denotes the closure of $\Pi_u$. Let 
\[\Pi(x,y) = \bigcap_{n\ge 0} \bigcup_{|u|=n} \overline{\Pi}_u 
\qquad \text{and} \qquad {}
\hat d(x,y) = \limsup_{n\to \infty} \sum_{|u|=n} |\Pi_u|^{1/2}\,. 
\]
The set $\Pi(x,y)$ is well-defined and non-empty, but it is so far unclear whether $\hat d(x,y)$ is finite. 
\begin{prop}\label{pro:distance_pairs}For any $x,y\in [0,1]$, we have almost surely
\begin{compactenum}[i)]
  \item $\Pi(x,y) = \llb x,y\rrb$, and
  \item $\hat d(x,y) = d(x,y)$.
\end{compactenum}
\end{prop}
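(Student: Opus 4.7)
The plan is to establish both identities by induction along the binary decomposition and by careful comparison with the convex-minorant construction behind $\llb 0,\cdot \rrb$ and $d(0,\cdot)$. The key correspondence, already visible in Lemmas~\ref{lem:fragment_of_x} and~\ref{lem:cut_point-time}, is that each cut point $\kappa_u$ is a vertex of the convex minorant of $\exc$ on some sub-interval, and each auxiliary mark $\eta(A_u,B_u)$ is exactly the uniform point $\ju(\kappa_u)$ that appears as $\xi$ in the original recursion. I would first record the nesting property $\Pi_{uw}\subseteq \Pi_u$: since $A_u,B_u$ lie in the same fragment $\Pi_u$ at its defining time parameter, $\tau_u$ must be at least equal to that parameter, and the monotonicity of $\tau\mapsto I^\tau(\cdot)$ yields the inclusion. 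Nesting implies that $\Pi(x,y)$ is a Cantor-like set whose points are either endpoints of some $\Pi_u$ or accumulation points of such endpoints.

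For (i), assume without loss of generality $x<y$. The inclusion $\Pi(x,y)\subseteq \llb x,y\rrb$ follows from Lemma~\ref{lem:cut_points} together with the observation that each $\kappa_u$ also belongs to $\llb 0,x\rrb\cup \llb 0,y\rrb$ (being a vertex of a convex minorant of $\exc$ on $[0,\max\{A_u,B_u\}]$), combined with the fact that $\Pi(x,y)\subseteq [x\wedge y,y]$, which is forced by the root-level cut $\kappa_\varnothing\geq x\wedge y$ and by nesting. For the reverse inclusion, I would use Lemma~\ref{lem:fragment_of_x} to argue inductively that every face of the original convex-minorant recursion is eventually visited by the binary decomposition: once enough levels have been processed inside a given $\Pi_u$, the binary decomposition recovers the full level-1 vertex sequence of the convex minorant of $\exc$ on $[0,\max\{A_u,B_u\}]$ restricted to $\overline{\Pi}_u$. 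Iterating this level-exhaustion, every element of $\llb 0,x\rrb\cap [x\wedge y,x]$ and $\llb 0,y\rrb\cap [x\wedge y,y]$ is caught by some $\overline{\Pi}_{u^{(n)}}$ at every depth~$n$.

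For (ii), my plan is to show that the sequence $\hat d_n(x,y):=\sqrt{\pi/2}\sum_{|u|=n}|\Pi_u|^{1/2}$ is a non-negative martingale with respect to the filtration generated by the cut-point data $(\tau_v,\kappa_v,\eta(A_v,B_v))_{|v|\leq n}$. Conditionally on its length, each $\Pi_u$ splits (by Brownian scaling and Lemma~\ref{lem:convex_minorant_law1}) according to a $\mathrm{Dirichlet}(1/2,1/2,1/2)$ vector, and the identity $\E{\Delta_1^{1/2}+\Delta_2^{1/2}}=1$ already used to prove Lemma~\ref{lem:dist_martingale} gives the martingale property, hence the almost-sure convergence of $\hat d_n(x,y)$ to a finite limit $\hat d(x,y)$. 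To identify this limit with $d(x,y)$, I would combine (i) with the length measure $\ell^\circ$ of Section~\ref{sub:geodesics_length}: both $\hat d(x,y)$ and $d(x,y)$ should be expressible as the mass of $\llb x,y\rrb$ for the unique measure that, restricted to each sub-excursion appearing in the decomposition, coincides with $\sqrt{\pi/2}$ times the $\psi$-Hausdorff measure --- the former through the limit of the martingale sum of face-length square roots, the latter through the recursive definition of $d$.

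The main obstacle is the reverse inclusion in (i). The binary decomposition splits one cut point at a time, whereas the original construction of $\llb 0,\cdot\rrb$ processes every vertex of the current convex minorant simultaneously, so careful bookkeeping is needed to match them level-by-level. I would circumvent this by first proving the proposition for $x,y\in\sL$, where $\llb 0,x\rrb$ and $\llb 0,y\rrb$ admit a fully explicit countable description via the recursion, and then extending to arbitrary $x,y\in[0,1]$ by density of $\sL$ in $[0,1]$ and the monotonicity/restriction properties recorded in Lemma~\ref{lem:geodesic_restriction}.
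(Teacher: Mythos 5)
Your outline of part \emph{i)} is broadly in the spirit of the paper's proof (match the cut points $\kappa_u$ with convex-minorant vertices and recover $\llb 0,B\rrb$ by repeatedly expanding the right children $1^k$), but two of your shortcuts do not hold up. First, the claim that $\Pi(x,y)\subseteq[x\wedge y,y]$ is ``forced by the root-level cut and nesting'' is false at every finite level: $\Pi_0=I^{\tau_\varnothing}(x)=[t_{m-1},t_m)$ with $t_{m-1}\in\llb0,x\rrb\cap\llb0,y\rrb$, so the left spine $\Pi_{0^n}$ typically extends strictly to the left of $x\wedge y$. What is actually needed (and is the nontrivial part of \emph{i)}) is the invariant that $x\wedge y\in\Pi_{0^n}$ for every $n$ (because one of $(A_{0^n},B_{0^n})$ always lies in $\llb0,x\rrb\times\llb0,y\rrb$, then apply Lemma~\ref{lem:geodesic_restriction}) together with an almost sure proof that $|\Pi_{0^n}|\to0$, which the paper gets from the domination $|\Pi_{u0}|\le|\Pi_u|\,U$ or $|\Pi_{u00}|\le|\Pi_u|\,M$ with $M\sim\mathrm{Beta}(\tfrac12,1)$; only then does $\kappa_{0^n}\downarrow x\wedge y$. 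Second, the reduction ``first $x,y\in\sL$, then density'' does not buy anything and is in fact dangerous: the statement is quantified as ``for each fixed pair, a.s.'', the maps $x\mapsto d(0,x)$ and $x\mapsto\llb0,x\rrb$ are \emph{not} continuous (Lemma~\ref{lem:continuity_distance}: discontinuity at every local minimum and at the intercepts), and the auxiliary marked points $\eta(A_u,B_u)$ produced by the recursion are a.s.\ not in $\sL$ anyway, so restricting the initial pair to $\sL$ does not keep the recursion inside the ``explicit'' regime.

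The more serious gap is in part \emph{ii)}. Your martingale argument assumes that, conditionally on its length, each $\Pi_u$ splits according to a $\mathrm{Dirichlet}(\tfrac12,\tfrac12,\tfrac12)$ vector; this is not true for the binary decomposition. Lemma~\ref{lem:convex_minorant_law1} gives that split law only when the two reference points are the left endpoint of the interval and an \emph{independent uniform} point. Here $A_\varnothing,B_\varnothing$ are the arbitrary fixed points $x,y$ (no distributional identity at all at the root), for $u1$-children the point $A_{u1}=\kappa_u$ is the left endpoint but $B_{u1}=B_u$ is inherited (ultimately $y$, or a maximum of a uniform with an inherited point), and for $u0$-children $A_{u0},B_{u0}$ are $\{A_u,\eta_u\}$ with $A_u$ not uniform. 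So $\hat d_n(x,y)$ is not a martingale by the argument you give, and even granting a.s.\ convergence, the identification of the limit with $d(x,y)$ is exactly where the work lies: appealing to ``the unique measure that coincides with $\sqrt{\pi/2}$ times $m^\psi$'' is not available (the constant relating $m^\psi$ to the length measure is the unidentified $a$ of Lemma~\ref{lem:measure_geodesic}, and $\ell^\circ$ is itself defined from $d$), and to equate $\hat d(x,y)$ with $\ell^\circ(\llb x,y\rrb)=d(x,y)$ you would still need the term-by-term correspondence between the cells $\Pi_u$ of the binary decomposition and the faces $[t_v,t_v+m_v]$ of the convex-minorant decomposition. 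That correspondence is precisely what the paper extracts from its proof of \emph{i)} and then uses to ``rewrite the sums'' $\sum_{|u|=n}|\Pi_u|^{1/2}$ as the sums defining $d_n$; your proposal never establishes it, so part \emph{ii)} remains unproved as written.
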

\begin{proof}\emph{i)} Recall that, by definition, $\llb x,y \rrb$ is the union of $\llb 0,x\rrb \cap [x\wedge y, 1]$ and $\llb 0,y\rrb \cap [x\wedge y, 1]$. We follow the binary decomposition defining $\Pi(x,y)$; for each $n\ge 0$, let $0^n$ be the left-most node in $\cU_2$ at level $n$; we agree that, in this context, $0^0=\varnothing$. We show that, for each $n\ge 0$, the two sets $\Pi(x,y)$ and $\llb x,y\rrb$ coincide on $[\kappa_{0^n},1]$; we will then show that $\kappa_{0^n}=\kappa_{0^n} \downarrow x\wedge y$ as $n\to\infty$.   

For $n=0$, we have $\kappa_{0^0}=\kappa_{\varnothing}=\kappa(x,y)$, $A_\varnothing = \min\{x,y\}$ and $B_\varnothing = \max\{x,y\}$. By Lemma~\ref{lem:cut_point-time} \emph{i)} and the definition of $\llb 0, B_\varnothing\rrb$ in Equation~\eqref{def:0x}, the set $\Pi(x,y)\cap [\kappa_{0^0}, 1]$ is contained in $\llb 0, B_\varnothing\rrb$; furthermore, since $x\wedge y \le \min\{x,y\}=A_\varnothing$, it is also the case that $\Pi(x,y) \cap [\kappa_{0^0}, 1]$ is  contained in $\llb 0,B_\varnothing\rrb \cap [x\wedge y, 1]$. On the other hand, by Lemma~\ref{lem:cut_points}, $\kappa(x,y)\in \llb 0, \max\{x,y\}\rrb$, and one easily sees that $\Pi(x,y)\cap [\kappa_{0^0},1]=\llb 0, B_\varnothing\rrb \cap [\kappa_{0^0},1]$. Indeed, we may now expand $\Pi_{1}$ on the right using the recurrence relation: writing $0^i1^j$ for the node at level $i+j$ in $\cU_2$ obtained by walking $i$ steps left, and then $j$ steps right from the root, and it should be plain that the points $\kappa_{1^k}$, $k\ge 0$, are simply the vertices of the convex minorant of $\exc$ on $[0,B_\varnothing]$ that are larger than $\kappa_\varnothing = \kappa_{0^0}$. It follows that the sets $\Pi_{10}, \Pi_{1^20},\dots, \Pi_{1^i0}, \dots $ all explicitly appear in the decomposition defining $\llb 0, B_\varnothing\rrb$ on the interval $[\kappa_{0^0},1]$. 

Now for any $n\ge 1$, assuming that we have treated the part of $\Pi(x,y)$ lying in $[\kappa_{0^n},1]$, we are left with the portion of $\Pi(x,y)$ that lies in $[0,\kappa_{0^n}]$, which is constructed from $\Pi_{0^{n+1}}$. By Lemma~\ref{lem:cut_points}, we have $A_{0^{n+1}},B_{0^{n+1}}\in \llb 0,x\rrb \cup \llb 0, y\rrb$, and we have $\kappa_{0^{n+1}}=\kappa(A_{0^{n+1}},B_{0^{n+1}})$. To the right, we have the set $\Pi_{0^{n+1}1}$, that we may expand from the right using the recurrence relation. The arguments above imply that the $\kappa_{0^n1^k}$, $k\ge 0$, are the vertices of the convex minorant on the interval $[0,B_{0^{n+1}}]$ that are at least $\kappa_{0^{n+1}}$. Therefore, $\Pi(x,y)$ and $\llb x,y\rrb$ coincide on $[\kappa_{0^{n+1}}, B_{0^{n+1}}]$ and thus on $[\kappa_{0^{n+1}}, \kappa_{0^n}]$, and in turn on $[\kappa_{0^{n+1}}, 1]$ by the induction hypothesis. 




Then, note that for each $n\ge 0$, $x\wedge y \in \Pi_{0^n}$. To see this, it suffices to note that for each $n \ge 0$, one of $(A_{0^n}, B_{0^n})$ or $(B_{0^n},A_{0^n})$ lies in $\llb 0, x\rrb \times \llb 0,y \rrb$. This is clearly true for $n=0$, and carries on because at each step we replace $\max\{A_{0^n}, B_{0^n}\}$ by $\eta(A_{0^n}, B_{0^n})$ which lies in $\llb 0, \max\{A_{0^n}, B_{0^n}\}\rrb$. Lemma~\ref{lem:geodesic_restriction} them implies that $\inf \Pi_{0^n} \in \llb 0,x\rrb \cap \llb 0,y\rrb$, which proves the claim. Since $\inf \Pi_{0^n}$ is non-decreasing, it would suffice to prove that $\diam(\Pi_{0^n})=\Leb(\Pi_{0^n})\to 0$ in order to show that  $\kappa_{0^n} \to x\wedge y$, which would complete the proof of \emph{i)}. So let us now this why $\Leb(\Pi_{0^n})\to 0$. For every $u$, we have $A_u\in [\inf \Pi_u, \kappa_u]$. Then, either $A_u<\eta_u$ and $|\Pi_{u0}|\le |\Pi_u| \cdot U$ where $U$ is uniformly random on $[0,1]$, or $A_u\ge \eta_u$, and then $\Pi_{u0}$ contains two uniform random points so that, $|\Pi_{u00}|\le |\Pi_u| \cdot M$, where $M$ is a Beta$(\tfrac 12, 1)$ random variable by Lemma~\ref{lem:convex_minorant_law1}. Since all the random variables are independent, it is straightforward that $|\Pi_{0^n}|\to 0$ with probability one as $n\to\infty$. 

\emph{ii)} The correspondence between the sets that are used to define $\Pi(x,y)$ and $\llb x,y\rrb$ in the proof of \emph{i)}, also yields a way of rewriting the sums which proves that $\hat d(x,y)=d(x,y)$. We omit the details.
\end{proof}

\black

\subsection{The cut tree and the reconstruction problem} 
\label{sub:the_genealogy_and_the_marked_cut_tree}

The fragmentation we have presented in Section~\ref{sub:a_fragmentation_connected_to_brownian_motion} has a remarkable genealogy, which can be encoded into a \emph{cut tree} introduced by \citet{BeMi2013a}, and which turns out to be distributed like a Brownian continuum random tree. 

Let $(\zeta_i)_{i\ge 1}$ be i.i.d.\ uniform points in $[0,1]$, which are also independent of $(e,\bU)$. Almost surely, for all $i\ne j$, we have $i\sim_0j$. Then, for distinct $i,j\ge 1$ let $\tau_{ij}=\inf\{\tau\ge 0: \zeta_i \not \sim_\tau \zeta_j\}$ be the first time when $\zeta_i$ and $\zeta_j$ are separated by a point of $Z^{-\tau}$. Then, we define a function $\delta$ on $\N_0\times \N_0$ as follows:
\begin{equation}\label{eq:def_cut-tree}
  \delta(0,i)=\int_{0}^\infty |I^\tau(\zeta_i)| d\lambda 
  \qquad \text{and}\qquad
  \delta(i,j) = \int_{\tau_{ij}}^\infty |I^\tau(\zeta_i)|d\tau + \int_{\tau_{ij}}^\infty |I^\tau(\zeta_j)| d\tau \,,
\end{equation}
where $|\cdot|$ denotes the Lebesgue measure on $[0,1]$. 
It is known that $\delta$ defines a real tree \cite{BeMi2013a}: let $\sC$ denote the completion of $\N_0$ with respect to $\delta$, and let $\nu$ denote the weak limit of probability rescaled counting measure on $\{0, 1,2,\dots, n\}$; then $(\sC,\delta, \nu, 0)$ is a measured real tree rooted at $0$ that we call the \emph{cut tree}; $\N_0=\{0,1,2, \dots\}$ should be seen as a collection of marks in $\sC$. The measured tree $(\sC, \delta, \nu, 0)$ is distributed like a Brownian continuum random tree, and the collection of points $\N\subseteq \sC$ is an i.i.d.\ sequence with common distribution $\nu$ \cite{BeMi2013a,AdDiGo2019a,BrWa2017b}.


For each $s\in [0,1]$, let $\Gamma_\tau(s):=\{i\in \N: \zeta_i \sim_\tau s\}$. Then, each $i\in \N$ is the image of $\zeta_i$ in the cut tree $\sC$ in the sense that $\Gamma_\tau(\zeta_i)$ converges in $\sC$ as $\tau\to \infty$ to the singleton $\{i\}$ (see \cite{AdDiGo2019a}). Every branch point of $\sC$ corresponds to a fragmentation event, just as reflected by the definition in \eqref{eq:def_cut-tree}. For any $i,j\in \N$, let $i\curlywedge j$ be the common ancestor of $i$ and $j$ in $\sC$, that is the point at distance 
\[\int_0^{\tau_{ij}} |I^\tau(\zeta_i)| d\tau=\int_0^{\tau_{ij}} |I^\tau(\zeta_j)|d\tau\]
from $0$ on the paths between $0$ and $i$, and between $0$ and $j$. Here, $i\curlywedge_\sC j$ corresponds to the (unique) fragmentation event that occurs at time $\tau_{ij}$, and that separates $\zeta_i$ from $\zeta_j$. Let $\sC_{i\curlywedge j}^i$ and $\sC_{i\curlywedge j}^j$ be the two subtrees of $\sC$ above the point $i\curlywedge j$ that contain respectively $i$ and $j$; then for every $k\in \N$ we have $\zeta_i\sim_{\tau_{ij}} \zeta_k$ precisely if $k\in \sC_{i\curlywedge j}^i$. Furthermore, the interval $I^{\tau_{ij-}}(\zeta_i)=I^{\tau_{ij-}}(\zeta_j)$ which contains all the $\zeta_k$ for which $\zeta_k\sim_{t} \zeta_i$ for all $i<\tau_{ij}$ splits into the two intervals $I^{\tau_{ij}}(\zeta_i)$ and $I^{\tau_{ij}}(\zeta_j)$ by the removal of the unique point of $Z^{-\tau_{ij}}$ lying in the interior of $I^{\tau_{ij-}}(\zeta_i)$. 

Observe that the cut tree is only constructed from the process of masses of the fragments containing a sequence of i.i.d.\ uniform points; this is crucial since the ``identities" of the fragments seen as subsets of $[0,1]$ retain some information (for instance, only neighbouring intervals can merge). More precisely, we can do so using only the process of masses, by exchangeability of $(\zeta_i)_{i\ge 1}$.

If we see the fragmentation $(F(\tau))_{\tau\ge 0}$ from the point of view of Aldous and Pitman in \cite{AlPi1998a}, the cut tree is the genealogy of the fragmentation of a Brownian continuum random tree, and it is natural to try to ask whether one can recover the initial tree $(\sT,d,\mu)$ from $\mathfrak C = (\sC,\delta, \nu, 0)$, or if not, what minimal additional information is necessary. This question has been studied by \citet*{BrWa2017b} and \citet*{AdDiGo2019a} (see also~\cite{AdBrHo2014a} for a partial result). Quite naturally, since the cut tree is constructed from the process of masses only, the locations of the cuts are lost, and reconstruction is impossible without additional information. The main result of \cite{BrWa2017b,AdDiGo2019a} is that these locations is the only information that is lost, and that one can recover $(\sT,d,\mu)$ from $(\sC,\delta,\nu)$ plus this additional information.

Since the fragmentation is binary, for every fragmentation event, there should correspond two points, one in each of the two fragments created. It turns out that these points can be given through their images in $\sC$: the additional information comes in the form of a countable collection of marks in the cut tree, and the only relevant information to us is its distribution conditionally on $(\sC,\delta,\nu)$. Let $\Br(\sC)$ denote the set of branch points of $\sC$. Almost surely, for each $b\in \Br(\sC)$, there are precisely three connected components to $\sC\setminus \{b\}$, and we denote by $\sC_b'$ and $\sC_b''$ the two which are not containing $0$, agreeing that $\nu(\sC_b')>\nu(\sC_b'')$. Let $\bV=\{(V_b',V_b''): b\in \Br(\sC)\}$ be an independent family of random variables such that, for each $b\in \Br(\sC)$, $(V_b',V_b'')$ has distribution 
\begin{equation}\label{eq:def_distribution_hook-points}
\frac{\nu(\cdot \cap \sC_b')}{\nu(\sC_b')} \otimes \frac{\nu(\cdot \cap \sC_b'')}{\nu(\sC_b'')}\,.
\end{equation}
The inverse cut tree transform then goes as follows: there exists a (measurable) map $\Phi$ that associates, to a pair $(\mathfrak C, \bV)$ a measured real tree that is distributed like a Brownian CRT. We will verify that $\CMT(\exc,\bU)$ turns out to be $\Phi(\mathfrak C, \bV)$ for a suitable collection $\bV$, but for now, let us describe the procedure if $\bV$ is given and has the distribution described above (this follows \cite{AdDiGo2019a}).

For $i,j\in \N$, we can recursively identify a collection of branch points in $\sC$, which are meant to correspond to the cut points on the path between $\zeta_i$ and $\zeta_j$ in $\sT$. With this goal in mind, we now define a collection $(C_u, p_u^0, p_u^1)$, $u\in \cU_2$, where $C_u$ is a subtree of $\sC$, and $p_u^0,p_u^1\in C_u$. First set $C_\varnothing = \sC$ and let $p_\varnothing^0=i$, $p_\varnothing^1=j$. Then, given $(C_u, p_u^0, p_u^1)$, and writing $b=p_u^0\curlywedge p_u^1$, let $C_{u0}$ (resp.\ $C_{u1}$) be the one among $\sC_b'$ and $\sC_b''$ which contains $p_u^0$ (resp.\ $p_u^1$). Let $p_{u0}^0=p_u^0$, $p_{u1}^1=p_u^1$ and then define $p_{u0}^1$ (resp.\ $p_{u1}^0$) be the one of $V_b'$ and $V_b''$ that lies in $C_{u0}$ (resp.\ $C_{u1}$). Then for each $n\ge 0$ define 
\[Y_n(i,j)=\sqrt{\frac \pi 2} \cdot \sum_{|u|=n} \nu(C_u)^{1/2}\,.\]
Almost surely for all $i,j$, $Y_n(i,j)\to Y(i,j)$ as $n\to \infty$. 
Then the collection of random variables $(Y(i,j):i,j\in \N)$ has the same distribution as $(\delta(i,j), i,j\in \N)$. Seen as a matrix of pairwise distances, this defines uniquely an isometry class of a random compact real tree, which is a Brownian continuum random tree.  




\subsection{The convex minorant tree as the inverse cut tree transform} 
\label{sub:distribution_CMT}


From the previous considerations, proving that $\CMT(\exc,\bU)$ is indeed a Brownian CRT boils down to verifying that it can be seen as obtained from the inverse cut tree transform from $\mathfrak C$ using a certain collection of points that we will denote by $\{(\beta_b',\beta_b''), b\in \Br(\sC)\}$. Our collection is in part constructed as a measurable function of $\exc$ alone, and the main task consists in verifying that it has indeed the same distribution as $\bV$ defined above in \eqref{eq:def_distribution_hook-points}.

We start with a canonical exploration of the fragmentation. We construct a process $(S_u,\tau_u, \epsilon_u)_{u\in \cU}$ where $S_u$ is a half-open interval of $[0,1)$, $\tau_u$ is the unique time when there exists $x\in [0,1]$ such that $S_u=I^{\tau_u}(x)$ (that is the interior of $S_u$ is a connected component of $[0,1)\setminus Z^{-\tau_u}$); furthermore, writing $\ell_u=|S_u|$, $\epsilon_u$ is a continuous function on $[0,\ell_u]$ with $\epsilon_u(0)=\epsilon_u(\ell_u)=0$ and $\epsilon_u(r)>0$ on $[0,\ell_u]$. It will also be convenient to write $a_u=\inf S_u\in S_u$. The precise order in which the intervals and times are associated with the elements of $\cU$ is key to control the independence structure which turns out to be crucial. 

We first set $S_\varnothing = [0,1)$, $\tau_\varnothing =0$, and $\epsilon_\varnothing = \exc$; we then have $\ell_\varnothing=1$ and $a_\varnothing=0$. For $x\in [0,1)$ and $t\ge 0$, let $R_t(x)=\sup I^t(x)$. The process $(R_t(a_\varnothing))_{t\ge \tau_\varnothing}=(R_t(0))_{t\ge 0}$ has countably many negative jumps. We let $\ell_1>\ell_2>\dots\ge 0$ denote their ranked sizes (in absolute value); then $\sum_i \ell_i = 1$ almost surely. For each $i\ge 1$, we let $\tau_i$ be the unique $t\ge \tau_\varnothing=0$ with $R_{t-}(a_\varnothing)-R_t(a_\varnothing)=\ell_i$, and define $S_i=[R_{\tau_i}, R_{\tau_i-})$; one then has $a_i=R_{\tau_i}$. We then let $\epsilon_i:[0,\ell_i]\to \R_+$ be defined for $r\ge 0$ by 
\[\epsilon_i(r)=e^{\tau_i}(a_i+r) \I{0\le r\le \ell_i}\,.\]

Now, for each $u\in \cU$, given $S_u$ and $\tau_u$, let $(\tau_{ui},\ell_{ui})_{i\ge 1}$ denote the times $\tau_{ui}\ge \tau_u$ and sizes of the jumps of the process $(R_t(a_u))_{t\ge \tau_u}$ sorted in such a way that $\ell_{u1}>\ell_{u2}>\dots \ge 0$. Write $S_{ui}=[R_{\tau_{ui}}(a_u), R_{\tau_{ui}-}(a_u))$, $a_{ui}=\inf S_{ui}$, $\ell_{ui}=|S_{ui}|$ and define $\epsilon_{ui}:[0,\ell_{ui}]\to \R_+$ for $r\ge 0$ by 
\[\epsilon_{ui}(r)
=e^{\tau_{ui}}(a_{ui}+r) \I{r\le \ell_{ui}} 
= \epsilon_u^{\tau_{ui}-\tau_u}\Bigg(\sum_{j\ge 1} \ell_{uj} \I{a_{uj}<a_{ui}}+r\Bigg)\I{0\le r\le \ell_{ui}}\,.\]

Let $\cF_\varnothing$ be the sigma-algebra generated by $(R_t(0))_{t\ge 0}$. Then $(S_i, \ell_i, \tau_i, a_i)_{i\ge 1}$ is $\cF_\varnothing$-measurable while, conditionally on $\cF_\varnothing$, the $(\epsilon_i)_{i\ge 1}$ are independent Brownian excursions of durations $\ell_1>\ell_2>\dots \ge 0$. More generally, let $\cF_u$ be the sigma-algebra generated by $\{(R_t(a_v))_{t\ge \tau_v}, v\preceq u\}$. Then $(S_{vi},\ell_{vi}, \tau_{vi}, a_{vi})_{v\preceq u, i\ge 1}$ is $\cF_u$-measurable while, conditionally on $\cF_u$, the functions $(\epsilon_{ui})_{i\ge 1}$ are independent Brownian excursions of durations $\ell_{u1}>\ell_{u2}>\dots\ge 0$.

The recursive exploration $(S_u, \tau_u, \epsilon_u)_{u\in \cU}$ we have just defined yields a canonical recursive spinal decomposition of the cut tree $\sC$; by canonical we mean that the random points that are used are constructed from $\exc$ only. We say that a point $s\in [0,1]$ has an image $x\in \sC$ if $\Pi_t(s)=\overline{\{j\in \N:\zeta_j \sim_t s\}}$ decreases to the singleton $\{x\}$ as $t\to\infty$. We let $\sC_\varnothing=\sC$ and $b_\varnothing=0$. Working towards the definition of $(\beta_b',\beta''_b)$, $b\in \Br(\sC)$, we start by defining a collection $\eta_u$, $u\in \cU$. In the following, for $x,y\in \sC$, $\llb x,y\rrb_\sC$ denotes the range of the unique geodesic in $\sC$ between $x$ and $y$.

\begin{lem}\label{lem:images_marks}With probability one, the points $(a_u)_{u\in \cU}$ have images in $\sC$ that we denote by $(\eta_u)_{u\in \cU}$. They are defined inductively and satisfy:
\begin{compactitem}[\textbullet]
  \item $\eta_\varnothing$ is the image of $a_\varnothing$;
  \item given $\sC_u$ and $\eta_u\in \sC_u$ the points $b_{ui}$ are the points of $\llb b_u, \eta_u\rrb_\sC$ at distance $\int_{\tau_u}^{\tau_{ui}} |I^t(a_u)|dt$ from $b_u$;
  \item $\sC_{ui}$ is the subtree of $\sC_u\setminus \{b_{ui}\}$ which contains neither $b_u$ nor $\eta_u$;
  \item $\eta_{ui}$ is the image of $a_{ui}$ in $\sC$, which turns out to be in $\sC_{ui}$.
\end{compactitem}
Furthermore, the family $(\eta_u)_{u\in \cU}$ is independent and for each $u\in \cU$, $\eta_u$ has distribution $\nu(\,\cdot \, \cap \sC_u)/\nu(\sC_u)$.
\end{lem}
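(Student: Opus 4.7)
The plan is to proceed by induction on the depth $|u|$, simultaneously establishing existence of $\eta_u$, the geometric identifications of $b_{ui}$ and $\sC_{ui}$, and the distributional and independence claims.

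For existence, fix $s \in [0,1]$ and consider the decreasing family $\Gamma_t(s) = \{i \in \N : \zeta_i \sim_t s\}$. By \eqref{eq:def_cut-tree}, its $\delta$-diameter is bounded by $2\int_t^\infty |I^r(s)|\, dr$, which vanishes as $t \to \infty$ since this total integral is a.s.\ finite (the root of $\sC$ is at finite $\delta$-distance from every mark). Hence $\Gamma_t(s)$ converges in $\sC$ to a single point; taking $s = a_u$ defines $\eta_u$.

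For the geometric identifications, observe that at time $\tau_{ui}$ the fragment $I^{\tau_{ui}-}(a_u)$ splits, by removal of a single point of $Z^{-\tau_{ui}}$, into $I^{\tau_{ui}}(a_u)$ and $S_{ui}$. This fragmentation event creates in $\sC_u$ a unique branch point on every geodesic joining a mark whose image lies in $I^{\tau_{ui}}(a_u)$ to one whose image lies in $S_{ui}$; a direct computation using \eqref{eq:def_cut-tree} shows that this branch point sits at distance $\int_{\tau_u}^{\tau_{ui}} |I^t(a_u)|\, dt$ from $b_u$. Since $\eta_u$ is the limit of marks staying in $I^{\tau_{ui}}(a_u)$, the branch point lies on $\llb b_u, \eta_u \rrb_\sC$ and is therefore $b_{ui}$. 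Declaring $\sC_{ui}$ to be the other connected component of $\sC_u \setminus \{b_{ui}\}$, it corresponds exactly to the sub-fragmentation of $S_{ui}$ started at time $\tau_{ui}$, and contains $\eta_{ui}$ since $a_{ui} \in S_{ui}$.

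For independence, Lemma~\ref{lem:W_decomp} applied recursively implies that conditionally on $(\tau_u, \ell_u)_{u \in \cU}$, the sub-excursions $(\epsilon_u)_{u \in \cU}$ form an independent family of rescaled standard Brownian excursions; the marks $\{\zeta_j : \zeta_j \in S_u\}$ are correspondingly conditionally i.i.d.\ uniform on $S_u$, independently across $u$. Since each $\eta_u$ is a measurable function of $\epsilon_u$ and of the marks falling in $S_u$, the family $(\eta_u)_{u\in\cU}$ is conditionally independent. After rescaling $\epsilon_u$ to unit duration, the marginal law identification reduces to showing that, in the cut tree of a standard Brownian excursion, the image of the left endpoint $0 \in [0,1]$ has distribution $\nu$.

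This last distributional claim is the main obstacle. I would establish it by combining the Bertoin--Miermont identification of $\sC$ with a Brownian CRT \cite{BeMi2013a} and the re-rooting invariance of the Brownian CRT with respect to its mass measure; alternatively, via Vervaat's identity representing $\exc$ as a Brownian bridge cyclically shifted to its unique minimum at a uniformly distributed point $M$. The cyclic shift preserves both the fragmentation structure and the mass measure, so the image of $0$ in the cut tree of the excursion agrees with the image of the uniformly distributed point $M$ in that of the bridge, and the latter is $\nu$-distributed by the very construction of $\nu$ as the push-forward of Lebesgue measure.
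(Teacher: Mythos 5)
Your overall architecture matches the paper's: induction on $|u|$, conditional independence of the rescaled excursions $(\epsilon_{ui})_{i\ge1}$ given the past of the exploration, and reduction of everything to the single distributional fact that the image of the left endpoint $0$ in the cut tree of a standard excursion has law $\nu$. But that reduction is exactly where your argument has a genuine gap. In the paper this fact (and the very existence of the limit point $\eta_\varnothing$) is imported from Bertoin's theorem that the tagged-fragment process $(|I^t(0)|)_{t\ge0}$ has the same law as $(|I^t(\zeta_1)|)_{t\ge0}$, so that the image of $0$ inherits the law of the image of $\zeta_1$, which is $\nu$-distributed by construction of the cut tree. Your substitute via Vervaat does not close: the argmin $M$ of the bridge is uniform, but it is a measurable function of the bridge, hence \emph{not} independent of the bridge's fragmentation; the assertion that ``the image of $M$ in the cut tree of the bridge is $\nu$-distributed by the very construction of $\nu$'' is only valid for a point that is uniform \emph{and independent} of the fragmentation. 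Making $M$ play that role requires a cyclic-exchangeability argument for the fragmentation of the bridge (including checking how the drift $s\mapsto\lambda s$ and the running minimum transform under the cyclic shift), which is precisely the content of the result of Bertoin that the paper cites; your alternative route via re-rooting invariance of the CRT concerns the law of the tree, not the law of the image of the distinguished point $0$, and is not developed.

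There is also a more local error in the existence step: the bound ``the $\delta$-diameter of $\Gamma_t(s)$ is at most $2\int_t^\infty|I^r(s)|\,dr$'' is false in general. For $i,j\in\Gamma_t(s)$ one only has $\delta(i,j)=\int_{\tau_{ij}}^\infty|I^r(\zeta_i)|\,dr+\int_{\tau_{ij}}^\infty|I^r(\zeta_j)|\,dr$, and once $\zeta_i$ has separated from $s$ its fragment is neither contained in nor smaller than the fragment of $s$ (if $s$ sits just to the left of the cut point, $|I^r(\zeta_i)|$ can vastly exceed $|I^r(s)|$). The correct bound involves $\sup_{x\in I^t(s)}\int_t^\infty|I^r(x)|\,dr$, whose decay needs an extra argument. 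Relatedly, your justification for the a.s.\ finiteness of $\int_t^\infty|I^r(a_u)|\,dr$ (``the root of $\sC$ is at finite $\delta$-distance from every mark'') applies to the marks $\zeta_i$, not to $a_u$, which is not a mark; for $a_\varnothing=0$ this finiteness is again exactly what Bertoin's tagged-fragment identity supplies. The remaining items — identification of $b_{ui}$ and $\sC_{ui}$, and the conditional independence across $u$ via the sigma-algebras $\cF_u$ — are fine and in line with the paper.
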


  


\begin{proof}The proof is by induction. It is proved in \cite{Bertoin2000a} that the process $(|I^t(0)|)_{t\ge 0}$ has the same distribution as the process $(|I^t(\zeta_1)|)_{t\ge 0}$. Therefore with $\Pi_t(0):=\overline{\{i\in \N: \zeta_i\in I^t(0)\}}$, we have almost surely $\sup\{\delta(i,j): i,j\in \Pi_t(0)\}\to 0$ as $t\to\infty $ so that there is a limit point that we denote by $\eta_\varnothing$ such that $\Pi_t(0)\to \{\eta_\varnothing$\}; by definition $\eta_\varnothing$ is the image of $a_\varnothing$ in $\sC$. It also follows that $\eta_\varnothing$ has distribution $\nu$ in $\sC$, since $1$, the image of $\zeta_1$ in $\sC$, does. The points $b_i$, $i\ge 1$, are precisely the branch points of $\sC$ along the segment $\llb 0,\eta_\varnothing\rrb_\sC$, sorted in the decreasing order of the masses $\nu(\sC_i)=\ell_i$ of the subtrees of $\sC$ hanging from the segment. 

Observe now that for $u\in \cU$ and $i\ge 1$, conditionally on $\cF_u$, the process $(I^{\tau_{ui}+t}(a_{ui}))_{t\ge 0}$ is precisely the process of masses of the fragment containing $0$ in the fragmentation of the excursion $\epsilon_{ui}$. As a consequence, the image $\eta_{ui}$ of $a_{ui}$ is well-defined. Furthermore, the distribution of $\eta_{ui}$ is the rescaled mass measure $\nu$ in the image of $S_{ui}$ in $\sC$, which is precisely $\sC_{ui}$. Finally, conditionally on $\cF_u$, the functions $(\epsilon_{u_i})_{i\ge 1}$ are independent, and so are the $(\eta_{ui})_{i\ge1}$: for any collection of bounded continuous functionals $(f_i)_{i\ge1}$, we have
\[\bE\Bigg[\prod_{i\ge 1} f_i(\eta_{ui})~\bigg|~\cF_u\Bigg]=\prod_{i\ge 1} \int_{\sC_{ui}} f_i(x_i) \frac{\nu(dx_i)}{\nu(\sC_{ui})}\,.\]
The claim follows by induction. 
\end{proof}
\begin{rem}Observe that, $\sC_u$ is the complete subtree of $\sC$ induced by $\{i\in \N: \zeta_i\in S_u\}$. Furthermore, except for $b_\varnothing$, the $b_u$ are all branch points in $\sC$; more precisely $b_{ui}$ is the common ancestor of $\eta_u$ and $\eta_{ui}$, namely $b_{ui}=\eta_u \curlywedge \eta_{ui}$.
\end{rem}

The collection $(\eta_u)_{u\in \cU}$ only provides part of the marks we shall need in the cut tree $\sC$. The remaining marks are the images of the random points constructed using the sequence of uniform random variables $\bU$, and which are associated to the local minima of $\exc$. 

\begin{lem}\label{lem:vertices_branchpoints}There is a one-to-one correspondence between the local minima of $\exc$ and the branch points of the cut tree $\sC$: every branch point of $\sC$ is of the form $\eta_u\curlywedge \eta_{ui}$ for $u\in \cU$ and $i\ge 1$, and the corresponding local minimum is $a_{ui}$.
\end{lem}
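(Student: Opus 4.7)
I would establish the bijection by passing through the countable set of fragmentation events of the process $(Z^{-\tau})_{\tau \ge 0}$, which I claim is in natural bijection with both $\sL(\exc) \setminus \{0\}$ and $\Br(\sC)$, and which is indexed by $\cU \times \N$ via the canonical exploration.

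On the path side, I would argue that the local minima of $\exc$ (other than $0$) are exactly the points where a new zero of $\Psi_{-\tau}\exc$ emerges as $\tau$ grows, i.e.\ the points triggering a fragmentation event. Given $t \in \sL(\exc) \setminus \{0\}$, Lemma~\ref{lem:no_exception_convex} ensures that $\tau^* := \slo(t, \exc)$ is well-defined, and one readily checks via Lemma~\ref{lem:shear_composition} that $t \in Z^{-\tau}$ iff $\tau \ge \tau^*$; hence $t$ is created as a new zero at time $\tau^*$. Conversely, any newly appearing zero of $\Psi_{-\tau}\exc$ lying in the interior of a fragment is a local minimum of $\Psi_{-\tau}\exc$, hence of $\exc$ since the two functions differ only by a smooth linear term. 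Moreover, the canonical exploration $(S_u, \tau_u, \epsilon_u)_{u\in\cU}$ enumerates all fragmentation events: at each node $u$, every newly appearing zero inside the evolving fragment containing $a_u$ causes one jump of $(R_t(a_u))_{t \ge \tau_u}$, and these jumps are sorted by decreasing size into $(\tau_{ui}, a_{ui})_{i \ge 1}$, with $a_{ui}$ being the location of the new zero and also $\inf S_{ui}$. Injectivity of $(u,i) \mapsto a_{ui}$ follows from the tree-structured nesting of the $(S_u)_{u\in\cU}$, and surjectivity onto $\sL(\exc)\setminus\{0\}$ follows from an iterative descent: starting from $u = \varnothing$, at each step find the unique $i$ with $t \in S_{ui}$, and terminate when $t = a_{ui}$, which must happen in finitely many steps since at the moment $t$ is first created it becomes the left endpoint of some fragment of positive length.

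On the cut-tree side, each branch point $b \in \Br(\sC)$ corresponds to exactly one binary fragmentation event, the two subtrees of $\sC \setminus \{b\}$ not containing $0$ being the images under the exploration of the two sub-fragments produced by that event. Under the exploration, the branch point attached to the event indexed by $(u, i)$ is precisely $b_{ui}$, which by Lemma~\ref{lem:images_marks} and the remark following it equals $\eta_u \curlywedge \eta_{ui}$. Composing the two bijections yields the stated correspondence $a_{ui} \leftrightarrow \eta_u \curlywedge \eta_{ui}$.

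The main obstacle is confirming surjectivity of both maps, in particular that the recursive exploration exhausts all fragmentation events and, correspondingly, all branch points of $\sC$; this reduces to the fact that the $\ell_{ui}$'s are strictly positive, sum to $\ell_u$, and the recursion is applied to every sub-fragment of positive Lebesgue measure, so that no fragmentation event within any $S_u$ is overlooked.
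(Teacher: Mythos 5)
Your overall architecture is the same as the paper's: you route the correspondence through the fragmentation events enumerated by the canonical exploration, identify the branch point of the event indexed by $(u,i)$ with $b_{ui}=\eta_u\curlywedge\eta_{ui}$ via Lemma~\ref{lem:images_marks} and the remark following it, and match the events with local minima on one side and with branch points (via density of the marked leaves and exhaustiveness of the exploration) on the other. Two of your justifications, however, do not hold as written.

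First, the claim that ``any newly appearing zero of $\Psi_{-\tau}\exc$ lying in the interior of a fragment is a local minimum of $\Psi_{-\tau}\exc$, hence of $\exc$ since the two functions differ only by a smooth linear term'' is incorrect: $\Psi_{-\tau}\exc$ is $\exc^{-\tau}$ reflected at its running infimum, not a linear tilt of $\exc$, and its zero set is the whole record set of $\exc^{-\tau}$. At any fixed $\tau$ infinitely many record points appear that are not local minima of $\exc$ --- for instance the intercepts $\zi(t)$, which by Remark~\ref{rem:points_absolute} are a.s.\ not in $\sL(\exc)$, together with the accumulation of records immediately to their right --- and each of these does lie in the interior of the time-$(\tau-)$ fragment containing it, so your argument would prove too much. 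What you actually need is only that the jump locations $a_{ui}$ of $(R_t(a_u))_{t\ge\tau_u}$ (the binary split points) are local minima; this follows from Lemma~\ref{lem:fragment_of_x} and Lemma~\ref{lem:cut_point-time}, which identify each split point as a vertex of a convex minorant, combined with Lemma~\ref{lem:no_exception_convex}~\emph{iii)}. Second, your termination argument for the descent (surjectivity onto $\sL(\exc)\setminus\{0\}$) is a non sequitur: the fact that $t$ becomes the left endpoint of a fragment of positive length at time $\slo(t)$ bounds the fragment sizes from below, but it does not bound the number of descent steps. What makes the descent finite is that the left endpoint of $I^\tau(t)$, as $\tau$ increases up to $\slo(t)$, runs exactly through the vertices of the convex minorant of $\exc$ on $[0,t]$ (Lemma~\ref{lem:fragment_of_x}), and for $t\in\sL$ there are a.s.\ only finitely many such vertices; this is precisely how the paper's proof proceeds, reading the exploration word $u=i_1\cdots i_{k-1}$ and the final index $i$ directly off the slopes $\gamma_1<\cdots<\gamma_k$. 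Both gaps can be repaired with lemmas already available in the paper, but as stated the first step is false and the second is unsupported.
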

\begin{proof}
For each $t\in \sL(e)$, let $t_0=0<t_1<t_2< \dots <t_i=t$ be the vertices of the convex minorant of $\exc$ on the interval $[0,t]$. Let $z_0=1>z_1>\dots> z_k$ and $\gamma_1<\gamma_2<\dots< \gamma_k$ be the corresponding intercepts and slopes. Then, at time $\gamma_k$ the interval $[t_{k-1},z_k)$ is split into the pair $[t_{k-1},t_k)$, $[t_k,z_k)$. Let $j_1=\inf\{j\in \N: \zeta_{j_1}\in [t_{k-1},t_k)\}$ and $j_2=\inf\{j\in \N: \zeta_{j_2}\in [t_k,z_k)\}$. To make the correspondence more explicit, we exhibit the two points $\eta_u$ and $\eta_{ui}$ in $\sC$ such that the branch point corresponding to $t$ is $\eta_u\curlywedge \eta_{ui}=j_1 \curlywedge j_2$. It shall be noted that the branch point corresponding to $t=t_i$ is not the image of $t$ in the cut tree $\sC$, the latter being almost surely the leaf $\eta_{ui}$ that we will exhibit. The path to follow in $\cU$ is given by the convex minorant. Let $i_1\ge 1$ be the unique index such that $\tau_{i_1}=\gamma_1$; then, let $i_2$ be the unique index such that $\tau_{i_1i_2}=\gamma_2$, and so on which yields a point $u=i_1i_2\dots i_{k-1}$ with $\tau_u=\gamma_{k-1}$, and $a_u=t_{k-1}$. Finally, let $i$ be the unique index such that $\tau_{ui}=\gamma_k$; then we have $a_{u}=t_k=t$ while $z_k=R_{\tau_{ui}-}$. The images $\eta_u$ and $\eta_{ui}$ of $t_u$ and $t_{ui}$ in $\sC$ are such that $\eta_u \curlywedge \eta_{ui}$ is the branch point $j_1\curlywedge j_2$.
  
Conversely, the sequence of sets $\{\llb 0, \eta_u\rrb_\sC: |u|\le n\}$, $n\ge 1$, increases to $\sC$ and thus exhausts all the branch points. In particular, every branch point $b$ of $\sC$ is of the form $\eta_u\curlywedge \eta_{ui}$ for some $u\in \cU$ and $i\ge 1$. Now, for such a branch point, $a_{ui}\in [0,1]$ is the local minimum of $\exc$ that separates the points from $[a_u,a_{ui})$ from $S_{ui}$ at time $\tau_{ui}$. 
\end{proof}

Finally, we complete the definition of the set of marks in the cut tree $\sC$. Consider a branch point $b$ of $\sC$; by Lemma~\ref{lem:vertices_branchpoints}, it is of the form $\eta_u\curlywedge \eta_{ui}$ for some $(u,i)\in \cU\times \N$ and $a_{ui}$ is the corresponding local minimum. Recall now the join point $\ju(a_{ui})$ associated to $a_{ui}\in \sL$ (Remark~\ref{rem:points_absolute} on page~\pageref{rem:points_absolute}). Observe that, by construction, at time $\tau_{ui}$, the two intervals that get separated are $S_{ui}$ to the right, and $[a_u,a_{ui})$, to the left. The subtree of $\sC$ above the branch point $b$ is therefore the completion of $\{i\in \N: \zeta_i\in [a_u, \sup S_{ui})\}$, and two intervals $[a_u,a_{ui})$ and $S_{ui}$ correspond to the two subtrees of $\sC$ above the branch point $b$, that we previously denoted by $\sC_b'$ and $\sC_b''$. 

Recall that $\cF_u$ is the sigma-algebra generated by $\{(R_t(a_v))_{t\ge \tau_v}, v\preceq u\}$, and that, as a consequence, $a_u, a_{ui}$ and $S_{ui}$ are $\cF_u$-measurable. By Lemma~\ref{lem:images_marks}, conditionally on $\cF_u$, the image $\eta_{ui}$ of $a_{ui}$ in $\sC$ is distributed like $\nu(\cdot \cap \sC_{ui})/\nu(\sC_{ui})$. Let $(\beta_b',\beta_b'')\in \sC_b'\times \sC_b''$ be the pair of points formed by $\eta_{ui}$ and the image of $\ju(a_{ui})$ in $\sC$ (which a.s.\ exists since $\ju(a_{ui})$ is uniform in $[a_u,a_{ui})$). Then, conditionally on $\cF_u$, and by Lemma~\ref{lem:images_marks}, the collection $(\beta_b',\beta_b'')$, $b\in \Br(\sC)$ has the same distribution as $\bV$:

\begin{lem}\label{lem:full_dist_marked_cuttree}
The marked cut tree $(\sC, \{\beta_b',\beta_b'': b\in \Br(\sC))$ is such that:
\begin{compactenum}[i)]
    \item $\{(\beta_b',\beta_b''): b\in \Br(\sC)\}$ are independent conditionally on $\sC$, and 
    \item for each $b$, $(\beta_b',\beta_b'')$ are independent random variables with distribution $\nu(\,\cdot\, \cap \sC'_b)/\nu(\sC_b')\otimes \nu(\,\cdot\, \cap \sC_b'')/\nu(\sC''_b)$.
\end{compactenum}
\end{lem}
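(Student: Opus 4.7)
The plan is to identify the marks $(\beta_b',\beta_b'')$ as explicit measurable functions of the input randomness $(\exc,\bU,(\zeta_i)_{i\ge 1})$ and to read off the joint conditional law from the independence structure of the inputs. Throughout, write $\pi:[0,1]\to\sC$ for the canonical map sending $s\in[0,1]$ to its image in the cut tree, so that $\pi_*\Leb=\nu$.

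By Lemma~\ref{lem:vertices_branchpoints}, every branch point is of the form $b=\eta_u\curlywedge\eta_{ui}$ for a unique $(u,i)\in\cU\times\N$, and tracking back through the construction gives $a_u=\li(a_{ui})$. The two subtrees of $\sC\setminus\{b\}$ not containing the root are $\pi(S_{ui})$ and $\pi([a_u,a_{ui}))$, of $\nu$-masses $\ell_{ui}$ and $a_{ui}-a_u$ respectively. The two marks at $b$ are $\eta_{ui}\in\pi(S_{ui})$ and $\pi(\ju(a_{ui}))\in\pi([a_u,a_{ui}))$, where $\ju(a_{ui})=a_u+U_{j(a_{ui})}(a_{ui}-a_u)$; the convention $\nu(\sC_b')>\nu(\sC_b'')$ then selects the labelling $\beta_b',\beta_b''$.

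The two families of marks depend on disjoint pieces of randomness. Write $\cH:=\sigma(\exc,(\zeta_i)_{i\ge 1})$, so that $\sC$, $\nu$, and all subtrees $\sC_b',\sC_b''$ are $\cH$-measurable while $\bU$ is independent of $\cH$. For the $\ju$-marks, since the association map $j$ is a bijection between $\N$ and $\sL(\exc)$, the family $\{U_{j(a_{ui})}:(u,i)\in\cU\times\N\}$ is a subfamily of $\bU$ and is i.i.d.\ uniform given $\cH$. Using $\pi_*\Leb=\nu$, I would conclude that conditional on $\cH$ each $\pi(\ju(a_{ui}))$ is $\nu$-uniform on $\pi([a_u,a_{ui}))$, jointly independently across $(u,i)$. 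For the $\eta$-marks $\eta_{ui}$, I would iterate Lemma~\ref{lem:images_marks} along the nested filtration $(\cF_u)_{u\in\cU}$: the conditional statement given $\cF_u$, integrated over the randomness refining $\sC_u$ into its descendants, yields that conditional on $(\sC,\nu)$ the family $(\eta_u)_{u\in\cU}$ has the recursive distribution in which each $\eta_{ui}$ is $\nu$-uniform on $\sC_{ui}=\pi(S_{ui})$.

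Combining these pieces gives the lemma. Independence between the two families follows from $\bU\perp\cH$; independence among the $\ju$-marks across branch points follows from the i.i.d.\ structure of the relevant subfamily of $\bU$; and independence among the $\eta$-marks across branch points is the content of Lemma~\ref{lem:images_marks}. Since for each $b$ exactly one mark lies in $\sC_b'$ and the other in $\sC_b''$, the resulting pair $(\beta_b',\beta_b'')$ is $\nu|_{\sC_b'}/\nu(\sC_b')\otimes\nu|_{\sC_b''}/\nu(\sC_b'')$-distributed, and these pairs are jointly independent across $b\in\Br(\sC)$, which are exactly claims (i) and (ii). The main technical subtlety is the passage from Lemma~\ref{lem:images_marks}, phrased conditionally on the nested $\cF_u$, to an analogous statement conditional only on the isometry class of $(\sC,\nu)$; this requires carefully separating the randomness used to reveal pieces of the tree from the randomness used to place the marks uniformly.
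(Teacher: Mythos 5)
Your proposal is correct and takes essentially the same route as the paper: identify each branch point as $\eta_u\curlywedge\eta_{ui}$ via Lemma~\ref{lem:vertices_branchpoints}, let the two marks be $\eta_{ui}$ (whose conditional law and independence are supplied by Lemma~\ref{lem:images_marks} along the filtration $(\cF_u)$) and the image of $\ju(a_{ui})$ (driven by the uniforms $\bU$, independent of everything else), then read off the product law on $\sC_b'\times\sC_b''$. The only minor quibble is that the association map of Remark~\ref{rem:association_map} is an injection of $\sL(\exc)$ into $\N$ rather than a bijection, which changes nothing in your argument, and the passage from conditioning on the $\cF_u$ to conditioning on $(\sC,\nu)$ that you flag is treated just as implicitly in the paper.
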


The points $(\beta'_b,\beta''_b)$, $b\in \sC$, now being defined, we are ready to verify that $\Phi(\mathfrak C, \bbeta)$ and $\CMT(\exc,\bU)$ are almost surely isometric. The arguments above should already make this pretty clear: indeed, for each $b\in \Br(\sC)$, the set of marks $\{\beta_b',\beta_b''\}$ is precisely the image in $\sC$ the set of points $\{\ju(a_{ui}), a_{ui}\}$ which are identified at time $\tau_{ui}$. To make this formal, fix any $i,j\in \N$, and consider $Y_n(i,j)$ and $\hat d_n(\zeta_i,\zeta_j)$. The choice of the marks $(\beta_b',\beta''_b)$, $b\in \Br(\sC)$, is precisely made so that, for every $n\ge 1$, sorting the sets $\{\nu(C_u), |u|=n\}$ and $\{|\Pi_u|, |u|=n\}$ in decreasing order yields the same sequence, and therefore
\[Y_n(i,j)= \sqrt{\frac \pi 2} \sum_{|u|=n} \nu(C_u)^{1/2} = \sqrt{\frac \pi 2}\sum_{|u|=n} |\Pi_u|^{1/2} = \hat d_n(\zeta_i,\zeta_j)=d_n(\zeta_i,\zeta_j)\,,\]
where the last step follows from Proposition~\ref{pro:distance_pairs}.
Taking the limit as $n\to\infty$, this implies that, for each $k\ge 1$ the metric spaces $(\{\zeta_i, 1\le i\le k\}, d)$ and $([k], Y)$ are isometric (with the correspondence $(i,\zeta_i)$, $i\in [k]$). Since $(\{\zeta_i, 1\le i\le k\},d)$ increases to $\CMT(\exc,\bU)$ (Proposition~\ref{pro:length_measure}), the claim follows by taking the limit as $k\to\infty$.

Finally, we are ready to prove Theorem~\ref{thm:additive_coalescent} which shows that the convex minorant tree provides a coupling between the two classical constructions of the additive coalescent by Aldous \& Pitman \cite{AlPi1998a} on the one hand, and Bertoin \cite{Bertoin2000a} on the other. Let $\cP=\{(\pi(x),\slo(x)): x\in \sL(e)\}$. 

\begin{proof}[Proof of Theorem~\ref{thm:additive_coalescent}]Observe that, by Lemma~\ref{lem:vertices_branchpoints}, with probability one, all the local minima of $\exc$ are of the form $\eta_u\curlywedge \eta_{ui}$ defined in Lemma~\ref{lem:cut_point-time}, and therefore, almost surely, $\cP=\{(\pi(a_{ui}), \tau_{ui}): u\in \cU,i\in \N\}$. Since, a.s.\ for all $u\in \cU$, we have $\ell_u>0$, this can equivalently be put as $\cP=\{(\pi(\kappa(\zeta_i,\zeta_j)), \tau(\zeta_i,\zeta_j)): i\ne j \in \N\}$. From there, the claim is an easy consequence of Theorem~16, and Corollaries~17-18 of \cite{AdDiGo2019a} (it is even simpler since we do not need to infer the $\tau(\zeta_i,\zeta_j)$ from the cut tree, they can be read directly from the fragmentation). 
\end{proof}



\section{Compactness of the Brownian parabolic tree $\CMT(X, \bU)$}
\label{sec:compactness}

In this section, we prove the compactness of $\CMT(X,\bU)$ constructed in Section~\ref{sub:recursive_convex_minorants_BPT}. The completeness is plain from the definition and we only need to verify that $(\sM,d)$ is totally bounded.

We will proceed by controlling the growth of a well-chosen sequence of subspaces that increase to $\sM$ using a chaining argument. This leads us to a process that is reminiscent to a certain version of Prim's exploration at the discrete level, and that also turns out to be crucial in the calculation of the Hausdorff dimension (see Section~\ref{sec:mass_hausdorff}). The general strategy is inspired from the arguments of \citet{CuHa2017a} for the compactness of trees constructed by aggregation of segments. 

\subsection{The growth process} 
\label{sub:the_growth_process}

In the entire section, we consider the process $X$, and the random variables $Z^\lambda=Z^\lambda(X)$ refer to this case. We may see the metric space $(\sM,d)$ as obtained from the coalescent process induced by $Z^\lambda$ on $\R_+$, which turns out to be the standard multiplicative coalescent \cite{BrMa2015a,Armendariz2001}. In this process, fragments only merge by pairs, but obtaining fine quantitative estimates is delicate since for any $\lambda \in \R$ and $h>0$, $Z^{\lambda}\setminus Z^{\lambda+h}$ is a.s.\ not contained in any compact interval. We shall thus track a single connected component as $\lambda$ increases.

\begin{rem}The most natural choice of a connected component to track is the largest one, but this leads to some inconvenient conditioning. One could also track a connected component containing a fixed point (at the discrete level), but, without any additional structure, such a node must be uniformly random, and thus the corresponding component would be too small to lead to anything interesting. Here, the structure imposed by the representation on $\R_+$ allows us to track any fixed point \emph{among the ones that do matter} even though they are a negligible for the mass measure (that is, the nodes $v_i$ for $\epsilon n^{2/3}\le i \le C n^{2/3}$ for constants $0<\epsilon<C$).
\end{rem}

For each $\lambda\in \R$, let $L_\lambda=\sup Z^\lambda \cap [0, 1)$ and $R_\lambda= \inf Z^\lambda \cap (1,\infty)$, and define $H_\lambda=[L_\lambda, R_\lambda)$. So, up to inclusion of the left-most point, $H_\lambda\subseteq \R_+$ is the interval of $\R_+\setminus Z^\lambda$ which contains the point~$1$; since $1\not\in \bigcup_\lambda Z^\lambda$ with probability one, this is well-defined for all $\lambda\in \R$. 
We have the following asymptotics, whose proofs are found in Section~\ref{sub:the_position_of_the_connected_component}.

\begin{lem}\label{lem:position_Hlambda}
There exist constant $c>0$ and $x_0,\lambda_0$ such that, for all $x>x_0$ and $\lambda\ge \lambda_0$ we have 
\[\pc{L_\lambda \ge \tfrac x {\lambda^2}} \le e^{-cx}
\qquad \text{and} \qquad \pc{|R_\lambda-2\lambda| > 1} \le e^{-c\lambda}\,.\]
\end{lem}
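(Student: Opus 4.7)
The two bounds are essentially independent, with the first requiring more care. For $L_\lambda$, my plan is to apply the strong Markov property at time $t := x/\lambda^2$. Writing $B^\lambda_t := X^\lambda_t - \underline X^\lambda_t \ge 0$ for the reflected process, the event $\{L_\lambda \ge t\}$ coincides with $\{\min_{s \in [t,1]} X^\lambda_s \le \underline X^\lambda_t\}$, and the Markov property identifies its conditional probability given $\cF_t$ as the probability that $\min_{u \in [0, 1-t]}(\tilde W_u + (\lambda - t)u - u^2/2) \le -B^\lambda_t$ for an independent Brownian motion $\tilde W$. Since $-u^2/2 \ge -u/2$ on $[0,1]$, the process inside the minimum dominates $\tilde W_u + \mu u$ with $\mu := \lambda - t - 1/2 \ge \lambda/2$ (for $\lambda \ge 3$, $t \le 1$), and the global infimum of the latter is distributed as $-E/(2\mu)$ with $E \sim \mathrm{Exp}(1)$. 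This yields the conditional bound $\pc{L_\lambda \ge t \mid \cF_t} \le \exp(-2\mu B^\lambda_t)$.

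What is left is to estimate $\E{\exp(-2\mu B^\lambda_t)}$. The key deterministic inequality is $B^\lambda_t \ge X^\lambda_t$, which follows from $\underline X^\lambda_t \le X^\lambda_0 = 0$. Splitting on the sign of $X^\lambda_t$ gives
\[\E{e^{-2\mu B^\lambda_t}} \;\le\; \E{e^{-2\mu X^\lambda_t}\, \mathbf{1}_{\{X^\lambda_t \ge 0\}}} + \pc{X^\lambda_t < 0}\,,\]
and both terms reduce to Gaussian tails since $X^\lambda_t \sim \cN(\lambda t - t^2/2,\, t)$. The second has standardized argument $-\lambda\sqrt{t} + t^{3/2}/2 = -\sqrt{x} + O(1)$ for $x \le \lambda^2$, so it is immediately $\le e^{-cx}$. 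For the first, completing the square produces a prefactor $\exp\bigl(2\mu(\mu t - m)\bigr)$ with $m := \lambda t - t^2/2$; the essential algebraic cancellation is $\mu t - m = -t(t+1)/2$, which makes the prefactor equal to $\exp(-\mu t(t+1)) \le 1$. What remains is $\Phi\bigl((m - 2\mu t)/\sqrt t\bigr)$ whose argument is again $-\sqrt{x} + O(1)$, giving the desired bound.

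For $R_\lambda$, the strategy is to exploit concentration around the deterministic zero $s = 2\lambda$ of the drift $\lambda s - s^2/2$. To bound $\pc{R_\lambda < 2\lambda - 1}$, observe that this drift satisfies $\lambda s - s^2/2 \ge \lambda - 1/2$ on $[1, 2\lambda - 1]$, so $X^\lambda_s \ge W_s + \lambda - 1/2$; the event requires $\min_{s \in [1, 2\lambda - 1]} X^\lambda_s \le \underline X^\lambda_1 \le 0$, forcing $\min_{s \le 2\lambda} W_s \le -(\lambda - 1/2)$, which by the reflection principle has probability $\le 2\Phi\bigl(-(\lambda - 1/2)/\sqrt{2\lambda}\bigr) \le e^{-c\lambda}$. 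To bound $\pc{R_\lambda > 2\lambda + 1}$, note that $X^\lambda_{2\lambda + 1} = W_{2\lambda + 1} - \lambda - 1/2$ has mean $-\lambda - 1/2$ and variance $2\lambda + 1$, so $X^\lambda_{2\lambda + 1} \le -\lambda/2$ with probability $1 - e^{-c\lambda}$; meanwhile $\underline X^\lambda_1 \ge \underline W_1 \ge -\lambda/2$ with probability $1 - e^{-c\lambda^2}$. On the intersection one has $X^\lambda_{2\lambda + 1} \le \underline X^\lambda_1$, which forces $R_\lambda \le 2\lambda + 1$.

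The only genuinely delicate step is the completing-the-square computation in the $L_\lambda$ bound: the saving comes entirely from the exact cancellation $\mu t - m = -t(t+1)/2$, which keeps the Gaussian prefactor at most $1$ and leaves only the tail $\Phi(-\sqrt x + O(1))$ to control. For $x > \lambda^2$ the first bound is trivial since $L_\lambda \le 1$, so only $x \le \lambda^2$ needs attention; the constants $x_0$ and $\lambda_0$ are then chosen large enough to absorb all $O(1)$ additive errors appearing in the standardized arguments.
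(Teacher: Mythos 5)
Your proof is correct. The two bounds on $R_\lambda$ follow essentially the paper's route: the paper likewise compares $\underline X^\lambda_1$ with the value (or running infimum) of $X^\lambda$ at $2\lambda\pm 1$, uses the threshold $\pm\lambda/2$, and reduces to Gaussian tails via $X^\lambda_s\ge W_s$ on $[0,1]$ and the nonnegativity of the parabolic drift; your direct use of $\lambda s-s^2/2\ge \lambda-\tfrac12$ on $[1,2\lambda-1]$ merely bypasses the paper's appeal to the Markov property at time $1$ for the lower bound. The bound on $L_\lambda$, however, is a genuinely different argument. The paper bounds $\pc{L_\lambda\ge t}\le \pc{\exists s\in[t,1]: W_s\le -\lambda s/2}$ and controls this first-passage event by a union bound over blocks of length $t$ together with the Gaussian tail of $\underline W$, the series summing to $e^{-cx}$. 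You instead condition at the deterministic time $t=x/\lambda^2$, dominate the post-$t$ drift by the linear drift $\mu=\lambda-t-\tfrac12\ge\lambda/2$, and invoke the exponential law of $\inf_{u\ge 0}(\tilde W_u+\mu u)$ to get the conditional bound $e^{-2\mu B^\lambda_t}$; the unconditional estimate then reduces to a Gaussian exponential-moment computation through $B^\lambda_t\ge X^\lambda_t\sim \cN(\lambda t-t^2/2,\,t)$, where the cancellation $\mu t-(\lambda t-t^2/2)=-t(t+1)/2$ (which is indeed exact) keeps the prefactor at most $1$ and leaves only $\Phi(-\sqrt x+O(1))$. Both routes yield $e^{-cx}$; the paper's is more elementary, using only the simple tail bound for $\underline W$, while yours isolates the underlying heuristic more cleanly --- by time $t$ the reflected height is already of order $\lambda t$, and returning to the running minimum against drift of order $\lambda$ costs $e^{-c\lambda^2 t}=e^{-cx}$ --- at the price of the exit law of drifted Brownian motion and a slightly more delicate moment computation. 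Your handling of the trivial regime $x>\lambda^2$ and of the constants $x_0,\lambda_0$ is fine, and the inclusion $\{L_\lambda\ge t\}\subseteq\{\min_{s\in[t,1]}X^\lambda_s\le \underline X^\lambda_t\}$, which is all you need, holds as stated.
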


It follows that $H_\lambda \uparrow (0,\infty)$ as $\lambda \to \infty$, so that $H_\lambda$ provides a suitable increasing family of subspaces of $\sM$.
So for any $x\in \R_+\setminus \{0,1\}$, we let $\lambda(x) = \inf\{\lambda: x\in H_\lambda\}$ be the time at which $x$ joins the connected component containing $1$. The intervals that join $H_\lambda$ play a different role depending on whether they lie to the left or to the right, and we define $\Lambda^\ssp=\{\lambda \in \R: R_\lambda> R_{\lambda-}\}$ and $\Lambda^\ssm:=\{\lambda \in \R: L_\lambda<L_{\lambda-}\}$; then $\Lambda^\ssp$ and $\Lambda^\ssm$ are both countable. Furthermore, they are almost surely disjoint; this is because the standard multiplicative coalescent is binary \cite{Aldous1997}, and could also be proved directly from the representation with $X$ (see the proof of Proposition~\ref{prop:large_merges}). We let $\Lambda=\Lambda^\ssp\cup \Lambda^\ssm$. 

For $x,y\in \R_+$, we let $x\leftrightarrow y$ if $\lambda(x)=\lambda(y)$, and let $q_\lambda$, $\lambda \in \Lambda$, be the equivalence classes of this relation. For $\lambda\in \Lambda^\ssm$, we have $q_\lambda=[L_\lambda, L_{\lambda-})$, while $q_\lambda=[R_{\lambda-}, R_\lambda)$ for $\lambda\in \Lambda^\ssp$. Because of this, $(q_\lambda)_{\lambda\in \Lambda^\ssm}$ and $(q_\lambda)_{\lambda\in \Lambda^\ssp}$ respectively define partitions of $(0,1)$ and $(1,\infty)$ into countably many disjoint intervals. 

For an interval $I\subset \R_+$, we let $\sM|_I$ be the \emph{intrinsic} metric space induced by $d$ on $I$: this is the metric space $(I,d_I)$ where $d_I(x,y)=d(x,y)$ if $\llb x,y\rrb \subseteq I$ and $d_I(x,y)=+\infty$ otherwise. So in general, $\sM|_I$ might be disconnected. Let $\sM_\lambda:=\sM|_{H_\lambda}$; then $\sM_\lambda$ is connected for all $\lambda\in \R$. For $\lambda\in \Lambda^\ssp$ (resp.\ $\Lambda^\ssm$), we also let $\sT^\tsp_\lambda:=\sM|_{q_\lambda}$ (resp.\ $\sT_\lambda^\tsm:=\sM_{q_\lambda}$). Quite plainly, and up to the metric completion, the metric space $\sM$ is obtained by combining the $\sT^\tsp_\lambda$, $\lambda\in \Lambda^\ssp$, and $\sT^\tsm_\lambda, \lambda\in \Lambda^\ssm$, using the identifications performed during the construction (using the random points constructed from $\bU$). This process actually turns out rather agreeable: the metric spaces $\sT^\tsp_\lambda$ and $\sT^\tsm_\lambda$ are easy to understand because they are small as $\lambda \to \infty$, and the way they are put together is also easy to control. Informally, the dynamics as $\lambda$ increases are as follows: 
\begin{itemize}
	\item at time $\lambda\in \Lambda^\ssp$, $\sT^\tsp_\lambda$ merges with $\sM_{\lambda-}$ by identifying $\inf q_\lambda$ with a uniform point in $H_{\lambda-}$;
	\item at time $\lambda\in \Lambda^\ssm$, $\sM_{\lambda-}$ connects with $\sT^\tsm_\lambda$ by identifying $L_{\lambda-}$ with a uniform point in $\sT^\tsm_\lambda$.  
\end{itemize}

The $(\sM_{k^3})_{k\ge 1}$ is the convenient sequence of subspaces of $\sM$ that we mentioned before. The following decomposition which takes advantage of these dynamics will be useful. What matters for now is the global picture, we will fill out the details later on.
\begin{itemize}
	\item \emph{The annuli of forests to the right.} 
	For any $k\ge 1$ define $\Lambda_k:=\{\lambda \in \Lambda^\ssp: k^3<\lambda \le (k+1)^3\}$, and let $\sF_k$ be the intrinsic metric space induced by $\sM$ on $(R_{k^3}, R_{(k+1)^3}]$. For each $k\ge 1$, $\sF_k$ is a forest consisting of  infinitely many connected components obtained when only the identifications within $(R_{k^3},R_{(k+1)^3}]$ are performed. Our aim is to bound the maximum diameter of the connected component $\sF_k$ in order to control the worst case accumulation of length when putting all the $\sF_k$ together. Formally, for $\lambda,\lambda'\in \Lambda_k$ we write $\lambda \equiv_k \lambda'$ if $R_{\lambda-}\wedge R_{\lambda'-} > R_{k^3}$, which implies that $\sT_\lambda^\tsp$ and $\sT_{\lambda'}^\tsp$ are connected within $\sF_k$. We will prove that, almost surely, every equivalence class of this relation is finite. So for each $\lambda\in \Lambda_k$, we may define $\rho_\lambda:=\min\{R_{\lambda'-}: \lambda' \equiv_k \lambda\}$ as the leftmost point of the connected component containing $\sT^\tsp_\lambda$ within $\sF_k$. More generally, the equivalence relation $\equiv_k$ naturally extends as follows: for $x,y\in (R_{k^3}, R_{(k+1)^3}]$ we let $x\sim_k y$ if $x\in q_\lambda$, $y\in q_{\lambda'}$ and $\lambda \equiv_k \lambda'$. Defining the diameter of a potentially disconnected metric space as the supremum of the diameters of its connected components, we therefore have $\diam(\sF_k)=\sup\{d(x,y): x,y\in (R_{k^3},R_{(k+1)^3}], x\equiv_k y\}$. 
 
	\item \emph{The chain of beads to the origin.} For any $\lambda\in \R$, we let $\sP_\lambda$ denote the intrinsic metric space induced by $\sM$ on $[0,L_\lambda)$. This metric is almost surely connected and has the structure of a ``string of beads'' that we now describe. For $a\in \R$, let $\Lambda_a^\ssm:=\Lambda^\ssm\cap (a,\infty)$.
	Almost surely for any $a\in \R$, $\Lambda^\ssm_a$ only contains finitely many points in any compact interval of $(0,+\infty)$, so that we may enumerate its elements in increasing order as $(\lambda_i)_{i\ge 1}$ (Lemma~\ref{lem:no_exception_convex}). 
	The metric space $\sP_\lambda$ is obtained by putting together the metric spaces $\sT^\tsm_{\lambda_j}$, $j\ge 1$, into a chain by connecting $\sT^\tsm_{\lambda_i}$ to a uniform random point in $\sT^\tsm_{\lambda_{i+1}}$ for each $i\ge 1$.  
\end{itemize}

By giving an estimate of the extent of $H_\lambda$, Lemma~\ref{lem:position_Hlambda} provides an effective way to control the locations of the ``gluing points'' which lies at the core of the proofs of the compactness and of the computation of the Hausdorff dimensions. The contribution of $\sP_\lambda$ is easily treated separately, and the crucial steps consists in controlling the diameters of the $\sF_k$, $k\ge 1$. 

\begin{prop}[Diameter of annuli forests]\label{pro:bound_annulus}There exists $k_0\in \N$ such that for all $k\ge k_0$ we have
\[\p{ \diam(\sF_k) > k^{-3/2}} \le 11 k^{-5/4}\,.\]
\end{prop}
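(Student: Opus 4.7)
The strategy is to decompose $\sF_k$ into the atoms $\sT^\tsp_\lambda$, $\lambda\in\Lambda_k$, control the diameters of the atoms individually, and bound the cluster structure produced by the gluing dynamics described just above the statement.

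I would first reduce to estimating $\max_C\sum_{\lambda:q_\lambda\subseteq C}\diam(\sT^\tsp_\lambda)$ over the connected components $C$ of $\sF_k$: since the identifications made in the construction act on single points, they add no length, so $\diam(C)$ is bounded above by this sum. The recursive construction of Section~\ref{sub:recursive_convex_minorants_BPT} together with the analogue for $X$ of Lemma~\ref{lem:def_consistency}(ii) shows that, conditionally on $|q_\lambda|$, $\sT^\tsp_\lambda$ is isometric to the convex minorant tree of a standard Brownian excursion of length $|q_\lambda|$; by Theorem~\ref{thm:limit_mst_surplus} with $s=0$, this is a Brownian CRT of mass $|q_\lambda|$. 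Brownian scaling then gives $\diam(\sT^\tsp_\lambda)=|q_\lambda|^{1/2}D_\lambda$ with $D_\lambda$ independent of $|q_\lambda|$ and having sub-Gaussian (Rayleigh-type) tails.

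Next I would bound the pool of fragment sizes. Lemma~\ref{lem:position_Hlambda} yields $R_{(k+1)^3}-R_{k^3}\le 7k^2$ with failure probability $O(e^{-ck^3})$, which controls the total mass accumulated in the annulus. For the individual sizes, a Girsanov argument together with It\^o excursion theory identifies the $|q_\lambda|$ with excursion lengths of a reflected Brownian motion with negative drift of magnitude $\sim k^3$, whose It\^o measure $\nu(d\sigma)\propto \sigma^{-3/2}\exp(-k^6\sigma/2)\,d\sigma$ decays exponentially on scale $\sim k^{-6}$. A direct moment computation then yields tail estimates of the form $\p{\max_{\lambda\in\Lambda_k}|q_\lambda|>k^{-6}\log^2 k}=O(k^{-5/4})$, together with uniform control over $\sum_\lambda \sqrt{|q_\lambda|}\I{|q_\lambda|\ge k^{-6}\log^2 k}$.

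Finally, each $q_\lambda$ attaches at a uniform point of $H_{\lambda-}$, and the probability that this point lands inside the part of $\sF_k$ already built is $(R_{\lambda-}-R_{k^3})/|H_{\lambda-}|=O(k^2/k^3)=O(1/k)$. A size-biased argument then shows that every connected component of $\sF_k$ contains at most $O(\log k)$ atoms, with failure probability $O(k^{-5/4})$. Combining these three ingredients and using a union bound over the relevant atoms to control $\max_\lambda D_\lambda$ via the sub-Gaussian tail, one obtains $\diam(\sF_k)\le k^{-3/2}$ with probability at least $1-11k^{-5/4}$ for $k$ large. The main technical obstacle is proving the excursion-length tail estimates uniformly in $\lambda\in[k^3,(k+1)^3]$: the Girsanov density, the precise behaviour of $X^\lambda$ near the bulk minimum $s\approx 2\lambda$, and the interplay with the cluster-size estimate must be combined sharply enough to yield the $k^{-5/4}$ rate with constant as small as $11$.
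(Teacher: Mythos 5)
Your plan collapses at two linked points, both stemming from the fact that $\Lambda_k$ is infinite. First, the quantity you reduce to, $\max_C\sum_{\lambda:\,q_\lambda\subseteq C}\diam(\sT^\tsp_\lambda)$, is almost surely infinite. The lengths $m_\lambda$, $\lambda\in\Lambda_k$, form a point process whose intensity behaves like (local time)$\times\sigma^{-3/2}e^{-c k^6\sigma}d\sigma$ near $\sigma=0$, so $\sum_{\lambda\in\Lambda_k}m_\lambda^{1/2}$ diverges a.s.\ (the integral $\int_0^1\sigma^{1/2}\,\sigma^{-3/2}d\sigma$ diverges logarithmically); moreover every single atom $q_{\lambda_0}$ a.s.\ receives infinitely many direct attachments, since each of the infinitely many later atoms lands in $q_{\lambda_0}$ independently with probability $m_{\lambda_0}/|H_{\lambda-}|$ bounded away from $0$, so each connected component of $\sF_k$ contains infinitely many atoms and the sum of their diameters is a.s.\ infinite. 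Consequently your cluster-size claim (``every component contains $O(\log k)$ atoms with probability $1-O(k^{-5/4})$'') is false, and even the statement you actually need --- a uniform bound on the length of the chain of successive gluing points from an atom back to $\sM_{k^3}$ --- cannot be obtained by your size-biased/union-bound argument: the per-atom estimate of order $(1/k)^{m-1}$ is not summable over the infinitely many $\lambda\in\Lambda_k$, and a.s.\ infinitely many atoms already have chains of length at least $2$.

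The paper's proof is built precisely to get around this: it bounds only the accumulation of diameters \emph{along the projection chain} of a point (not over all atoms of a component), and it stratifies the atoms by size, $\lambda\in\Lambda_{k,i}$ when $m_\lambda\in[k^{-6-i},k^{-5-i})$ (Lemma~\ref{lem:sizes_fragments-k}). At level $i$ there are only finitely many atoms ($\le 7k^{8+i}$), their maximal diameter is $\le k^{-2-i/4}$ after a union bound over the sub-Gaussian variables, and the total mass of level-$\ge i$ material is so small compared with $R_{k^3}\approx 2k^3$ that a chain makes at most $20$ consecutive hops inside level-$i$ atoms (Lemma~\ref{lem:Ni}); summing $20\sum_{i\ge0}k^{-2-i/4}$ gives the $k^{-3/2}$ bound. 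Some such trade-off between atom count, atom diameter and hop probability \emph{per scale} is indispensable; without it neither the $k^{-3/2}$ diameter bound nor the $k^{-5/4}$ error rate can be reached. A secondary inaccuracy: conditionally on $m_\lambda$, the excursion over $q_\lambda$ is not a standard Brownian excursion but the area-tilted law $\tilde\bn_\sigma$ of \eqref{eq:exp_tilted_excursion}, so the sub-Gaussian control of $m_\lambda^{-1/2}D_\lambda$ is not a direct consequence of Theorem~\ref{thm:limit_mst_surplus} and Brownian scaling; it requires the comparison argument (Cauchy--Schwarz against $\bn_\sigma$, uniform in $\sigma\le\epsilon$) of Proposition~\ref{pro:diameter_small}. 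This last point is fixable, but the missing scale decomposition is the real gap.
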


\begin{prop}[Diameter of the string of beads]\label{pro:left-end}Almost surely, 
 \[\lim_{\lambda \to \infty }\diam(\sP_\lambda) = 0\,.\]
\end{prop}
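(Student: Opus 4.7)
The plan is to bound $\diam(\sP_\lambda)$ by a supremum of distances from the origin, observe that $L_\lambda\to 0$ fast, and then use Brownian scaling near $0$ to show that this supremum is small.

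First I would reduce to $\diam(\sP_\lambda)\le 2M_\lambda$ where $M_\lambda:=\sup_{y\in[0,L_\lambda)} d(0,y)$. Indeed, by the very definition of $\llb 0,y\rrb$ in Section~\ref{sub:recursive_convex_minorants_BPT}, for every $y\in[0,L_\lambda)$ we have $\llb 0,y\rrb\subseteq[0,y]\subseteq[0,L_\lambda)$, so the intrinsic distance $d_I(0,y)$ coincides with $d(0,y)$; the triangle inequality inside the connected space $\sP_\lambda$ then gives the bound. Since $H_\lambda$ is non-decreasing, $L_\lambda$ is non-increasing in $\lambda$, and hence so is $M_\lambda$. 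Applying Lemma~\ref{lem:position_Hlambda} with $x=k$ yields $\pc{L_{k^3}\ge k^{-5}}\le e^{-ck}$, and Borel--Cantelli then gives $L_{k^3}\le k^{-5}$ eventually, almost surely. In particular $L_\lambda\to 0$ almost surely.

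The second step is to show $\sup_{y\in[0,\epsilon]} d(0,y)\to 0$ almost surely as $\epsilon\downarrow 0$. By absolute continuity (Girsanov) of $X$ with respect to a standard Brownian motion $W$ on $[0,T_0]$ for any fixed $T_0>0$, it suffices to establish this for $W$. Brownian scaling then gives
\[\sup_{y\in[0,\epsilon]} d_W(0,y) \eqdist \sqrt{\epsilon}\cdot S,\qquad S:=\sup_{y\in[0,1]} d_{W'}(0,y),\]
for an auxiliary standard Brownian motion $W'$, and monotonicity in $\epsilon$ reduces the problem to showing $S<\infty$ almost surely. For this, one decomposes $W'$ along the faces of its convex minorant on $[0,1]$: by Lemma~\ref{lem:W_decomp}, conditionally on the face structure the excursions above the faces are independent Brownian excursions whose convex minorant trees are Brownian CRTs (by the $s=0$ case of Theorem~\ref{thm:limit_mst_surplus}), each of almost surely finite diameter. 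The face lengths $(\ell_i)_{i\in\Z}$ admit the Pitman--Ross representation $\ell_i=x_i/\sum_j x_j$ where $(x_j)$ is a Poisson point process of intensity $e^{-x}/x\,dx$ on $\R_+$, and because $\int_0^\infty x^{-1/2} e^{-x}\,dx=\Gamma(1/2)<\infty$ one has $\sum\sqrt{\ell_i}<\infty$ almost surely. A conditional first-moment computation then yields $S<\infty$ almost surely.

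The main obstacle is this last step: although individual distances $d(0,y)$ are finite for Lebesgue-almost every $y$, the uniform bound over $y\in[0,1]$ requires simultaneously summing infinitely many Brownian excursion contributions, and controlling this sum demands both the square-root summability of the face lengths and the almost sure boundedness of each Brownian excursion convex minorant tree---which itself relies on the identification of $\CMT(\exc,\bU)$ with the Brownian CRT proved in Section~\ref{sec:a_new_point_of_view_on_the_continuum_random_tree}.
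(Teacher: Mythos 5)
Your argument is correct, but it takes a genuinely different route from the paper's. The paper exploits the string-of-beads structure directly: it bounds $\diam(\sP_\lambda)$ by $\sum_{\lambda'>\lambda,\,\lambda'\in\Lambda^\ssm}\diam(\sT^\tsm_{\lambda'})$, dominates each bead diameter by $\sqrt{m_{\lambda'}}\,D^\star_{\lambda'}$ via Proposition~\ref{pro:diameter_small} (which is what handles the exponentially tilted excursion laws of the beads of $X$), and then kills the tail of the series using the square-root summability of the face lengths established in Section~\ref{sub:recursive_convex_minorants_BPT} together with an $L^2$-bounded martingale argument. You instead bound $\diam(\sP_\lambda)\le 2\sup_{y<L_\lambda}d(0,y)$, send $L_\lambda\to0$ by Lemma~\ref{lem:position_Hlambda} and Borel--Cantelli, and control $\sup_{y\le\epsilon}d(0,y)$ by transferring to a standard Brownian motion via Girsanov---thereby bypassing the tilted-excursion estimates entirely---and then use exact Brownian scaling plus monotonicity in $\epsilon$ to upgrade a fixed-$\epsilon$ distributional bound to an almost sure limit. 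Both proofs ultimately rest on the same two pillars: the a.s.\ summability of $\sqrt{\ell_i}$ for the faces of the convex minorant near the origin (Pitman--Ross) and the finiteness (indeed finite mean) of the diameter of the convex minorant tree of a Brownian excursion. What the paper's route buys is that it recycles estimates (Proposition~\ref{pro:diameter_small}, the statistics of the $m_\lambda$) that are needed anyway in Section~\ref{sec:compactness}; your route is lighter on excursion-theoretic input but leans on the CRT identification of Theorem~\ref{thm:limit_mst_surplus} with $s=0$, which is legitimate and creates no circularity since Section~\ref{sec:a_new_point_of_view_on_the_continuum_random_tree} does not use Proposition~\ref{pro:left-end}.

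One step of your ``conditional first-moment computation'' deserves to be made explicit, namely the inequality $\sup_{y\in[0,1]}d_W(0,y)\le\sum_i \diam\bigl(\CMT(g_i,\bU|_{(t_i,t_{i+1})})\bigr)$, the sum running over the faces $[t_i,t_{i+1}]$ of the convex minorant of $W$ on $[0,1]$. For $y$ in the interior of a face you need (a) that $\cV_y\cap[0,t_i]=\cV_1\cap[0,t_i]$ (Lemma~\ref{lem:convex_minorants_intersection}), so that $d(0,y)=\sum_{j\le i}d(t_{j-1},t_j)+d(t_i,y)$ as in Lemma~\ref{lem:def_consistency}~i), with each $d(t_{j-1},t_j)$ a distance inside the face excursion $g_{j-1}$ (Lemma~\ref{lem:W_decomp}); and (b) that $d(t_i,y)$ is computed entirely within the face, i.e.\ that the convex minorant of $W$ on $[0,y]$ restricted to $[t_i,y]$ coincides with the convex minorant of $W$ restricted to $[t_i,y]$ (a short convexity argument: chords from $t_i$ to the right have slope at least $\gamma_i>\gamma_{i-1}$), so that $d(t_i,y)$ is a distance in $\CMT(g_i,\bU|_{(t_i,t_{i+1})})$ and hence at most its diameter. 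With this localization in place, the conditional expectation of your supremum given the face lengths is bounded by a constant times $\sum_i\sqrt{\ell_i}<\infty$, and the rest of your argument goes through as written.
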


Taking Propositions~\ref{pro:bound_annulus} and~\ref{pro:left-end} for granted for now, the proof of compactness is then straightforward.

\begin{prop}[Compactness of $\sM$]\label{pro:space_compact}The metric space $(\sM,d)$ is almost surely compact.
\end{prop}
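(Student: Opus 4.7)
The plan is to prove total boundedness of $(\sM,d)$; completeness is automatic since $\sM$ is defined as the metric completion of a quotient. The key ingredients are Propositions~\ref{pro:bound_annulus} and~\ref{pro:left-end}, together with the observation that the ``annular forests'' $\sF_k$ are attached to the growing seed $\sM_{k^3}$ through a small number of pinch points, so that chaining the diameter bounds propagates smallness across the tail.

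First I would apply the Borel--Cantelli lemma to Proposition~\ref{pro:bound_annulus}: since $\sum_k 11k^{-5/4}<\infty$, almost surely there exists a random $K^\star<\infty$ such that $\diam(\sF_k)\le k^{-3/2}$ for every $k\ge K^\star$. Fix $\epsilon>0$, and use Proposition~\ref{pro:left-end} to choose $K\ge K^\star$ so large that simultaneously $\diam(\sP_{K^3})<\epsilon$ and $\sum_{k\ge K} k^{-3/2}<\epsilon$. Up to the completion and the gluings described in Section~\ref{sub:the_growth_process}, we have the geometric decomposition $\sM=\sP_{K^3}\cup\sM_{K^3}\cup\bigcup_{k\ge K}\sF_k$, in which $\sP_{K^3}$ is covered by a single $\epsilon$-ball.

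Next, the chaining step: for each $k\ge K$, every connected component of $\sF_k$ is attached to a single point of $\sM_{k^3}$ (the identification performed when the corresponding fragment merges with the growing component containing $1$) and has diameter at most $k^{-3/2}$. Hence every point of $\sF_k$ lies within $k^{-3/2}$ of $\sM_{k^3}$. Iterating along $k=K,K+1,\dots$, using that $\sM_{(k+1)^3}$ is obtained from $\sM_{k^3}$ by adjoining $\sF_k$, every point of $\bigcup_{k\ge K}\sF_k$ lies within cumulative distance $\sum_{j\ge K} j^{-3/2}<\epsilon$ of a point in $\sM_{K^3}$. Consequently, a finite $\epsilon$-net for $\sM_{K^3}$ yields a finite $(2\epsilon)$-net for all of $\sM$, and total boundedness reduces to that of the seed $\sM_{K^3}$.

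The main obstacle is this last reduction: establishing total boundedness of $\sM_{K^3}$, since Proposition~\ref{pro:bound_annulus} only controls annuli for $k$ past an unspecified threshold. I would handle it by applying the same annulus strategy at a finer scale within the bounded interval $[L_{K^3},R_{K^3})$: namely, decomposing $\sM_{K^3}$ into a smaller seed plus ``microscopic'' annular forests and invoking an analogue of Proposition~\ref{pro:bound_annulus} at these scales, so that the chaining estimate can be iterated until the remaining seed has diameter less than $\epsilon$. Alternatively, one can argue directly from the explicit description of $\sM_{K^3}$ as a union of the real trees $\sT^\tsp_\lambda,\sT^\tsm_\lambda$ for $\lambda$ in a bounded range, each of which is a rescaled Brownian continuum random tree and hence compact, and then bound the number of those pieces whose diameter exceeds a given threshold using Lemma~\ref{lem:position_Hlambda} together with the fragmentation structure.
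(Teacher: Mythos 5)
Your treatment of the tail coincides with the paper's proof: Borel--Cantelli applied to Proposition~\ref{pro:bound_annulus}, combined with Proposition~\ref{pro:left-end}, gives exactly the paper's estimate $\dH(\sM_{k^3},\sM)\le 2k^{-1/2}+\diam(\sP_{k^3})$, so that a finite net for the seed $\sM_{k^3}$ inflates to a finite net for $\sM$. (One small imprecision: a component of $\sF_k$ does not in general attach directly to a point of $\sM_{k^3}$ --- its gluing point may lie in a previously attached component of the same annulus, or to the left of $L_{k^3}$; it is precisely the accumulation of diameters along such chains that Proposition~\ref{pro:bound_annulus} controls, so the quantitative conclusion is the same.)

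The genuine gap is the step you yourself flag: total boundedness of the seed $\sM_{K^3}$, and neither of your two suggestions closes it. Option (a) would require a finer-scale analogue of Proposition~\ref{pro:bound_annulus} inside $[L_{K^3},R_{K^3})$, which is not available in the paper and which you do not prove. Option (b) is insufficient as stated: knowing that each piece $\sT^\tsp_\lambda$, $\sT^\tsm_\lambda$ is compact and that only finitely many pieces exceed any given diameter does not yield total boundedness of their gluing, because the pieces attach to one another along chains and one must control the \emph{accumulated} diameter along such chains (for instance, countably many chains attached at a common root, the $n$-th chain made of $n$ pieces of diameter $1/n$, satisfy your finiteness criterion and are bounded, yet contain infinitely many points pairwise at distance about $2$, so the space is not totally bounded). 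This is exactly the difficulty that Proposition~\ref{pro:bound_annulus} resolves for the annuli, and it reappears at every scale inside the seed. The paper sidesteps it in one line: $\sM_{k^3}$ is the restriction of $\sM$ to the single interval $H_{k^3}$, whose excursion has the exponentially tilted law $\tilde\bn_\sigma$ of \eqref{eq:exp_tilted_excursion}, absolutely continuous with respect to $\bn_\sigma$; since $\CMT(\exc,\bU)$ is a Brownian CRT (Theorem~\ref{thm:limit_mst_surplus} with $s=0$) and hence almost surely compact, each $\sM_{k^3}$ is almost surely compact, simultaneously for all integers $k$ since there are only countably many. Replacing your seed step by this absolute-continuity argument turns your proposal into the paper's proof.
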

\begin{proof}By Proposition~\ref{pro:bound_annulus} and the Borel--Cantelli lemma, with probability one, there exists an almost surely finite random variable $K$ such that $\diam(\sF_k)\le k^{-3/2}$ for all $k\ge K$. In particular, for all $k\ge K$, 
\[\dH(\sM_{k^3}, \sM) \le 2 k^{-1/2} + \diam(\sP_{k^3})\,.\]
Fix any $\epsilon>0$. By Proposition~\ref{pro:left-end}, $\diam(\sP_{k^3})<\epsilon/3$ for all $k$ large enough; it follows that there exists $k\ge K$ large enough such that $\dH(\sM_{k^3}, \sM)< \epsilon/2$. Recall that, for any fixed $\lambda\in \R$, the restriction $\sM_\lambda$ of $\sM$ to $H_\lambda$ is almost surely compact by absolute continuity with the Brownian continuum random tree. So there exists a cover of $\sM_{k^3}$ by finitely many balls of radius $\epsilon/2$; increasing the radius of each ball to $\epsilon$ yields a finite cover of $\sM$. We have thus proved that $\sM$ is totally bounded. Since it is complete by definition, it is compact.
 \end{proof} 
 
The remainder of the section is devoted to the proof of Propositions~\ref{pro:bound_annulus} and~\ref{pro:left-end}. We first prove Lemma~\ref{lem:position_Hlambda} in Section~\ref{sub:the_position_of_the_connected_component}. The forests $\sF_k$ are made of the trees $\sT^\tsp_\lambda$, for $\lambda\in \Lambda_k$. For all $k\ge 1$, $\Lambda_k$ is infinite, which causes some difficulties. Still, we expect that, for large $k$, the components $\sT_\lambda^\tsp$ with $\lambda \in \Lambda_k$ should be rather small; Section~\ref{sub:distances_in_swallowed_components} deals with the question of uniform bounds on distances in the $\sT^\tsp_\lambda$ in terms of the lengths $|q_\lambda|$. We then obtain in Section~\ref{sub:statistics_small_components} the relevant statistics about the connected components $\sT_\lambda^\tsp$ for $\lambda\in \Lambda_k$ which includes information about the lengths $|q_\lambda|$ and  their diameters. In Section~\ref{sub:contribution_of_intervals} we put together all the pieces and prove Proposition~\ref{pro:bound_annulus} which essentially says that the diameter of $\sF_k$ is comparable to the maximum diameter of the $\sT_\lambda^\tsp$, $\lambda \in \Lambda_k$. Finally, we prove Propostition~\ref{pro:left-end} in Section~\ref{sub:string_of_beads}.

\subsection{The position of the component containing $1$: Proof of Lemma~\ref{lem:position_Hlambda}} 
\label{sub:the_position_of_the_connected_component}

Recall that for a continuous process $\omega = (\omega_t)_{t\ge 0}$, we let $\underline \omega$ and $\overline \omega$ denote respectively the running infimum and supremum processes: $\underline \omega_t :=\inf\{\omega_s : 0\le s\le t\}$ and $\overline \omega_t = \sup\{\omega_t: 0\le s \le t\}$. 

Recall that $(W_t)_{t\ge 0}$ denotes a standard Brownian motion. We will use repeatedly the following simple fact (see, e.g., \cite{KaSh1988} page 96, consequence of the fact that $\overline W_t$ has same law as $|W_t|$, for a fixed $t$): for all $x\ge 0$, we have
\begin{equation}\label{eq:tail_max_bm}
\pc{\overline W_t \ge x} = \pc{\underline{W\!}_t \le -x }\le e^{-x^2/(2t)}\,.
\end{equation}

Let $\lambda\ge 0$. Let us first focus on the upper bound. For any $t\in \R_+$, we have $R_\lambda > t$ if only if there is an excursion of $X^\lambda$ above its running minimum that straddles both $1$ and $t$, that is if $1\sim_\lambda t$. Thus
\begin{align}\label{eq:re-upper-bound1}
\pc{R_\lambda > 2\lambda +1} 
& = \pc{\underline{X}_1^\lambda = \underline X_{2\lambda+1}^\lambda} \notag \\
& = \pc{\underline{X}_1^\lambda \le \underline{X}_{2\lambda+1}^\lambda} \notag \\ 
& \le \inf_x \{\pc{\underline{X}_1^\lambda \le x} + \pc{\underline{X}_{2\lambda+1}^\lambda > x}\}\,.
\end{align}
A quick inspection of the expected values leads to the choice $x=-\tfrac \lambda 2$. On the one hand $\underline{X}_1^\lambda \ge -\frac 1 2 + \underline{W\!}_1$ so that, by \eqref{eq:tail_max_bm}, we obtain
\begin{align}\label{eq:re-upper-bound2}
\pc{\underline{X\!}^\lambda_1 \le -\tfrac \lambda 2}
 \le \pc{-\tfrac 12 + \underline{W}_1\le -\tfrac \lambda 2}
\le e^{-\lambda^2/9}\,,
\end{align}
for all $\lambda$ large enough. On the other hand, $\underline{X}_{2\lambda+1}^\lambda \le X^\lambda_{2\lambda+1} = -\lambda - \tfrac 12 + W_{2\lambda+1}$ which implies that 
\begin{align}\label{eq:re-upper-bound3}
\pc{\underline{X}_{2\lambda+1}^\lambda > - \tfrac \lambda 2 }
 \le \pc{W_{2\lambda +1} > \tfrac \lambda 2} \le e^{-\lambda/20}\,,
\end{align}
for all $\lambda$ large enough. Putting together \eqref{eq:re-upper-bound1}--\eqref{eq:re-upper-bound3} completes the proof of the upper bound.

For the lower bound, observe that 
\begin{align}\label{eq:re-lb1}
\p{R_\lambda < 2\lambda-1}
&=\pc{\underline{X}^\lambda_1 > \underline{X}^\lambda_{2\lambda-1}} \notag\\
&\le \inf_x \big\{\pc{X_1^\lambda < x} + \pc{\underline{X}_1^\lambda> \underline{X}^\lambda_{2\lambda-1}, X_1^\lambda\ge x} \big\}\,. 
\end{align}
Considering the fact that $X_1^\lambda = -\tfrac 12 + \lambda + W_1$, we are lead to choosing $x=\tfrac\lambda2$. We have $\pc{X_1^\lambda <\frac \lambda 2} \le e^{-\lambda^2/9}$ for all $\lambda$ large enough. To deal with the second part of the right-hand side of \eqref{eq:re-lb1}, we use Markov's property at time $1$ and observe that for $s\ge 1$,  $X_s^\lambda - X^\lambda_1$ is distributed like $X^{\lambda-1}_{s-1}$: 
\begin{align}\label{eq:re-lb2}
  \pc{\underline X_1^\lambda > \underline{X}_{2\lambda -1}^\lambda~|~X_1^\lambda \ge \tfrac \lambda 2} 
  & \le \pc{\underline X^{\lambda-1}_{2\lambda-2} < -\tfrac \lambda 2}\,.
\end{align}
However, since $s\mapsto -\tfrac {s^2}2 +(\lambda-1)s$ is non-negative on $[0,2(\lambda-1)]$, we have
\begin{align*}
	\underline{X}^{\lambda-1}_{2\lambda-2} 
	= \inf\{-\tfrac {s^2}2 + (\lambda -1) s + W_s: s\le 2\lambda -2 \} 
	\ge \underline{W}_{2\lambda -2}\,,
\end{align*}
so that, by \eqref{eq:tail_max_bm},
\begin{align*}
  \pc{\underline X^{\lambda-1}_{2\lambda-2} < -\tfrac \lambda 2} 
  \le \pc{\underline{W}_{2\lambda -2} < -\tfrac \lambda 2} 
  \le e^{-\lambda/20}\,, 
\end{align*}
for all $\lambda$ large enough. Putting this together with \eqref{eq:re-lb1} and \eqref{eq:re-lb2} yields the lower bound on $R_\lambda$.

Finally, we deal with the lower bound on $L_\lambda$. Observe that 
$\Ec{X^\lambda_s}=-\tfrac {s^2}2 + \lambda s 
\ge s \lambda / 2$ for all $s\in [0,1]$ provided that $\lambda$ is large enough. 
So writing $t=a/\lambda^2$, we have, since $\underline X^\lambda_t\le 0$, 
\begin{align*}
\pc{L_\lambda \ge t} 
= \pc{\underline{X}_t^\lambda > \underline{X}_1^\lambda}
&\le \pc{\exists s\in [t,1]: X^\lambda_s \le 0}\\
&\le \pc{\exists s\in [t,1]: W_s \le - \lambda s / 2}\,.
\end{align*}
Observe that $\{(s,y): s\in [t,1], y\le -\lambda s/2\} \subseteq \bigcup_{i\ge 1} \{(s,y): s\le (i+1)t, y\le -\lambda i t/ 2\}$. So writing $\tau(-x):=\inf\{s\ge 0: W_s<-x\}$, we have
\begin{align*}
\pc{\exists s\in [t,1]: W_s \le - \lambda s/ 2} 
 &\le \sum_{i\ge 1} \pc{\tau(-\lambda it/2) \le (i+1) t}\\
 &\le \sum_{i\ge 1} \pc{\underline{W}_{(i+1)t} \le -\lambda i \tfrac t2 }
\end{align*}
which, using \eqref{eq:tail_max_bm} is at most $e^{-a/20}$ for all $a$ large enough. This completes the proof of Lemma~\ref{lem:position_Hlambda}.

\subsection{Distances in small aggregated components} 
\label{sub:distances_in_swallowed_components}

In this section, we are interested in the intrinsic metric space induced by $d$ on $q_\lambda$. Write $D_\lambda = \sup\{d(x,y): x,y\in q_\lambda\}$ for the diameter of this metric space, and for $\zeta_\lambda$ a random variable with uniform distribution in $q_\lambda$, independent of everything else, let $Y_\lambda:=d(l_\lambda, \zeta_\lambda)$ where $l_\lambda=\inf q_\lambda$.

Recall that $X^\lambda_t=W_t-\tfrac {t^2}2 + \lambda t$ and $B^\lambda_t = X^\lambda_t - \underline{X}^\lambda_t$ is the process reflected in the running infimum. We define $m_\lambda=|q_\lambda|$ and $\varepsilon_\lambda:\R_+ \to \R_+$ by 
\begin{equation}\label{eq:e_alpha}
	\varepsilon_\lambda(s):= B^{\lambda-h}(l_\lambda+s) \I{0\le s\le m_\lambda}\,.
\end{equation}
The excursion $\varepsilon_\lambda$ has duration $m_\lambda$ and encodes the metric space $\sT_\lambda^\tsp$ or $\sT_\lambda^\tsm$ supported by $q_\lambda$. For any fixed $\lambda'\in \R$, and $q$ an interval of $\R_+\setminus Z^{\lambda'}$, conditionally on $|q|=\sigma$, the distribution of the excursion of $B^{\lambda'}$ straddling $q$ is given by 
\begin{equation}\label{eq:exp_tilted_excursion}
	\tilde\bn_{\sigma}(A)=
	\frac {\int \I{\omega\in A} \exp(\int_0^\sigma \omega(r)dr) \bn_\sigma(d\omega) }
	{\int \exp(\int_0^\sigma \omega(r) dr) \bn_\sigma(d\omega)}\,
\end{equation}
where $\bn_\sigma$ is the law of a Brownian excursion of duration $\sigma>0$. Consider now, $\lambda\in \Lambda$, which is random. For any $\lambda'<\lambda$, $l_\lambda\in Z^{\lambda'}$; furthermore, as $\lambda'\uparrow \lambda$, we have $\inf\{s>0: B^{\lambda'}(l_\lambda+s)=0\}\uparrow m_\lambda$ and the excursion of $B^{\lambda'}$ starting at $l_\lambda$ converges to $\varepsilon_\lambda$. It follows that, for a bounded continuous functional $\phi$, 
\begin{equation}\label{eq:dist_varepsilon_lambda}
	\Ec{\phi(\varepsilon_\lambda)~|~m_\lambda=\sigma}=\int \phi(\omega) \tilde \bn_\sigma(d\omega)\,.
\end{equation}
We note further that, by the strong Markov property, the excursions $\varepsilon_\lambda$, $\lambda\in \Lambda$ are mutually independent conditionally on $m_\lambda$, $\lambda \in \Lambda$. 

\begin{prop}[Distances in small components]\label{pro:diameter_small}
For any $\epsilon>0$, there exist
\begin{compactenum}[i)]
	\item a sub-Gaussian random variable $D^\star$ such that, if $m_\lambda\in (0,\epsilon]$ then $m_\lambda^{-1/2} D_\lambda \le_{st} D^\star$, and
	\item a random variable $Y^\star$ with $\Ec{1/Y^\star}<\infty$ and such that if $m_\lambda\in (0,\epsilon]$ then $Y^\star \le_{st} m_\lambda^{-1/2} Y_\lambda$.
\end{compactenum}
\end{prop}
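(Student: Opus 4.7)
The plan is to reduce to excursions of unit duration via Brownian scaling, and then transfer tail bounds from $\bn_1$ to the tilted law $\tilde\bn_\sigma$ through H\"older's inequality, exploiting finiteness of all exponential moments of the Brownian excursion area to absorb the tilt uniformly in $\sigma\in(0,\epsilon]$.

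First, by the straightforward analog for $X$ of the restriction/consistency Lemma~\ref{lem:def_consistency}~\emph{ii)}, conditionally on $\{m_\lambda=\sigma\}$ the component $\sT_\lambda^\tsp$ (or $\sT_\lambda^\tsm$) is isometric to $\CMT(\varepsilon_\lambda,\bU_\lambda)$ with $\varepsilon_\lambda\sim\tilde\bn_\sigma$. Under the Brownian rescaling $\hat\omega(u):=\sigma^{-1/2}\omega(\sigma u)$, the tilt $\exp(\int_0^\sigma\omega)$ becomes $\exp(\sigma^{3/2}\int_0^1\hat\omega)$ while all distances in the $\CMT$ construction scale by $\sigma^{1/2}$. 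Hence $\sigma^{-1/2}D_\lambda\eqdist\tilde D$ and $\sigma^{-1/2}Y_\lambda\eqdist\tilde Y$, where $\tilde D$ and $\tilde Y$ denote, respectively, the diameter and the distance from the root to an independent uniform point of $\CMT(\hat\omega,\bU)$ when $\hat\omega$ is drawn from the law $\hat\bn_\sigma$ defined as $\bn_1$ tilted by $\exp(\sigma^{3/2}\int_0^1\omega)$.

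For any measurable event $A$ and conjugate exponents $p,q>1$, using $Z_\sigma:=\bE_{\bn_1}[\exp(\sigma^{3/2}\int_0^1\omega)]\ge 1$, H\"older's inequality yields
\[ \bP_{\hat\bn_\sigma}(A)\ \le\ \bE_{\bn_1}\bigl[\exp\bigl(p\sigma^{3/2}\textstyle\int_0^1\omega\bigr)\bigr]^{1/p}\,\bP_{\bn_1}(A)^{1/q}\ \le\ C_{p,\epsilon}\,\bP_{\bn_1}(A)^{1/q}, \]
with $C_{p,\epsilon}:=\bE_{\bn_1}[\exp(p\epsilon^{3/2}\int_0^1\omega)]^{1/p}<\infty$, since the Brownian excursion area has finite exponential moments of all orders. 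The bound is uniform in $\sigma\in(0,\epsilon]$.

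Under $\bn_1$, Theorem~\ref{thm:limit_mst_surplus} (case $s=0$) identifies $\CMT(\omega,\bU)$ with a Brownian CRT, whose diameter has sub-Gaussian tails $\bP_{\bn_1}(\tilde D>x)\le Ce^{-cx^2}$; feeding this into the H\"older bound with any fixed $q>1$ gives $\bP_{\hat\bn_\sigma}(\tilde D>x)\le C'e^{-cx^2/q}$, still sub-Gaussian, and a $D^\star$ with this tail settles \emph{i)}. For \emph{ii)}, Remark~\ref{rem:one-point_function} combined with the same identification shows that $\tilde Y$ is Rayleigh-distributed (up to a constant) under $\bn_1$, whence $\bP_{\bn_1}(\tilde Y\le y)\le C y^2$; choosing $q\in(1,2)$ the H\"older bound gives $\bP_{\hat\bn_\sigma}(\tilde Y\le y)\le C' y^{2/q}$ with $2/q>1$, so that $Y^\star$ defined by $\bP(Y^\star\le y):=\min(1, C' y^{2/q})$ satisfies $\Ec{1/Y^\star}<\infty$ and is stochastically dominated by $m_\lambda^{-1/2}Y_\lambda$ on $\{m_\lambda\in(0,\epsilon]\}$. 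The only genuinely delicate step is the uniform H\"older transfer between $\hat\bn_\sigma$ and $\bn_1$, which works precisely because the tilt exponent $\sigma^{3/2}$ is bounded on $(0,\epsilon]$; the one-sided tails under $\bn_1$ are classical consequences of the identification of $\CMT(\exc,\bU)$ with the Brownian CRT.
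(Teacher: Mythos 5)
Your proof is correct and rests on the same pillars as the paper's: the tilted excursion law \eqref{eq:exp_tilted_excursion}--\eqref{eq:dist_varepsilon_lambda} for $\varepsilon_\lambda$ given $m_\lambda=\sigma$, Brownian scaling of the convex minorant tree construction, the identification of $\CMT(\exc,\bU)$ with the Brownian CRT (Theorem~\ref{thm:limit_mst_surplus} with $s=0$ and Remark~\ref{rem:one-point_function}, giving sub-Gaussian diameter and a Rayleigh one-point distance), and the finiteness of all exponential moments of the excursion area to absorb the tilt uniformly over $\sigma\in(0,\epsilon]$. The packaging differs: the paper controls conditional moments --- the Laplace transform of $m_\lambda^{-1/2}D_\lambda$ via Cauchy--Schwarz for \emph{i)}, and a negative moment of $m_\lambda^{-1/2}Y_\lambda$ via H\"older with exponents $(3/2,3)$ for \emph{ii)} --- and then constructs $D^\star$, $Y^\star$ through the infimum/supremum of the conditional distribution functions and an inverse-transform argument, whereas you transfer tail and small-ball probabilities directly from $\bn_1$ to the tilted law by a single H\"older inequality (legitimately dropping the normalizing constant since $Z_\sigma\ge 1$, the area being nonnegative) and read off $D^\star$, $Y^\star$ from the resulting uniform bounds. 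Your treatment of \emph{ii)} is arguably the cleaner of the two: a small-ball bound $C'y^{2/q}$ with $q\in(1,2)$, hence exponent $2/q>1$, immediately yields $\Ec{1/Y^\star}<\infty$, without having to decide which negative moment of the Rayleigh variable survives the tilt. If you write this up, make explicit the two facts you use silently: that the map $\omega\mapsto\sigma^{-1/2}\omega(\sigma\,\cdot)$ preserves the convex-minorant structure and hence scales all CMT distances by $\sigma^{1/2}$ while turning the tilt into $\exp(\sigma^{3/2}\int_0^1\hat\omega)$, and that the conditional law statement \eqref{eq:dist_varepsilon_lambda} (for the random times $\lambda\in\Lambda$) is what licenses identifying the conditional law of $D_\lambda$, $Y_\lambda$ with functionals of $\CMT$ under $\tilde\bn_\sigma$; both are exactly the inputs the paper itself relies on.
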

\begin{proof}
\emph{i)} Let $(\omega,\bu)\mapsto d_{\omega,\bu}$ a deterministic measurable function which gives the metric  of $\CMT(\omega,\bu)$ for $\bn\otimes d\bu$-a.e. $(\omega,\bu)$, and let $\bar d_{\omega,\bu}=\sup_{x,y} d_{\omega,\bu}(x,y)$ be the corresponding diameter.
From the law of $\varepsilon_\lambda$ in \eqref{eq:exp_tilted_excursion}--\eqref{eq:dist_varepsilon_lambda} and the Cauchy--Schwarz inequality, for any $r\in \R$,
\begin{align}\label{eq:diameter_to_brownian}
\E{e^{r m_\lambda^{-1/2} D_\lambda}~\Big|~m_\lambda=\sigma} \notag
& = \int \exp(r \sigma^{-1/2} \bar d_{\omega,\bu}) \tilde \bn_\sigma(d\omega) d\bu  \notag \\
& = \int \exp(r \sigma^{-1/2} \bar d_{\omega,\bu}) \frac{\exp(\int_0^\sigma \omega(s) ds) \bn_\sigma(d\omega)} {\int \exp(\int_0^\sigma \omega(s) ds) \bn_\sigma(d\omega) } d\bu\notag \\
& \le \frac{\sqrt{\int \exp(2 r \sigma^{-1/2} \bar d_{\omega,\bu}) \bn_\sigma(d\omega) d\bu } \cdot \sqrt{\int \exp(2\int_0^\sigma \omega(s)ds) \bn_\sigma(d\omega)}}{\int \exp(\int_0^\sigma \omega(s)ds) \bn_\sigma(d\omega)}.
\end{align}
In the first factor above, the metric space is encoded by a Brownian excursion, and is therefore exactly distributed like a Brownian continuum random tree of mass $\sigma$ (Theorem~\ref{thm:limit_mst_surplus} with $s=0$). It follows by Brownian scaling that, under $\bn_\sigma \otimes d\bu$, $\sigma^{-1/2} \bar d_{\omega,\bu}$ is dominated by  $2\|e\|$, twice the supremum of a standard Brownian excursion, which is well-known to be sub-Gaussian (see, for instance, \cite{DuIg1977,Kennedy1976}). On the other hand, observe that $\psi(z):=\int \exp(z\int_0^1 \omega(s)ds) \bn_1(d\omega)=\Ec{\exp(z\int_0^1 e(s)ds)}$, the Laplace transform of Brownian excursion area \cite{Janson2007}, is continuous and positive on any interval $[0,\epsilon]$. It follows that there exist constants $A=A_\epsilon$ and $v>0$ such that, for any $\sigma\in [0,\epsilon]$,
\begin{equation}\label{eq:Dlambda_subgaussian}
	\E{\exp(r m_\lambda^{-1/2} D_\lambda)~\Big|~m_\lambda=\sigma} \le e^{r^2/(2v)} \cdot \sup_{\sigma \in [0,\epsilon]} \frac{\sqrt{\psi(2\sigma^{3/2})}}{\psi(\sigma^{3/2})} \le A e^{r^2/(2v)}\,.
\end{equation}

One then easily constructs (the law of a variable) $D^\star$ by inverse transform. For $\sigma>0$ let $F_\sigma(x):=\pc{D_\lambda \le x~|~m_\lambda=\sigma}$, and $F_\star(x):=\inf\{F_\sigma(x): \sigma \le \epsilon \}$. Then for $U$ a $[0,1]$-uniform random variable, $F_\star$ is the distribution function of a random variable $D^\star:=F_\star^{-1}(U)$ that dominates all the $m_\lambda^{-1/2} D_\lambda\I{m_\lambda\le \epsilon}$. The random variables $F^{-1}_\sigma(U)$ are uniformly sub-Gaussian by \eqref{eq:Dlambda_subgaussian}, and so is $D^\star$. This completes the proof of \emph{i)}. 

The proof of \emph{ii)} about the distance to a random point is similar: we only discuss the adaptation of the arguments in \emph{i)} to bound $\Ec{m_\lambda^{1/2}/Y_\lambda~|~m_\lambda=\sigma}$. Instead of Cauchy--Schwarz, using Hölder's inequality (with exponents $3/2$ and $3$) provides an upper bound similar to \eqref{eq:diameter_to_brownian} where the main term involves the $d_{\omega,\bu}(0,\xi)$ for $\xi$ independent and uniform in $[0,\sigma]$ under $\bn(d\omega)$: this is the distance between two random points in a unit mass Brownian CRT, which is a random variable $Y$ with density $x e^{-x^2/2}dx$ on $\R_+$ \cite{Aldous1991b}, so that $\Ec{Y^{-3/2}}<\infty$. The multiplicative error term is bounded just as above, and the random variable $Y^\star$ is constructed similarly, using the supremum of the distribution functions instead of the infimum. We omit the details. 
\end{proof}

\subsection{Statistics of the aggregated components $\sT_\lambda^\tsp$} 
\label{sub:statistics_small_components}

From the results of the previous section, especially the law of the $\varepsilon_\lambda$, it is crucial to understand the distribution of the $m_\lambda$. In the following we let $\sE$ be the space of continuous excursions, that is, the functions $f\in \cC(\R_+,\R_+)$ with $f(0)=0$ such that there exists $\sigma\in [0,\infty)$ such that $f(s)>0$ for $s\in (0,\sigma)$ and $f(s)=0$ for $s\ge \sigma$. 

Fix any $\lambda\in \R$ and consider the excursions of $X^\lambda$ away from its running infimum $\underline X^\lambda$. For $y\in \R_+$, let $\tau^\lambda_y:=\inf\{s>0: -\underline{X}_s^\lambda>y\}$. Then, $\{(y,\tau^\lambda_y-\tau^\lambda_{y-}): \tau_y^\lambda>\tau_{y-}^\lambda\}$ is a Poisson point process on $\R_+\times \R_+$ of intensity $dy \tilde\varrho^\lambda_{\tau_{y-}}$, where the inhomogeneous measure $\tilde \varrho_x^\lambda$ is defined by (see, e.g., \cite{Aldous1997}, Section 5.2)
\begin{equation}\label{eq:excursion_length_measure}
\tilde \varrho^\lambda_x(l,\infty):=\lim_{\epsilon\to 0} \frac 1 \epsilon \p{\inf\{s>0:X^\lambda_{x+s}\le 0\} > l~\Big|~X_x^\lambda=\epsilon }\,.
\end{equation}

\begin{prop}\label{pro:distribution_R-process}
The process $\{(\lambda, \varepsilon_\lambda): R_{\lambda}>R_{\lambda-}\}$ is a Poisson point process on $\R\times \sE$ of intensity $R_{\lambda-} d\lambda \tilde \varrho^{\lambda}_{R_\lambda-}(d\sigma) \tilde\bn_{\sigma}(d\omega)$.
\end{prop}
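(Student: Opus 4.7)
The plan is to identify the predictable compensator of the counting measure
\[
  N(d\lambda, d\sigma, d\omega) := \sum_{\lambda \in \Lambda^\ssp} \delta_{(\lambda,\, |q_\lambda|,\, \varepsilon_\lambda)}
\]
with respect to the natural filtration $\cF_\lambda := \sigma((X^\mu_s)_{s \le R_\mu}: \mu \le \lambda)$, and to conclude via Watanabe's characterization. Since the proposed compensator $R_{\lambda-}\, d\lambda\, \tilde\varrho^\lambda_{R_{\lambda-}}(d\sigma)\, \tilde\bn_\sigma(d\omega)$ depends on the past only through the predictable process $R_{\lambda-}$ and is deterministic in $(\sigma, \omega)$ given $R_{\lambda-}$, this will yield the PPP property.

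The first step is to identify the conditional law of the mark $(|q_\lambda|, \varepsilon_\lambda)$ at a jump time. Fix $\lambda \in \Lambda^\ssp$ and set $r := R_{\lambda-}$. By the spatial strong Markov property of $X$ at $r$, the process $(X^\lambda_{r+s} - X^\lambda_r)_{s \ge 0}$ equals in law $\tilde X^{\lambda - r}_s := \tilde W_s - s^2/2 + (\lambda - r) s$ for a fresh Brownian motion $\tilde W$ independent of $\cG_r := \sigma(X_u : u \le r)$. The excursion $\varepsilon_\lambda$ coincides with the first excursion of $\tilde X^{\lambda - r}$ above $0$, whose length $\sigma$ is distributed as $\tilde\varrho^\lambda_{r}(d\sigma)$ by the defining formula \eqref{eq:excursion_length_measure}, and whose shape given the length $\sigma$ is $\tilde\bn_\sigma$ by \eqref{eq:dist_varepsilon_lambda}. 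This gives the mark distribution $\tilde\varrho^\lambda_{r}(d\sigma)\, \tilde\bn_\sigma(d\omega)$, conditionally independent of $\cF_{\lambda-}$ given $r$.

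The second step is to determine the intensity of the jump times $\Lambda^\ssp$. A jump at $\lambda$ corresponds to $r = R_{\lambda-}$ exiting $Z^\lambda$, a condition involving only the past $(X_s)_{s \le r}$ and the linear component of the drift. A direct algebraic manipulation gives
\[
  \inf\bigl\{\mu:\, \exists\, s\in [0, r),\ X^\mu_s < X^\mu_r\bigr\} = \inf_{s \in [0, r)} \left(\frac{X_s - X_r}{r - s} + \frac{r + s}{2}\right),
\]
a $\cG_r$-measurable functional of the past alone. Since the gap $X^\mu_s - X^\mu_r$ changes at rate $(r - s)\, d\mu$ as $\mu$ varies, a first-passage analysis (performed conveniently after time-reversing $X$ at $r$, which converts the gap into a Brownian path hitting a linear barrier of slope $(r - s)$) produces an instantaneous rate of new merges proportional to $r = R_{\lambda-}$. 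This identifies the intensity factor $R_{\lambda-}\, d\lambda$, and combined with the mark distribution above, yields the claimed compensator.

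\textbf{Main obstacle.} The delicate step is the rigorous identification of the factor $R_{\lambda-}$ in the intensity. While the conditional mark distribution emerges transparently from the spatial strong Markov property and Itô's excursion theory, justifying the correct scaling for the jump-time intensity requires a careful analysis of how the past trajectory on $[0, R_{\lambda-}]$ interacts with the parabolic drift as $\lambda$ increases. An alternative route would be to leverage the known identification (Aldous 1997, Armendariz 2001, Broutin--Marckert 2015) of the excursion lengths $(|\gamma^\lambda_i|)_i$ of $X^\lambda$ with the standard multiplicative coalescent, and to use the asymptotics $R_\lambda \sim 2\lambda$ from Lemma \ref{lem:position_Hlambda} to pin down the precise constant in the intensity.
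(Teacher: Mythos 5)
There is a genuine gap, and it sits exactly where the content of the proposition lies: the factor $R_{\lambda-}\,d\lambda$ in the intensity. You acknowledge this yourself (``the delicate step''), and neither of the two routes you sketch closes it: the ``first-passage analysis after time reversal'' is a heuristic with no computation, and the alternative via the multiplicative coalescent plus the asymptotics $R_\lambda\sim 2\lambda$ cannot identify the functional form of a stochastic intensity, let alone the mark kernel $\tilde\varrho^\lambda_{R_{\lambda-}}(d\sigma)\,\tilde\bn_\sigma(d\omega)$. Worse, the compensator/Watanabe framework as you set it up would actually fail. With $\cF_\lambda=\sigma((X^\mu_s)_{s\le R_\mu}:\mu\le\lambda)$ you know the path $(X_s)_{s\le R_\lambda}$, and the next jump time of $R$ is the exit time of the point $R_\lambda$ from $Z$, namely $\inf_{s<R_\lambda}\bigl(X^0_s-X^0_{R_\lambda}\bigr)/(R_\lambda-s)$, which is a measurable functional of the explored path: the jump times are announced by $\cF_\lambda$, hence predictable, and the compensator of your counting measure in this filtration is purely atomic in $\lambda$ rather than of the absolutely continuous form $R_{\lambda-}\,d\lambda\cdots$. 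Watanabe's characterization therefore cannot produce the statement in this filtration; the Poissonian structure has to be extracted with respect to a different parametrization (local time / spatial exploration), not the $\lambda$-exploration you chose. A further, smaller, inaccuracy: you treat $\tilde\varrho^\lambda_r(d\sigma)$ as the \emph{distribution} of the length of ``the first excursion'' of $\tilde X^{\lambda-r}$ above $0$. It is an infinite (intensity) measure, and there is no first excursion: the jump times in $\Lambda^\ssp$ are dense, with infinitely many microscopic intervals adjoined in any $\lambda$-interval, so conditioning on ``the mark at a jump time'' as if jumps were isolated is not meaningful.

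For comparison, the paper does not attempt a martingale characterization at all. It takes the conditional law of the shapes for granted from \eqref{eq:exp_tilted_excursion}--\eqref{eq:dist_varepsilon_lambda} (your Step 1 is essentially a re-derivation of that, and could simply be cited), and then obtains the intensity geometrically from the already-known spatial excursion decomposition: for each $\lambda$, the excursions of $X^\lambda$ above its running infimum form a Poisson point process $\{(y,\tau^\lambda_y-\tau^\lambda_{y-})\}$ with intensity $dy\,\tilde\varrho^\lambda_{\tau_{y-}}$ (Lemma~\ref{lem:W_decomp} and \eqref{eq:excursion_length_measure}), and as $\lambda$ increases by $d\lambda$ the amount of this ``local time'' swallowed at the right end of the component containing $1$ is $R_{\lambda-}\,d\lambda$; the claimed intensity is then a change of variables in the known Poisson structure, reading everything from the single path $X^0$. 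If you want to salvage your plan, this is the mechanism you would need to make quantitative; as written, the proposal establishes neither the intensity factor nor a framework in which it could be established.
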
 
\begin{proof}The distribution of the excursions $\varepsilon_\lambda$ conditionally on their duration is known from the previous section, and we only need to deal with the sizes of the jumps of $(R_\lambda)_{\lambda\in \R}$. These are formed by agglomeration of some of the excursion lengths of $B^\lambda$, for $\lambda\in \R$, which are described by the excursion length measure in \eqref{eq:excursion_length_measure}: As $\lambda$ increases, the excursions of $B^\lambda$ away from $0$ merge together until they eventually join the connected component containing $1$. 

We proceed geometrically using the process $X^0$ only. The excursions $\varepsilon_\lambda$ are simply read from $X^0$: for any $\lambda$ such that $R_{\lambda}>R_{\lambda-}$, $\varepsilon_\lambda$ is obtained as 
\[\varepsilon_\lambda(s) = (X^0(R_{\lambda-}+s)-X^0(R_{\lambda-})+s\lambda) \I{R_{\lambda-} + s\le R_\lambda}
\qquad \text{for }s\ge 0.\]
Here, notice that $\varepsilon_\lambda$ is indeed obtained from the agglomeration of countably many excursions of $X^{\lambda -h}-\underline X^{\lambda-h}$, which might be described using the process involving straight lines with slopes $\lambda-h$ (at time $\lambda$, the excursions of interest are those of $X^0$ above the process $s\mapsto \underline X_s^\lambda - \lambda s$). Now, knowing the intensity of jumps of $\tau^\lambda$ for each $\lambda\in \R$, it is routine to deduce the intensity of excursions of jumps of $(R_\lambda)_{\lambda \in \R}$: at time $\lambda$, we always have an excursion, the increase in local time is $d\lambda R_{\lambda-}$, and this gives rise to excursions whose durations are governed by $\tilde \rho_{R_{\lambda}-}^\lambda$. 
\end{proof}

For each $k\ge 1$ we say $\lambda\in \Lambda_k$ is of level $i\ge 0$ and write $\lambda \in \Lambda_{k,i}$ if its duration satisfies $m_\lambda \in [k^{-6-i}, k^{-5-i})$. We define the $M_{k,i}$ total duration (mass) of excursions of level at least $i$ 
\[M_{k,i}=\sum_{j\ge i}\sum_{\lambda \in \Lambda_{k,j}} m_\lambda\,. \]

\begin{lem}[Statistics for fragments $\sT^\tsp_\lambda$, $\lambda\in \Lambda^\ssp$]\label{lem:sizes_fragments-k}For any $k\ge 1$ and $i\ge 0$, there exists an event $B_{k,i}$ of probability at most $8\min\{k^{-1-i/4}, k^{-5/4}\}$ such that outside of $B_{k,i}$ we have 
\begin{compactenum}[i)]
\item the longest excursion: $\sup\{m_\lambda: \lambda \in \Lambda_k\} \le k^{-5}$
\item total duration of excursions of level at least $i$: $M_{k,i} \le \min \{k^{7/2-i/4}, 7k^2 \},$
\item number of excursions of level $i$: $\#\Lambda_{k,i}\le 7 k^{8+i}$,
\item maximum diameter of an excursion of level $i$: $D_{k,i}=\sup\{D_\lambda: \lambda\in \Lambda_{k,i}\}\le k^{-2 -i/4}$.
\end{compactenum}
In particular, $\bigcup_{i\ge 0} B_{k,i}$ occurs with probability at most $10 k^{-5/4}$.
\end{lem}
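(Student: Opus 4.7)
The plan is to derive (i)--(iv) from the Poisson structure in Proposition~\ref{pro:distribution_R-process}, combined with the localization of $R_\lambda$ from Lemma~\ref{lem:position_Hlambda} and the distance estimates of Proposition~\ref{pro:diameter_small}. First I would introduce the ``good'' event $A_k := \{R_{k^3} \ge 2k^3 - 1 \text{ and } R_{(k+1)^3} \le 2(k+1)^3 + 1\}$: by Lemma~\ref{lem:position_Hlambda} and the monotonicity of $\lambda \mapsto R_\lambda$, $\p{A_k^c} \le 2e^{-ck^3}$ and $R_{\lambda-} \in [2k^3-1, 2(k+1)^3+1]$ for every $\lambda \in (k^3,(k+1)^3]$ on $A_k$. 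Since $X^\lambda_{x+s} - X^\lambda_x \eqdist X^{\lambda-x}_s$ has local drift $\lambda - x - s \le \lambda - x \approx -\lambda$ on $A_k$, a pathwise comparison with a Brownian motion of constant drift $\lambda - x$ yields $\tilde\varrho^\lambda_x([l,\infty)) \le \int_l^\infty (2\pi\sigma^3)^{-1/2} e^{-(\lambda-x)^2\sigma/2}\,d\sigma$, so on $A_k$ the intensity of $\{(\lambda,m_\lambda):\lambda \in \Lambda_k\}$ is dominated by the deterministic Poisson measure $(2\lambda+1)\sigma^{-3/2} e^{-\lambda^2\sigma/3}\,d\lambda\,d\sigma$. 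The main technical step will be to justify this uniform domination rigorously; a clean path is via Girsanov, since the first-passage density to $0$ admits a closed form for a linear drift.

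Claims (i), (ii), (iii) then reduce to routine Poisson-moment computations. For (i), the expected number of excursions in $\Lambda_k$ with $m_\lambda \ge k^{-5}$ is at most $C\int_{k^3}^{(k+1)^3}\lambda \int_{k^{-5}}^\infty \sigma^{-3/2} e^{-\lambda^2\sigma/3}\,d\sigma\,d\lambda$, which is stretched-exponentially small in $k$ since $\lambda^2\sigma \ge k$ throughout. For (ii), observe first that $M_{k,i}$ is precisely the total mass of excursions with $m_\lambda < k^{-5-i}$ (since $\bigcup_{j\ge i}[k^{-6-j},k^{-5-j}) = (0,k^{-5-i})$); the crude bound $M_{k,i} \le 7k^2$ then follows from $M_{k,i} \le M_{k,0} \le R_{(k+1)^3} - R_{k^3}$ on $A_k \cap \{\text{(i) holds}\}$, while the sharp bound $M_{k,i} \le k^{7/2-i/4}$ follows via Chebyshev from $\E{M_{k,i}} \le Ck^{5/2-i/2}$ and $\V{M_{k,i}} \le Ck^{-5/2-3i/2}$, both obtained by integrating $\sigma$ and $\sigma^2$ against the dominating intensity (using $\int_0^l \sigma^{-1/2}\,d\sigma = 2\sqrt{l}$ and $\int_0^l \sigma^{1/2}\,d\sigma = \tfrac{2}{3}l^{3/2}$); the resulting failure probability is of order $k^{-19/2-i}$, far smaller than the target $k^{-1-i/4}$. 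Claim (iii) is then a deterministic consequence of (ii): each $\lambda \in \Lambda_{k,i}$ contributes at least $k^{-6-i}$ to $M_{k,i}$, so $\#\Lambda_{k,i} \le M_{k,i}\,k^{6+i} \le 7k^{8+i}$.

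For (iv), Proposition~\ref{pro:diameter_small}(i) furnishes a sub-Gaussian $D^\star$ with $m_\lambda^{-1/2} D_\lambda \le_{\text{st}} D^\star$, and conditional on the sequence $(m_\lambda)_{\lambda\in\Lambda_k}$ the excursions $\varepsilon_\lambda$ (and hence the $D_\lambda$) are independent by Proposition~\ref{pro:distribution_R-process}. For $\lambda\in\Lambda_{k,i}$ one has $m_\lambda^{1/2} \le k^{-5/2-i/2}$, so $\p{D_\lambda > k^{-2-i/4}} \le \p{D^\star > k^{1/2+i/4}} \le e^{-ck^{1+i/2}}$, and union-bounding over the at-most $7k^{8+i}$ entries of $\Lambda_{k,i}$ (from (iii)) produces a doubly-small failure probability. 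Setting $B_{k,i} := A_k^c \cup \{\text{one of (i)--(iv) fails}\}$, each of the four component bounds is easily $o(k^{-1-i/4})$, giving $\p{B_{k,i}} \le 8\min\{k^{-1-i/4}, k^{-5/4}\}$ for $k \ge k_0$ (the minimum distinguishes the small-$i$ regime, where the crude $7k^2$ bound controls (ii), from the large-$i$ regime, where the Chebyshev bound is tighter). The final union bound $\p{\bigcup_{i\ge 0} B_{k,i}} \le 10k^{-5/4}$ then follows by summing the geometric-like series $\sum_{i \ge 0} \min\{k^{-1-i/4}, k^{-5/4}\}$, which is $O(k^{-5/4})$ as $k \to \infty$.
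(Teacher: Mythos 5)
Your proposal follows essentially the same route as the paper's proof: the same good event on $R_{k^3},R_{(k+1)^3}$ via Lemma~\ref{lem:position_Hlambda}, domination of the excursion intensity over $\Lambda_k$ by total local time $\approx 7k^5$ times the excursion-length measure $\varrho^{-\mu}$ of a Brownian motion with linear drift $\mu\approx -k^3/2$ (the paper justifies this by the pathwise coupling $X^{-\mu}_s\le W^{-\mu}_s$ and the closed form of $\varrho^{-\mu}$ rather than Girsanov), a first-moment bound for (i), the deterministic count $\#\Lambda_{k,i}\le 7k^2\cdot k^{6+i}$ for (iii), and the sub-Gaussian union bound over at most $7k^{8+i}$ excursions for (iv). The only deviation is in (ii), where you use Chebyshev with a variance computation while the paper just applies Markov's inequality to $\E{M_{k,i}}\le 7k^{5/2-i/2}$ together with the crude bound $M_{k,i}\le 7k^2$; your estimate is stronger than needed (the paper's Markov bound on (ii) is exactly what dictates the stated $8\min\{k^{-1-i/4},k^{-5/4}\}$), so this is a harmless refinement rather than a different approach.
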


The proof of Lemma~\ref{lem:sizes_fragments-k} is based on upper bounds on the durations $|q_\lambda|$, $\lambda\in \Lambda_k$. The relevant calculations are simplified if we upper bound the Brownian with parabolic drift by a Brownian with a suitable linear drift. This is why the following is especially useful. In the following, we let $\varrho^{\mu}$ be the excursion length measure for Brownian motion with linear drift $\mu$, $W^\mu:=s\mapsto W_s + \mu s$. We are mostly interested in what happens at large positions, for which the drift is negative: we note that, for $\mu\ge 0$, 
\[\varrho^{-\mu}(x,\infty)
= \lim_{\epsilon\to 0} \frac 1 \epsilon \p{\inf\{s>0: W^{-\mu}_s\le 0\}>x\Big|~W^{-\mu}_0=\epsilon}
= \int_x^\infty \frac {e^{-\mu^2 r/2}} {\sqrt{2\pi r^3}}  dr\,.\]
The following proof relies this and the fact that $X^{-\mu}_s\le W^{-\mu}_s$ for all $s\ge 0$ in the natural coupling.

\begin{proof}[Proof of Lemma~\ref{lem:sizes_fragments-k}] Let $B_k$ be the event that $R_{k^3}> 2k^3-1$ or $R_{(k+1)^3}< 2(k+1)^3+1$. Then, by Lemma~\ref{lem:position_Hlambda}, $\pc{B_k}\le \exp(-ck^3)$ for some constant $c>0$. Now, on the one hand, the quantity of local time $LT_k$ corresponding to $\Lambda_k$ is  
\[LT_k:=\int_{k^3}^{(k+1)^3} R_{\lambda-}d\lambda = \int_{k^3}^{(k+1)^3} R_{\lambda}d\lambda \le R_{(k+1)^3}[(k+1)^3-k^3]\,,\]
so that $LT_k\le 7k^5$ for all $k$ large enough on the complement event $B_k^c$.  On the other hand, $\sup\{\lambda-\inf q_\lambda: \lambda \in \Lambda_k\}\le (k+1)^3-R_{k^3}\le -k^3/2$ for large $k$ on $B_k^c$. 

By Proposition~\ref{pro:distribution_R-process}, it follows that, on the event $B_k^c$, for every $x>0$, $\#\{\lambda\in \Lambda_k: m_\lambda>x\}$ is stochastically dominated by a Poisson random variable with parameter $\Delta_k \varrho^{-\mu_k}(x,\infty)$ with $\Delta_k=7k^5$ and $\mu_k=k^3/2$. The properties \emph{i)} to \emph{iv)} in the statement then follow easily. 


\noindent\emph{i) Longest excursion.} We have
\begin{align*}
	\bE\Bigg[\sum_{\lambda\in \Lambda_k} \I{m_\lambda > k^{-5}} \I{B_k^c} \Bigg]
	\le 7k^5 \varrho^{-\mu_k}(k^{-5},\infty) 
	= 7k^5 \int_{k^{-5}}^\infty \frac{e^{-k^6 t/8}}{\sqrt{2\pi t^3}} dt
	\le e^{-c k}\,,
\end{align*}
for some constant $c$ and all $k$ large enough. Markov's inequality then implies the claim. 

\noindent \emph{ii) Total length of excursions of level $i$.} We proceed similarly for the upper bound on $M_{k,i}$. We have
\begin{align*}
\bE\Bigg[\sum_{\lambda\in \Lambda_k} m_\lambda \I{m_\lambda \le k^{-5-i}} \I{B_k^c} \Bigg]
\le 7 k^5 \int_0^{k^{-5-i}} \!\!\!\!\!\! x \varrho^{-\mu_k}(dx)  
\le 7 k^5 \int_0^{k^{-5-i}} \frac{xe^{-k^6 x/8}}{\sqrt{2\pi x^3}}  dx
\le 7 k^{5/2 - i/2}\,.
\end{align*}
Moreover, we also always have the bound $\sum_{\lambda\in \Lambda_k} m_\lambda \le R_{(k+1)^3}-R_{k^3}$ which is at most $7k^2$ on $B_k^c$. Markov's inequality then yields
\[\pc{M_{k,i} \ge k^{7/2-i/4}, B_k^c} \le \min\{7 k^{-1-i/4}, 7k^{-3/2+i/4}\}\,.\]

\noindent \emph{iii) Cardinality of $\Lambda_{k,i}$.} This is a deterministic bound on $B_k^c$: we have $\#\Lambda_{k,i}\le 7 k^2 \cdot k^{6+i}=7 k^{8+i}$.

\noindent \emph{iv) Maximum diameter in level $i$.} By Proposition~\ref{pro:diameter_small}, when the event of \emph{i)} occurs, $D_{k,i}$ is stochastically dominated by 
\[k^{-5/2-i/2} \cdot \max\{D^\star_j: 1\le j\le \# \Lambda_{k,i}\}\,,\]
where $D^\star_j$ are iid copies of the random variable $D^\star$ that is sub-Gaussian (choose $\epsilon=1$). Using the bound for $\#\Lambda_{k,i}$ in \emph{iii)} above, it follows easily that  
\begin{align*}
	\pc{D_{k,i} \ge k^{-2-i/4}, B_k^c} 
	& \le 7 k^{8+i} \cdot \pc{D^\star \ge k^{1/2+i/4}} \\
	& \le 7 k^{8+i} \cdot \exp(-\tfrac{k^{1+i/2}}{2v})\,,
\end{align*}
for some constant $v>0$. 

Finally, writing $B_{k,i}$ for the event that either $B_k$ or any of the bad events in \emph{i)}--\emph{iv)} occur, we have $\pc{B_{k,i}}\le 8\min \{7k^{-1-i/4},k^{-5/4}\}$ for large $k$ (this is essentially limited by the event in \emph{ii)}). The union bound yields the last claim. 
\end{proof}


\subsection{The accumulation of length in an annulus: Proof of Proposition~\ref{pro:bound_annulus}} 
\label{sub:contribution_of_intervals}

Each component $\sF_\lambda$ gets connected to some point to its left (in $\R_+$), which falls within some $\sF_{\lambda'}$, for some $\lambda'<\lambda$, and so on. The proof of Proposition~\ref{pro:bound_annulus} consists in bounding the accumulation of these lengths before a connection to $\sM_{k^3}$ eventually occurs. We are only interested here in the points of $(1,\infty)$.

Recalling the notation $\ju(\cdot)$ from Remark~\ref{rem:points_absolute}, by construction, for each $\lambda\in \Lambda^\ssp$, the point $l_\lambda=\inf q_\lambda$ is identified with $\ju(l_\lambda)$ in the metric space $(\sM,d)$. Furthermore, $\ju(l_\lambda)$ is uniform in $H_{\lambda-}$. For any $x\in q_\lambda$, with $\lambda \in \Lambda^\tsp$, the segment between $x$ and $1$ must contain the points $\ju(l_\lambda)$ that we may see as a projection of $q_\lambda$ on $H_{\lambda-}$. With this in mind, we let $p(x)=\ju(l_\lambda)$ if $x\in q_\lambda$, $\lambda\in \Lambda^\ssp$. Then, for any point $x>1$, we consider the sequence of successive projections defined by $p^0(x)=x$, and provided that $p^n(x)>1$, $p^{n+1}(x)=p(p^n(x))$, until we eventually find a point in $[0,1]$. 

Fix now some natural number $k\ge 1$ and let $A_k:=(R_{k^3}, R_{(k+1)^3}]$ denote the set of points of the annulus of fragments $\sF_\lambda$, for $\lambda\in \Lambda_k$. For each $\lambda\in \Lambda_k$, and $n\ge 0$, let $\lambda_n$ be such that $p^{n}(l_\lambda)\in q_{\lambda_n}$. Recall that $D_\lambda=\sup\{d(x,y): x,y\in q_\lambda\}$. Clearly, the distance from any point $x\in A_k$ to $\sM_{k^3}$ is at most 
\[\sup_{\lambda \in \Lambda_k} \sup_{x\in q_\lambda} d(x,\sM_{k^3}) \le \sup_{\lambda \in \Lambda_k} \sum_{1\le n\le N_\lambda} D_{\lambda_n} \,,\]
where $N_\lambda=\inf\{n\ge 1: p^n(l_\lambda)<R_{k^3}\}$. However, since $\Lambda_k$ is infinite, we shall refine the analysis and rely on the decomposition into different levels introduced in the previous section. 

Recall that we say that $q_\lambda$ is an interval of level $i$, and write $\lambda\in \Lambda_{k,i}$ if $m_\lambda\in [k^{-6-i},k^{-5-i})$; let $A_{k,i}=\bigcup_{\lambda\in \Lambda_{k,i}} q_\lambda$ be the subset of $A_k$ consisting of the intervals of level $i$. For $i\ge 0$, and $\lambda \in \Lambda_{k,i}$, let $N_i(\lambda):=\inf\{n\ge 1: p^{n}(l_\lambda) \not\in A_{k,i} \}$ be the number of hops until hitting an interval of level lower than $i$, or exiting $A_k$ altogether from the left. We then have
\begin{align}\label{eq:bound_accumulation}
\sup_{\lambda \in \Lambda_k} \sum_{1\le n\le N_\lambda} D_{\lambda_n} 
& \le \sum_{i\ge 0} \sup_{\lambda\in \Lambda_{k,i}} \sum_{1\le n \le N_i(\lambda)} D_{\lambda_n} \notag \\ 
& \le \sum_{i\ge 0} \sup_{\lambda\in \Lambda_{k,i}} N_i(\lambda) \cdot \sup_{j\ge i}\sup_{\lambda\in \Lambda_{k,j}} D_\lambda \notag \\ 
& \le \sum_{i\ge 0} \sup_{\lambda\in \Lambda_{k,i}} N_i(\lambda) \cdot k^{-2-i/4}\,,
\end{align}
provided that the event $\bigcap_{i\ge 0} B^c_{k,i}$ from Lemma~\ref{sub:statistics_small_components} occurs. So it remains only to upper bound $N_{k,i}=\sup\{N_i(\lambda):\lambda \in \Lambda_{k,i}\}$. We do this using the properties of the sequence of the projections. 

\begin{lem}\label{lem:Ni}
For any $k\ge 1$ large enough,  with probability at least $1-11 k^{-5/4}$, we have for every $i\ge 0$, 
\[N_{k,i} := \sup_{\lambda \in \Lambda_{k,i}} N_\lambda < 20\,.\]
\end{lem}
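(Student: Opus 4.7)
The strategy is a first-moment (chain-counting) argument. The plan is to work on the good event $G_k := \bigcap_{i\ge 0} B_{k,i}^c$ of Lemma~\ref{lem:sizes_fragments-k}, which satisfies $\pc{G_k^c}\le 10 k^{-5/4}$, and on which (combined with an exponentially small event from Lemma~\ref{lem:position_Hlambda} ensuring $R_{k^3}\ge 2k^3-1$) we have uniform bounds
\[
 |H_{\lambda-}| \ge k^3 \text{ for every } \lambda\in \Lambda_k,\qquad \#\Lambda_{k,i}\le 7k^{8+i},\qquad M_{k,i}\le \min\{k^{7/2-i/4},\,7k^2\}.
\]

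For fixed $i\ge 0$ and $n\ge 1$, I would call a \emph{length-$n$ chain in} $\Lambda_{k,i}$ a strictly decreasing tuple $\lambda_0>\lambda_1>\cdots>\lambda_{n-1}$ in $\Lambda_{k,i}$ with $\ju(l_{\lambda_{j-1}})\in q_{\lambda_j}$ for each $j=1,\dots,n-1$; by the definition of $N_i(\lambda)$, the event $\{\sup_{\lambda\in\Lambda_{k,i}} N_i(\lambda)\ge n\}$ is exactly the existence of such a chain. Conditionally on $X$, the projections $\{\ju(l_\lambda):\lambda\in\Lambda^\ssp\}$ are mutually independent (they are built from independent components of $\bU$ associated to distinct local minima), each uniform on $H_{\lambda-}$, so the conditional probability of a specified chain equals $\prod_{j=1}^{n-1} m_{\lambda_j}/|H_{\lambda_{j-1}-}|$. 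Summing freely over tuples (dropping ordering and distinctness constraints) and using $|H_{\lambda-}|\ge k^3$ on $G_k$,
\[
 \bE\bigl[\#\{\text{length-}n\text{ chains in }\Lambda_{k,i}\}\cdot \I{G_k}\bigr] \,\le\, 7k^{8+i}\,\bigg(\frac{M_{k,i}}{k^3}\bigg)^{\!n-1}.
\]

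Taking $n=20$ and splitting the minimum defining $M_{k,i}/k^3\le \min\{k^{1/2-i/4},\,7k^{-1}\}$, a short computation shows this is at most $Ck^{-5}$ for each $i$ (tightness at $i\approx 6$ where the two regimes meet) and, after summing over $i\ge 0$, remains $O(k^{-5})$ (the tail $i\ge 7$ decays geometrically in $i$). By Markov's inequality, the probability that such a length-$20$ chain exists at \emph{any} level $i$ while $G_k$ holds is at most $Ck^{-5}$; adding $\pc{G_k^c}\le 10 k^{-5/4}$ yields the required bound $11 k^{-5/4}$ for all $k$ sufficiently large.

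The main obstacle is that $\#\Lambda_{k,i}$ can be as large as $7k^{8+i}$, so a per-starting-point union bound has no chance. The saving insight is to count \emph{chains} rather than starting points and to exploit the fact that each hop in a chain pays a factor $M_{k,i}/k^3$ thanks to the uniform lower bound $|H_{\lambda-}|\ge k^3$; with $n=20$ hops the product comfortably dominates the entropy $\#\Lambda_{k,i}$ uniformly in $i$, which explains the choice of the constant $20$.
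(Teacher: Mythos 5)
Your argument is correct and essentially identical to the paper's: the paper also conditions on $\cG=\sigma(X)$, uses that the points $\ju(l_\lambda)$ are conditionally independent uniforms on $H_{\lambda-}$ with $|H_{\lambda-}|\ge k^3$ on the good event, and bounds $\pc{N_{k,i}\ge 20}$ by $\#\Lambda_{k,i}\cdot(M_{k,i}/k^3)^{m}\le 7k^{8+i}(M_{k,i}/k^3)^{m}$ using the same two regimes for $M_{k,i}$ --- which is exactly your chain count (so your remark that a per-starting-point union bound ``has no chance'' is off the mark: that is precisely what both computations amount to), and the small discrepancies (exponent $19$ versus $20$, regimes split at $i\approx 6$ versus $i=5$) do not affect the final $10k^{-5/4}+O(k^{-5})\le 11k^{-5/4}$. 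One minor point to tidy: $G_k$ is not $\cG$-measurable (the diameter bound in $B_{k,i}$ involves $\bU$), so when taking the conditional expectation of the chain count you should restrict to the $\cG$-measurable part of the good event (the bounds on $\#\Lambda_{k,i}$, $M_{k,i}$ and $R_{k^3}$ suffice), exactly as in the paper's conditioning.
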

\begin{proof}Let $\cG$ be the sigma-algebra generated by $(X_s)_{s\ge 0}$. Then $(q_\lambda)$, $\lambda \in \Lambda^\ssp$ is $\cG$-measurable while, conditionally on $\cG$, the random variables $\xi(l_\lambda)$ are independent and uniform in $H_{\lambda-}$. Let $B_k$ be the event that $R_{k^3}<2k^3-1$ or $R_{(k+1)^3}>2(k+1)^3+1$. For any $\lambda\in \Lambda_k$, $(1,R_{k^3}] \subseteq H_{\lambda-}$; therefore, on the event $B_k^c$ for any Borel set $A$, $\pc{\xi_\lambda \in A~|~B_k^c,\cG}\le \Leb(A)/k^3$. Furthermore, by Lemma~\ref{sub:statistics_small_components}, on $B^c_{k,i}$ we have $M_{k,i}\le \max\{k^{7/2-i/4}, 7k^2\}$ and $\#\Lambda_{k,i}\le 7k^{8+i}$. It follows by the union bound, that for any natural number $m\ge 1$, we have
\begin{align*}
	\pc{N_{k,i} \ge m~|~B^c_{k,i}, B^c_k, \cG} 
	& \le \#\Lambda_{k,i} \cdot \sup_{\lambda \in \Lambda_{k,i}}\pc{N_\lambda \ge m ~|~M_{k,i} \le k^{7/2-i/4}} \\
	& \le 7 k^{8+i} \cdot (k^{7/2-i/4}/k^3)^m \\
	& \le 7 k^{8+i} \cdot k^{-m(i-2)/4}\,.
\end{align*}
As a consequence, for $m=20$, we obtain
\[\pc{N_{k,i} \ge 20 ~|~B^c_{k,i}, B^c_k, \cG} \le 7 k^{18-4i}\,,\]
which will be good enough for $i\ge 5$. On the other hand, for $0\le i\le 4$, the alternative bound $M_{k,i}\le 7k^2$ yields a bound of $7k^{8+i-m}\le 7k^{-8}$ for $m=20$. Putting everything together, we have $N_{k,i}\ge 20$ for some $i\ge 0$ with probability at most $10 k^{-5/4} + 28 k^{-8} + 8 k^{-2}\le 11 k^{-5/4}$ for all $k$ large enough. 
\end{proof}

Going back to \eqref{eq:bound_accumulation}, Lemma~\ref{lem:Ni} implies that 
\[\sup_{\lambda\in \Lambda_k} \sum_{1\le n \le N_\lambda} D_{\lambda_n} \le 20 \sum_{i\ge 0} k^{-2 -i/4} \le k^{-3/2}\,\]
with probability at least $1-11k^{-5/4}$ which completes the proof of Proposition~\ref{pro:bound_annulus}.



\subsection{The diameter of the string of beads: Proof of Proposition~\ref{pro:left-end}} 
\label{sub:string_of_beads}

By construction, for any $\lambda \in \R$, the diameter of $\sP_\lambda$ is no greater than 
\begin{align}\label{eq:sum_diam_around0}
\sum_{\lambda'>\lambda} \diam(\sT^-_{\lambda'}) \I{\lambda'\in \Lambda^\ssm}
&\le \sum_{\lambda'>\lambda} \sqrt{m_{\lambda'}} \cdot D_\lambda^\star \I{\lambda'\in \Lambda^\ssm}\,,
\end{align}
where $D^\star_\lambda$, $\lambda\in \Lambda^\ssm$, are i.i.d.\ copies of the sub-Gaussian random variable whose existence is guaranteed by Proposition~\ref{pro:diameter_small} with $\epsilon=1$. We have already bounded a similar sum in Section~\ref{sub:recursive_convex_minorants_BPT}; in particular, the arguments there show that almost surely
\begin{equation}\label{eq:bound_around_zero}
\sum_{\lambda'\in \Lambda^\ssm} \sqrt{m_{\lambda'}} < \infty\,.
\end{equation}
Finally, consider the process $M_\lambda(s)$ defined for $s\ge 0$ by 
\[M_\lambda(s):=\sum_{\lambda<\lambda'\le \lambda+s} \sqrt{m_{\lambda'}} \cdot (D^\star_{\lambda'}-\Ec{D^\star_{\lambda'}})\,.\]
Conditionally on the $m_\lambda$, $\lambda \in \Lambda^-$, $(M_\lambda(s))_{s\ge 0}$ is a martingale. Since $D^\star$ is sub-Gaussian, $M_\lambda(s)$ is bounded in $L^2$ and thus converges almost surely to a finite limit as $s\to\infty$. Putting this together with \eqref{eq:bound_around_zero} shows that the right-hand side of \eqref{eq:sum_diam_around0} and hence $\diam(\sP_\lambda)$ tends to zero as $\lambda \to \infty$, which completes the proof of Proposition~\ref{pro:left-end}. 


\section{The mass measure and Hausdorff dimension of $\CMT(X,\bU)$}
\label{sec:mass_hausdorff}

In this section we prove the lower bound on the Hausdorff dimension of $\sM$. We use the mass distribution principle using the mass measure $\mu$ that is defined in Section~\ref{sub:mass_measure}. The asymptotics for the $\mu$-mass of small balls are provided in Section~\ref{sub:evolution_mass_ball} and relies heavily on the growth process defined in Section~\ref{sec:compactness}.

\subsection{The mass measure} 
\label{sub:mass_measure}

We start with the construction of the mass measure $\mu$ on $\sM$. The measures in this section will always be seen as Borel measures on $\sM$, the completion of $\R_+$ with respect to $d$. For $t\in \R_+$ we let $\mu_t$ be the rescaled Lebesgue measure on $[0,t]$: $\mu_t(A)=t^{-1} \Leb(A\cap [0,t])$. For each $t$, $\mu_t$ is a probability measure on $\sM$ which charges only a subtree containing the root $0$ (it is easy to see that $[0,t]$ is connected). Let $\cL$ be the set of leaves of $\sM$, that is the set of points $x$ such that $\sM\setminus \{x\}$ is connected. Our aim in this section is the following
\begin{prop}\label{prop:mass_measure}
With probability one, as $t\to\infty$, $\mu_t$ converges weakly to a limit probability measure that we denote by $\mu$ and call the mass measure on $\sM$. Furthermore $\mu(\R_+)=0$ so $\mu(\cL)=1$.
\end{prop}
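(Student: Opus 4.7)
The plan is to reparametrize by the growth process from Section~\ref{sub:the_growth_process} and work with the auxiliary probability measures $\nu_\lambda := |H_\lambda|^{-1}\, \pi_* \Leb|_{H_\lambda}$. Since $\sM$ is a.s.\ compact by Proposition~\ref{pro:space_compact}, the space of Borel probabilities on $\sM$ is Prokhorov-compact, so it suffices to prove that $(\nu_\lambda)$ is Cauchy in the Prokhorov distance and then to compare $\mu_t$ with $\nu_{\lambda(t)}$ for a suitable $\lambda(t)$.

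For $\lambda<\lambda'$, using $H_\lambda \subseteq H_{\lambda'}$, I would write
\begin{equation*}
\nu_{\lambda'} = \frac{|H_\lambda|}{|H_{\lambda'}|}\,\nu_\lambda + \frac{|H_{\lambda'}|-|H_\lambda|}{|H_{\lambda'}|}\,\tilde\nu_{\lambda,\lambda'},
\end{equation*}
where $\tilde\nu_{\lambda,\lambda'}$ is a probability supported on $\pi([L_{\lambda'},L_\lambda)) \cup \pi([R_\lambda,R_{\lambda'}))$. By Lemma~\ref{lem:position_Hlambda}, $|H_\lambda|/|H_{\lambda'}|\to 0$ as $\lambda'\to\infty$ (for fixed $\lambda$), so the first summand is $o(1)$. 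For the support of the second, Proposition~\ref{pro:bound_annulus} together with Borel--Cantelli gives $\diam(\sF_k) \le k^{-3/2}$ for all $k$ large enough, so every point of $\pi([R_\lambda,R_{\lambda'}))$ sits within distance $\sum_{k\ge k_\lambda} k^{-3/2}$ of $\sM_\lambda$, with $k_\lambda \to \infty$ as $\lambda\to\infty$; simultaneously $\pi([L_{\lambda'},L_\lambda)) \subseteq \sP_\lambda$ has vanishing diameter by Proposition~\ref{pro:left-end}. Together these show that $\tilde\nu_{\lambda,\lambda'}$ concentrates arbitrarily close to $\sM_\lambda$ for $\lambda$ large, establishing the Cauchy property and producing a limit $\mu$. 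The convergence of $(\mu_t)$ to $\mu$ then follows by choosing $\lambda(t)$ with $R_{\lambda(t)} \approx t$ (e.g.\ $\lambda(t)=t/2$, so that $|R_{\lambda(t)}-t|=O(1)$ a.s.\ by Lemma~\ref{lem:position_Hlambda}), decomposing $[0,t]$ accordingly, and using $L_{\lambda(t)}/t\to 0$ to conclude that $\mu_t-\nu_{\lambda(t)}$ vanishes in total variation.

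For the last assertion, fix $\lambda\in\N$ and note that $\sM_\lambda$ is a.s.\ compact --- hence closed in $\sM$ --- either by the same chaining strategy as Proposition~\ref{pro:space_compact} applied inside the bounded interval $[L_\lambda,R_\lambda]$, or by absolute continuity of $X|_{[0,R_\lambda]}$ with Brownian motion combined with the known compactness of the scaling limits of critical components. Portmanteau then gives
\begin{equation*}
\mu(\sM_\lambda) \le \liminf_{\lambda'\to\infty} \nu_{\lambda'}(\sM_\lambda) \le \liminf_{\lambda'\to\infty} \frac{|H_\lambda|}{|H_{\lambda'}|} = 0,
\end{equation*}
and since $\pi(\R_+)=\bigcup_{\lambda\in\N}\sM_\lambda$ we get $\mu(\pi(\R_+))=0$. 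The equality $\mu(\cL)=1$ then follows because every non-leaf $x$ of $\sM$ lies in $\pi(\R_+)$: picking $a=\pi(s),b=\pi(t)\in\pi(\R_+)$ in distinct components of $\sM\setminus\{x\}$ (possible by density of $\pi(\R_+)$), the geodesic from $a$ to $b$ is the image under $\pi$ of the pre-arc $\llb s,t\rrb$ constructed in Section~\ref{sec:the_branching_structure}, hence entirely contained in $\pi(\R_+)$ and passing through $x$. The main obstacle is making the Cauchy estimate quantitative, which crucially relies on the summability $\sum_k k^{-3/2} < \infty$ of the annulus diameter bound from Proposition~\ref{pro:bound_annulus}.
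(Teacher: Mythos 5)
Your argument does not actually prove convergence: the step ``$\tilde\nu_{\lambda,\lambda'}$ concentrates arbitrarily close to $\sM_\lambda$, establishing the Cauchy property'' is a non sequitur. Knowing that two measures $\nu_{\lambda'}$ and $\nu_{\lambda''}$ both put all but $o(1)$ of their mass within distance $\epsilon_\lambda$ of the fixed compact set $\sM_\lambda$ says nothing about their Prokhorov distance from each other, because $\sM_\lambda$ has diameter of order one: the two measures could in principle spread their mass over quite different parts of (a neighbourhood of) $\sM_\lambda$. Since $|H_\lambda|/|H_{\lambda'}|\to 0$, essentially \emph{all} of the mass of $\nu_{\lambda'}$ is attached after time $\lambda$, and where it attaches is governed by the uniform variables $\bU$; the whole difficulty of the proposition is to show that the proportions of mass landing above the various parts of $\sM_\lambda$ stabilise. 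Your argument only yields tightness/precompactness, which is anyway automatic from the compactness of $\sM$, so subsequential limits could a priori differ and the limit $\mu$ is not well defined. The paper fills exactly this hole with the mass supermartingales of Lemma~\ref{lem:mass_martingale}: for a fixed $\lambda_0$ and a Borel set $S\subseteq[0,R_{\lambda_0}]$, the mass $M_\lambda(S)$ of points projecting into $S$ is a bounded supermartingale in $\lambda$ (conditionally on $X$), hence converges a.s.; covering $[0,R_{\lambda_0}]$ by finitely many $\epsilon$-balls and combining with the Hausdorff approximation of $\sM$ by $\sM_{\lambda_0}$ (the part you do have, via Propositions~\ref{pro:bound_annulus} and~\ref{pro:left-end}) then gives the Prokhorov--Cauchy estimate. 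Some substitute for this ``where does the new mass project'' control is indispensable.

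A secondary problem: in the last part you invoke Portmanteau in the wrong direction. The inequality $\mu(A)\le\liminf_{\lambda'}\nu_{\lambda'}(A)$ is the \emph{open}-set half of Portmanteau, whereas $\sM_\lambda$ is compact, hence closed and (being $\mu$-null in the end) certainly not open; for closed sets Portmanteau only gives $\limsup\nu_{\lambda'}(\sM_\lambda)\le\mu(\sM_\lambda)$, i.e.\ a lower bound on $\mu(\sM_\lambda)$, which is useless here. This part is fixable (the paper argues via $\mu_\lambda([i,i+1])\le 1/R_\lambda\to 0$ for each unit interval, together with the observation that the completion adds only leaves, which is simpler than your geodesic argument), but as written the step is incorrect.
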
 

Recall the notation in Section~\ref{sub:the_growth_process}. For a subset $S$ of the tree $\sM$ and $x\in \R_+$, we define the projection of $x$ onto $S$ as the point of $S$ that is closest to $x$. Fix $\lambda_0\in \R$. We are interested in the projection onto the subset of $\sM$ consisting of the points $[0,R_{\lambda_0}]$. For $x\in \sM$, we let $[x]_{\lambda_0}$ denote the corresponding point. Observe that, with the notation of the previous section, a.s.\ $[x]_{\lambda_0}=\sup\{p^n(x) \cap [0,R_{\lambda_0}], n\ge 0\}$.

Even though $[0,R_{\lambda_0}]$ is not closed in $\sM$, we will always have $[x]_{\lambda_0}\in [0,R_{\lambda_0})$ for the points $x$ we consider. Rather than working with the measures $\mu_t$, $t\in \R_+$, it will be more convenient to work with $\bar \mu_\lambda := \mu_{R_\lambda}$ for $\lambda \in \R$; Lemma~\ref{lem:position_Hlambda} which says that $R_\lambda\to\infty$ guarantees that taking the limits as $t\to\infty$ or $\lambda\to\infty$ is equivalent.

We define the following process: for a Borel set $S\subseteq [0,R_{\lambda_0}]$ and $\lambda\ge \lambda_0$, 
\begin{equation}\label{eq:def_martingale}
M_\lambda=M_\lambda(S):=\bar \mu_\lambda(\{x\le R_\lambda: [x]_{\lambda_0} \in S\})\,.
\end{equation}
We will consider only the randomness coming from $\bU$ and study $M_\lambda$ conditionally on $\sigma(R_\lambda:\lambda\ge \lambda_0)$. We let $\bP^{\shortdownarrow}$ and $\bE^\shortdownarrow$ be the corresponding probability and expectation. 

\begin{lem}\label{lem:mass_martingale}
The process $(M_\lambda, \lambda \ge \lambda_0)$ is almost surely a supermartingale under $\bP^\shortdownarrow$.
\end{lem}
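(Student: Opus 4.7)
The process $M_\lambda$ is piecewise constant and only changes at the jump times in $\Lambda\cap(\lambda_0,\infty)$. I would first check that $\Lambda^-$-events do not alter $M_\lambda$: an interval $q_{\lambda^*}$ absorbed at a $\Lambda^-$-event already lies in $[0,L_{\lambda_0})\subseteq[0,R_{\lambda_0}]$, so no new Lebesgue mass enters $[0,R_\lambda]$, the identification point $L_{\lambda^*-}\equiv\zeta_{\lambda^*}$ lies in $[0,R_{\lambda_0}]$ already, and points of $q_{\lambda^*}$ project to themselves. Thus it suffices to study what happens at $\Lambda^+$-events. At such a jump $\lambda^*$ the new interval $q_{\lambda^*}$ of length $m_{\lambda^*}$ joins $[0,R_\lambda]$ and, by the chain description $[x]_{\lambda_0}=\sup\{p^n(x)\cap[0,R_{\lambda_0}],n\ge 0\}$, every point of $q_{\lambda^*}$ projects to $[\ju(l_{\lambda^*})]_{\lambda_0}$. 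Setting $\xi_{\lambda^*}:=\mathbf{1}\{[\ju(l_{\lambda^*})]_{\lambda_0}\in S\}$, this yields the recursion
\begin{equation*}
M_{\lambda^*}=\frac{R_{\lambda^*-}M_{\lambda^*-}+m_{\lambda^*}\xi_{\lambda^*}}{R_{\lambda^*-}+m_{\lambda^*}}.
\end{equation*}

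Take the filtration $(\cG_\lambda)$ generated under $\bP^\shortdownarrow$ by the uniforms $\ju(l_{\lambda'})$ for $\lambda'\in\Lambda\cap(\lambda_0,\lambda]$. Under $\bP^\shortdownarrow$, conditionally on $\cG_{\lambda^*-}$, the variable $\ju(l_{\lambda^*})$ is uniform on the (deterministic) interval $H_{\lambda^*-}$, so with $H^S_{\lambda^*-}:=\{y\in H_{\lambda^*-}:[y]_{\lambda_0}\in S\}$ one has $\bE^\shortdownarrow[\xi_{\lambda^*}\mid\cG_{\lambda^*-}]=|H^S_{\lambda^*-}|/|H_{\lambda^*-}|$. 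The increment formula becomes
\begin{equation*}
\bE^\shortdownarrow[M_{\lambda^*}\mid\cG_{\lambda^*-}]-M_{\lambda^*-}=\frac{m_{\lambda^*}}{R_{\lambda^*-}+m_{\lambda^*}}\Bigl(\frac{|H^S_{\lambda^*-}|}{|H_{\lambda^*-}|}-M_{\lambda^*-}\Bigr),
\end{equation*}
so the supermartingale property reduces to the single inequality $|H^S_{\lambda^*-}|/|H_{\lambda^*-}|\le M_{\lambda^*-}$ at every $\Lambda^+$-jump.

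The main obstacle is verifying this last inequality. The idea is that $M_{\lambda^*-}$ is a Lebesgue-weighted convex combination of the two densities of $S$-projections on the complementary pieces $[0,L_{\lambda^*-})$ and $H_{\lambda^*-}$ of $[0,R_{\lambda^*-}]$, and that on $[0,L_{\lambda^*-})$ every point projects to itself, so that density equals $|S\cap[0,L_{\lambda^*-})|/L_{\lambda^*-}$. It thus suffices to compare the $H_{\lambda^*-}$- and $[0,L_{\lambda^*-})$-densities. I would do this by induction on the (a.s.\ countable, but dealt with by monotone approximation through finitely many events) set $\Lambda\cap(\lambda_0,\lambda^*)$: the base case involves only $H_{\lambda_0}$ together with $\Lambda^-$-fragments in $[L_{\lambda^*-},L_{\lambda_0})$, which project to themselves and contribute exactly their Lebesgue $S$-masses, while at an inductive $\Lambda^+$ step the new fragment inherits (conditionally) the $H$-density from the previous stage. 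Establishing this density comparison cleanly—together with handling the countably many jumps via a suitable limiting argument—is the delicate part of the proof.
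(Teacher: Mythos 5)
Your reduction is exactly the paper's: only right jumps ($\Lambda^\ssp$-times) can change $M_\lambda$, at such a time the whole new interval $q_{\lambda^*}$ projects to $[\ju(l_{\lambda^*})]_{\lambda_0}$ with $\ju(l_{\lambda^*})$ uniform on $H_{\lambda^*-}$ under $\bP^\shortdownarrow$, and the supermartingale property is equivalent to the single inequality $\Leb(H^S_{\lambda^*-})/|H_{\lambda^*-}|\le M_{\lambda^*-}$ at every such jump, where $H^S_{\lambda^*-}=\{y\in H_{\lambda^*-}:[y]_{\lambda_0}\in S\}$. (The paper reaches the same point by writing $M_\lambda$ explicitly as $R_\lambda^{-1}[M_{\lambda_0}R_{\lambda_0}+\sum_{\lambda_0<\lambda'\le\lambda}m_{\lambda'}\I{\phi(\lambda')\in S}]$ and computing $\bE^\shortdownarrow[M_{\lambda+h}\mid\cF_\lambda]$ directly, which also avoids the jump-by-jump bookkeeping you need because $\Lambda^\ssp$ is dense.) Up to here your argument is correct and coincides with the paper's.

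The gap is the final inequality, which you leave open, and the route you sketch cannot close it in the stated generality. Since projections are the identity on $[0,L_{\lambda^*-})$, the inequality $\Leb(H^S_{\lambda^*-})/|H_{\lambda^*-}|\le M_{\lambda^*-}$ is, as you say, equivalent to the density comparison $\Leb(S\cap[0,L_{\lambda^*-}))/L_{\lambda^*-}\ \ge\ \Leb(H^S_{\lambda^*-})/|H_{\lambda^*-}|$. But $S$ is an arbitrary Borel subset of $[0,R_{\lambda_0}]$ and $[0,L_{\lambda^*-})\subseteq[0,L_{\lambda_0})$: if $S$ is disjoint from $[0,L_{\lambda_0})$ (say a subinterval of $H_{\lambda_0}$ of positive length), the left-hand density is identically $0$ while $\Leb(H^S_{\lambda^*-})\ge\Leb(S\cap H_{\lambda_0})>0$ and $L_{\lambda^*-}>0$ a.s., so the comparison you propose to establish by induction fails outright, and no limiting argument over the countably many jumps can repair a pointwise false inequality. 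So, as written, your base case already breaks down and the proposal does not prove the lemma. It is worth noting where this sits relative to the paper: the paper's proof dispatches this step in one line, asserting $\bP^\shortdownarrow(\phi(\lambda')\in S\mid\cF_\lambda)\le M_\lambda$ from the sole fact that the entry point of the projection chain is uniform in $H_\lambda\subset[0,R_\lambda]$, i.e.\ it never performs the left-versus-$H_\lambda$ comparison that your computation shows is actually required; closing this step (for instance by restricting to the sets $S$ used later, balls around the root, and arguing that the left piece $[0,L_{\lambda-})$ carries at least the $H_{\lambda-}$-density of such $S$-projections) needs a genuine additional argument that neither your sketch nor the uniformity of the attachment point by itself supplies.
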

\begin{proof}For each $\lambda\in \Lambda^\ssp$, $\lambda>\lambda_0$, all the points $x\in q_\lambda$ have the same projection on $[0,R_{\lambda_0}]$ since $p(x)=\xi(l_\lambda)$. Furthermore, in order to determine where an interval $q_\lambda$ projects onto $H_{\lambda_0}$ it suffices to follow the sequence of random projections/jumps $p^n(x)$, $n\ge 1$. Almost surely, $\inf\{n\ge 0: p^n(l_\lambda)\in H_{\lambda_0}\}<\infty$ and the point $[l_\lambda]_{\lambda_0}$ is uniformly random in $H_{\lambda_0}$. In the following, $\lambda_0$ being fixed, we use $\phi(\lambda)$ as a short-hand for $[l_\lambda]_{\lambda_0}$. The points $\phi(\lambda)$, $\lambda>\lambda_0$, are of course not independent because of the coalescence of the trajectories. Then,
\begin{equation}\label{eq:mass_martingale}
M_\lambda = \frac 1 {R_\lambda} \left[{M_{\lambda_0}\cdot R_{\lambda_0}} + \sum_{\lambda_0<\lambda'\le \lambda } m_{\lambda'} \I{\phi(\lambda') \in S}\right].
\end{equation}
It follows that, writing $\cF_\lambda$ for the sigma-algebra generated by $(\ju(l_{\lambda'}): \lambda
'\le \lambda)$, the random variable $M_\lambda$ is bounded and $\cF_\lambda$-measurable. The expression in \eqref{eq:mass_martingale} is amenable to a simple evaluation of the conditional expectations: for $h\ge 0$,
\begin{align*}
\bE^\shortdownarrow[M_{\lambda+h}~|~\cF_\lambda]
  & = \frac 1 {R_{\lambda+h}} \cdot \bE^\shortdownarrow \Bigg[M_{\lambda_0}\cdot R_{\lambda_0} + \sum_{\lambda_0< \lambda'\le \lambda + h} m_{\lambda'}\I{\phi(\lambda') \in S}~\Bigg|~\cF_\lambda \Bigg]\\
  & = \frac 1 {R_{\lambda+h}} \Bigg(R_\lambda \cdot M_\lambda + \sum_{\lambda<\lambda'\le \lambda+h} m_{\lambda'} \bP^\shortdownarrow (\phi(\lambda')\in S~|~\cF_\lambda)\Bigg)\,.
\end{align*}

However, almost surely conditionaly on $\cF_\lambda$, $\phi(\lambda')\in S$ if and only if the first point of the sequence $(p^i(l_{\lambda'}))_{i\ge 1}$ that falls in $H_\lambda$ lies in some interval $q_{\lambda''}$ itself such that $\phi(\lambda'')\in S$. By definition, such a point is the projection of $l_{\lambda'}$ on $H_\lambda$, and is uniform in $H_\lambda \subset [0,R_\lambda]$ and therefore $\bP^\shortdownarrow(\phi(\lambda')\in S~|~\cF_\lambda)\le M_\lambda$. It follows that 
\begin{align*}
\bE^\shortdownarrow [M_{\lambda+h}~|~\cF_\lambda]
& \le \frac{M_\lambda} {R_{\lambda+h}} \cdot \Bigg(R_\lambda + \sum_{\lambda<\lambda'\le \lambda+h} m_{\lambda'}\Bigg)  = M_\lambda\,,
\end{align*}
which completes the proof.
\end{proof}


\begin{proof}[Proof of Proposition~\ref{prop:mass_measure}]
Since $\sM$ is compact by Proposition~\ref{pro:space_compact}, the collection of measures $(\mu_t)_{t> 0}$ is tight. We prove that it is Cauchy for the Prohorov metric using the super-martingales $M_\lambda$ we have just introduced. Recall that, for two Borel measures $\nu$ and $\nu'$ on $(\sM,d)$, the Prohorov distance is given by 
\[\dP(\nu,\nu')=\inf\{\epsilon>0: \nu(A)\le \nu'(A^\epsilon)+\epsilon, \nu'(A)\le \nu(A^\epsilon)+\epsilon \text{ for all Borel sets } A\}\,\]
where $A^\epsilon=\{x: d(x,A)<\epsilon\}$. 

The arguments for compactness in Section~\ref{sec:compactness} show that for any $\epsilon>0$, there exists a $\lambda_0\in \R_+$ such that $\sup_x d(x,[0,R_\lambda])<\epsilon$ for all $\lambda\ge \lambda_0$. With this choice for $\lambda_0$, it follows that, for any $\lambda>\lambda_0$, 
\[\dP(\mu_\lambda,[\mu_\lambda]_{\lambda_0})<\epsilon\,,\] 
where $[\mu_\lambda]_{\lambda_0}$ denotes the image of $\mu_\lambda$ by the projection onto $[0,R_{\lambda_0}]$. Therefore, for any $\lambda,\lambda'\ge \lambda_0$,
\[\dP(\mu_\lambda,\mu_{\lambda'})\le \dP([\mu_\lambda]_{\lambda_0}, [\mu_{\lambda'}]_{\lambda_0}) + 2 \epsilon\,.\]
To complete the proof, cover $[0,R_{\lambda_0}]$ with finitely many balls of diameter $\epsilon$, say $B_1,B_2,\dots, B_k$. Then, by definition of $M_\lambda(S)$, we can construct a coupling $(X,Y)$ with $X\sim [\mu_\lambda]_{\lambda_0}$ and $Y\sim [\mu_{\lambda'}]_{\lambda_0}$ such that $(X,Y)\not \in \cup_{i} B_i \times B_i$ with probability at most $\sum_{i} |M_\lambda(B_i)-M_{\lambda'}(B_i)|$. Since the diameter of $B_i$ is at most $\epsilon$, the cost of the coupling on $B_i\times B_i$ is at most $\epsilon$. It follows that 
\[\dP([\mu_\lambda]_{\lambda_0},[\mu_{\lambda'}]_{\lambda_0}) \le \sum_{i=1}^k |M_{\lambda}(B_i)-M_{\lambda'}(B_i)| + \epsilon \,,\]
which is at most $2\epsilon$ for all $\lambda,\lambda'$ large enough because of the convergence of the mass super-martingales of Lemma~\ref{lem:mass_martingale}. This completes the proof of convergence.

The two additional properties are straightforward from the definition. First, for any interval $[i,i+1)$, $\mu([i,i+1))=\lim_\lambda \mu_\lambda([i,i+1])= 0$ since $R_\lambda\to \infty$, and thus $\mu(\R_+)=0$. The completion of $\R_+$ with respect to $d$ only adds leaves, so $\sM\setminus \R_+\subseteq \cL$, and therefore $\mu(\cL)=1$. 
\end{proof}


\subsection{The mass of balls around zero} 
\label{sub:evolution_mass_ball}

It is proved in \cite{AdBrGoMi2013a} that the Minkowski dimension of $\sM$ is almost surely equal to 3, and we thus only need to find a lower bound. For this, we aim at using the mass distribution principle with the mass measure. In this direction, one needs to upper bound the $\mu$-mass of balls centered at points with distribution $\mu$. In general, this might be delicate since we need to identify the balls around these points, and they are almost surely not in $\R_+$ (Proposition~\ref{sub:mass_measure}). This is why the following result is crucial; the intuition should be intuitively clear from the discrete setting, where the point $1$ can be replaced in Prim's algorithm by a uniformly random point in $[n]$ without altering the distributions. For a point $x\in \sM$ and $r> 0$ we let $B_x(r)$ denote the open ball of radius $r$ centered at $x$ (for the metric $d$).

\begin{lem}\label{lem:uniform_point}Let $\zeta$ be a point of $\sM$ with distribution $\mu$. Then, the processes $(\mu(B_0(r)))_{r\ge 0}$ has the same distribution as $(\mu(B_\zeta(r)))_{r\ge 0}$. 
\end{lem}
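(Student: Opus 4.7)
The plan is to transfer the invariance from the discrete minimum spanning tree to its scaling limit via exchangeability, following the intuition hinted by the authors. At the discrete level, the joint law of $M_n$ together with its edge weights on $K_n$ is invariant under permutations of the vertex labels; in particular, replacing the fixed root $\rho_n=1$ by a uniform random vertex $v_n \in [n]$ independent of the weights gives
\[
(M_n, n^{-1/3}d_n, n^{-1}\mu_n, \rho_n) \eqdist (M_n, n^{-1/3}d_n, n^{-1}\mu_n, v_n).
\]
Projecting to the ball-mass marginal, this gives the discrete counterpart of the lemma:
\[
\bigl(n^{-1}\mu_n(B_{\rho_n}(rn^{1/3}))\bigr)_{r\ge 0} \eqdist \bigl(n^{-1}\mu_n(B_{v_n}(rn^{1/3}))\bigr)_{r\ge 0}.
\]

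To lift this identity to the continuum, I would invoke Theorem~\ref{thm:limit_mst_Kn} strengthened by an independent $\mu_n$-sample: using the fact that $v_n/n$ is distributed as a uniform pick from $n^{-1}\mu_n$, a routine two-point Gromov--Hausdorff--Prokhorov argument produces the doubly-marked convergence
\[
(M_n, n^{-1/3}d_n, n^{-1}\mu_n, \rho_n, v_n) \xrightarrow[n\to\infty]{(d)} (\sM,d,\mu,0,\zeta),
\]
where $\zeta$ has conditional law $\mu$ given $\CMT(X,\bU)$. Composing with the ball-mass functional and taking $n\to\infty$ in the discrete identity would then conclude the proof.

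The main technical obstacle is that the map $(\cX,x,r)\mapsto \mu_\cX(B_x(r))$ is not continuous in the GHP topology, because of the strict inequality in the definition of open balls. However it is upper semi-continuous, and for any fixed $x$ the map $r\mapsto\mu(B_x(r))$ is non-decreasing and right-continuous with at most countably many jumps. I would therefore apply the continuous mapping theorem at the co-countable set of radii $r$ that are a.s.\ continuity points of the limiting process, deduce equality of finite-dimensional distributions there, and extend to all $r$ by right-continuity in $r$. A final logical point: this route relies on the statement of Theorem~\ref{thm:limit_mst_Kn}, whose proof occupies Section~\ref{sec:coupling}; the ordering remains consistent provided that proof does not itself invoke the Hausdorff dimension computation of Section~\ref{sec:mass_hausdorff}, which it does not.
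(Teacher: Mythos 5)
Your proof is correct, but it follows a genuinely different route from the paper's. You argue by global exchangeability of the vertex labels of $K_n$ (so the fixed root $1$ may be replaced by an independent uniform vertex $v_n$), and then transfer the resulting identity of ball-mass processes to the limit via Theorem~\ref{thm:limit_mst_Kn}, upgraded to a doubly-marked convergence with an extra $\mu_n$-sampled point, handling the discontinuity of ball masses by restricting to the co-countable set of radii at which the limiting spheres are a.s.\ null and then using monotonicity. The paper instead never invokes the full Theorem~\ref{thm:limit_mst_Kn}: it implements the same exchangeability intuition \emph{inside a fixed-$\lambda$ component} of the Prim-order representation, using Lemma~\ref{lem:dist_left-most-node} (the leftmost Prim-rank vertex of a component is uniform in it and independent of the internal weights) to get the discrete identity for $(H^n_\lambda, d^n_\lambda,\mu^n_\lambda)$ marked at $L^n_\lambda$ versus at an independent uniform vertex, then passes to the limit at fixed $\lambda$ via Proposition~\ref{pro:convergence_distances} (Gromov--Prokhorov), and finally sends $\lambda\to\infty$ using $d(0,L_\lambda)\le\diam(\sP_\lambda)\to 0$ (Proposition~\ref{pro:left-end}), $\dP(\hat\mu_\lambda,\mu_\lambda)\le L_\lambda\to 0$ and $\mu_\lambda\to\mu$ (Lemma~\ref{lem:position_Hlambda}, Proposition~\ref{prop:mass_measure}). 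What each buys: your version is conceptually cleaner (pure label exchangeability of the whole MST) but leans on the heaviest result of the paper plus marked-GHP machinery; the paper's version uses only the component-level convergence and the growth/mass estimates already established, which keeps the logical load lighter even though, as you correctly checked, there is no circularity in your route since the proof of Theorem~\ref{thm:limit_mst_Kn} does not use Lemma~\ref{lem:uniform_point} or the dimension computation. Two small imprecisions to fix in your write-up: for \emph{open} balls the map $r\mapsto\mu(B_x(r))$ is non-decreasing and \emph{left}-continuous (right-continuity holds for closed balls), so the extension from the dense set of good radii should approximate each $r$ from below; and the relevant semicontinuity bounds under GHP convergence go in both directions (lower for open balls, upper for closed ones), squeezing to give convergence exactly at radii where the sphere is a.s.\ $\mu$-null -- the set of such radii is co-countable by considering the atoms of $A\mapsto\E{\mu(\{y:d(x,y)\in A\})}$, which is the argument you need to state.
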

\begin{proof}
For $n\ge 1$, recall from Section~\ref{sec:motivation_and_history} that $v_1,v_2,\dots, v_n$ denote the Prim order on $[n]$ on the complete graph with edge weights $(w_e)$, $e\in E^n$; in what follows, we will occasionally write $v(i)$ instead of $v_i$. 
For $n\ge 1$, let $V^n_\lambda$ be the collection of vertices connected to $v(\lfloor n^{2/3}\rfloor)$ in the random graph with edge weights at most $p_n(\lambda)$, and let $H^n_\lambda$ denote the collection of their Prim ranks, and let $L^n_\lambda=\min H^n_\lambda$. Let $\hat \mu^n_\lambda$ denote the uniform probability distribution on $H^n_\lambda$. 
By Lemma~\ref{lem:dist_left-most-node}, conditionally on $V^n_\lambda$, the vertex $v(L^n_\lambda)$ is uniformly random in $V^n_\lambda$, and independent of the random variables $w_{e}$ associated to the edges with end points in $V^n_\lambda$. 
It follows in particular that, for any $r\ge 0$, 
\begin{align}\label{eq:mass_discrete_ball}
  \mu^n_\lambda(\{u \in H^n_\lambda: d^n_\lambda(L^n_\lambda, u)\le r n^{1/3}\}) 
\eqdist
\mu^n_\lambda(\{u \in H^n_\lambda: d^n_\lambda(\zeta^n_\lambda, u)\le r n^{1/3}\}) \,,
\end{align}
where $\zeta^n_\lambda$ denotes an independent point with distribution $\hat \mu^n_\lambda$ (uniform in $H^n_\lambda$).
By Proposition~\ref{pro:convergence_distances}, $(H^n_\lambda, d^n_\lambda, \mu^n_\lambda, L^n_\lambda, \zeta^n_\lambda)$ converges in distribution in the sense of Gromov--Prokhorov to $(H_\lambda, d, \hat \mu_\lambda, L_\lambda, \zeta_\lambda)$, where $\zeta_\lambda$ is an independent point with distribution $\mu_\lambda$. This implies the convergence of the random variables in \eqref{eq:mass_discrete_ball} as $n\to\infty$ towards
\[
\hat\mu_\lambda(\{u\in H_\lambda: d(L_\lambda,u)\le r\})
\eqdist
\hat\mu_\lambda(\{u\in H_\lambda: d(\zeta_\lambda,u)\le r\})\,.
\]
Now, $d(0,L_\lambda)\le \diam(\sP_\lambda)\to 0$ as $\lambda\to \infty$ by Proposition~\ref{pro:left-end}. Note also that a straightforward coupling yields $\dP(\hat \mu_\lambda, \mu_\lambda)\le L_\lambda/R_\lambda \le L_\lambda$ (independently of the metric since we can match the points exactly on a set of probability $1-L_\lambda/R_\lambda$). Taking the limit as $\lambda\to\infty$, Lemma~\ref{lem:position_Hlambda} and Proposition~\ref{prop:mass_measure} yield the claim for every fixed $r\ge 0$. This is easily extended to the joint convergence for finitely many values $r_1<r_2<\dots<r_k$, which completes the proof.
\end{proof}

In order to upper bound $\mu(B_0(r))$ we will proceed in two steps: we will first upper bound $\bar \mu_\lambda(B_0(r))$ showing that it is of the correct order of magnitude, that is roughly $r^{3}$ for some well-chosen $\lambda$ depending on $r$ ($\lambda$ of order $1/r$); we will then rely on the concentration for the mass supermartingales of the previous section, which controls the evolution of the mass as $\lambda$ increases, to show that $\mu(B_0(r))=\lim_\lambda \bar \mu_\lambda(B_0(r))$ remains of order $r^3$. Once we have the relevant upper bound for a fixed $r$, the proof is easily completed using routine arguments (taking a suitable subsequence and the Borel--Cantelli lemma). 

\begin{prop}\label{pro:mass_distribution-lambda}There exists a constant $c>0$ such that, for any $\epsilon\in (0,1)$ and every $r>0$ small enough,
\[\pc{\bar \mu_{r^{\epsilon-1}}(B_0(r)) > r^{3-\epsilon}} \le r^{c\epsilon}\,.\]
\end{prop}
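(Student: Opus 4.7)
The plan is to choose $\lambda_0 := r^{\epsilon-1}$ and rewrite
\[\bar\mu_{\lambda_0}(B_0(r)) = R_{\lambda_0}^{-1}\, \Leb\bigl(B_0(r)\cap[0,R_{\lambda_0}]\bigr),\]
controlling the two factors separately. The guiding heuristic is that, since $r \ll \lambda_0^{-1/2} = r^{(1-\epsilon)/2}$, balls of radius $r$ live at a scale much smaller than the characteristic length of $\sM_{\lambda_0}$; at that scale the parabolic drift is negligible and $\sM$ locally resembles a Brownian CRT (of Hausdorff dimension $2$), so the Lebesgue mass of $B_0(r) \cap [0,R_{\lambda_0}]$ should be of order $r^2$. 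Dividing by $R_{\lambda_0} \sim \lambda_0 = r^{\epsilon-1}$ then yields the target $r^{3-\epsilon}$.

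The first ingredient is the lower bound $R_{\lambda_0} \ge \lambda_0$, which Lemma~\ref{lem:position_Hlambda} gives on an event of probability at least $1 - e^{-c\lambda_0}$. Since $e^{-c r^{\epsilon-1}}$ is super-polynomially small in $r$, this failure probability is absorbed in the $r^{c\epsilon}$ bound.

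The main step is to show that
\[\Leb\bigl(B_0(r)\cap[0,R_{\lambda_0}]\bigr) \le r^2\]
with probability at least $1 - r^{c\epsilon}$. I would split $[0,R_{\lambda_0}] = [0,L_{\lambda_0}) \sqcup H_{\lambda_0}$ and analyze each piece. On $H_{\lambda_0}$, the excursion theory of Section~\ref{sub:convex_minorant_of_brownian_paths} together with Proposition~\ref{pro:distribution_R-process} shows that $\sM_{\lambda_0}$ is encoded by an area-tilted Brownian excursion of duration $R_{\lambda_0}-L_{\lambda_0} \sim \lambda_0$ (cf.\ \eqref{eq:exp_tilted_excursion}); by Theorem~\ref{thm:limit_mst_surplus} with $s=0$ this makes $\sM_{\lambda_0}$ into a (slightly biased) Brownian CRT of mass $\sim \lambda_0$. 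A quantitative dimension-$2$ estimate then bounds the Lebesgue mass of a ball of radius $r$ around the attachment point $L_{\lambda_0}$ by $r^2$, with polynomial-in-$r$ tail control; Brownian scaling reduces this to a uniform-in-$r$ estimate on a unit-mass Brownian CRT, and the area tilt contributes only a bounded density over the relevant range of masses. The chain-of-beads contribution $B_0(r) \cap \sP_{\lambda_0}$ is handled separately: only those beads $\sT^\tsm_{\lambda_j}$ lying at tree-distance less than $r$ from $0$ contribute, and a greedy bead-by-bead computation analogous to the proof of Proposition~\ref{pro:left-end}, using the sub-Gaussian bead diameters from Proposition~\ref{pro:diameter_small}, gives an $O(r^2)$ bound with adequate polynomial tails (even though the naive estimate $L_{\lambda_0}\lesssim \lambda_0^{-2}=r^{2-2\epsilon}$ is too weak).

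The main obstacle is making the BCRT-type estimate sufficiently quantitative: namely, showing that in a Brownian CRT of mass $m$, the Lebesgue mass of a ball of radius $r$ around a given point (here, a biased attachment point) satisfies a polynomial tail of the form $\pc{\Leb(B(r))>m^{1/2}r^{2-\epsilon}} \le r^{c\epsilon}$, uniformly for $m\sim\lambda_0$. A secondary delicate issue is the interaction between $\sP_{\lambda_0}$ and $H_{\lambda_0}$ at $L_{\lambda_0}$, where the two contributions must be aggregated consistently through the identification of $L_{\lambda_0}$ with a uniform point in the attached bead.
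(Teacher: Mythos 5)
Your plan has genuine gaps, and it also bypasses the much simpler argument that actually works. The paper's proof does not try to control the Lebesgue mass of $B_0(r)\cap H_{\lambda_0}$ at all: it shows that with probability at least $1-r^{c\epsilon}$ one has $d(0,H_{r^{\epsilon-1}})>r$, so the ball does not meet $H_{\lambda_0}$ and therefore $\bar\mu_{\lambda_0}(B_0(r))\le L_{\lambda_0}/R_{\lambda_0}$, which by Lemma~\ref{lem:position_Hlambda} is at most $r^{2-4\epsilon}\cdot r^{1-2\epsilon}=r^{3-6\epsilon}$ outside an event of probability $e^{-r^{-\epsilon}}$. This is exactly the ``naive estimate'' you dismiss as too weak: since $\epsilon\in(0,1)$ is arbitrary and $c$ is free, an exponent $3-C\epsilon$ is just as good as $3-\epsilon$ after relabelling (the paper itself only proves the $r^{3-6\epsilon}$ version), so your insistence on getting Lebesgue mass exactly $r^2$ creates a needlessly hard problem. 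The real content of the proposition is the estimate $\pc{d(0,H_{r^{\epsilon-1}})\le r}\le r^{c\epsilon}$, and your sketch never produces it: to exclude Lebesgue mass from $B_0(r)$ you need a \emph{lower} bound on distances along the chain of beads (points at Lebesgue position $\gg r^2$ must be at tree distance $>r$ from $0$), whereas the sub-Gaussian diameter bounds of Proposition~\ref{pro:diameter_small}~i) only bound distances from above. The paper gets the lower bound by exhibiting, with probability $1-r^{c\epsilon}$, a single face of the convex minorant of $X$ on $[0,1]$ with slope beyond $-r^{\epsilon-1}$ and length at least $r^{2-\epsilon}$ (via Girsanov comparison with $W$ and the Pitman--Ross Poisson description of the faces), combined with the $\Ec{1/Y^\star}<\infty$ bound of Proposition~\ref{pro:diameter_small}~ii). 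Nothing in your proposal plays this role.

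The $H_{\lambda_0}$ half of your plan also rests on incorrect premises. The excursion straddling $1$ at time $\lambda_0=r^{\epsilon-1}$ has duration $R_{\lambda_0}-L_{\lambda_0}\sim 2\lambda_0\to\infty$ and law $\tilde\bn_\sigma$ from \eqref{eq:exp_tilted_excursion}; for such durations the area tilt $\exp(\int_0^\sigma\omega)$ is not ``a bounded density'' (the Cauchy--Schwarz device in the proof of Proposition~\ref{pro:diameter_small} works only for durations bounded by a fixed constant), and $\sM_{\lambda_0}$ is not a slightly biased CRT: at scales below $1/\lambda_0$ its local geometry is MST-like of dimension $3$ --- the very statement being proven --- not dimension $2$, so the quantitative small-ball estimate you defer is not merely unproven but based on the wrong model. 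Moreover, even taking your stated target for granted, $\pc{\Leb(B(r))>m^{1/2}r^{2-\epsilon}}\le r^{c\epsilon}$ with $m\sim r^{\epsilon-1}$ gives a Lebesgue bound of order $r^{3/2-\epsilon/2}$, hence a contribution of order $r^{5/2-3\epsilon/2}$ to $\bar\mu_{\lambda_0}(B_0(r))$ after dividing by $R_{\lambda_0}$ --- far above the target $r^{3-\epsilon}$ (the correct CRT scaling would give $r^2$ independently of $m$, so the normalization $m^{1/2}r^{2-\epsilon}$ is also off). In short: the step you identify as the main obstacle is both insufficient as formulated and avoidable, and the step you treat as routine (the bead-by-bead bound) is where the actual difficulty, a distance lower bound via the convex minorant, lives.
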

\begin{proof}Observe that if $d(0,H_\lambda)>r$ then no point of $H_\lambda$ lies within $B_0(r)$, so that $\bar\mu_\lambda(B_0(r))$ is at most $L_\lambda/R_\lambda$. Using this with $\lambda=r^{\epsilon-1}$, it follows that
\begin{align}\label{eq:bound_mubar_start}
\pc{\bar \mu_{r^{\epsilon-1}}(B_0(r)) > r^{3-6\epsilon}}
& \le \p{\bar \mu_{r^{\epsilon-1}}(B_0(r)) > \frac{L_{r^{\epsilon-1}}}{R_{r^{\epsilon-1}}}} + \p{\frac{L_{r^{\epsilon-1}}}{R_{r^{\epsilon-1}}} > r^{3-6\epsilon} } \notag \\
& \le \pc{d(0,H_{r^{\epsilon-1}}) \le r } + \pc{L_{r^{\epsilon-1}} > r^{2-4\epsilon}} + \pc{R_{r^{\epsilon-1}} \le r^{-1+2\epsilon}}\notag \\ 
& \le \pc{d(0,H_{r^{\epsilon-1}}) \le r } + \exp(-r^{-\epsilon})\,,
\end{align}
where the last line follows, for all $r>0$ small enough, from the bounds in Lemma~\ref{lem:position_Hlambda}. 

Most of the work now consists in bounding the first term in \eqref{eq:bound_mubar_start} above. Observe that the geodesic from $H_{r^{\epsilon-1}}$ to $0$ must cross every single one of the metric spaces induced by $\sM$ on the intervals $q_\lambda\subseteq [0,1]$ with $\lambda>r^{\epsilon-1}$. Furthermore, with the notation of Section~\ref{sub:distances_in_swallowed_components}, $d(0,H_{r^{\epsilon-1}})$ decomposes as follows: since here the portion of path in $q_\lambda$ is precisely between $\ju(\sup q_\lambda)\in q_\lambda$ a.s. and $\inf q_\lambda$; we have 
\begin{align*}
d(0,H_{r^{\epsilon-1}}) 
= \sum_{\lambda\in \Lambda^\ssm} Y_{\lambda} \I{\lambda > r^{\epsilon-1}}
\ge_{st} \sum_{\lambda\in \Lambda^\ssm} Y^\star_{\lambda} \cdot m_{\lambda}^{1/2} \cdot \I{\lambda> r^{\epsilon-1}}\,,
\end{align*}
where the last inequality is a stochastic minoration that relies on Proposition~\ref{pro:diameter_small}: the $m_\lambda$, $\lambda \in \Lambda^\ssm$, are the sizes of the jumps of $L_\lambda$, and the $Y^\star_\lambda$ are conditionally independent from the entire collection $(m_\lambda: \lambda\in \Lambda^\ssm)$. In order to lower bound $d(0,H_{r^{\epsilon-1}})$ it suffices to focus on a single term of the sum in the right-hand side: if any of those terms is greater than $r$, then $d(0,H_{r^{\epsilon-1}})>r$ as well, thus
\begin{align}\label{eq:bound_mubar}
\pc{d(0,H_{r^{\epsilon-1}}) \le r} 
& \le \p{\#\{\lambda>r^{1-\epsilon}: \lambda\in \Lambda^\ssm, Y^\star_\lambda m_\lambda^{1/2} > r\}=0} \notag\\ 
& \le \p{\sup\{m_\lambda: \lambda\in \Lambda^\ssm, \lambda>r^{1-\epsilon}\}\le r^{2-\epsilon}} + \pc{Y^\star \le r^{\epsilon/2}}\,.
\end{align}
The second term is at most $r^{c\epsilon}$ by Proposition~\ref{pro:diameter_small} \emph{ii)} and Markov's inequality. The $(m_\lambda)$ are also the lengths of the faces of the convex minorant of $X$ on the interval $[0,1]$ by Lemma~\ref{lem:fragment_of_x}, and to deal with the first term, we relate it to the convex minorant of a standard Brownian motion $W$. 

Let $(Q_t)_{t\ge 0}$ be defined by 
\begin{align*}
Q_t
&=\exp\left(\int_0^t s dW_s - \frac 1 2 \int_0^t s^2 ds\right)\,,
\end{align*}
Then by the Cameron--Martin--Girsanov formula (Theorem 38.5 of \cite{RoWi2000}), the laws of $X$ and $W$ are related by a change of measure whose density is given by the martingale $Q_t$. For a function $\omega \in \cC(\R_+, \R)$, we consider the convex minorant of $\omega$ on $[0,1]$ and we let $\chi_r(\omega)$ denote the indicator that the longest face with slope (strictly) smaller than $-r^{\epsilon-1}$ has length at most $r^{2-\epsilon/2}$. Then, by Lemma~\ref{lem:fragment_of_x} we have
\begin{align}\label{eq:convex_minorant_CS}
\p{\sup\{m_\lambda: \lambda\in \Lambda^\ssm, \lambda>r^{1-\epsilon}\}\le r^{2-\epsilon}}
& = \Ec{\chi_r(X^0)} \notag \\ 
& = \Ec{\chi_r(W) \cdot Q_1} \notag \\ 
& \le \Ec{\chi_r(W)^2}^{1/2} \cdot \Ec{Q_1^2}^{1/2}\,,
\end{align}
by the Cauchy--Schwarz inequality. Observe that in the right-hand side above, $\Ec{Q_1^2}\le \Ec{\exp(2 \overline{W}_1)}$ is finite and independent of $r$ thanks to the Gaussian tails of $\overline W_1$.

The first factor in \eqref{eq:convex_minorant_CS} can be estimated using the results of Pitman and Ross \cite[Theorem~1]{PiRo2011a} and Brownian scaling. Let $(x_i,s_i)_{i}$ be the points of a Poisson point process with intensity $(2\pi x)^{-1/2} \cdot \exp(-(2+s^2)x/2)dxds$ on $\R_+\times \R$. Then $(x_i,s_i)$ are the lengths and slopes of the faces of the convex minorant of $W$ on the interval $[0,E]$ where $E$ is an independent exponential random variable with mean one, and here $E=\sum x_j$. Therefore, by Brownian scaling,
\begin{align*}
\Ec{\chi_r(W)}
&= \p{\sup\{ x_i : s_i \sqrt E <-r^{\epsilon-1}\} \le E \cdot r^{2-\epsilon}}\\ 
& \le \p{A^c} + \p{\sup\{x_i: s_i r^{1-5\epsilon/6}<-1\} \le r^{2-4\epsilon/3}}\,,
\end{align*}
where $A$ denotes the event that $E=\sum x_j \in [r^{\epsilon/3}, r^{-\epsilon/3}]$.
Since $E$ is exponential with mean one, we have $\pc{A^c}\le 2r^{\epsilon/3}$ for all $r>0$ small enough. We claim that there exists a constant $c>0$ such that the second term above is no larger than $r^{c\epsilon}$, and in order the complete the proof, it suffices to justify that claim. We slightly change the scaling and write $\delta=r^{1-5\epsilon/6}$ and $\gamma=\epsilon/3$ to lighten the notation. Then $r^{2-4\epsilon/3}\le \delta^{2+\gamma}$, and we focus on 
\begin{align}\label{eq:around_zero_main}
\p{\sup\{x_i: s_i \delta <-1\} \le \delta^{2+\gamma}}
& = \exp\left(-\int\!\!\!\!\int \frac{e^{-(2+s^2)x/2}}{\sqrt{2\pi x}} \I{x\ge \delta^{2+\gamma}, s\delta<-1} ds dx\right)\,.
\end{align}
We just need to lower bound the integral in the right-hand side: consider the subregion $\Sigma$ of $[\delta^{2+\gamma}, \infty)\times (-\infty, -1/\delta]$ where $u=xs^2\le 1$, that is $\Sigma:=\{(x,s): \delta^{2+\gamma}\le x\le \delta^2, -x^{-1/2}\le s\le -1/\delta\}$:
\begin{align*}
\int\!\!\!\!\int \frac{e^{-(2+s^2)x/2}}{\sqrt{2\pi x}} \I{x\ge \delta^{2+\gamma}, s \delta<-1} ds dx 
& \ge \int\!\!\!\!\int_\Sigma \frac{e^{-(s^2+2)x/2}}{\sqrt{2\pi x}}dsdx\\ 
&\ge e^{-\delta^2} \int\!\!\!\!\int_\Sigma \frac{e^{-s^2x/2}}{\sqrt{2\pi x}}dsdx \\ 
& \ge \frac {e^{-\delta^2-1/2}} {\sqrt{2\pi}} \int_{\delta^{2+\gamma}}^{\delta^2}\left[\frac 1 x - \frac 1{\delta\sqrt x} \right] dx \\ 
& \ge \frac {-\gamma \log \delta e^{-\delta^2-1/2}} { \sqrt{2\pi}} \,,
\end{align*}
for all $\delta>0$ small enough.
It follows easily that, there exists a constant $c>0$ such that for all $\delta>0$ small enough the right-hand side of \eqref{eq:around_zero_main} is at most $\delta^{c\gamma}$, which translated into the original parameters yields a bound of $r^{c'\epsilon}$ for the right-hand side of \eqref{eq:convex_minorant_CS}, and in turn for \eqref{eq:bound_mubar} and \eqref{eq:bound_mubar_start}. This completes the proof.
\end{proof}

Finally, using Lemma~\ref{lem:uniform_point}, the following proposition completes the proof of Theorem~\ref{thm:compact_dimH}

\begin{prop}\label{pro:mass_distribution}Let $\zeta$ be a point of $\sM$ with distribution $\mu$. Then for every $\epsilon\in (0,1)$, almost surely, for all $r>0$ small enough we have
\[\mu(B_\zeta(r)) \le r^{3-\epsilon}\,.\]
As a consequence $\DimH(\sM)\ge 3$.
\end{prop}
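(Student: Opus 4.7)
The plan is to combine three ingredients: (a) Lemma~\ref{lem:uniform_point} reduces the statement to a bound on $\mu(B_0(r))$; (b) Proposition~\ref{pro:mass_distribution-lambda} provides the required estimate at the finite scale $\lambda_0 = r^{\epsilon/2-1}$; (c) the supermartingale of Lemma~\ref{lem:mass_martingale} combined with Doob's maximal inequality propagates this estimate uniformly in $\lambda \ge \lambda_0$, and thence to the limit measure $\mu$. A Borel--Cantelli argument along a geometric scale then yields the almost-sure statement for all small $r$, and the mass distribution principle converts it into a lower bound on the Hausdorff dimension.

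To carry out (c), fix small $r > 0$, set $\lambda_0 := r^{\epsilon/2-1}$, and let $S := B_0(r) \cap [0, R_{\lambda_0}]$. Since $(\sM, d)$ has a tree-like structure, the projection $[x]_{\lambda_0}$ onto the subtree $[0, R_{\lambda_0}]$ lies on the geodesic from $x$ to $0$, so $d(0, [x]_{\lambda_0}) \le d(0, x)$. Consequently every $x \in B_0(r) \cap [0, R_\lambda]$ satisfies $[x]_{\lambda_0} \in S$, and therefore
\[
\bar\mu_\lambda(B_0(r)) \le M_\lambda(S) \qquad \text{for every } \lambda \ge \lambda_0.
\]
Proposition~\ref{pro:mass_distribution-lambda} applied with exponent $\epsilon/2$ bounds $M_{\lambda_0}(S) = \bar\mu_{\lambda_0}(B_0(r))$ by $r^{3-\epsilon/2}$ outside an event of probability $\le r^{c\epsilon/2}$. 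Conditionally on $\cF_{\lambda_0}$, Doob's maximal inequality applied to the non-negative supermartingale $M_\lambda(S)$ gives $\pc{\sup_{\lambda \ge \lambda_0} M_\lambda(S) \ge r^{3-\epsilon} \mid \cF_{\lambda_0}} \le M_{\lambda_0}(S)/r^{3-\epsilon}$. Integrating against the good event and adding the probability of its complement yields $\pc{\sup_{\lambda \ge \lambda_0} M_\lambda(S) \ge r^{3-\epsilon}} \le r^{\epsilon/2} + r^{c\epsilon/2}$. Finally, Portmanteau applied to the open ball $B_0(r)$ together with the weak convergence $\bar\mu_\lambda \to \mu$ from Proposition~\ref{prop:mass_measure} yields $\mu(B_0(r)) \le \liminf_\lambda \bar\mu_\lambda(B_0(r)) \le \sup_{\lambda \ge \lambda_0} M_\lambda(S)$, so
\[
\pc{\mu(B_0(r)) > r^{3-\epsilon}} \le 2r^{c'\epsilon}
\]
for some $c' > 0$.

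Applying this bound along the sequence $r_n := 2^{-n}$, the Borel--Cantelli lemma gives $\mu(B_0(r_n)) \le r_n^{3-\epsilon}$ for all large $n$ almost surely. Monotonicity of $r \mapsto \mu(B_0(r))$ extends this to all small $r$, up to a multiplicative constant that can be absorbed by an infinitesimal perturbation of the exponent. Via Lemma~\ref{lem:uniform_point}, the same statement transfers to $\mu(B_\zeta(r))$. The bound $\DimH(\sM) \ge 3$ then follows from the mass distribution principle applied to the finite non-trivial measure $\mu$, letting $\epsilon$ run through a countable sequence tending to zero.

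The main technical obstacle I anticipate is justifying the geodesic-projection inequality $d(0, [x]_{\lambda_0}) \le d(0, x)$. While natural from the recursive construction of $\sM$ as a convex minorant tree, it formally requires either a direct verification that $(\sM, d)$ is a real tree, or a careful piecewise argument using absolute continuity of $X$ with respect to Brownian motion on bounded intervals (Girsanov), combined with the corresponding property for $\CMT(\exc, \bU)$ established via Proposition~\ref{pro:cmt_real-tree}.
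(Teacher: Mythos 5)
Your proposal is correct and follows the paper's overall scheme step for step: reduce to the ball around the root via Lemma~\ref{lem:uniform_point}, control $\bar\mu_{\lambda_0}(B_0(r))$ at a scale $\lambda_0$ polynomial in $1/r$ via Proposition~\ref{pro:mass_distribution-lambda}, propagate the bound to $\mu$ through the mass supermartingale of Lemma~\ref{lem:mass_martingale}, then Borel--Cantelli along $r_n=2^{-n}$ and the mass distribution principle. The one genuinely different ingredient is the propagation step: where the paper applies the Azuma--Hoeffding inequality to $M_\lambda-M_{\lambda_0}$, which forces it to bound $\sum_{\lambda'>\lambda_0} m_{\lambda'}^2$ using the fragment statistics of Lemma~\ref{lem:sizes_fragments-k}, you apply Doob's maximal inequality to the non-negative supermartingale $M_\lambda(S)$ conditionally on $\cF_{\lambda_0}$, at the price of splitting the exponent ($\epsilon/2$ at scale $\lambda_0=r^{\epsilon/2-1}$ versus $\epsilon$ in the target). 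This is a real simplification -- it bypasses the $\ell^2$ estimate on the jump sizes entirely -- and the resulting polynomial bound $r^{c'\epsilon}$ is just as adequate for the Borel--Cantelli step as the paper's sharper mixture of exponential and polynomial terms; the only hygiene points are that $M_\lambda$ is a piecewise-constant right-continuous process, so the supremum over the continuum of $\lambda$ is handled by the standard càdlàg version of Doob's inequality, and that the Portmanteau inequality for the open ball $B_0(r)$, which you invoke, is a clean way to pass from $\sup_\lambda M_\lambda(S)$ to $\mu(B_0(r))$ (the paper passes to the a.s.\ limit $M_\infty$ instead). Finally, the ``technical obstacle'' you flag, $d(0,[x]_{\lambda_0})\le d(0,x)$, is not an extra burden relative to the paper: it is exactly the fact the paper uses implicitly when asserting $\bar\mu_\lambda(B_0(r))\le M_\lambda$, and it follows from the construction, since the path from $x$ to $0$ passes through the successive attachment points $p^n(x)$ and hence through $[x]_{\lambda_0}$, so that $d(0,x)=d(0,[x]_{\lambda_0})+d([x]_{\lambda_0},x)$; no appeal to Girsanov is needed beyond what is already in place.
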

\begin{proof}By Lemma~\ref{lem:uniform_point}, it suffices to prove the bound for $\mu(B_0(r))$. Fix $\lambda_0=1/r$, set $S=B_0(r)$ and recall the process $M_\lambda=\bar\mu_\lambda(\{x\le R_\lambda: d(0,[x]_{\lambda_0}) \le r \})$ of \eqref{eq:def_martingale}. Then, for any $\lambda\ge \lambda_0$, $\bar\mu_\lambda(B_0(r))$ is no larger than the $\bar\mu_\lambda$-mass of the excursions which are grafted within distance $r$ or the origin: 
we have 
\[\bar\mu_\lambda(B_0(r)) \le M_\lambda\,.\]
Since $M_\lambda$ is bounded, Lemma~\ref{lem:mass_martingale} implies that $M_\lambda$ converges almost surely as $\lambda\to\infty$, but it also implies some concentration results since the increments of $M_\lambda$ are bounded by the $m_\lambda$, $\lambda \in \Lambda^\ssp$. By the Azuma--Hoeffding inequality \cite{Azuma1967,Hoeffding1963,BoLuMa2012a}, for any $x>0$, we have, conditionally on $(m_\lambda, \lambda\in \Lambda)$,
\begin{align}\label{eq:azuma_mass-martingale}
\p{M_\lambda - M_{\lambda_0}>x~|~m_\lambda, \lambda>\lambda_0} 
&\le \exp\bigg(-\frac{x^2}{2 \sum_{\lambda_0<\lambda'} m_{\lambda'}^2}\bigg)\,.
\end{align}

Bounding the $\ell^2$-norm of $(m_\lambda)_{\lambda>\lambda_0}$ is routine using Lemma~\ref{lem:sizes_fragments-k}. Indeed, for any $k\ge 1$, 
\begin{align*}
\E{\sum_\lambda m_\lambda^2 \cdot \I{\lambda \in \Lambda_k}} 
&\le 8k^5\int_0^\infty \frac{t^2}{\sqrt{2\pi t^3}} e^{-k^6 t/8} dt
= 64 \cdot k^{-4}\,,
\end{align*}
and Markov's inequality then yields, for some constant $K$,  
\begin{equation}\label{eq:tail_masses}
  \p{\sum_{\lambda: \lambda r>1} m_\lambda^2 \ge r^{3-\epsilon}} \le r^{\epsilon-3} \cdot \E{\sum_\lambda m_\lambda^2 \cdot \I{\lambda >1/r}} \le K r^\epsilon\,.
\end{equation}
Since $M_{\lambda_0}=\bar\mu_{\lambda_0}(B_0(r))$, it follows from Proposition~\ref{pro:mass_distribution-lambda} and \eqref{eq:azuma_mass-martingale}--\eqref{eq:tail_masses} that 
\begin{align*}
\pc{\mu(B_0(r) \ge 2 r^{3-\epsilon})} 
& \le \pc{\mu_{\lambda_0}(B_0(r)) \ge r^{3-\epsilon}} + \pc{M_\infty - M_{\lambda_0} \ge r^{3-\epsilon}} \\
& \le r^{c\epsilon} + \exp(-c r^{-\epsilon}) + K r^\epsilon\,.
\end{align*}

From there, completing the proof is standard: take a subsequence $r_i=2^{-i}$, $i\ge 1$; the Borel--Cantelli implies that for all but finitely many values of $i\ge 1$, we have $\mu(B_0(r_i))\le 2 r_i^{3-\epsilon}$, and thus $\mu(B_0(r))\le 16 r^{3-\epsilon}$ for all $r>0$ small enough. As a consequence, the mass distribution principle (see, e.g., Proposition~4.9 of \cite{Falconer1990a}) implies that $\DimH(\sM)\ge 3-\epsilon$, which completes the proof since $\epsilon>0$ was arbitrary.
\end{proof}



\section{Distances in the Brownian parabolic tree}
\label{sec:coupling}




All the proofs of convergence will be based on couplings with discrete objects. It would be possible to identify the distribution of $\CMT(X, \bU)$ as that of the scaling limit of the minimum spanning tree constructed in \cite{AdBrGoMi2013a} directly in the continuum using the dynamics as $\lambda$ evolves and the tools developed in \cite{AdBrGoMi2019a}. However, since we need comparisons with discrete objects anyway for Theorems~\ref{thm:limit_mst_surplus} and~\ref{thm:dynamics_X}, we do not pursue this here. All the limit theorems essentially boil down to proving that, in a suitable coupling, and for every $\lambda\in \R$, the restriction of the metric space $\CMT(X,\bU)$ to any interval $(a,b)$ of $\R_+\setminus Z^\lambda$ is the limit (in probability) of the minimum spanning tree of a connected component induced by a vertex set whose node have Prim ranks in an interval $\{a_n, a_n+1,\dots, b_n-1\}$ where $a_n \sim a n^{2/3}$ and $b_n \sim b n^{2/3}$. Our coupling will be ``global'' in the sense that it allows a transparent application to any collection of times $\lambda_1<\lambda_2<\dots < \lambda_k$ and any finite collection of intervals at these times. 

\subsection{Discrete preliminaries} 
\label{sub:discrete_preliminaries}

In this section, we provide the discrete representation that we will use to prove our limit theorems. They all heavily rely on the Prim order introduced in \cite{BrMa2015a} and its properties. We will in particular give a representation of the minimum spanning tree $M_n$, and of the random graph $G(n,p)$ that we will see as the union of a portion of the minimum spanning tree, the Kruskal forest denoted by $K(n,p)$, together with additional cyclic edges. 

Recall the Prim algorithm and the Prim order $v_1,v_2,\dots, v_n$ discussed in Section~\ref{sub:intuition_and_techniques}. Recall also that $V_k=\{v_1,\dots, v_k\}$. For $k\in [n]$, let $N_k^{n, p}$ be the number of nodes in $[n]\setminus V_k$ which have a neighbour in $V_k$ in the graph $G(n,p)$ whose edge set is $\{e: w_e\le p\}$. For $\lambda\in\R$, set $p_n(\lambda)=1/n+\lambda n^{-4/3}$. Then define, for $t\ge 0$, 
\begin{equation}\label{eq:def_Xnlambda}
X^{n,\lambda}_t
:=n^{-1/3} \left( 
N_{\lfloor t n^{2/3}\rfloor }^{n,p_n(\lambda)}
-\#\{i\le t n^{2/3} : N_i^{n,p_n(\lambda)} = 0\}
\right)\,.
\end{equation}
Let $Z^{n,\lambda}$ be $n^{2/3}$ times the collection of instants when $X^{n,\lambda}$ reaches a new minimum. Then, the points of $Z^{n,\lambda}$ are the Prim ranks of the first vertices of the connected components of $G(n,p_n(\lambda))$. Furthermore, recalling the definitions in Section~\ref{sub:intuition_and_techniques}, the collection of the edges of the minimum spanning tree $M_n$ are precisely $\{e_i=(u_i,v_i): 2\le i\le n\}$. The identities of the nodes can of course not be recovered from $(X^{n,\lambda})_{\lambda\in \R}$, but one may use the Prim ranks to construct a graph on $[n]$ that is isomorphic to $M_n$ using $(X^{n,\lambda})_{\lambda\in \R}$ only. However, the information about the location of the $u_i$ vanishes in the limit, and we shall construct a graph that has the correct distribution of the left-end points $u_i$, conditionally on $(Z^{n,\lambda})_{\lambda \in \R}$. 

We start with an encoding of the merges. Note that there are precisely $n-1$ jumps to the process $(Z^{n,\lambda})_{\lambda \in \R}$, each one corresponding to the appearance of one of the edges $e_i=(u_i,v_i)$ for some $2\le i\le n$. Let $\li_n(i)$ and $\ri_n(i)$ be respectively the Prim ranks of the left-most and right-most vertices of the connected component of $v_{i}$ at time $w_{e_i}$; let also $\slo_n(i)=(1-n w_{e_i})n^{1/3}$ be the discrete analog of the slope of a point $t\in \sL$ in the continuous setting. Then, the set
\begin{equation}\label{eq:def_mergen}
	\Merge_n(X^n):=\{(n^{-2/3}\li_n(i), n^{-2/3}i, n^{-2/3}\ri_n(i), -\slo_n(i)): 2\le i\le n\}
\end{equation}
contains all the information about the merges of connected components. We can rephrase the fact that the extremities $u_i$ of the edges $e_i=(u_i,v_i)$ are uniform in the connected component containing $v_{i-1}$ as follows. 
Let $(U_i)_{i\ge 1}$ be i.i.d.\ uniform on $[0,1]$, also independent of $\Merge(X^n)$. For each $i$, let $\ju_n(i)=\li_n(i)+\lfloor U_i (i-\li_n(i))\rfloor$. Then, $\ju_n(i)$ is uniform in $\{\li_n(i), \li_n(i)+1, \dots, i-1 \}$. The following lemma is a simple reformulation of Lemma~\ref{lem:discrete_merges}.

\begin{prop}[A representation of the minimum spanning forest]\label{prop:random_forest}
Conditionally on $\Merge_n(X^n)$, the collection of Prim ranks of the nodes $(u_i: 2\le i\le n)$ has the same distribution as $(\ju_n(i): 2\le i\le n)$. In particular, up to a relabelling of the nodes of $M_n$ using the Prim ranks: 
\begin{compactenum}[i)]
	\item the graph on $[n]$ with edges $\{\ju_n(i), i\}$, $2\le i\le n$, is distributed like $M_n$;
	\item the graph on $[n]$ with edges $\{\ju_n(i), i\}$, $2\le i\le n$ with $-\slo_n(i) \le \lambda$ is distributed like the Kruskal forest $K(n,p_n(\lambda))$.
 \end{compactenum}
\end{prop}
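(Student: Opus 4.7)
The plan is to reduce Proposition~\ref{prop:random_forest} to an iterated application of Lemma~\ref{lem:discrete_merges} together with a careful bookkeeping of what information the $\sigma$-algebra generated by $\Merge_n(X^n)$ actually contains. The first step is to observe that $\Merge_n(X^n)$ is a function of $(X^{n,\lambda})_{\lambda\in \R}$ and therefore a function of the \emph{unlabelled} merge history: for each $i$ it records the triple of Prim ranks $(\li_n(i),i,\ri_n(i))$ and the normalized weight $\slo_n(i)$, but it does not record which specific vertex of the left component is $u_i$. In particular, $\Merge_n(X^n)\subseteq \cF_{w_{e_n}-}$.

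Next, I would argue that, conditionally on $\Merge_n(X^n)$, the vector $(u_i:2\le i\le n)$ is a collection of \emph{independent} random variables, with $u_i$ uniform on $\{v_{\li_n(i)},v_{\li_n(i)+1},\dots,v_{i-1}\}$. For a fixed $i$, conditioning on $\cF_{w_{e_i}-}$ gives, by Lemma~\ref{lem:discrete_merges}, that $u_i$ is uniform in $L_i=\{v_{\li_n(i)},\dots,v_{i-1}\}$; and further conditioning on the remaining merge information $\Merge_n(X^n)\setminus \cF_{w_{e_i}-}$ does not alter this distribution, since this information is encoded by edge weights on edges disjoint from those between $V_{i-1}$ and $[n]\setminus V_{i-1}$ (and thus independent of $u_i$ by the uniform weights hypothesis, given the ordering already observed). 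An induction on $i$ then yields the joint independence: given $\Merge_n(X^n)$ and $(u_2,\dots,u_{i-1})$, the conditional law of $u_i$ is still uniform on $\{v_{\li_n(i)},\dots,v_{i-1}\}$, because the previously revealed $u_j$ for $j<i$ only refine the MST on edges between different pairs of clusters.

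Translating this to Prim ranks, the rank of $u_i$ is uniform on the integer set $\{\li_n(i),\li_n(i)+1,\dots,i-1\}$, which is precisely the distribution of $\ju_n(i)=\li_n(i)+\lfloor U_i(i-\li_n(i))\rfloor$ for $U_i$ uniform on $[0,1]$, independent across $i$. This establishes the equality in distribution of $(u_i:2\le i\le n)$ and $(\ju_n(i):2\le i\le n)$ conditionally on $\Merge_n(X^n)$, which gives the main statement.

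For the consequences, part \emph{i)} is immediate: relabelling each vertex by its Prim rank, the edge set of $M_n$ becomes $\{(\text{rk}(u_i),i):2\le i\le n\}$, which has the same distribution as $\{(\ju_n(i),i):2\le i\le n\}$ by what precedes. Part \emph{ii)} follows because $K(n,p_n(\lambda))$ consists of the edges of $M_n$ with weight at most $p_n(\lambda)=1/n+\lambda n^{-4/3}$, and by definition $-\slo_n(i)\le \lambda$ is equivalent to $w_{e_i}\le p_n(\lambda)$; restricting the edge set in the representation above to this condition gives a graph distributed as the Kruskal forest. The only delicate point is the conditional independence across $i$ in the main claim; once this is granted, the rest is essentially a rewriting.
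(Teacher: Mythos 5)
Your overall strategy is the one the paper itself has in mind (the proposition is stated there as a direct reformulation of Lemma~\ref{lem:discrete_merges}, with no further proof), and your translation of ``uniform in $L_i$'' into ``rank uniform on $\{\li_n(i),\dots,i-1\}$, i.e.\ distributed as $\ju_n(i)$'', as well as parts \emph{i)} and \emph{ii)} (including the equivalence $-\slo_n(i)\le\lambda \Leftrightarrow w_{e_i}\le p_n(\lambda)$), are fine. The problem is the justification of the one step you yourself single out as delicate. Your claim that the information in $\Merge_n(X^n)$ beyond $\cF_{w_{e_i}-}$ ``is encoded by edge weights on edges disjoint from those between $V_{i-1}$ and $[n]\setminus V_{i-1}$'' is false: the later merges of the interval $[\li_n(i),\ri_n(i))$, hence later entries $(\li_n(j),j,\ri_n(j),-\slo_n(j))$ of $\Merge_n(X^n)$, are determined in part by weights, above level $w_{e_i}$, of edges with one endpoint in $L_i\subseteq V_{i-1}$ — exactly crossing edges of $V_{i-1}$. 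There is a second bookkeeping issue: your induction runs in Prim-index order, but $i\mapsto w_{e_i}$ is not monotone, so $\cF_{w_{e_i}-}$ neither contains the previously revealed $u_j$, $j<i$, with $w_{e_j}>w_{e_i}$, nor excludes attachments $u_j$ with $j>i$ and $w_{e_j}<w_{e_i}$; Lemma~\ref{lem:discrete_merges} as stated therefore does not directly give the conditional law of $u_i$ given $\Merge_n(X^n)$ together with the other attachment ranks. (The interaction between Prim ranks and the weight-level filtration is genuinely subtle: for instance the identity of the vertex carrying rank $\li_n(i)$ is decided by an edge of weight larger than $w_{e_i}$, cf.\ Lemma~\ref{lem:dist_left-most-node}.)

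What makes the statement true is an exchangeability argument organized by increasing weight (Kruskal order) rather than by Prim index. Given the level-$w_{e_i}-$ information and the fact that the components with rank-intervals $[\li_n(i),i)$ and $[i,\ri_n(i))$ merge at level $w_{e_i}$, the unrevealed weights are i.i.d.\ uniform above the current level; hence the edge realizing the minimum over the crossing edges of this pair is uniform among them, and — this is the point your ``disjointness'' claim was standing in for — conditionally on its identity, the joint law of all the other (crossing and non-crossing) weights is unchanged, since for i.i.d.\ continuous variables the argmin is independent of the order statistics and of every independent family. The remaining merge history in Prim ranks, as well as the other attachment ranks, are functions of those other weights and of quantities that do not depend on which vertex of $L_i$ carries the minimum; iterating over the $n-1$ merge levels then yields that, conditionally on $\Merge_n(X^n)$, the attachment ranks are independent with the claimed uniform laws. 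With this replacement for your key step, the rest of your argument goes through as written.
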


We now move on the representation of the random graphs. We say that an edge is \emph{cyclic} if it is the maximum weight edge of some cycle. For each $p\in [0,1]$, the graph $G(n,p)$ is formed of the portion of the minimum spanning tree consisting of the edges of weight at most $p$, together with the cyclic edges of weight at most $p$. Observe that while the edges of the minimum spanning tree are all a.s.\ a function of $(X^{n,\lambda})_{\lambda\in \R}$, this is not the case for the cyclic edges (with positive probability some information is lost, even at the discrete level).  Again, rather than collecting the information from the random graph, it is more instructive to construct this information with the correct distribution conditionally on $(Z^{n,\lambda})_{\lambda \in \R}$. 

Let $\{Y_{ij}, 1\le i<j\le n\}$ be i.i.d.\ random variables uniform on $[0,1]$ and independent of everything else (namely $\Merge_n(X^n)$ and $(U_i)_{2\le i\le n}$). For each $1\le i<j\le n$, let $\lambda^n_{ij}=(nY_{ij}-1)n^{1/3}$. We store the information concerning cyclic edges in a point process. Define 
\begin{equation}\label{eq:def_xin}
	\Xi_n :=\Big\{(i n^{-2/3}, jn^{-2/3}, \lambda^n_{ij}): 1\le i<j\le n, Z^{n,\lambda^n_{ij}}\cap \{i+1,i+2,\dots, j\} = \varnothing \Big\}\,,
\end{equation}
so that $\Xi_n$ is the collection of triples $(i/n^{2/3}, j/n^{2/3}, \lambda^n_{ij})$ for which $v_i$ and $v_j$ are in the same connected component of $G(n,p_n(\lambda^n_{ij})-\delta)$ for $\delta>0$ small enough. The total number of cyclic edges, sometimes called the surplus, of a connected component is also a quantity of interest, and can be expressed in terms of $\Xi_n$. Recall that $C^{n,\lambda}_i$, $i\ge 1$, denote the collection of vertex sets of the connected components of the random graph $G(n,p_n(\lambda))$, sorted in decreasing order of their sizes. For a discrete connected component $C^{n,\lambda}_j$, the number of surplus edges in $G(n,p_n(\lambda))$ is given by 
\begin{equation}\label{eq:discrete_surplus}
	\surp^{n,\lambda}_j=\#\big\{(x,y,\lambda') \in \Xi_n: xn^{2/3},yn^{2/3}\in C^{n,\lambda}_j, \lambda'\le\lambda\big\}\,.
\end{equation}

\begin{prop}[A representation of the random graph]\label{prop:random_graph}Up to a relabelling of the nodes with the Prim ranks, the graph on $[n]$ with edge set consisting of the union of 
\begin{compactitem}
	\item the edges $\{\ju_n(i), i\}$, $2\le i\le n$ such that $\slo_n(i)\ge -\lambda$, and 
	\item the edges $\{i,j\}$, $1\le i<j\le n$ such that $\lambda^n_{ij}\le \lambda$ and $Z^{n,\lambda^n_{ij}}\cap \{i+1,i+2,\dots, j\} = \varnothing$
\end{compactitem}
has the same distribution as $G(n,p_n(\lambda))$.
\end{prop}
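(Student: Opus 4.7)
The plan is to establish the equality in distribution by reducing to Proposition~\ref{prop:random_forest} for the minimum spanning tree part and analyzing the conditional distribution of the additional (cyclic) edges coming from the second bullet.

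First, I would condition on the $\sigma$-algebra $\cG$ generated by $(X^{n,\lambda})_{\lambda \in \R}$, which encodes $\Merge_n(X^n)$ together with the entire connectivity process $(Z^{n,\lambda})_{\lambda \in \R}$ and the weights $w_{e_i}$ of the MST edges. Conditionally on $\cG$, Lemma~\ref{lem:discrete_merges} says that the left endpoints $u_i$ of the MST edges in the original model are independent and uniform on $\{\li_n(i),\ldots,i-1\}$, which is exactly the conditional law of $(\ju_n(i))_{2\le i\le n}$; Proposition~\ref{prop:random_forest} then yields that the first bullet reproduces the MST edges with weight at most $p_n(\lambda)$, labelled by Prim ranks, with the correct joint conditional law.

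Next, I would exploit the structural property (recalled in Section~\ref{sub:intuition_and_techniques}) that for every $p \in [0,1]$ the connected components of $G(n,p)$ are intervals in the Prim order. This identification immediately gives the key equivalence behind the second bullet: the condition $Z^{n,\lambda^n_{ij}} \cap \{i+1,\ldots,j\} = \varnothing$ holds iff no vertex among $v_{i+1},\ldots,v_j$ is the first vertex of its component at time $\lambda^n_{ij}$, that is, iff $v_i$ and $v_j$ lie in a common component of $G(n, p_n(\lambda^n_{ij}))$. Writing $w^*_{ij}$ for the maximum edge weight along the MST path from $v_i$ to $v_j$, this is in turn equivalent to $Y_{ij} \ge w^*_{ij}$, so the second bullet contributes the edge $\{i,j\}$ precisely when $Y_{ij} \in [w^*_{ij}, p_n(\lambda)]$.

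Finally, I would compare the two edge sets. For MST pairs $\{m,k\} = \{\ju_n(k),k\}$, the union in the construction places the edge whenever $w_{e_k} \le p_n(\lambda)$, matching the inclusion rule for MST edges in $G(n,p_n(\lambda))$. For non-MST pairs $\{i,j\}$, using that conditionally on the MST structure and its weights the actual non-MST weights $w_{ij}$ are independent, with $w_{ij}$ supported on $[w^*_{ij},1]$, one verifies that the collection of cyclic edges produced by the $Y_{ij}$'s (filtered by $Y_{ij}\ge w^*_{ij}$ and $Y_{ij}\le p_n(\lambda)$) has the same joint conditional law as the non-MST edges of $G(n,p_n(\lambda))$ in the Prim-rank labelling. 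Combined with the MST part, this yields the stated equality in distribution.

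The main obstacle is this last step: the $Y_{ij}$ are uniform on $[0,1]$ and independent of $\cG$, while the genuine non-MST weights $w_{ij}$ have a conditional law that is tightly tied to the MST structure (in particular, to the values $w^*_{ij}$). The argument must therefore be organised so that the randomisation in $\ju_n$ (from the $U_i$) and the filtering of the $Y_{ij}$ through the cyclic condition fit together to reproduce the correct joint law rather than an edge-by-edge marginal one; the cleanest route is to write everything as a function of $(X^{n,\lambda})_{\lambda\in\R}$, the $U_i$ and the $Y_{ij}$, and check agreement of joint distributions over the bijection between vertex labels and Prim ranks.
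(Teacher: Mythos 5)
Your outline follows the same route as the paper's own (very short) proof: treat the first bullet via Proposition~\ref{prop:random_forest}, and for the second bullet use the interval structure of components in the Prim order together with $p_n(\lambda^n_{ij})=Y_{ij}$ to translate the condition $Z^{n,\lambda^n_{ij}}\cap\{i+1,\dots,j\}=\varnothing$ into $Y_{ij}\ge w^*_{ij}$, where $w^*_{ij}$ is the connection threshold, i.e.\ the maximal MST weight on the path between the vertices of Prim ranks $i$ and $j$; and then invoke the cycle property, namely that conditionally on the weighted MST the non-tree weights are independent, each being a uniform conditioned to exceed the corresponding $w^*_{ij}$. Up to this point you and the paper agree.

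The problem is that the final step -- ``one verifies that the collection of cyclic edges produced by the $Y_{ij}$'s has the same joint conditional law as the non-MST edges'' -- is the entire content of the proposition, and you do not carry it out; you explicitly flag it as the main obstacle and leave it as a programme (``write everything as a function of $(X^{n,\lambda})_\lambda$, the $U_i$ and the $Y_{ij}$ and check agreement''). So as written the proposal is not a proof. Moreover, your instinct that this step is delicate is well founded: as you have set it up it does not go through verbatim. Conditionally on the weighted MST (equivalently on $(X^{n,\lambda})_{\lambda\in\R}$), your rule includes a non-tree rank pair $\{i,j\}$ with probability $(p_n(\lambda)-w^*_{ij})^+$, since the unconditioned uniform $Y_{ij}$ must fall in $[w^*_{ij},p_n(\lambda)]$, whereas the genuine non-tree weight, being uniform \emph{conditioned} to exceed $w^*_{ij}$, is at most $p_n(\lambda)$ with probability $(p_n(\lambda)-w^*_{ij})^+/(1-w^*_{ij})$: filtering an unconditioned uniform by the event $\{Y_{ij}\ge w^*_{ij}\}$ is not the same as conditioning it, and the missing factor $1/(1-w^*_{ij})$ does not cancel against anything in the $\ju_n$ randomisation. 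To close the argument you must either feed $Y_{ij}$ in as the conditional quantile (i.e.\ compare $w^*_{ij}+(1-w^*_{ij})Y_{ij}$ to $p_n(\lambda)$, equivalently rescale the threshold), or make explicit that the identification is to be understood modulo this per-pair correction of order $w^*_{ij}=O(1/n)$, which is harmless for every asymptotic use of the proposition. Be aware that the paper's own three-sentence proof simply asserts the identification (``this is precisely what the second condition says in terms of the Prim ranks'') without addressing this normalisation, so deferring the verification, as you do, leaves exactly the nontrivial point unproved.
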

\begin{proof}Note first that we only care about the distribution of the edges of the second set that are not already in the first one. From Kruskal's algorithm, it is clear that, conditionally on the minimum spanning tree $M_n$, the weights of the edges in the complement are independent. Furthermore, for any fixed pair of nodes $u,v\in [n]$ which are not adjacent in the minimum spanning tree, the weight of the edge between $u$ and $v$ is uniform, conditioned on being larger than the value $p_n(\lambda)$ at which $u$ and $v$ first become part of the same connected component. This is precisely what the second condition says when expressed in terms of the Prim ranks. 
\end{proof}

Recall that $E^n_p=\{e\in E^n: w_e\le p\}$ denotes edge set of the random graph $G(n,p)$. Define similarly $F^n_p\subseteq E^n_p$ the edge set of the minimum spanning forest $K(n,p)$, that is the collection of edges of the minimum spanning tree which have weight at most~$p$. 
\begin{lem}\label{lem:discrete_edge_removal}
Let $p\in [0,1]$. 
\begin{compactenum}[i)]
	\item Conditionally on $E^n_{p}$, $(w_e:e\in E^n_p)$ is a family of i.i.d.\ uniform random variables (r.v.) on $[0,p]$;
	\item Conditionally on $F^n_{p}$, $(w_e:e\in F^n_p)$ is dominated by a family of i.i.d.\ uniform r.v.\ on $[0,p]$.
\end{compactenum}
\end{lem}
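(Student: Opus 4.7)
For part (i), the statement follows directly from independence: the weights $(w_e)_{e \in E^n}$ are i.i.d.\ uniform on $[0,1]$, and the event $\{E^n_p = S\}$ for $S \subseteq E^n$ is exactly $\{w_e \le p, \forall e \in S\} \cap \{w_e > p, \forall e \notin S\}$. Conditioning on this event conditions each $w_e$, $e \in S$, independently to be uniform on $[0,p]$.

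For part (ii), I plan to use a two-step conditioning. Fix a candidate value $F \subseteq \binom{[n]}{2}$ for $F^n_p$, and condition additionally on the weights $(w_{e'})_{e' \notin F}$. The cycle characterization of the MSF shows that, given these non-$F$ weights, the event $\{F^n_p = F\}$ is equivalent to: (a) $w_e \le p$ for each $e \in F$, and (b) for every $e' \notin F$ with $w_{e'} \le p$, both endpoints of $e'$ lie in the same component of $F$ and $w_{e'}$ is the maximum weight on the unique cycle $C(e')$ in $F \cup \{e'\}$. Reformulating this as a condition on each $F$-weight, it becomes $w_e < M_e$ for each $e \in F$, with
\[M_e := \min\!\left(p,\; \min\{w_{e'} : e' \notin F,\; w_{e'} \le p,\; e \in C(e')\}\right),\]
a quantity that depends only on the non-$F$ weights.

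Because the constraints $\{w_e < M_e\}$ decouple across $e \in F$, and because $(w_e)_{e \in F}$ are i.i.d.\ uniform on $[0,1]$ independent of the non-$F$ weights, the conditional joint law of $(w_e)_{e \in F}$ given $\{F^n_p = F\}$ and the non-$F$ weights is a product of uniforms on $[0, M_e]$. Since each $M_e \le p$, a uniform on $[0, M_e]$ is stochastically dominated by a uniform on $[0, p]$, and this domination is preserved in the product. Integrating over the conditional law of the non-$F$ weights given $\{F^n_p = F\}$ yields (ii). The main point to verify carefully is the decoupling claim: each cycle/cut constraint from $\{F^n_p = F\}$, once the non-$F$ weights are fixed, is a one-sided upper bound on a single $F$-weight, which is the key structural feature behind the argument.
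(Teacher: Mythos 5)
Your part (i) is the same one-line observation as in the paper. For part (ii) your argument is correct, but it takes a more explicit route than the paper's. The paper's proof is a two-sentence appeal to the reverse-delete description of Kruskal: $F^n_p$ is obtained from $E^n_p$ by iteratively removing the heaviest edge of a cycle, so the surviving weights, which by (i) start out i.i.d.\ uniform on $[0,p]$, have been ``selected for not being the maximum edge of any cycle'' and are therefore dominated; the selection-bias step is left at the level of intuition. You instead fix the candidate forest $F$, condition additionally on the weights outside $F$, and use the fundamental-cycle characterization of the minimum spanning forest of $G(n,p)$ to show that, given the outside weights, the event $\{F^n_p=F\}$ is exactly an intersection of one-sided constraints $w_e<M_e$, $e\in F$, with each $M_e\le p$ measurable with respect to the outside weights (together with the requirement that every light non-forest edge have both endpoints in one $F$-component, which is a constraint on the outside weights alone and should be stated as being absorbed into the conditioning rather than folded into the $M_e$'s). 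This yields the exact conditional law of $(w_e)_{e\in F}$, namely independent uniforms on $[0,M_e]$, from which the domination follows via the coupling $w_e=M_eU_e\le pU_e$ with $(U_e)$ i.i.d.\ uniform on $[0,1]$, uniformly in the outside weights, and hence persists after integrating them out. So both proofs hinge on the cycle property, but your version replaces the paper's heuristic selection argument by an exact conditional computation, and it delivers slightly more: the precise conditional distribution given $F^n_p$ and the complementary weights, not merely the stochastic domination.
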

\begin{proof}\emph{i)} The first assertion is immediate since $\{w_e: e\in E^n_p\}$ is simply a collection of i.i.d.\ uniform r.v.\ on $[0,1]$ conditioned on being at most $p$. \emph{ii)} The second claim is a consequence of Kruskal's algorithm: The set $F^n_p\subseteq E^n_p$ is obtained from $E^n_p$ by iteratively removing the edge with maximum weight that belongs to a cycle, until there are no more cycles. The remaining edge have thus been selected for not being the maximum edge of any cycle; by \emph{i)} the initial weights in $E^n_p$  are i.i.d.\ uniform  random variables on $[0,p]$, and the weights in $F^n_p$ are therefore dominated by a collection of i.i.d.\ uniform r.v.\ on $[0,p]$. 
\end{proof}

\begin{lem}\label{lem:dist_left-most-node}
Fix any $\lambda\in \R$. Conditionally on $a,b\in [n]$, $a<b$, being two successive points of $Z^{n,\lambda}$, 
\begin{compactenum}[i)]
	\item the vertices whose Prim ranks are in $\{a, a+1, \dots, b-1\}$ form a connected component of $G(n,p_n(\lambda))$;
	\item conditionally on $S=\{v_a,\dots, v_{b-1}\}$, the vertex $v_a$ with Prim rank $a$ is uniformly random in $S$, and independent of $G(n,p_n(\lambda))$.
\end{compactenum}
\end{lem}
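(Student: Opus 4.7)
The first assertion follows from the fact that Prim's algorithm exhausts each connected component of $G(n, p_n(\lambda))$ before moving to the next. Suppose the visited set $V_k$ contains some but not all of the vertices of a component $C$. Since $C$ is connected using only edges of weight at most $p_n(\lambda)$, there is an edge of such weight from $V_k \cap C$ to $C \setminus V_k$. Conversely, any edge from $V_k$ to a vertex outside $V_k \cup C$ has weight strictly greater than $p_n(\lambda)$, as that vertex would otherwise belong to $C$. So the minimum-weight edge from $V_k$ to $[n] \setminus V_k$ stays inside $C$, and an induction on $k$ shows that the Prim ranks of the vertices of $C$ form a contiguous interval, which is exactly (i).

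For (ii) (taking $a > 1$, since $v_1 = 1$ is deterministic), I would condition on $G := G(n, p_n(\lambda))$ and on the Prim history $(v_1, \ldots, v_{a-1})$; this determines $V^{\text{prev}} := \{v_1, \ldots, v_{a-1}\}$ and the next component $S$. By Prim's rule and (i), $v_a$ is the endpoint in $S$ of the minimum-weight edge among all edges from $V^{\text{prev}}$ to $[n] \setminus V^{\text{prev}}$. By Lemma~\ref{lem:discrete_edge_removal}(i), conditionally on $G$ the weights $(w_e)_{e \notin G}$ are i.i.d.\ uniform on $(p_n(\lambda), 1]$ and independent of $(w_e)_{e \in G}$. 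The constraints that the Prim history imposes on these inter-component weights are pairwise comparisons: at each step when a new component is entered, the corresponding ``entry'' inter-edge must be the minimum among all inter-edges from the current $V_k$ to its complement. None of these constraints distinguish between two vertices of $S$; they are invariant under the action of $\mathrm{Sym}(S)$ permuting the $S$-endpoints of the inter-edges from $V^{\text{prev}}$ to $S$.

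By this exchangeability, the $S$-endpoint of the minimum-weight inter-edge from $V^{\text{prev}}$ to $[n] \setminus V^{\text{prev}}$ is conditionally uniform on $S$ given $(G, \text{Prim history})$, hence $v_a$ is uniform on $S$. Since this conditional distribution depends on $G$ only through $S$, marginalizing yields that $v_a$ is uniform on $S$ and independent of $G$ given $S$, as required. The main obstacle is verifying that the Prim history only contributes $\mathrm{Sym}(S)$-invariant constraints on the inter-edges from $V^{\text{prev}}$ to $S$; this is a careful but routine check of the list of constraints imposed at each step of the exploration, and no computation beyond the symmetry of i.i.d.\ uniform variables is required.
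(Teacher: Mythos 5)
Your proof is correct and follows essentially the same route as the paper: part \emph{i)} is the standard interval-structure argument for components in the Prim order (which the paper simply cites from Broutin--Marckert), and part \emph{ii)} rests on the same key observation that the choice of $v_a$ is decided by inter-component edges of weight strictly above $p_n(\lambda)$, whose conditional i.i.d./exchangeable structure over the $S$-endpoints makes $v_a$ uniform on $S$ and independent of $G(n,p_n(\lambda))$. One small slip in your set-up: conditioning on $G$ and $(v_1,\dots,v_{a-1})$ does \emph{not} determine the next component $S$ (that requires knowing $v_a$), so one should additionally condition on the event that the minimum-weight inter-edge from $V^{\mathrm{prev}}$ lands in $S$ --- which is what your symmetry argument implicitly does anyway, so the substance of the proof is unaffected.
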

\begin{proof}The first claim is immediate from the definition of $X^{n,\lambda}$; see Section 4.1 of \cite{BrMa2015a}. The second point is a consequence of the definition of the Prim order. Consider the time in Prim's algorithm when we decide who gets to have rank $a$: conditionally on the event in \emph{i)}, this depends on an edge with weight (strictly) larger than $p_n(\lambda)$; conditionally on having its extremity in the set of vertices with Prim ranks $a,a+a,\dots, b-1$, the end point is uniformly random, and declared to have Prim rank $a$. This completes the proof.
\end{proof}

\subsection{Asymptotic properties of random graphs for $\lambda\to -\infty$}
\label{sec:asymptotic_random_graphs}

{Recall that we identify the nodes with their Prim ranks, so $v_i$ is simply denoted by $i$. 
For points $s_1,\dots, s_k \in (0,\infty)$, let $\Span_n(s_1,\dots, s_k)$ denote the collection of vertices that belong to one of the paths in the minimum spanning tree $M_n$ between some $s_i^n=\lfloor s_i n^{2/3}\rfloor$ and $s_j^n=\lfloor s_j n^{2/3} \rfloor$. For $\lambda\in \R$ and $s_1,s_2,\dots, s_k\in (0,\infty)$, let $J^{n}_{\lambda}(s_1,s_2,\dots, s_k)$ be the collection of indices $j\ge 1$ such that $C^{n,\lambda}_j$ intersects $\Span_n(s_1,\dots, s_k)$. 

The following lemma shows that all the connected components containing part of the path in the minimum spanning tree between a collection of random points have a size of order $n^{2/3}$. 

\begin{prop}\label{pro:local_asympt_small_lambda}
Let $I\subset (0,\infty)$ be any compact interval, and let $s_1,s_2,\dots, s_k\in (0,\infty)$ be i.i.d.\ uniform in $I$. Then, for any $\epsilon>0$ there exists $\lambda\in \R$, and $\delta>0$, such that, with probability at least $1-\epsilon$, all the connected components of $G(n,p_n(\lambda))$ containing nodes of $\Span_n(s_1,\dots, s_k)$ are trees and have size at least $\delta n^{2/3}$.
\end{prop}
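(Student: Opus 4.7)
The strategy is to work at the continuum level in $\sM=\CMT(X,\bU)$, where the spanning subtree of $\pi(s_1),\dots,\pi(s_k)$ has a tractable fragmentation structure, and then transfer back to the discrete random graph via the joint convergence of Theorem~\ref{thm:dynamics_X}. The plan is to pick $\lambda$ sufficiently negative and then $\delta>0$ depending on $\lambda$.

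The continuum analysis will proceed in two sub-steps. First, the spanning subtree of $\pi(s_1),\dots,\pi(s_k)$ in $\sM$ is the union of at most $2k-3$ structural arcs $\llb s_l,s_m\rrb$, and along each such arc the parabolic analog of the recursive construction in Proposition~\ref{pro:distance_pairs} yields a binary family of cut points $\kappa_u$, $u\in\cU_2$, whose cut times $\tau_u$ are non-decreasing as one descends $\cU_2$. For any fixed $\lambda$, sub-intervals $\Pi_u$ shrink and their cut times diverge to $+\infty$, so the sub-tree $\{u:\tau_u\le\lambda\}$ of already-activated cuts is a.s.\ finite. I conclude that the number $N(\lambda)$ of pieces of $\R_+\setminus Z^\lambda$ visited by the spanning subtree is a.s.\ finite, each with strictly positive Lebesgue measure; setting $\underline m(\lambda):=\min\{|\gamma^\lambda_j|:\gamma^\lambda_j\text{ visited}\}>0$, I can then select $\delta=\delta(\lambda,\epsilon)>0$ with $\pc{\underline m(\lambda) < 2\delta}<\epsilon/4$. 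Second, for the tree condition, the continuum Erd\H{o}s--R\'enyi component $\mathfrak G^\lambda_j$ is obtained from the MST piece $\sM|_{\gamma^\lambda_j}$ by adding a Poisson number of pairwise identifications with intensity proportional to $\int \tilde e^\lambda_j(r)\,dr$. Comparing $X^\lambda$ with a Brownian motion with linear drift in a neighborhood of each $s_l$ (as in the proof of Lemma~\ref{lem:sizes_fragments-k}), the visited pieces around each $s_l$ shrink as $\lambda\to-\infty$, with their total expected area tending to $0$. Thus I can pick $\lambda=\lambda_0$ negative enough so that with probability at least $1-\epsilon/4$ no visited piece carries any surplus edge.

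With $\lambda_0$ and $\delta$ fixed, Theorem~\ref{thm:dynamics_X} provides the joint convergence of $(\mathfrak F^{n,\lambda_0},\mathfrak G^{n,\lambda_0})$ to their continuum limits. This, combined with the convergence of $\Span_n(s_1,\dots,s_k)$ (a consequence of the convergence of pairwise MST distances after relabelling by the Prim order), will allow me to conclude that for all $n$ large, every component of $G(n,p_n(\lambda_0))$ containing a node of $\Span_n$ is a tree of size at least $\delta n^{2/3}$, with probability at least $1-\epsilon$. The main obstacle is the quantitative control of the total surplus intensity $\sum_{j\text{ visited}}\int \tilde e^\lambda_j(r)\,dr$ as $\lambda\to-\infty$: I must verify that for each fixed $s\in(0,\infty)$, the interval of $\R_+\setminus Z^\lambda$ containing $s$ shrinks in an integrable sense. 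I expect this to follow from Girsanov's theorem, which locally reduces $X^\lambda$ to a Brownian motion with drift $\lambda-s$ near $s$, combined with the excursion-length-measure estimates already used in the proofs of Lemmas~\ref{lem:position_Hlambda} and~\ref{lem:sizes_fragments-k}.
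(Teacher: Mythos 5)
Your plan is circular. You invoke Theorem~\ref{thm:dynamics_X} (and, implicitly, the identification of the spanning subtree of $\pi(s_1),\dots,\pi(s_k)$ inside $\CMT(X,\bU)$ as the limit of $\Span_n$, i.e.\ the convergence of pairwise MST distances) to transfer your continuum estimates back to $G(n,p_n(\lambda))$. But in the paper the logical order is the reverse: Proposition~\ref{pro:local_asympt_small_lambda} is an \emph{a priori} discrete estimate that feeds Proposition~\ref{pro:convergence_distances} (it is used there to guarantee that the visited components have size at least $\delta' n^{2/3}$), and Proposition~\ref{pro:convergence_distances} is what yields Theorems~\ref{thm:limit_mst_Kn} and~\ref{thm:dynamics_X}. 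So neither the joint convergence of $(\mathfrak F^{n,\lambda},\mathfrak G^{n,\lambda})$ nor ``convergence of $\Span_n$'' is available at this stage; using them here would prove nothing. The only ingredient of your sketch that survives without circularity is the tree-property step, since the Poisson surplus count with parameter $\int_0^t(X^\lambda_s-\underline X^\lambda_s)\,ds\to 0$ as $\lambda\to-\infty$ rests only on the convergence $(X^{n,\lambda})\to(X^\lambda)$ and the surplus representations of \cite{AdBrGo2012a,BrMa2015a}, which is exactly how the paper handles it.

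The size lower bound must therefore be obtained by a purely discrete argument, and this is where your proposal has no substitute for the paper's mechanism. The paper first takes $\overline\lambda$ large so that all $s_i^n$ lie in one component together with $\lfloor n^{2/3}\rfloor$, then uses Lemma~\ref{lem:discrete_edge_removal}~\emph{ii)}: passing from $p_n(\overline\lambda)$ to $p_n(\lambda)$ removes each Kruskal edge with probability at most $\sim(\overline\lambda-\lambda)n^{-1/3}$, essentially independently. Since $n^{-1/3}\diam(M_n)$ is tight, only a tight number of edges are removed along each MST path between the $s_i^n$, so $|J^n_\lambda|$ is tight and each surviving portion of path has length of order $n^{1/3}$ (the pairwise MST distances being of order $n^{1/3}$ from below as well, via the graph distances at time $\overline\lambda$). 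Combined with the tree property and the fact that a uniform tree component on $m$ vertices has diameter of order $m^{1/2}$, a portion of length $\asymp n^{1/3}$ forces its component to have size $\asymp n^{2/3}$, giving the $\delta n^{2/3}$ bound. Your continuum picture (finitely many visited intervals of positive mass, vanishing surplus intensity as $\lambda\to-\infty$) is a correct description of the limit, but it cannot be used to \emph{prove} this proposition without already knowing the limit theorems it is meant to establish.
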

\begin{proof}
\emph{i)} We abbreviate $J^n_\lambda(s_1,\dots, s_k)$ as $J^n_\lambda$. For any fixed $\lambda$, the collection of connected components containing any of the $s_i^n$, $1\le i\le k$, have Prim ranks at most $n^{2/3} \sup I  + |C^{n,\lambda}_1|$. With high probability, this is at most $tn^{2/3}$ for some fixed $t$ for all $\lambda\le 0$ (say). However, by the representations in \cite{AdBrGo2012a} or \cite{BrMa2015a}, the number of surplus edges involving pairs of nodes with Prim rank at most $tn^{2/3}$ converges to a Poisson random variable whose parameter the $\int_0^t (X^\lambda_s-\underline X^\lambda_s) ds$, which tends to zero almost surely as $\lambda\to-\infty$. Thus, for any $\epsilon>0$, we can indeed choose $\lambda$ small enough for all the $C^{n,\lambda}_j$, $j\in J^{n}_\lambda$ to be trees with probability at least $1-\epsilon$.

\emph{ii)} We shall prove that the family of random variables $\max\{n^{2/3}/ |C^{n,\lambda}_j|: j\in J^n_\lambda\}$, $n\ge 1$, is tight. The arguments are all routine, and we only provide the main structure of the proof. Fix $\epsilon>0$. First, let $\overline \lambda$ be large enough that $s_1^n,\dots, s_k^n$ are all in the same connected component $H^n$ which also contains the point $\lfloor n^{2/3}\rfloor$ with probability at least $1-\epsilon$ (see for instance, Lemma~\ref{lem:position_Hlambda}). 

By Lemma~\ref{lem:discrete_edge_removal} \emph{ii)}, when decreasing $p$ from $p_n(\overline \lambda)$ to $p_n(\lambda)$, each edge is removed with probability at most $(p_n(\overline \lambda)-p_n(\lambda))/p_n(\overline \lambda)\sim (\overline \lambda-\lambda)n^{-1/3}$ independently of the others. So, the number of edges removed on a prescribed path of length at most $C n^{1/3}$ is dominated by a binomial random variable with parameters $Cn^{1/3}$ and $(\overline \lambda-\lambda)n^{-1/3}$ and is thus tight. Since $n^{-1/3}\diam(H^n)\le n^{-1/3}\diam(M_n)$ which is tight (\cite{AdBrGo2010}), the same holds for the length of the path between any of two of the $\{s_1^n, \dots, s_k^n\}$. This implies the tightness of $(|J^n_\lambda|)_{n\ge 1}$, for any $\lambda\in \R$. This also readily implies that $n^{1/3}$ divided by the smallest distance in the minimum spanning tree between any two removed edges is tight. On the other hand, the minimum distance between any two of the $\{s_1^n,\dots, s_k^n\}$ is itself of order $n^{1/3}$ (this is lower bounded by the distance in the corresponding  graph $G(n,p_n(\overline \lambda))$, and thus follows from the results in \cite{AdBrGo2012a,AdBrGoMi2013a}). It follows that the smallest portion of a path connecting the $\{s_1^n, \dots, s_k^n\}$ in the Kruskal forest $K(n,p_n(\lambda))$ is also of order at least $n^{1/3}$.

Now, by \emph{i)}, let $\lambda$ be small enough that all the involved connected components are trees at time $p_n(\lambda)$ with probability at least $1-\epsilon$. Conditionally on the number of its nodes being $m$, the diameter any such connected component is of order $m^{1/2}$ (\cite{Aldous1991b,Aldous1993a}). Putting this together with the facts that, for this value of $\lambda$, the number of portions of paths is tight $J^n_\lambda$, that each of the portions has length of order $n^{1/3}$, this implies that each of the portions is contained in a connected component whose size is indeed of order $n^{2/3}$ (and no smaller). 
\end{proof}

The following folklore global asymptotic properties for the connected components will be useful. 
\begin{lem}\label{lem:global_asympt_small_lambda}
For any $\epsilon,\delta, \delta'>0$, there exists $\lambda\in \R$ such that, with probability at least $1-\epsilon$,
\begin{compactenum}[i)]
	\item the largest connected component of $G(n,p_n(\lambda))$ contains at most $\delta n^{2/3}$ nodes;
	\item the maximum diameter of a connected component of $G(n,p_n(\lambda))$ is at most $\delta' n^{1/3}$.
\end{compactenum}
\end{lem}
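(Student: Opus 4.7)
Both claims follow by combining well-established scaling limit results for $G(n,p_n(\lambda))$ in the critical window with a simple observation that the limiting continuum objects shrink to points as $\lambda \to -\infty$.

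For (i), the key ingredient is the theorem of \citet{Aldous1997}: for any fixed $\lambda\in\R$, the sequence $(n^{-2/3}|C^{n,\lambda}_j|)_{j\geq 1}$ converges jointly in distribution (for the $\ell^2_\searrow$ topology) to $(|\gamma^\lambda_j|)_{j\geq 1}$, the ordered lengths of the intervals of $\R_+ \setminus Z^\lambda$. In particular $n^{-2/3}|C^{n,\lambda}_1|\Rightarrow |\gamma^\lambda_1|$. Hence it suffices to check that $|\gamma^\lambda_1|\to 0$ in probability as $\lambda\to-\infty$. This is a deterministic-style computation on $X^\lambda$: when $\lambda$ is very negative, the drift $\lambda-s$ of $X^\lambda$ is negative on a neighbourhood of $0$ growing with $|\lambda|$, so the running infimum $\underline{X}^\lambda$ tracks $X^\lambda$ very closely, forcing all excursions above it to be short. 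Quantitatively, a bound analogous to (indeed dual to) Lemma~\ref{lem:position_Hlambda} with $\lambda$ replaced by $-|\lambda|$, obtained by comparing with Brownian motion with linear drift as done in the proof of Lemma~\ref{lem:sizes_fragments-k}, yields $\pc{|\gamma^\lambda_1|>\delta/2}<\epsilon/2$ for $\lambda$ sufficiently negative.

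For (ii), we invoke the metric scaling limit of \citet{AdBrGo2012a}: $(\mathfrak G^{n,\lambda}_j)_{j\geq 1}$ converges in distribution, in the product GHP topology on sequences of compact measured metric spaces, to the limit $(\mathfrak G^\lambda_j)_{j\geq 1}$. Since the diameter is continuous for GHP, it suffices to show that $\sup_j \diam(\mathfrak G^\lambda_j)\to 0$ in probability as $\lambda\to-\infty$. Each $\mathfrak G^\lambda_j$ is obtained from a tilted Brownian excursion of length $|\gamma^\lambda_j|$ together with finitely many identifications (the expected number of which given the excursion is $\int_{\gamma^\lambda_j} \tilde e^\lambda_j(s)ds$, cf.\ Lemma~\ref{lem:area} of the paper). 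On the event that the surplus is bounded, $\diam(\mathfrak G^\lambda_j)$ is controlled by a constant multiple of $|\gamma^\lambda_j|^{1/2}$ times the diameter of a standard Brownian CRT, which is sub-Gaussian. Since $|\gamma^\lambda_1|\to 0$ and the total expected surplus $\tfrac12 \int_{-\infty}^\lambda \sum_j |\gamma^r_j|^2 dr \to 0$ as $\lambda \to -\infty$, a straightforward union bound combining (i) with Proposition~\ref{pro:diameter_small}-style estimates gives $\sup_j \diam(\mathfrak G^\lambda_j)\le \delta'/2$ with probability at least $1-\epsilon/2$.

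The main obstacle is promoting the convergence of each $\diam(\mathfrak G^{n,\lambda}_j)$ to a uniform statement in $j$: tightness of the supremum requires dominating the contribution of many small components, but this is precisely handled by the decomposition into excursions of different scales (exactly as in the proof of Lemma~\ref{lem:sizes_fragments-k}). All remaining ingredients are direct citations, which is why this result is folklore.
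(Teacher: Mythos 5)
Your treatment of part \emph{i)} is fine and close in spirit to the paper: the paper gets the smallness of $|C^{n,\lambda}_1|$ from the entrance boundary of the multiplicative coalescent (Aldous--Limic) together with Aldous's coupling, while you use Aldous's fixed-$\lambda$ convergence of $(n^{-2/3}|C^{n,\lambda}_j|)_j$ plus the fact that $|\gamma^\lambda_1|\to 0$ in probability as $\lambda\to-\infty$; via a portmanteau/limsup argument this gives the required bound for all large $n$, and the continuum estimate is indeed available (cf.\ the $O(\lambda^{-2}\log\lambda)$ bound used in the proof of Theorem~\ref{thm:limit_mst_Kn}).

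Part \emph{ii)}, however, has a genuine gap, and it is exactly the point you flag as the ``main obstacle''. The product GHP convergence of $(\mathfrak G^{n,\lambda}_j)_{j\ge 1}$ only controls each fixed rank $j$ (equivalently, any finite number of the largest components); the quantity in the lemma is $\max_{j\ge 1}\diam(C^{n,\lambda}_j)$, a supremum over the unboundedly many \emph{small} discrete components, and this is not a continuous functional for the product topology. Your proposed fix --- invoking ``the decomposition into excursions of different scales, exactly as in the proof of Lemma~\ref{lem:sizes_fragments-k}'' --- does not close the gap, because that decomposition (and Proposition~\ref{pro:diameter_small}) lives entirely in the continuum: it controls diameters of the metric spaces encoded by excursions of $X^\lambda$, not diameters of components of $G(n,p_n(\lambda))$. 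Nothing in the scaling limit rules out, for some $n$, a discrete component of size $o(n^{2/3})$ that is anomalously long (e.g.\ path-like of length $\gg n^{1/3}$); excluding this requires a discrete estimate that is uniform in $n$ and over all components. This is precisely what the paper imports: it splits the bad event according to whether $|C^{n,\lambda}_1|\ge \alpha n^{2/3}$, handles the first piece as in \emph{i)}, and controls $\pc{\max_j \diam(C^{n,\lambda}_j)\ge \delta' n^{1/3},\ |C^{n,\lambda}_1|\le \alpha n^{2/3}}$ by Theorem~1.3 of Nachmias--Peres, a purely discrete tail bound on the diameter of critical-window components given their size. To repair your argument you would either cite such a discrete result or reprove it (diameter tails for a component conditioned on its size, summed over dyadic size scales at the discrete level), which is substantially more than the ``direct citations'' your plan suggests.
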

\begin{proof}
For any $\alpha\le \delta$, the probability that either \emph{i)} or \emph{ii)} fails is at most
\begin{align*}
	\pc{|C^{n,\lambda}_1| \ge \alpha n^{2/3}}	+ \p{\max_{j\ge 1} \diam(C^{n,\lambda}_j) \ge \delta' n^{1/3}, |C^{n,\lambda}_1|\le \alpha n^{2/3}}\,.
\end{align*}
By Theorem~1.3 of \cite{NaPe2008}, there exists $\alpha>0$ small enough such that the second term is at most $\epsilon/2$. 
Then, choose $\lambda$ small enough that the first term is also at most $\epsilon/2$. The fact that such a $\lambda$ exists follows for instance from the results of \cite{AlLi1998} on the entrance boundary for the standard multiplicative (Theorem~4 there), and the relation between the random graph and the multiplicative coalescent in \cite{Aldous1997} (Proposition~4). 
\end{proof}

\subsection{A global coupling argument} 
\label{sec:global_coupling_argument}

Before actually proving the convergence of the trees or graphs seen as metric spaces, we verify that the main objects, on which the representations of the previous section rely, do converge. The objective is to eventually construct a rich enough probability space on which enough parameters converge almost surely, in order the make the final proof of convergence of the metric as easy as possible. The starting point is the process $(X^{n,\lambda})_{\lambda\in \R}$ introduced in \eqref{eq:def_Xnlambda}. By Theorem 7 of \cite{BrMa2015a}, we have 
\begin{equation}\label{eq:Convergence_Xnlambda}
(X^{n,\lambda})_{\lambda\in \R} \xrightarrow[n\to\infty]{} (X^\lambda)_{\lambda\in R}\,,
\end{equation}
in distribution in $\mathbb D(\R, \C([0,\infty), \R))$. The first essential ingredient consists in proving that this implies that the macroscopic merges restricted to any compact region of time and space also converge. Define 
\[\Merge(X)=\{(\li(t),t,\ri(t),-\slo(t)): t\in \sL(X)\}\,.\]
We say that $\Merge_n((X^{n,\lambda})_{\lambda\in \R)}\to \Merge(X)$ if for any compact intervals $I\subset (0,\infty)$ and $\Lambda\subset \R$, and any threshold $\epsilon>0$, the subset of $\Merge_n((X^{n,\lambda})_{\lambda\in \R}))$ consisting of points $(l,t,r,-s)$ such that $r-t,t-l>\epsilon$, $l,t,r\in I$ and $-s\in \Lambda$ converges to the corresponding subset of $\Merge(X)$. 

\begin{prop}[Convergence of large merges]\label{prop:large_merges}
Consider a probability space in which $(X^{n,\lambda})_{\lambda\in \R}\to (X^\lambda)_{\lambda\in \R}$ almost surely. Then, in probability,
\[\Merge_n((X^{n,\lambda})_{\lambda\in \R}) \xrightarrow[n\to\infty]{} \Merge(X)\,.\]
\end{prop}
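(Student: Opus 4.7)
The plan is to work on the given probability space where $(X^{n,\lambda})_{\lambda\in\R} \to (X^\lambda)_{\lambda\in\R}$ almost surely in $\mathbb{D}(\R, \cC([0,\infty),\R))$, and to fix compact windows $I \subset (0,\infty)$, $\Lambda \subset \R$, and a threshold $\epsilon > 0$. Writing $W$ for the set of $(l, t, r, -s)$ with $l, t, r \in I$, $-s \in \Lambda$, and $r-t, t-l \ge \epsilon$, I want to show $\Merge_n(X^n) \cap W \to \Merge(X) \cap W$ as finite subsets of $\R^4$ (say in the Hausdorff sense). The strategy is to establish, in order: finiteness of the limit set, lower semicontinuity (every continuum merge is reached by discrete ones), and upper semicontinuity (no spurious discrete merges survive in the limit).

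First I would show $\Merge(X) \cap W$ is a.s.\ finite. Every element of $\Merge(X)$ corresponds to a local minimum $t$ of $X$ which disappears from $Z^\lambda$ at $\lambda_0 = \slo(t)$, producing a binary merge of two adjacent fragments of $B^{\lambda_0-}$ with lengths $|t - l|$ and $|r - t|$, both at least $\epsilon$. For any $\lambda \in \Lambda$, only finitely many fragments of $\R_+ \setminus Z^\lambda$ lying in a bounded neighborhood of $I$ have length at least $\epsilon$; each macroscopic merge in $W$ reduces this count by one, so there are finitely many such merges as $\lambda$ ranges in $\Lambda$. Moreover, by Lemma~\ref{lem:no_exception_convex}~iv) the slopes of distinct local minima of $X$ are a.s.\ distinct, hence the macroscopic merges occur at distinct values of $-\slo$, giving an a.s.\ positive gap between the slopes of any two elements of $\Merge(X) \cap W$.

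Second, for lower semicontinuity, I would characterize $(l, t, r, -\lambda_0) \in \Merge(X) \cap W$ by: for all sufficiently small $h > 0$, the intervals $(l, t)$ and $(t, r)$ are two distinct excursion intervals of $B^{\lambda_0 - h}$ whose infima on compact subintervals are strictly positive, while for small $h > 0$, $(l, r)$ is a single excursion interval of $B^{\lambda_0 + h}$ with $\inf_{s \in K} B^{\lambda_0 + h}(s) > 0$ on every compact $K \subset (l, r) \setminus \{t\}$. Under the uniform convergence $X^{n,\lambda} \to X^\lambda$ on compacts, the running infima $\underline{X}^{n,\lambda}$ also converge uniformly, hence $B^{n,\lambda}$ converges uniformly, and the strict positivity just before and after $\lambda_0$ gives Hausdorff convergence of $Z^{n,\lambda} \cap I$ to $Z^\lambda \cap I$ locally uniformly in $\lambda$ near $\lambda_0$. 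This produces, for all large $n$, a unique discrete merge in $W$ with parameters $(l_n, t_n, r_n, -\lambda_n) \to (l, t, r, -\lambda_0)$.

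Third, for upper semicontinuity, I would take any sequence $(l_n, t_n, r_n, -\lambda_n) \in \Merge_n(X^n) \cap W$; by compactness of $W$ extract a subsequential limit $(l, t, r, -\lambda_0) \in W$. The a.s.\ convergence of processes implies $l, r \in Z^{\lambda_0}$ (they are limits of zeros of $B^{n,\lambda_n}$) while the same characterization of $(l, t)$ and $(t, r)$ as excursion intervals of $B^{\lambda_0-}$ of length $\ge \epsilon$ is inherited in the limit, so $(l, t, r, -\lambda_0) \in \Merge(X) \cap W$. The main obstacle is ruling out that one discrete merge in $W$ secretly corresponds to two distinct continuum merges happening at nearly the same $\lambda$ but different space-points; this is where the finiteness and the a.s.\ distinct-slopes property from Step 1 are essential, as they give a strictly positive separation between the slopes of any two distinct continuum merges in $W$, which under uniform convergence of the processes forces distinct discrete merges for large $n$ and hence correctly pairs discrete and continuum merges. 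Finally, the original conclusion (convergence in probability) follows from the a.s.\ convergence on the enlarged space.
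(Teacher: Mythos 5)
Your three-step architecture (finiteness, lower and upper semicontinuity on a compact window) is reasonable, and your Step 3 is close to the paper's treatment of accumulation points, but Step 2 contains a genuine gap. You assert that the a.s.\ uniform convergence $X^{n,\lambda}\to X^\lambda$ "gives Hausdorff convergence of $Z^{n,\lambda}\cap I$ to $Z^\lambda\cap I$ locally uniformly in $\lambda$". Uniform convergence only yields one direction: if $B^\lambda\ge\delta$ on a compact set, then $Z^{n,\lambda}$ eventually avoids it (no spurious discrete zeros). It does not, by itself, produce a discrete zero near a given point $x\in Z^\lambda$: for that you need the continuum path to attain values strictly below its earlier running infimum in every neighbourhood of $x$, otherwise the discrete process may simply skip the region. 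Your characterization of a merge $(l,t,r,-\lambda_0)$ constrains $B^{\lambda_0\pm h}$ only on compact subsets of the interior of $(l,r)$ and says nothing about the path immediately to the right of $r$, nor about strictness of the level at $l$ relative to $\inf_{[0,l-\eta]}X^{\lambda_0}$. Without these strict dips, the discrete merged component can extend well past $r$ at the merge time, so the recorded right endpoint $\ri_n(\cdot)$ need not converge to $r$ (and the point of $Z^{n,\lambda}$ near $l$ need not exist), and your claim of a "unique discrete merge with parameters converging to $(l,t,r,-\lambda_0)$" is unjustified.

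The missing input is precisely what the paper's proof establishes first: almost surely no three local minima of $X$ are collinear (binarity of the multiplicative coalescent), proved via absolute continuity of the joint law of local minima on disjoint rational intervals. At a merge $(a,b,c,\lambda)$ one has $X^\lambda_a=X^\lambda_b=X^\lambda_c$; since $a$ and $b$ are local minima of $X^\lambda$ at that level, non-collinearity forces $c$ not to be one, hence $X^\lambda$ dips strictly below the merge level immediately after $c$, and similarly the level at $a$ is strictly below the infimum of $X^\lambda$ over $[0,a-\varepsilon']$. These strict inequalities, packaged in the event $\cE((a,b,c,\lambda);\varepsilon',\delta)$, are exactly what converts uniform convergence of the processes into existence of discrete zeros $a_n,b_n,c_n$ at the right places and pins down the discrete right endpoint near $c$. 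Your Step 1 remark about "distinct slopes" (which in any case misreads Lemma~\ref{lem:no_exception_convex}~iv), a statement about the vertices of a single convex minorant) is used only to separate merge times and cannot substitute for this path property. Repairing Step 2 essentially amounts to importing the non-collinearity lemma and the strict-dip events, i.e., reproducing the paper's argument.
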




\begin{proof}We will use the following fact: a.s., there does not exist three local minima of $t, t', t''\in \sL(X)$ such that the points $(t,X_t)$, $(t',X_{t'})$ and $(t'', X_{t''})$ all lie on the same line; using the representation in \cite{BrMa2015a}, this is essentially equivalent to the fact that the standard multiplicative coalescent is binary. To see that this is indeed the case, note that local minima of a continuous function are also global minima on an interval with rational extremities; then for, three disjoint intervals $[a,b]$, $[a',b']$ and $[a'',b'']$ with rational extremities, the local minima $(t,X_t)$, $(t',X_{t'})$ and $(t'', X_{t''})$ on each of these intervals have a law which absolutely continuous with respect to the Lebesgue measure on $\R^2$, and then, are aligned with probability zero; the union of this countable number of zero probability events also has probability zero. Assume that $(a,b,c,\lambda)\in \Merge(X)$. In this case, $b$ is a local minimum of $X^\lambda$, and $X^\lambda$ is strictly above the (horizontal) line connecting $(a,X_a^\lambda),(b,X_b^\lambda),(c,X_c^\lambda)$ on $(a,b)\cup (b,c)$; because of the property recalled above, since $(a,X_a^\lambda)$ and $(b,X_b^\lambda)$ are local minima of $X^\lambda$, it is (a.s.) not the case for $(c,X_c^\lambda)$, so that, $\inf\{X^\lambda_t: t\in (c,c+\eta)\}<0$ for any $\eta>0$.

Given compact intervals $I\subset (0,\infty)$, $\Lambda\subset \R$ and a threshold $\epsilon>0$, there are only finitely many points in $\Merge(X)\cap I^3\times \Lambda$ with the first three coordinates at least $\epsilon$ apart, and it suffices to consider each one separately. 
Take some $(a,b,c,\lambda)\in \Merge(X)$, so that, in particular $X^\lambda_a=X^\lambda_b=X^\lambda_c$. Consider, for  $\varepsilon'>0$, $\delta>0$, the event
\begin{align*}
  \cE((a,b,c,\lambda); \varepsilon',  \delta)
  =& \Big\{\inf_{[0,a-\varepsilon']}X^\lambda \geq X^\lambda_a+\delta\Big\}
  \cap \Big\{\inf_{[a+\varepsilon', b-\varepsilon']} X^\lambda> X_a^\lambda+\delta\Big\}\\
  & \cap\Big\{\inf_{[b+\varepsilon',c-\varepsilon']} X^\lambda > X^\lambda_a + \delta\Big\}
\cap \Big\{\inf_{[c,c+\varepsilon']} X^\lambda < X^\lambda_c-\delta\Big\}\,.
\end{align*}
Fix $\varepsilon'>0$, $a,b,c\in (0,\infty)$ such that $|b-a|, |c-b|\ge 2\varepsilon'$, and $\lambda\in \R$. Using the properties of the local minima of the Brownian motion, for any $\epsilon>0$ there exists a $\delta>0$ such that
\[\pc{\cE((a,b,c,\lambda); \varepsilon',\delta )~|~(a,b,c,\lambda)\in \Merge(X)}\geq 1-\epsilon.\]
Let us show that, the event $\cE((a,b,c,\lambda);\varepsilon', \delta)$, for all $n$ large enough, there must exist some vector $(a_n,b_n,c_n,\lambda_n)$ close to $(a,b,c,\lambda)$ such that $(a_n,b_n,c_n,\lambda_n)\in \Merge_n((X^{n,\lambda})_{\lambda\in \R})$. To prove this, it suffices to show that for some $\lambda'<\lambda$ close enough to $\lambda$ there are points $(a_n,b_n,c_n)\in Z^{n,\lambda'}$ which are close to $(a,b,c)$, while for some $\lambda''>\lambda$ close enough to $\lambda$ there are points $a_n'',b_n''\in Z^{n,\lambda''}$ with $(a_n'',c_n'')$ close to $(a,c)$ but no other point of $Z^{n,\lambda''}$ between $a_n''$ and $c_n''$. We now proceed with the details. 

On the one hand, for any $\lambda'<\lambda$ there exists $\delta'>0$ small enough such that $X^{\lambda}_b<X^{\lambda'}_a-\delta'$ and $X^{\lambda'}_c<X^{\lambda'}_b-\delta'$. Taking $\lambda'$ close enough to $\lambda$ and $\delta'>0$ even smaller, we may also ensure that $X^{\lambda'}>\delta'$ on $[a+\varepsilon', b-\varepsilon']$ and $[b+\varepsilon', c-\varepsilon']$. The convergence of $X^{n,\lambda'}$ to $X^{\lambda'}$ then ensures that we may find $a_n,b_n, c_n\in Z^{n,\lambda'}$ all within distance $\varepsilon'$ of $a$, $b$ or $c$. 

On the other hand, for any $\lambda'>\lambda$, we have $X^{\lambda'}>\delta'$ on $(a+\varepsilon',c-\varepsilon')$; we may take $\lambda'$ close enough to $\lambda$, and $\delta'>0$ small enough such that we also have $\inf\{X^{\lambda'}: [c,c+\varepsilon']\}<\inf\{X^{\lambda'}: [0,a-\varepsilon']\}-\delta'$. The convergence of $X^{n,\lambda'}$ to $X^\lambda$ then ensures that there exists $a_n\in [a-\varepsilon', a+\epsilon']$ and $c_n\in [c-\varepsilon',c+\varepsilon']$ such that $a_n,c_n\in Z^{n,\lambda'}$ and $Z^{n,\lambda}$ does not have any other point between $a_n$ and $c_n$. 

Consider now a accumulation point $(a,b,c,\lambda)$ of $\Merge_n((X^{n,\lambda})_{\lambda\in \R})$ with $a<b<c$. 
Then, there exists a sequence $(a_n,b_n,c_n,\lambda_n)$ converging to $(a,b,c,\lambda)$ with $a_n,b_n,c_n\in Z^{n,\lambda_n}$. It follows that 
\[X^{n,\lambda_n}_{a_n}=\underline X^{n,\lambda_n}_{a_n}=X^{n,\lambda_n}_{b_n}+n^{-1/3} = X^{n,\lambda_n}_{c_n}+2n^{-1/3}\,.\] The convergence of $X^{n,\lambda}$ to $X^\lambda$ then implies that $\underline X^\lambda_a=X^\lambda_a=X^\lambda_b=X^\lambda_c$, so that $a,b,c\in Z^\lambda$. The path properties of $X$ also imply that, for any $\lambda'>\lambda$, $(a,c)\cap Z^{\lambda'}=\varnothing$ so that $(a,b,c,\lambda)\in \Merge(X)$. This completes the proof.
\end{proof}

The next step concerns the convergence of the representation of the cyclic edges. Given $X$, let $\Xi$ be a Poisson point process with intensity $\I{x<y}  \I{(x,y)\cap Z^\lambda= \varnothing} dx dy d\lambda$ on $\R_+^2\times \R$. 

\begin{prop}[Convergence of cyclic edges]\label{pro:cyclic_edges}
We have the convergence in distribution
\[((X^{n,\lambda})_{\lambda \in \R}, \Xi_n) \xrightarrow[n\to\infty]{d}((X^\lambda)_{\lambda\in \R}, \Xi)\,.\]
Furthermore, for any $a\in \R_+$ and $\lambda\in \R$, we have jointly,
\[\#\Xi_n \cap \Big([0,a]^2 \times (-\infty, \lambda] \Big) 
\xrightarrow[n\to\infty]{d} 
\#\Xi \cap \Big([0,a]^2 \times (-\infty,\lambda]\Big)\,.\]
\end{prop}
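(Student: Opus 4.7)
The plan is to work on a probability space, furnished by Skorokhod's representation theorem, on which $(X^{n,\lambda})_{\lambda \in \R} \to (X^\lambda)_{\lambda \in \R}$ almost surely, and on which the convergence $\Merge_n((X^{n,\lambda})_{\lambda\in\R}) \to \Merge(X)$ of Proposition~\ref{prop:large_merges} holds (at least along a subsequence, almost surely). From this, one deduces the following pointwise convergence of the thinning indicator, which will be used below: for almost every $0 < x < y$ and $\lambda \in \R$, the indicator $\I{Z^{n,\lambda_n} \cap \cro{x n^{2/3}+1, \lfloor y n^{2/3}\rfloor} = \varnothing}$ converges to $\I{(x,y) \cap Z^\lambda = \varnothing}$ whenever $\lambda_n \to \lambda$. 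The almost-sure validity relies on the fact that, by Lemma~\ref{lem:no_exception_convex}, points of $Z^\lambda$ in $(x,y)$ are a.s.\ interior to $Z^\lambda$, so a crossing at the discrete level eventually follows.

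Next, I would analyse the \emph{unthinned} point process $\Xi_n^0 := \{(in^{-2/3}, jn^{-2/3}, \lambda^n_{ij}): 1 \le i < j \le n\}$. Conditionally on $(w_e)_{e\in E^n}$ (which determines $(X^{n,\lambda})_{\lambda\in\R}$), the variables $(Y_{ij})_{i<j}$ are i.i.d.\ uniform in $[0,1]$, so $\Xi_n^0$ is independent of $(X^{n,\lambda})$. For any compact box $K = [0,a]^2 \times [-L,L]$ with $x<y$, each pair $(i,j)$ with $i,j \le an^{2/3}$ contributes a point to $\Xi_n^0 \cap K$ with probability $\sim 2L n^{-4/3}$ independently, hence by a standard Poissonisation (e.g.\ via Laplace functionals, or Chen--Stein) we obtain $\Xi_n^0 \cap K \to \Xi^0 \cap K$ in distribution, where $\Xi^0$ is a Poisson process with intensity $\I{x<y}\, dx\, dy\, d\lambda$, \emph{jointly} with the convergence $(X^{n,\lambda})_\lambda \to (X^\lambda)_\lambda$ (with asymptotic independence). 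Combining this joint convergence with the almost-sure convergence of the thinning indicator yields $((X^{n,\lambda})_\lambda, \Xi_n \cap K) \to ((X^\lambda)_\lambda, \Xi \cap K)$ jointly in distribution for every compact box $K$, which is the first claim of the proposition.

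For the second claim, only the non-compact tail $r \to -\infty$ requires care. The expected intensity of $\Xi$ on $[0,a]^2 \times (-\infty, -L]$ equals
\[
\tfrac12 \int_{-\infty}^{-L} \sum_{i\ge 1} |\gamma^r_i \cap [0,a]|^2\, dr,
\]
which tends to $0$ as $L \to \infty$ because $\sum_i |\gamma_i^r|^2 \to 0$ as $r\to -\infty$ (this reflects the entrance boundary of the standard multiplicative coalescent). A matching bound for $\bE[\#\Xi_n \cap ([0,a]^2 \times (-\infty,-L])]$ is obtained from the direct formula $\tfrac12 \int_{-\infty}^{-L}\sum_i |\gamma_i^{n,r}\cap [0,an^{-2/3}\Z]|^2\, dr$ together with the uniform-in-$n$ control of sizes of connected components below criticality (Lemma~\ref{lem:global_asympt_small_lambda}). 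Compact convergence from step~2 combined with these tail bounds then gives the joint convergence of $\#\Xi_n \cap ([0,a]^2 \times (-\infty,\lambda])$ to $\#\Xi \cap ([0,a]^2 \times (-\infty,\lambda])$, jointly with $(X^{n,\lambda})$. The main obstacle will be the tail control uniform in $n$, which forces one to exhibit explicit moment estimates rather than merely passing to the limit; once this is in hand, the remainder is routine.
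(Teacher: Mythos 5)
Your treatment of the first claim follows essentially the same route as the paper: show the unthinned process $\Xi_n^0$ converges to a unit-rate Poisson process on $\{x<y\}$ jointly with (and asymptotically independently of) $(X^{n,\lambda})_\lambda$, then pass the thinning to the limit on a Skorokhod space where $X^{n,\cdot}\to X^\cdot$ almost surely. Two remarks on your version of this step. First, the convergence of $\Merge_n$ is not needed here. Second, your justification of the a.e.\ convergence of the thinning indicator is wrong as stated: points of $Z^\lambda$ are never ``interior to $Z^\lambda$'' (this set has empty interior, like the zero set of reflected Brownian motion), and Lemma~\ref{lem:no_exception_convex} says nothing of the sort. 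The correct argument, which is the one the paper uses, exploits monotonicity in $\lambda$: if $(x,y)\cap Z^\lambda=\varnothing$ then $X^\lambda-\underline X^\lambda$ is bounded away from $0$ on $[x+\epsilon,y-\epsilon]$, so eventually $Z^{n,\lambda_n}$ misses the discrete interval; if $(x,y)\cap Z^\lambda\ne\varnothing$ then for a slightly smaller drift $\lambda-\epsilon$ the reflected process dips strictly below $0$ inside $(x,y)$, which forces a discrete zero for large $n$. With that repair, your first claim is fine and essentially identical to the paper's proof (the paper checks Poisson convergence box by box with a short decoupling argument where your Laplace-functional/Chen--Stein appeal is an acceptable substitute).

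The genuine gap is in the second claim. You reduce it, correctly, to a uniform-in-$n$ control of $\bE\big[\#\Xi_n\cap([0,a]^2\times(-\infty,-L])\big]$, but the estimate you invoke is not available from what you cite: Lemma~\ref{lem:global_asympt_small_lambda} gives, for one sufficiently negative $\lambda$, a high-probability bound on the largest component and its diameter; it is neither a moment bound nor an estimate integrated over all $r\le -L$, and it says nothing about $\sum_i|\gamma^{n,r}_i\cap[0,an^{2/3}]|^2$ uniformly in the subcritical window. To make your route work you would have to prove a susceptibility-type estimate such as $\bE\big[\sum_i|C^{n,r}_i\cap[an^{2/3}]|^2\big]\le C\,a\,n^{4/3}/r^2$ uniformly for $-n^{1/3}\le r\le -L$, which is precisely the ``explicit moment estimate'' you acknowledge is missing. (Your continuum computation has the same soft spot: $\sum_i|\gamma_i^r|^2\sim 1/|r|$ alone is not integrable near $-\infty$; it is only the spatial restriction to $[0,a]$, which forces component lengths of order $r^{-2}$, that makes the tail intensity of order $a/L$.) The paper avoids all of this: via the representation of Proposition~\ref{prop:random_graph}, $\#\Xi_n\cap([0,a]^2\times(-\infty,\lambda])$ is dominated by the number of surplus edges among the components containing Prim ranks at most $an^{2/3}$ at time $p_n(\lambda)$, and tightness of that quantity is already known from \cite{BrMa2015a} (via the Bernoulli point set under the reflected walk $X^{n,\lambda}-\underline X^{n,\lambda}$). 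Either import that domination argument, or supply the subcritical moment estimate; as written, the tail control — and hence the second claim — is not established.
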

\begin{proof}Consider first the larger point process $\Xi^\circ_n=\{(in^{-2/3}, jn^{-2/3}, \lambda^n_{ij}): 1\le i<j\le n\}$. For every set $A\subset \{(x,y,\lambda): a\le x<y\le b, \lambda_1 \le \lambda \le \lambda_2\}$, $|\Pi_n \cap A|$ is a binomial random variable with parameters asymptotic to $n^{4/3}(b-a)^2/2$ and $\pc{\lambda_{ij}^n\in[\lambda_1,\lambda_2]}= (\lambda_2-\lambda_1)n^{-4/3}$, and thus converges to a Poisson random variable $P_A$ with parameter $(\lambda_2-\lambda_1)(b-a)^2/2$. 

Let now $A$ and $A'$ be any two disjoint such sets; we show that the distributional limits $P_A$ and $P_{A'}$ are independent. If the space intervals $[a,b]$ and $[a',b']$ are disjoint, this is straightforward since the random variables $(Y_{ij})$ involved in the definitions of $\Pi_n$ on $A$ and $A'$ are themselves independent. Otherwise the space intervals do intersect, and the time intervals must then be disjoint. Fix any $\epsilon>0$. There is some constant $K$ such that $\pc{|\Pi_n \cap A| > K}<\epsilon$. Now, conditionaly on $\Pi_n\cap A$ and $|\Pi_n \cap A|\le K$, to estimate the distribution of $|\Pi_n\cap A'|$ we shall remove the pairs $(i,j)$ which correspond to a point in $\Pi_n \cap A$, and correct the probability of every other to account for the fact that they did not occur in $[\lambda_1,\lambda_2]$ (for those that indeed intersect). This removes only at most $K$ out of the $n^{4/3}(b'-a')^2/2$ pairs, and boots the probability of some of others by a factor $(1-n^{-4/3}(\lambda_2-\lambda_1))$. Overall, the limit remains Poisson random variable with the same distribution. Since $\epsilon>0$ was arbitrary, this proves that $\Xi^\circ_n$ converges to a Poisson point process $\Xi^\circ$ with unit rate on $\{(x,z,\lambda)\in \R_+^2\times \R:x<y\}$.

For the remainder of the proof, we consider now a probability space on which $(X^{n,\lambda})_{\lambda\in \R}$ converges almost surely to $(X^\lambda)_{\lambda\in \R}$. To complete the proof, it now suffices to show that, the set $A^n=\{(in^{-2/3}, jn^{-2/3}, \lambda): Z^{n,\lambda}\cap \{i+1,\dots, j\} = \varnothing\}$ used to filter the points of $\Xi^\circ_n$ converges to $A=\{(x,y,\lambda): Z^\lambda \cap (x,y) =\varnothing\}$ used to filter those of $\Xi^\circ$, for the Hausdorff distance. Indeed, since $Z^{n,\lambda}$ and $Z^\lambda$ are both decreasing in $\lambda$ this would imply the convergence of their Lebesgue measures. Let $(x,y,\lambda)\in A$, then $(x,y)\cap Z^\lambda = \varnothing$, so that for any $\epsilon>0$ small enough, $\inf\{X^\lambda_s - \underline X^\lambda_s: s\in (x+\epsilon, y-\epsilon)\} > 0$. It follows that $\inf\{X^{n,\lambda}_s - \underline X^{n,\lambda}_s: s\in (x+\epsilon, x-\epsilon)\}>0$ as well for all $n$ large enough, so that $(x+\epsilon,y-\epsilon, \lambda)\in A^n$. Similarly, if $(x,y,\lambda)\not\in A$, then $\inf\{X_s^\lambda-\underline X_s^\lambda: s\in (x+\epsilon,y-\epsilon)\}\le 0$ for every $\epsilon>0$ small enough, and therefore $\inf\{X^{\lambda-\epsilon}_s -\underline X^{\lambda-\epsilon}_s: s\in (x+\epsilon,y-\epsilon)\}<0$ for some $\epsilon>0$ small enough. The argument we used above implies that $(x+\epsilon,y-\epsilon,\lambda-\epsilon)\not\in A^n$ for all $n$ large enough. 

For the second claim, one only needs the additional tightness of the number $N_n$ of points of $\Xi_n$ in $[0,a]^2 \times (-\infty,\lambda]$. Since the discrete representation of $\Xi_n$ is delicate to handle, we shall change the point of view: By the exact distribution of $N_n$ in Proposition~\ref{prop:random_graph}, $N_n$ is dominated by the number of surplus edges in the connected components that have nodes with Prim ranks at most $an^{2/3}$ at time $p_n(\lambda)$. The latter is known to be tight by the results in \cite{BrMa2015a} (Corollary~20 and Section 7.2), which consider the alternative representation for the surplus edges using a Bernoulli pointset under the graph of the discrete reflected process $X^{n,\lambda}-\underline X^{n,\lambda}$ (just as in the results of Aldous in \cite{Aldous1997}). 
\end{proof}

We may also recast the results of \cite{BrMa2015a} about the convergence of surpluses of connected components in the Prim order in the present setting (see also, \cite{Aldous1997}). Recall the discrete defined in \eqref{eq:discrete_surplus}. There is a continuum analog to the surplus of a connected component, that can be defined in terms of $\Xi$. At time $\lambda\in \R$, the connected components correspond to the intervals of $\R_+\setminus Z^\lambda$, that are sorted in decreasing order as $(\gamma^\lambda_j)_{j\ge 1}$. We define
\begin{equation}\label{eq:continuum_surplus}
\surp^\lambda_j=\#\big\{(x,y,\lambda') \in \Xi: x,y\in \gamma^\lambda_j, \lambda'\le \lambda \big\}\,.
\end{equation}

\begin{cor}\label{cor:convergence_surplus}Jointly with the convergence in Proposition~\ref{pro:cyclic_edges}, for any fixed $\lambda\in \R$, and any $j\ge 1$, we have 
\[\surp_j^{n,\lambda} \xrightarrow[n\to\infty]{d} \surp_j^{\lambda}\,.\]
\end{cor}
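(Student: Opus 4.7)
The plan is to deduce the convergence from Proposition~\ref{pro:cyclic_edges}, the convergence of the processes $X^{n,\lambda}\to X^\lambda$, and an argument showing that the regions selecting the relevant points of $\Xi_n$ converge to the region selecting the points of $\Xi$. I would first invoke the Skorokhod representation theorem to work on a probability space where $(X^{n,\lambda})_{\lambda\in\R}\to(X^\lambda)_{\lambda\in\R}$ almost surely for the topology of $\mathbb D(\R,\cC([0,\infty),\R))$ and $\Xi_n\to\Xi$ almost surely for the vague topology of point measures on $\R_+^2\times\R$. Together with the tightness of $\#\Xi_n\cap([0,a]^2\times(-\infty,\lambda])$ proved in Proposition~\ref{pro:cyclic_edges}, this upgrades to convergence as finite point measures on any set of the form $[0,a]^2\times(-\infty,\lambda]$.

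Next, I would fix $\lambda\in\R$ and $j\ge 1$, and show that the (rescaled) Prim endpoints of $C^{n,\lambda}_j$ converge to the endpoints of $\gamma^\lambda_j$. Almost surely the ranked excursion lengths of $B^\lambda=X^\lambda-\underline X^\lambda$ are all distinct, so the $j$-th longest excursion $\gamma^\lambda_j$ is unambiguously defined; by the a.s.\ convergence of $X^{n,\lambda}$ to $X^\lambda$ (hence of the excursions of the reflected process above any fixed height), the endpoints $n^{-2/3}\min C^{n,\lambda}_j$ and $n^{-2/3}\max C^{n,\lambda}_j$ converge a.s.\ to $\inf\gamma^\lambda_j$ and $\sup\gamma^\lambda_j$. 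Pick a random compact $[0,a]$ (using Lemma~\ref{lem:position_Hlambda} applied to suitable shifts, or the fact that $\gamma^\lambda_j$ is a.s.\ bounded) large enough to contain both $\gamma^\lambda_j$ and $n^{-2/3}C^{n,\lambda}_j$ for all $n$ large.

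The heart of the argument is then to show that no point of $\Xi$ lies on the boundary of the selecting region $\overline{\gamma^\lambda_j}\times\overline{\gamma^\lambda_j}\times\{\lambda\}\cup(\partial(\gamma^\lambda_j\times\gamma^\lambda_j))\times(-\infty,\lambda]$. Conditionally on $(X^\mu)_{\mu\in\R}$, $\Xi$ is a Poisson point process with an intensity absolutely continuous with respect to Lebesgue measure on $\R_+^2\times\R$, and the boundary set in question has Lebesgue measure zero (it is a finite union of lower-dimensional pieces, using that $\gamma^\lambda_j$ is an interval with two fixed endpoints). Hence a.s.\ no point of $\Xi$ lies on this boundary. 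This continuity implies that the indicator of the selecting region is a.s.\ continuous at every atom of $\Xi$, so the weak convergence of $\Xi_n|_{[0,a]^2\times(-\infty,\lambda]}$ to $\Xi|_{[0,a]^2\times(-\infty,\lambda]}$ combined with the convergence of the (rescaled) endpoints of $C^{n,\lambda}_j$ gives $\surp^{n,\lambda}_j\to\surp^\lambda_j$ almost surely on our coupled space. Joint convergence with the statement of Proposition~\ref{pro:cyclic_edges} is automatic since the argument uses only the coupled limits.

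The main obstacle I anticipate is the careful handling of the \emph{spatial} boundary: the discrete condition $Z^{n,\lambda^n_{ij}}\cap\{i+1,\dots,j\}=\varnothing$ in \eqref{eq:def_xin} localizes each point of $\Xi_n$ to a single connected component at time $\lambda^n_{ij}$, and one must ensure that, in the limit, a point $(x,y,\lambda')\in\Xi$ with $\lambda'<\lambda$ and $x,y\in\gamma^\lambda_j$ also satisfies $(x,y)\cap Z^{\lambda'}=\varnothing$. This is exactly the compatibility condition built into the intensity $\I{(x,y)\cap Z^{\lambda'}=\varnothing}\,dx\,dy\,d\lambda'$ of $\Xi$, and the a.s.\ convergence of $X^{n,\lambda'}$ (uniformly in $\lambda'$ on compacts) together with Hausdorff convergence of the sets $A^n$ to $A$ already established in the proof of Proposition~\ref{pro:cyclic_edges} handles this cleanly. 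Once this is in place, the corollary follows.
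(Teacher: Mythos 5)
Your argument is correct and follows essentially the route the paper intends: the corollary is stated as an immediate consequence of Proposition~\ref{pro:cyclic_edges} together with the convergence of the excursion intervals of $X^{n,\lambda}$ to those of $X^\lambda$ (the Prim-order representation of \cite{BrMa2015a}), which is exactly what you spell out via the Skorokhod coupling, convergence of the endpoints of $C^{n,\lambda}_j$ to those of $\gamma^\lambda_j$, and the a.s.\ absence of atoms of $\Xi$ on the boundary of the counting region. The extra care you take with the spatial boundary and the condition $(x,y)\cap Z^{\lambda'}=\varnothing$ is sound but already built into the definitions of $\Xi_n$ and $\Xi$, so no further verification is needed.
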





	


We shall also want to couple the collection of uniform choices that are associated to each merge, and used to sample the edges in discrete and the point identifications in the limit.
In the limit, merge events are characterized by some element $(x,y,z,\lambda)$ where $0<x<y<z$ and $\lambda\in \R$. Let $\Delta:=\{(x,y,z): 0<x<y<z\}\subseteq \R^3_+$. We shall discretize the set of merge events. 
We decompose the collection of all potential merge triples $\Delta$ into countably many cells (up to a Lebesgue null set), so that each cell can contain at most one of the elements $(x,y,z)\in \Delta$ for which there exists some $\lambda$ and $(x,y,z,\lambda)\in \Merge(X)$. 

For $x>0$, let 
$K(x)$ be the unique integer $k\in \Z$ such that $x\in [1/2^{k+1},1/2^k)$. For $k\in \Z$ and $(x,y,z)\in \Delta$ define
\[D(x,y,z; k)= \big(\floor{x2^k}/2^k,\floor{(y-x)2^k}/2^k,\floor{(z-y)2^k}/2^k\big). \]
Then, the integer $M(x,y,z)= 1+\max\{K(y-x), K(z-y)\}$ is used as the precision at which we shall encode the triple $(x,y,z)$.
For each $(x,y,z)\in \Delta$, define
\[\Code(x,y,z) =(M(x,y,z), D(x,y,z; M(x,y,z))\,,\]
and observe that $\Code$ takes its values in the countable set $\N\times \mathbb Q^3$.
A subset $S\subset \Delta$ is called nested if for any $(x,y,z),(x',y',z')\in S$ we have either (a) $(x,z)\cap (x',z')=\varnothing$, or (b) $(x',z')$ is contained in either $(x,y)$ or $(y,z)$, or (c) $(x,z)$ is contained in either $(x',y')$ or $(y',z')$. The set $\Merge(X)$ is such that its projection on the first three coordinates is nested. One easily verifies that, the map $\Code$ is injective on any nested subset $S\subset \Delta$. 
\begin{lem}Let $S\subseteq \Delta$ be nested. Then for any $(x,y,z),(x',y',z')\in S$ we have $\Code(x,y,z)\ne \Code(x',y',z')$.
\end{lem}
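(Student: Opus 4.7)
My plan is to unpack the definition of $\Code$ and show that, if two triples in a nested set $S$ share a code, the common precision $M$ is too coarse for the ``close but nested'' geometry to be compatible with the lower bounds on the gap lengths $y-x$ and $z-y$. Concretely, suppose $(x,y,z),(x',y',z')\in S$ have the same code, and let $M=M(x,y,z)=M(x',y',z')$. Writing $a=\lfloor x 2^M\rfloor$, $b=\lfloor (y-x)2^M\rfloor$, $c=\lfloor (z-y)2^M\rfloor$, matching codes give both $x$ and $x'$ in $[a/2^M,(a+1)/2^M)$, both $y-x$ and $y'-x'$ in $[b/2^M,(b+1)/2^M)$, and similarly for $z-y$ and $z'-y'$. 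The first key observation is that $M\ge K(y-x)+1$ and $M\ge K(z-y)+1$ force $y-x\ge 2^{-M}$ and $z-y\ge 2^{-M}$, hence $b\ge 1$ and $c\ge 1$; in particular $z-x\ge (b+c)/2^M\ge 2\cdot 2^{-M}$, and the same holds for $z'-x'$.

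Next I would dispatch the three nested cases, each ruled out by a one-line floor-function estimate. For case (a), disjointness forces $|x-x'|\ge \min(z,z')-\max(x,x')\ge (b+c)/2^M\ge 2/2^M$, contradicting $|x-x'|<1/2^M$ coming from the matching $a$-digit. For case (b) with $(x',z')\subseteq(x,y)$, containment gives $z'-x'\le y-x$, i.e.\ $(b+c)/2^M\le z'-x'\le y-x<(b+1)/2^M$, which forces $c<1$, contradicting $c\ge 1$; the variant $(x',z')\subseteq(y,z)$ is identical with $b$ and $c$ swapped. Case (c) is case (b) with the roles of the primed and unprimed triples exchanged.

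The main thing to check carefully is point~2 above, namely that the precision $M=1+\max\{K(y-x),K(z-y)\}$ is fine enough that both middle digits $b$ and $c$ are at least one: everything else is a mechanical consequence of $\lfloor\cdot\rfloor$ bounds. Once that is in place, the nested alternative collapses in each subcase by the short comparison between the geometric diameter and the coded diameter, so the only remaining possibility is $(x,y,z)=(x',y',z')$, giving the claimed injectivity.
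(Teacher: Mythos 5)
Your proposal is correct and follows essentially the same route as the paper: equal codes force equal precision $M$ and equal dyadic digits for $x$, $y-x$, $z-y$; the definition of $M$ gives both gaps $\ge 2^{-M}$ (so both middle digits are at least $1$ and each interval $(x,z)$ has length $\ge 2\cdot 2^{-M}$); and each nested alternative is then killed by a floor-function comparison, exactly as in the paper's (terser) argument. One small slip in your case (a): the intermediate quantity $\min(z,z')-\max(x,x')$ is nonpositive when the intervals are disjoint (e.g.\ if $z\le x'$ it equals $z-x'$), so the chain as written fails; the correct one-liner is $|x-x'|\ge \min(z-x,\,z'-x')\ge (b+c)/2^{M}\ge 2\cdot 2^{-M}$, which contradicts $|x-x'|<2^{-M}$ from the matching first digit, so the conclusion of that case stands.
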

\begin{proof}Suppose that $\Code(x_1,y_1,z_1)=\Code(x_2,y_2,z_2)$. Then, there is $k\in \Z$ and $(a,b,c)\in \R_+^3$ such that for $i\in \{1,2\}$, we have
\begin{compactenum}[i)]
	\item $M(x_i,y_i,z_i)=k$, so that $\min\{|y_i-x_i|,|z_i-y_i|\}\in [1/2^k, 1/2^{k-1})$, and
	\item $x_i\in [a, a+1/2^k)$, $y_i-x_i\in [b, b+1/2^k)$ and $z-y\in [c, c+1/2^k)$.
\end{compactenum}
Now, since $S$ is nested, and there are two alternatives. Either $(x_1,z_1)\cap (x_2,z_2) = \varnothing$, and without loss of generality, $z_1\le x_2$, and because of i) we have $|x_1-x_2|\ge 1/2^{k-1}$ which contradicts ii). Or, without of generality, $(x_1,z_1)\subset [x_2,y_2]$ or $(x_1,z_1)\subset [y_2,z_2]$, and either way, because of i), we cannot have simultaneously all the inequalities in ii).
\end{proof}

The following lemma is straightforward:
\begin{lem}\label{lem:code_cv}
Suppose that $(a_n,b_n,c_n)_{n\ge 1}$ converges to $(a,b,c)\in \Delta$ such that neither $a$, $b-a$ nor $c-b$ are multiple of some $2^j$, with $j\in \Z$. Then, for all $n$ large enough $\Code(a_n,b_n,c_n)=\Code(a,b,c)$.
\end{lem}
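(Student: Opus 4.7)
The plan is to establish the convergence by checking each component of $\Code$ separately, exploiting the observation that both the function $K$ and the floor operation $x\mapsto \lfloor x2^k\rfloor$ (for any fixed $k\in\Z$) are locally constant away from dyadic points.

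First, I would handle the precision parameter $M$. By hypothesis, neither $b-a$ nor $c-b$ is of the form $m/2^j$ for any $j\in \Z$, so in particular these quantities do not lie on any boundary $1/2^k$ of the partition defining $K$. Consequently $K$ is continuous at $b-a$ and at $c-b$: there is an open neighborhood of $b-a$ on which $K$ equals $K(b-a)$, and similarly for $c-b$. Since $b_n-a_n\to b-a$ and $c_n-b_n\to c-b$, for all $n$ large enough we have $K(b_n-a_n)=K(b-a)$ and $K(c_n-b_n)=K(c-b)$. Hence, writing $k:=M(a,b,c)=1+\max\{K(b-a),K(c-b)\}$, we obtain $M(a_n,b_n,c_n)=k$ eventually.

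Next, for this fixed $k\in \N$, I would verify the equality of the three coordinates of $D(\cdot\,;k)$. The hypothesis implies $a\cdot 2^k$, $(b-a)\cdot 2^k$ and $(c-b)\cdot 2^k$ are all non-integers, so the floor function is continuous at each of these points. Since $(a_n,b_n,c_n)\to (a,b,c)$, for $n$ large enough $\lfloor a_n 2^k\rfloor=\lfloor a 2^k\rfloor$, $\lfloor (b_n-a_n)2^k\rfloor=\lfloor(b-a)2^k\rfloor$ and $\lfloor(c_n-b_n)2^k\rfloor=\lfloor(c-b)2^k\rfloor$, so $D(a_n,b_n,c_n;k)=D(a,b,c;k)$. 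Combining the two steps yields $\Code(a_n,b_n,c_n)=\Code(a,b,c)$ for $n$ large enough.

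There is no real obstacle here; the statement is essentially a continuity statement for a piecewise-constant function, and the hypothesis is exactly what is needed to stay away from the countable set of discontinuities. The only care to take is to fix the precision level $k$ from the limiting triple and verify stability of the floors at that single level, rather than letting $k$ vary with $n$.
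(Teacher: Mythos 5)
Your proof is correct, and it is exactly the argument the paper has in mind: the paper states Lemma~\ref{lem:code_cv} without proof as ``straightforward'', and the intended justification is precisely your observation that the hypothesis keeps $a$, $b-a$, $c-b$ away from the dyadic discontinuity points of $K$ and of $x\mapsto \lfloor x2^k\rfloor$, so both the precision level $M$ and the three floors at that fixed level stabilize. The only cosmetic slip is calling the fixed level $k$ an element of $\N$ (it is a priori just an integer, since $K$ can be negative for differences larger than $1$), but this has no bearing on the argument.
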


From now on, we will see $\bU$ as indexed by this $\Code(\Delta)$ (which is countable) rather than by~$\N$. 

\subsection{A global coupling and ideal forests} 
\label{sub:global_coupling}

We are now ready to define the probability space on which we will work. By iterative applications of Skorohod's representation theorem, we can find a probability space in which we have the following almost sure convergences, as $n\to\infty$:
\begin{compactitem}
	\item $(X^{n,\lambda})_{\lambda \in \R}\to (X^{\lambda})_{\lambda\in \R}$;
	\item $\Xi_n \to \Xi$; 
	\item $\Merge_n((X^{n,\lambda})_{\lambda\in \R}) \to \Merge(X)$.
\end{compactitem}
\medskip

Our starting point is the following: for the discrete objects, we consider the Kruskal forest $K(n,p)$ and random graphs $G(n,p)$ that are constructed from $(X^{n,\lambda})_{\lambda\in \R}$, $\bU$, and $\Xi_n$, which also define the metric space $(M_n,d_n)$. The continuous objects are those built from $X$, $\bU$ and $\Xi$, in particular the metric $d$ on $\R_+$. In the following, we shall identify the label and the Prim ranks to ease the discussion. 

These objects are crucial for us, and we will show that their macroscopic structures are similar. Rather than trying to couple details at the scale $\delta n^{2/3}$ in the discrete, and $\delta$ in the continuous, we shall proceed as follows: both in the discrete and continuous setting, we can see the metric spaces at some given time $\lambda$ (that is $K(n,p_n(\lambda))$ and $G(n,p_n(\lambda))$ in discrete, and the metric spaces induced by the intervals of $\R_+\setminus Z^\lambda$ in continuous) as combining together the metric spaces that were already present far in the past, say at some time $\underline \lambda<\lambda$. We will never look any further in time, and replace the metrics in the connected components at time $\underline \lambda$ by some idealization. In general, the distribution will be incorrect, but we will ensure that $\underline \lambda$ can be chosen far enough for the distributions to be exact (or close enough) on an event of arbitrarily large probability. Observe that, this modification at time $\underline \lambda$ provides a coupling at all times $\lambda>\underline \lambda$ simultaneously. 

Fix any two points $s_1,s_2\in (0,\infty)$ and define $s_i^n=\lfloor s_i n^{2/3}\rfloor$, $i=1,2$. There is always some deterministic $\overline \lambda$ large enough such that $s_1^n$ and $s_2^n$ lie in the same connected component of $K(n,p_n(\overline \lambda))$ for $n$ large enough with probability close to one; the path between $s_1^n$ and $s_2^n$ we refer to is the one in this connected component (and at any larger time). For any $\lambda\in \R$, let $J^n_\lambda(s_1,s_2)$ denote the collection of ranks of the connected components of $G(n,p_n(\lambda))$ that contain some node on the path between $s_1^n$ and $s_2^n$; $j\in J^n_\lambda(s_1,s_2)$ means that the $j$th largest connected component of $K(n,p_n(\lambda))$ contains some node of the path between $s_1^n$ and $s_2^n$. Similarly, let $J_\lambda(s_1,s_2)$ be the collection of indices of the intervals $(\gamma^\lambda_j)_{j\ge 1}$ obtained as $\R_+\setminus Z^\lambda$ which contain part of $\llbracket s_1, s_2 \rrbracket$. By construction, any of the connected components at time $\lambda$ are traversersed by a single portion of the path, between two points that we will denote by $a_j^n$, $b^n_j$ and $a_j, b_j$ respectively. 
Then, we have the following exact decompositions:
\begin{align}\label{eq:dist_decomp_exact-disc}
d_n(s_1^n,s_2^n) = \sum_{j\in J_\lambda^n(s_1,s_2)} d_n(a_j^n, b_j^n) + \#J_\lambda^n(s_1,s_2)-1\,,
\end{align}
and 
\begin{align}\label{eq:dist_decomp_exact-cont}
d(s_1,s_2)=\sum_{j \in J_\lambda(s_1,s_2)} d(a_j,b_j)\,.
\end{align}

We will simply replace the distances in the components at time $\lambda$ by what they should be in an ideal situation; for now, we are only interested in the definition, the verifications will come later. 
With this goal in mind, let us suppose our probability space contains the following sequences of random variables. 
Let $(V_j)_{j\ge 1}$ be i.i.d.\ random variables uniform on $[0,1]$. For each $m\ge 1$, let $F_m$ denote the distribution function of the distance $D_m$ between two independent uniformly random points in a uniformly random labelled tree on $m$ nodes. Then, for each $m\ge 1$, $\bar D_j(m)=m^{-1/2} F_m^{-1}(V_j)$. This provides a sequence of random variables where each term $\bar D_j(m)$ is distributed like $m^{-1/2} D_m$, and that converges almost surely as $m\to\infty$ to a Rayleigh random variable $\bar R_j$ with density $xe^{-x^2/2}$ on $\R_+$ (see, e.g., \cite{Aldous1991b} for the convergence in distribution). 

The objective is to control the matrix of pairwise distances between multiple points, and our new approximation of the distance will depend on the entire set of points. Let $\bs=(s_1,s_2,\dots, s_k)\in \R_+^k$. We will only replace the distance in the connected components that only contain a single portion of paths between these points; in all the other components, which contain branch points of the collection of paths between the elements of $\bs$, we will keep the distance unchanged. Let 
\[J^n_\lambda(s_p,s_q; \bs) = J^n_\lambda(s_p,s_q) \setminus \bigcup_{i<j, i\ne p, j\ne q} J^n_\lambda(s_i,s_j)\,,\]
and similarly, define the continuum analog by
\[J_\lambda(s_p,s_q; \bs) = J_\lambda(s_p,s_q) \setminus \bigcup_{i<j, i\ne p, j\ne q} J_\lambda(s_i,s_j)\,.\]
Let $(n^{2/3}|\gamma_j^{n,\lambda}|)_{j\ge 1}$ denote the collection of sizes of the connected components at time $\lambda$, just as $(|\gamma^\lambda_j|)_{j\ge 1}$ denotes the Lebesgue measures in the continuous setting. Define the following approximations, for $1\le p<q\le k$,  
\begin{align}\label{eq:distance_discrete_comp}
	\tilde d_n(s_p^n, s_q^n) 
	= &~ n^{1/3} \sum_{j\in J^n_{\lambda}(s_p,s_q;\bs)} |\gamma^{n,\lambda}_j|^{1/2} \bar D_j(n^{2/3}|\gamma^{n,\lambda}_j|)  \\ 
	& + \sum_{j\in J^n_\lambda(s_p,s_1)\setminus J^n_\lambda(s_p,s_1; \bs)} d_n(a_j^n,b_j^n) + \# J^{n}_{ \lambda}(s_1,s_2) - 1\,,\notag
\end{align}
and 
\begin{equation}\label{eq:distance_continuous_comp}
	\tilde d(s_p,s_q) = \sum_{j\in J_{\lambda}(s_p,s_q;\bs)} |\gamma_j^{\lambda}|^{1/2} \bar R_j
	+ \sum_{j\in J_{\lambda}(s_p,s_q)\setminus J_\lambda(s_p,s_q;\bs)} d(a_j,b_j)\,.
\end{equation}

We first verify that these provide a suitable coupling of the pairwise distances between the $k$ points $s_1^n,s_2^n,\dots, s_k^n$ and $s_1,s_2,\dots, s_k$, respectively. 
\begin{prop}\label{pro:distribution_coupling}
Fix some compact interval $I\in (0,\infty)$. For any $\epsilon>0$, there exists $\lambda\in \R$ and an event $A$ of probability at least $1-\epsilon$, such that, for any $k$ i.i.d.\ uniform points $s_1,s_2, \dots, s_k \in I$, for all $n$ large enough, on the event $A$, we have
\[(\tilde d_n(s_p,s_q))_{1\le p<q\le k} \eqdist (d_n(s_p^n,s_q^n))_{1\le p<q\le k} 
\quad \text{and} \quad 
(\tilde d(s_p,s_q))_{1\le p<q\le k} \eqdist (d(s_p,s_q))_{1\le p<q\le k}\,.\]
\end{prop}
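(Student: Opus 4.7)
The plan is to choose $\lambda=\lambda(\epsilon)$ sufficiently negative so that, on a large event $A$, the connected components of $G(n,p_n(\lambda))$ intersecting $\Span_n(s_1,\ldots,s_k)$ (resp.\ the intervals of $\R_+\setminus Z^\lambda$ intersecting the span of $s_1,\ldots,s_k$) are all trees, of size at least $\delta n^{2/3}$ in the discrete side and of positive mass with zero surplus in the continuum side. By Proposition~\ref{pro:local_asympt_small_lambda} and Corollary~\ref{cor:convergence_surplus}, this holds with probability at least $1-\epsilon$ provided $\lambda$ is small enough. On $A$, the exact decompositions \eqref{eq:dist_decomp_exact-disc} and \eqref{eq:dist_decomp_exact-cont} of $d_n(s_p^n,s_q^n)$ and $d(s_p,s_q)$ already agree with $\tilde d_n$ and $\tilde d$ on the ``complex'' components indexed by $J^n_\lambda(s_p,s_q)\setminus J^n_\lambda(s_p,s_q;\bs)$ (those carrying branch points of the path system), since those terms are simply reproduced unchanged. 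Hence only the joint distribution of the contributions from the ``simple'' components $j\in J^n_\lambda(s_p,s_q;\bs)$ (resp.\ $j\in J_\lambda(s_p,s_q;\bs)$) needs to be identified.

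For the discrete statement, condition on $\Merge_n((X^{n,\lambda'})_{\lambda'\in\R})$ and on $\Xi_n$: this determines the sizes and nested structure of the components, as well as which ones are trees. Proposition~\ref{prop:random_forest} implies that, inside each simple tree component $C_j^{n,\lambda}$, the minimum spanning tree is built from independent uniform hooks $\ju_n(i)$, one per interior merge event, so the component is distributed as a uniformly random labelled tree on its vertex set. Applying Lemma~\ref{lem:discrete_merges} to the two merge events that created and destroyed $C_j^{n,\lambda}$ shows that the entry and exit points $a_j^n,b_j^n$ of the path are independent uniform picks in the vertex set, independently of the interior tree structure and of the analogous picks in other simple components. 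Consequently $d_n(a_j^n,b_j^n)$ is distributed like $D_{m_j^n}$ with $m_j^n=n^{2/3}|\gamma_j^{n,\lambda}|$, jointly and independently across $j$; the inverse-transform identity $F_{m_j^n}^{-1}(V_j)\eqdist D_{m_j^n}$ together with the independence of the $V_j$ from the component data yields the distributional equality after multiplication by $n^{1/3}(m_j^n)^{1/2}/m_j^n^{1/2}=n^{1/3}$.

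The continuum case is handled symmetrically. Condition on $X$ and $\Xi$. On $A$, each simple interval $\gamma_j^\lambda$ has no surplus, and the metric space carried by $\gamma_j^\lambda$ is $\CMT(\tilde e_j^\lambda, \bU|_{\gamma_j^\lambda})$, hence a Brownian CRT of mass $|\gamma_j^\lambda|$ by the $s=0$ case of Theorem~\ref{thm:limit_mst_surplus}. The points $a_j,b_j$ are the hook points associated through $\bU$ to the two merges whose coalescence created and whose fragmentation at time $\lambda$ destroyed $\gamma_j^\lambda$; by Lemma~\ref{lem:cut_point-time} they are independent uniform picks in $\gamma_j^\lambda$, and independent across $j$ because disjoint subsets of $\bU$ are used for distinct simple components. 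Hence $d(a_j,b_j)\eqdist |\gamma_j^\lambda|^{1/2}R_j$ with $R_j$ i.i.d.\ Rayleigh (Remark~\ref{rem:one-point_function}), matching $|\gamma_j^\lambda|^{1/2}\bar R_j$ in distribution. The main obstacle is the precise bookkeeping of the filtrations: we must identify inside the recursive construction of Section~\ref{sub:recursive_convex_minorants} which subset of $\bU$ is responsible for each pair $(a_j,b_j)$, and check that these subsets are disjoint for distinct simple $j$, in order to extract the required conditional independence; once this is done, the independence of the auxiliary $V_j$ from everything else closes the argument in both the discrete and the continuous setting.
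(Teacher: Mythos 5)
Your overall strategy is the same as the paper's (reduce to the ``simple'' components, identify the law of the crossing distance in each as a rescaled two-point distance in a uniform tree, resp.\ a Rayleigh variable), but two of your steps have genuine gaps. First, in the continuum you condition ``on $X$ and $\Xi$'' and then assert that the metric space carried by $\gamma_j^\lambda$ is a Brownian CRT; conditionally on $X$ the excursion $\tilde e_j^\lambda$ is deterministic, so the claim cannot be extracted at that level of conditioning. The point you are missing is that, given only $|\gamma_j^\lambda|=\sigma$, the excursion straddling $\gamma_j^\lambda$ has the area-tilted law $\tilde\bn_\sigma$ of \eqref{eq:exp_tilted_excursion}, and it is only after conditioning on the event that $\Xi$ puts no point under it (zero surplus) that this tilted law collapses back to $\bn_\sigma$ — this is exactly the calculation from Section~2.1 of \cite{AdBrGo2010} that the paper invokes; the $s=0$ case of Theorem~\ref{thm:limit_mst_surplus} alone does not give it. Relatedly, $a_j$ is \emph{not} an independent uniform point of $\gamma_j^\lambda$: it is the left endpoint $\inf\gamma_j^\lambda$, i.e.\ the root of the excursion, and the correct ingredient is Remark~\ref{rem:one-point_function} (the root-to-uniform-point distance in $\CMT(\exc,\bU)$ is Rayleigh), not ``two independent uniform picks''. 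You also explicitly leave open the bookkeeping showing that distinct simple components use disjoint, conditionally independent parts of $\bU$, which is the conditional independence the proposition needs; the paper sidesteps this by working with the conditional uniformity statements of Lemma~\ref{lem:cut_point-time} and Lemma~\ref{lem:dist_left-most-node} directly.

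Second, your event $A$ is built from Proposition~\ref{pro:local_asympt_small_lambda}, hence depends on the sample points $s_1,\dots,s_k$, on $n$, and — through $\Span_n(s_1,\dots,s_k)$ — on the hook choices themselves. The statement requires a single event $A$, fixed before the points are drawn and valid for all large $n$; moreover, conditioning on an event defined through the span biases the law of the hooks $\ju_n(\cdot)$, so the uniformity and independence you then invoke from Proposition~\ref{prop:random_forest}/Lemma~\ref{lem:discrete_merges} are no longer automatic. The paper's $A$ is measurable with respect to $\Xi$ alone (no cyclic point in the fixed window $[0,2\lambda_1+1]$ with time below $\lambda$), which is precisely what allows the conditional uniform-tree and uniform-hook statements to survive the conditioning; tree-ness of the relevant discrete components on $A$ then follows from the convergence $\Xi_n\to\Xi$, not from a span-dependent event. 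Finally, you never address the components containing the endpoints $s_p,s_q$ themselves: for the corresponding terms of $\tilde d_n$ and $\tilde d$ to have the right law one needs $s_i$ to be conditionally uniform in its component, which the paper arranges by taking $\lambda$ so negative that $|\gamma_1^\lambda|<\epsilon/(4k)$ and the components meeting $I$ are contained in $I$ with high probability; without this step the matching of the end terms fails.
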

\begin{proof}There exists a $\lambda_1$ large enough that $I$ is contained in a single connected component at time $\lambda_1$: with Lemma~\ref{lem:position_Hlambda} in mind, let $\lambda_1$ be the smallest $\lambda$ for which $\sup I<2\lambda +1$ and $\exp(-c\lambda)<\epsilon/2$. This value being fixed, $I$ is contained in $[0,2\lambda_1+1]$ with probability at least $1-\epsilon/2$. 

Let $A=A_\lambda$ be the event that $\Xi$ does not contain any point in $[0,2\lambda_1+1]$ with time lower than $\lambda$. From the correspondence between the intensity of $\Xi$ and the area of $X^\lambda-\underline X^\lambda$, there exists $\lambda$ small enough that $A$ has probability at least $1-\epsilon$. The convergence of $\Xi_n$ to $\Xi$ implies that, on this event, for all $n$ large enough, $\Xi_n$ also has no point in $I$ with times before $\lambda$ (Proposition~\ref{pro:cyclic_edges}). 

We may choose $\lambda$ even smaller to ensure that, $|\gamma^\lambda_1|<\epsilon/(4k)$, so that, the probability that some point $s_i$, $1\le i\le k$, falls in an interval of $\R_+\setminus Z^\lambda$ that is not fully contained in $I$ is at most $\epsilon/(2|I|)$. When this occurs, conditionally on $s_i\in \gamma^\lambda_j$, the position $s_i$ is uniformly random in $\gamma^\lambda_j$. The same holds true for the discrete counterparts $s_i^n$ for all $n$ large enough. 

Now, on the event $A$, for all $n$ large enough, all the connected components of the random graph $G(n,p_n(\lambda))$ containing nodes with label at most $(2\lambda_1 +1)n^{2/3}$ are all trees, which are uniformly random. These are thus identical to the components in the Kruskal forest $K(n,p_n(\lambda))$. By Proposition~\ref{prop:random_forest} and Lemma~\ref{lem:dist_left-most-node} the points $a_j^n$ and $b_j^n$ are independent and uniformly random (their actual labels!) and independent of the component. Since the end points $s_i^n$ are themselves uniformly random in the connected component in which they lie, by the previous paragraph, this proves that, on $A$, the discrete approximation $\tilde d(s_p,s_q)$, $1\le p<q\le k$, has the same distribution as $d(s_p,s_q)$, $1\le p<q\le k$. 

The continuous analog follows from the calculations in Section~2.1 of \cite{AdBrGo2010} saying that, conditionally on having no point under the curve, an excursion under $\tilde \bn_\sigma$ is distributed according to $\bn_\sigma$, and is thus exactly a Brownian excursion; Brownian scaling and Remark~\ref{rem:one-point_function} saying that in $\CMT(\exc, \bU)$, the distance $d_\exc(0,V)$ between $0$ and a uniformly random point $V$ is Rayleigh distributed, which completes the proof.
\end{proof}

\subsection{Main proof of convergence} 
\label{sub:convergence}

Finally, we are ready to prove that, in the probability space defined in the previous section, we have convergence in probability of the pairwise distance. 
\begin{prop}\label{pro:convergence_distances}Fix $I$ a compact interval of $(0,\infty)$, and let $s_1,s_2,\dots, s_k$ be i.i.d.\ uniform points in $I$. For any $\epsilon,\delta>0$, there exists $\lambda\in \R$ such that, 
\[\limsup_{n\to\infty} \p{\sup_{1\le p<q\le k} \big|n^{-1/3}\tilde d_n(s_p^n,s_q^n)-\tilde d(s_p,s_q)\big|> \delta} \le \epsilon\,.\]	
\end{prop}
\begin{proof}
Fix any $\epsilon,\delta>0$.
Let us first deal with the portions of paths contained in connected components that are traversed by more than one path, and that we did not bother coupling. 
Consider the event $A$ in Proposition~\ref{pro:distribution_coupling} and the corresponding value $\lambda_1$ for $\lambda$ which ensures that $\pc{A^c}\le \epsilon/4$. By Lemma~\ref{lem:global_asympt_small_lambda}, there exists $\lambda_2$ such that, for all $\lambda\le \lambda_2$ and all $n$ large enough, the probability that the maximum diameter of a connected component of $G(n,p_n(\lambda))$ is larger than $n^{1/3}\delta/(3k)$ is at most $\epsilon/4$. 
Furthermore, on the event $A$, each one of the $k-1$ portions of continuum paths in the intervals $\gamma_j^\lambda$ which contain more than one portion has a length stochastically dominated by $|\gamma_j^\lambda|\bar R_j$ (or the diameter of the corresponding CRT). We can choose $\lambda_3$ small enough such that the probability that any of them is greater than $\delta/(3k)$ is at most $\epsilon/4$. Fix $\lambda=\min\{\lambda_1,\lambda_2,\lambda_3\}$. 
Finally, by Proposition~\ref{pro:local_asympt_small_lambda}, for this value of $\lambda$, there is some $\delta'>0$ small enough such that, with probability at least $1-\epsilon/4$ all the connected components $C^{n,\lambda}_j$, $j\in J^n_\lambda(s_p,s_q)$, $1\le p<q\le k$, contain at  least $\delta'n^{2/3}$ nodes. The probability that either of these bad events occur is at most $\epsilon$, and we now suppose we work on the event $A'$ that none occurs. 

On the event $A'$, we have from \eqref{eq:distance_discrete_comp} and \eqref{eq:distance_continuous_comp}, for any $1\le p<q\le k$, 
\begin{align*}
	\big|n^{-1/3}\tilde d_n(s_p^n,s_q^n)-\tilde d(s_p,s_q)\big|
	& \le  \Bigg| \sum_{j\in J^n_\lambda(s_p,s_q;\bs) } n^{-1/3} |\gamma^{n,\lambda}_j|^{1/2} \bar D_j(n^{2/3}|\gamma^{n,\lambda}_j|) - \sum_{j\in J_\lambda(s_p,s_q;\bs) }|\gamma_j^{\lambda}|^{1/2} \bar R_j\Bigg|\\
	& \quad + n^{-1/3} \# J^n_\lambda(s_p,s_q) + 2\delta/3\,.  
\end{align*}
Since the $\#J^n_\lambda(s_p,s_q)$ are all tight by the proof of Proposition~\ref{pro:local_asympt_small_lambda}, we only need to deal with the first term the right-hand side above. 

We claim that the fact that all discrete connected components $C^{n,\lambda}_j$, for $j\in J^n_\lambda(s_p,s_q)$ for some $1\le p<q\le k$ contain at least $\delta'n^{2/3}$ nodes, the convergence of the merge events implies that, for all $n$ large enough, we have $J^n_\lambda(s_p,s_q)=J_\lambda(s_p,s_q)$ for every $1\le p<q\le k$. The reason is the following: (1) for all $n$ large enough, for every $i$, if $s_i\in \gamma^\lambda_j$, then $s_i^n\in C^{n,\lambda}_j$, because $\{s_1,\dots, s_k\}$ and $Z^\lambda$ are almost surely disjoint. (2) The merges of large connected components do converge because $\Merge((X^{n,\lambda})_{\lambda\in \R})\to \Merge(X)$. (3) The points random points constructed in the discrete and continuuous model for matching merges use the same uniforms by Lemma~\ref{lem:code_cv}. It follows that, for $n$ large enough, these points themselves end up in matching pair of discrete and continuum components. (4) The number of such merges is finite (the $\#J^n_\lambda(s_p,s_q)$, $J_\lambda(s_p,s_q)$ are tight). As a consequence, for all $n$ large enough, we are lead to bounding
\[\p{\Bigg|
\sum_{j\in J_\lambda(s_p,s_q;\bs)} n^{-1/3} |\gamma^{n,\lambda}_j|^{1/2} \bar D_j(n^{2/3}|\gamma^{n,\lambda}_j|) - |\gamma_j^{\lambda}|^{1/2} \bar R_j \Bigg| > \delta/3}\,,\]
but we our coupling precisely ensures that every single term of the sum converges almost surely to zero. This completes the proof.
\end{proof}


\subsection{Remaining proofs of convergence} 
\label{sub:remaining_proofs}

Finally, we rely on the results of the previous section to complete the proofs of the remaining results, namely that of Theorem~\ref{thm:limit_mst_surplus} about the MST of a connected graph with given surplus, and Theorem~\ref{thm:dynamics_X} about the dynamics for the limit random graph and Kruskal processes.

Before going further, let us discuss the types of convergence. Proposition~\ref{pro:convergence_distances} 
implies the convergence of the distribution of the matrix of pairwise distances between any finite number of points, and may thus be used to prove convergence in the Gromov--Prokhorov (GP) sense (Theorem~5 of \cite{GrPfWi2009a}): indeed, for any $\lambda$, restriction of the $d$ to any interval $\gamma^\lambda_j$, $j\ge 1$, is the limit of the metric of the discrete minimum on $C^{n,\lambda}_j$. The reason why this suffices to also prove convergence in the sense of Gromov--Hausdorff--Prokhorov (GHP) is that we actually already know that the sequences are tight for GHP (\cite{AdBrGoMi2013a,AdBrGo2012a}), and that the limit we construct has a mass measure which has full support because of Proposition~\ref{pro:length_measure} (see \cite{AtLoWi2016a}). In the following, we thus only discuss GP convergence.

\begin{proof}[Proof of Theorem~\ref{thm:dynamics_X}]\emph{i)} Since the coupling is global, the proof of the joint convergence of the Kruskal forest $({\frak F}^{n,\lambda_1}, \dots, {\frak F}^{n,\lambda_k})$ at times $\lambda_1<\lambda_2<\dots<\lambda_k$ is an immediate consequence of Proposition~\ref{pro:convergence_distances}, and the above discussion about the GHP versus GP convergence. The connected components at time $\lambda_i$ correspond to the intervals of $\R_+\setminus Z^{\lambda_i}$, equipped with the metric induced by $d$.

\noindent \emph{ii)} For the same reason, the proof of the joint convergence $({\frak G}^{n,\lambda_1}, \dots, {\frak G}^{n,\lambda_k})$ would be complete once we have an analog of Proposition~\ref{pro:convergence_distances} for the random graph at a fixed time. Proving this amounts to verifying that the joint convergence of the minimum spanning tree and of $\Xi_n$ is sufficient to guarantee the convergence of the end points of every single surplus edge. 

Once we have convergence of the end points of the edges, the techniques in \cite{AdBrGo2012a} imply the convergence of the graph. Proving that we indeed have convergence of the locations of the end points of edges is not immediate because the function $d(x,y)$ is not continuous in either $x$ or $y$. However, we can find a small $\underline \lambda\in \R$ such that the points appear between the correct connected components at time $\underline \lambda$ for all $n$ large enough (almost surely, since the points have a diffuse distribution). Since the diameter of these components at time $\underline \lambda$ may be made arbitrarily small by choice of $\underline \lambda$, we do have convergence of the locations of the end points. This completes the proof of the sequence of graphs, in the product topology for a fixed $\lambda$. The extension to a vector of $(\lambda_1,\dots, \lambda_k)$ is immediate using the same arguments as above. 
\end{proof}

\begin{proof}[Proof of Theorem~\ref{thm:limit_mst_surplus}]
Consider the probability space from above, and fix some interval $\gamma^\lambda_i$ of $\R_+\setminus Z^\lambda$. Recall the discrete and continuum surplus defined in \eqref{eq:discrete_surplus} and \eqref{eq:continuum_surplus}, respectively. Furthermore, $\surp^\lambda_i$ is a Poisson random variable with parameter the area of the process $X^\lambda-\underline X^\lambda$ on $\gamma^\lambda_i$. It thus follows from the calculations in Section~2.1 of \cite{AdBrGo2010} that, 
\begin{align*}
\E{f((X^\lambda_{t_0+t}-\underline X^\lambda_{t_0+t})_{0\le t\le \sigma})~\Big|~\gamma_i^\lambda=(t_0, t_0+\sigma),\surp_j^\lambda=s} 
&= \frac{\Ec{f(\exc_\sigma) \cdot (\int_0^\sigma \exc_\sigma(u)du)^s}} {\Ec{(\int_0^\sigma \exc_\sigma(u) du)^s}}
\end{align*}
where $\exc_\sigma$ is a Brownian excursion of duration $\sigma$. By definition, the right-hand side above is nothing else than $\Ec{f(\exc_\sigma^{(s)})}$. Furthermore, on the event that $\surp^\lambda_i=s$, by Corollary~\ref{cor:convergence_surplus}, we have $\surp^{n,\lambda}_i=s$ for all $n$ large enough. Therefore, up to a trivial relabelling, $C^{n,\lambda}_i$ is a uniformly random connected component with surplus $s$ and size $\gamma^{n,\lambda}_i$. Since each of the values for $\surp^{\lambda}_i$ has positive probability, Theorem~\ref{thm:limit_mst_surplus} follows from Proposition~\ref{pro:convergence_distances}, and the discussion about the strengthening to Gromov--Hausdorff--Prokhorov convergence. 
\end{proof}

Finally, we prove our main result about the entire minimum spanning tree. In \cite{AdBrGoMi2013a}, it is proved that the scaling limit of the minimum spanning tree can be constructed as the limit as $\lambda\to\infty$ of the scaling limit of the minimum spanning tree of the largest connected component of the random graph at $p_n(\lambda)$. Here, we use the limit as $\lambda \to \infty$ of the connected component containing the vertex with Prim order $\lfloor n^{2/3}\rfloor$. We now verify that this coincides with our definition, which uses a connected component $H_\lambda$containing the point $1$ and the measured metric space $(H_\lambda,d, \hat \mu_\lambda)$, $\hat \mu_\lambda$ is the (image of the) probability measure which is proportional to Lebesgue measure on $H_\lambda$. At this point, this should be essentially straightforward.

\begin{proof}[Proof of Theorem~\ref{thm:limit_mst_Kn}]
Let $\cE^\star_\lambda$ be the event that the largest connected component of $\R_+\setminus Z^\lambda$ contains the point $1$. Observe that, for all $\lambda\ge 2$, with $R_\lambda=\sup H_\lambda$,
\begin{align*}
\pc{\cE^\star_\lambda}
& \ge \pc{R_\lambda >2 \lambda-1, |\gamma^\lambda_2|\le 2}\\
& \ge 1 - \pc{R_\lambda\le 2\lambda - 1} - \pc{|\gamma^\lambda_2|\ge 2}.
\end{align*}
Lemma~\ref{lem:position_Hlambda} implies that the first probability in the right-hand side above tends to zero as $\lambda\to\infty$. The same holds for the second one, see for instance, Proposition~5.3 of \cite{AdBrGoMi2019a} which says that $|\gamma^\lambda_2|$ is $O(\lambda^{-2} \log \lambda)$ in probability. This also easily follows from Lemma~\ref{lem:sizes_fragments-k} \emph{i)}: indeed, for any natural number $i\ge 1$, on the event that $R_\lambda> i^3$, we have (with the notation of Section~\ref{sub:statistics_small_components})
\[|\gamma^\lambda_2| \le 1 + \sup_{k\ge i}\sup\{m_{\lambda'}: \lambda'\in \Lambda_k\}\,,\]
which is at most $1+i^{-5}$ with probability at least $1-O(i^{-1/4})$. This implies $\pc{|\gamma^\lambda_2|\ge 2} = O(\lambda^{-1/12})$, and in turn that $\pc{\cE^\star_\lambda}\to 1$ as $\lambda \to \infty$. By Proposition~\ref{pro:convergence_distances}, $(H_\lambda, d, \hat \mu_\lambda)$ is the Gromov--Prokhorov limit (in distribution) of the minimum spanning tree of the connected component containing the vertex with Prim order $n^{2/3}$. However we know by the results of \cite{Ad2013a} that the sequence of rescaled minimum spanning trees converge for the Gromov--Hausdorff--Prokhorov topology, so that the convergence actually holds for GHP. Together with the fact that $\pc{\cE^\star_\lambda}\to 1$ as $\lambda \to \infty$, this proves that $(\sM, d, \mu)$ has the same distribution as $(\sM', d', \mu')$ constructed in \cite{Ad2013a}.	
\end{proof}






{\small
\addcontentsline{toc}{section}{References}
\setlength{\bibsep}{0pt plus 0.4ex}
\bibliographystyle{plainnat}
\bibliography{MST_brownian}
}

\appendix

\section{Auxiliary technical results}
\label{sec:technical_results}

\begin{lem}\label{lem:properties_Z}Let $\omega\in \cC(\R_+, \R)$. Then
\begin{compactenum}[i)]
  \item the process $(Z^\lambda(\omega))_{\lambda \in \R}$ non-increasing and right-continuous with left-limits;
	\item for every $\lambda \in \R$, $Z^\lambda(\omega)$ and $Z^{\lambda-}$ are both closed;
	\item the set $\{\lambda \in \R: Z^{\lambda-}(\omega)\setminus Z^\lambda(\omega) \ne \varnothing\}$ is countable.
\end{compactenum}
\end{lem}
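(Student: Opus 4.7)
The plan is to base the entire proof on a single pointwise observation. For each fixed $t \in D$, consider the scalar function $\phi_t(\lambda) := \Psi_\lambda \omega(t) = [\omega(t) + \lambda t] - \inf_{r \in D \cap [0,t]}[\omega(r) + \lambda r]$. Being the difference of an affine function and a concave function (the infimum of affine functions of $\lambda$), $\phi_t$ is convex and continuous in $\lambda$. Moreover $\phi_t \ge 0$ by construction, $\phi_t(\lambda) = 0$ for all $\lambda$ sufficiently negative (the infimum is eventually attained at $r = t$), and $\phi_t(\lambda) \to +\infty$ as $\lambda \to +\infty$. Hence the zero set of $\phi_t$ is a closed half-line $(-\infty, M(t)]$ with $M(t) \in \R \cup \{+\infty\}$, giving the characterization
\[
  t \in Z^\lambda(\omega) \iff \lambda \le M(t), \qquad \text{i.e.} \qquad Z^\lambda = \{t \in D : M(t) \ge \lambda\}.
\]

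From this characterization every assertion of the lemma follows by formal set-theoretic manipulation. Monotonicity in (i) is immediate, since $\{t : M(t) \ge \mu\}$ decreases as $\mu$ increases. The right-limit $\bigcup_{\mu > \lambda} Z^\mu = \{t : M(t) > \lambda\}$ and the left-limit $\bigcap_{\mu < \lambda} Z^\mu = \{t : M(t) \ge \lambda\} = Z^\lambda$ both exist, delivering the càdlàg-type regularity asserted: with the natural convention for a decreasing set-valued process, one identifies $Z^\lambda$ with its right-continuous modification $\bigcup_{\mu > \lambda} Z^\mu$ (the two agree outside the countable set of (iii)), while $Z^{\lambda-}$ denotes the strictly larger left-limit $\bigcap_{\mu < \lambda} Z^\mu$. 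For (ii), $Z^\lambda = (\Psi_\lambda \omega)^{-1}(\{0\})$ is closed as the zero set of the continuous function $\Psi_\lambda \omega$, and $Z^{\lambda-} = \bigcap_{\mu < \lambda} Z^\mu$ is closed as an intersection of closed sets.

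For (iii), I would inject the jump set $J := \{\lambda : Z^{\lambda-} \setminus Z^\lambda \ne \varnothing\}$ into the countable set $\Q^2$ via first-exit times of rational intervals. Given $\lambda \in J$ and $t^* \in Z^{\lambda-} \setminus Z^\lambda$ (so $M(t^*) = \lambda$ in the convention above), closedness of $Z^\mu$ for $\mu$ slightly greater than $\lambda$ produces rationals $p < t^* < q$ with $(p, q) \cap Z^\mu = \varnothing$. Then $T(p, q) := \inf\{\mu' : (p, q) \cap Z^{\mu'} = \varnothing\}$ satisfies $T(p, q) \le \mu$, while monotonicity combined with $t^* \in Z^{\mu'}$ for every $\mu' < \lambda$ forces $T(p, q) \ge \lambda$; letting $\mu \downarrow \lambda$ gives $T(p, q) = \lambda$. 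This associates to each $\lambda \in J$ a rational pair $(p_\lambda, q_\lambda)$, and the association is injective because $(p, q)$ determines $T(p, q)$ uniquely; hence $J$ is countable. The only genuine subtlety in the whole argument is the bookkeeping of conventions for right- versus left-continuity of a decreasing set-valued process, and reconciling the lemma's phrasing with the paper's use of $Z^{\lambda-} \setminus Z^\lambda$ as the non-trivial ``jump fiber''; once the pointwise identity $t \in Z^\lambda \iff \lambda \le M(t)$ is in hand, each item becomes an immediate formal consequence.
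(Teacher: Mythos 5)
Your pointwise reduction ($t\in Z^\lambda$ iff $\lambda\le M(t)$) is a reasonable idea, but the step that carries all the weight is justified by a false claim. You assert that $\phi_t(\lambda)=\Psi_\lambda\omega(t)=0$ for all $\lambda$ sufficiently negative because ``the infimum is eventually attained at $r=t$''. For a general $\omega\in\cC(\R_+,\R)$ this fails: $t\in Z^\lambda$ for some $\lambda$ requires $\inf_{0\le r<t}\frac{\omega(r)-\omega(t)}{t-r}>-\infty$, and this infimum is $-\infty$ for instance when $\omega(r)=-\sqrt{t-r}$ near $t$ (and, for Brownian-type paths, at almost every $t$). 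In that case the zero set of $\phi_t$ is empty, $M(t)=-\infty$, and your convexity-plus-vanishing-near-$-\infty$ argument no longer yields that the zero set is a half-line reaching $-\infty$; convexity alone only gives an interval, which is not enough for the monotonicity in (i). The half-line structure is true, but it must come from the monotonicity of $\lambda\mapsto Z^\lambda$ itself, i.e.\ from the elementary inequality behind Lemma~\ref{lem:shear_composition} (equivalently, $\lambda\mapsto\Psi_\lambda\omega(t)$ is non-decreasing: $\omega(r)+(\lambda-h)r\ge\omega(t)+(\lambda-h)t$ whenever $\omega(r)+\lambda r\ge\omega(t)+\lambda t$, $r\le t$, $h\ge0$), with $M(t)\in[-\infty,+\infty]$ allowed. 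This is exactly the ingredient the paper uses for (i), together with uniform convergence in $\lambda$ on $[0,t]$ for the continuity statement; your proposal never invokes it.

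The argument for (iii) also has a concrete gap. You choose, for each $\mu>\lambda$, rationals $p<t^*<q$ with $(p,q)\cap Z^\mu=\varnothing$, and then ``let $\mu\downarrow\lambda$'' to conclude $T(p,q)=\lambda$; but the pair $(p,q)$ depends on $\mu$, and for a fixed pair one only gets $\lambda\le T(p,q)\le\mu$. If $t^*$ is an accumulation point of $\bigcup_{\mu>\lambda}Z^\mu$, then $T(p,q)>\lambda$ for every rational pair around $t^*$, and the injection into $\Q^2$ is not defined. Worse, under the right-continuous-modification convention you adopt, statement (iii) is simply false for some continuous $\omega$: for $\omega(t)=-t^2$ one finds $\{t:M(t)=\lambda\}=\{\lambda\}$ for every $\lambda>0$, so ``$Z^{\lambda-}\setminus Z^\lambda\neq\varnothing$'' (in your modified sense) holds for uncountably many $\lambda$ — and note also that your modified $Z^\lambda=\{t:M(t)>\lambda\}$ is not closed, so (ii) would concern a different object than (i) and (iii). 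The countability in the paper comes from a different mechanism: using that $Z^\lambda$ (unmodified) is closed, any witness $x_\lambda\in Z^{\lambda-}\setminus Z^\lambda$ lies at distance $>1/n$ from $Z^\lambda$ for some $n$, and since $Z^{\lambda'-}\subseteq Z^\lambda$ for $\lambda<\lambda'$ the witnesses for distinct jump times in the $n$-th class are pairwise $1/n$-separated in $\R_+$, hence countable. That packing/separation argument is what your proposal is missing, and it (rather than a first-exit-time tagging of rational intervals) is what makes (iii) work.
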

\begin{proof}
\emph{i)} The monotony is a consequence of Lemma~\ref{lem:shear_composition}, this implies the existence of the left and right limits $\cap_{h>0} Z^{\lambda-h}$ and $\cup_{h>0} Z^{\lambda+h}$, respectively. The right-continuity follows by continuity of the maps $\lambda\mapsto \omega^\lambda$ and $\underline \omega^\lambda$: if $s\in Z^{\lambda-h}$ for all $h>0$, then $\omega(s)+(\lambda-h) s = \inf\{\omega(r)+(\lambda-h)r: 0\le r\le s\}$ for all $h>0$, and thus this also holds for $h=0$. \emph{ii)} The fact that $Z^\lambda$ is closed is an easy consequence of the continuity of $\omega$. The monotony shows that $Z^{\lambda-}$ is a decreasing limit of closed sets, and is thus closed. \emph{iii)} Since $Z^\lambda$ and $Z^{\lambda-}$ are both closed for every $\lambda\in\R$, if $\lambda$ is such that $Z^{\lambda-}\setminus Z^\lambda\ne \varnothing$, then there exists $\epsilon>0$ and $x=x_\lambda\in Z^{\lambda-}$ with $d(x, Z^\lambda)>\epsilon$.  It follows that 
\[\{\lambda \in \R: Z^{\lambda-}\setminus Z^{\lambda}\ne \varnothing\} = \bigcup_{n\ge 1} \{\lambda\in \R: \dH(Z^{\lambda-}, Z^\lambda)>1/n\}\,.\]
For each $n\ge 1$, there must exist for each $\lambda$ a ball of radius $1/n$, and the collection of these balls must be disjoint. 
For each $n\ge 1$, any collection of open balls of radius $1/n$ must be countable, and therefore any set in the right-hand side above is countable. The claim follows.
\end{proof}

\begin{lem}[Continuity properties of the metric $d$]\label{lem:continuity_distance}Let $d(\cdot,\cdot)$ be the pseudo-metric on $[0,1]$ defined from the pair $(\exc,\bU)$ used in the construction of $\CMT(\exc, \bU)$. Almost surely, 
\begin{compactenum}[i)]
    \item the map $d(0, \cdot)$ is continuous almost everywhere, but 
    \item for every $x\in \sL(\exc)\cap (0,1)$, the map $d(0,\cdot)$ is not left-continuous at $x$, and
    \item for every $x\in \sL(\exc)\cap (0,1)$, the map $d(0,\cdot)$ is neither left- nor right-continuous at $\ri(x)$.
\end{compactenum}
\end{lem}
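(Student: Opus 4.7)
The three statements together identify the discontinuity set of $d(0,\cdot)$ as being contained in $\sL \cup \ri(\sL)$. Since this set is countable (Lemma~\ref{lem:no_exception_convex}), (i) will be an automatic consequence of (ii), (iii), and a direct verification of continuity at non-special points. My plan is to prove (ii) as the main technical step; (iii) will reduce to (ii) applied to an intercept excursion.

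\textbf{Strategy for (ii).} Fix $x \in \sL(\exc) \cap (0,1)$ and set $L = \li(x)$, $\sigma = x - L$. Since $L \in \cV_x \cap [0,y]$ for any $y \in (L,x)$, Lemma~\ref{lem:convex_minorants_intersection} forces $L \in \cV_y$, and then Lemma~\ref{lem:def_consistency}(i) yields
\[
d(0,y) - d(0,x) = d(L,y) - d(L,x).
\]
By Lemma~\ref{lem:def_consistency}(ii) applied at the intercept $\zi(L) = x$, the restriction of the metric to $\pi([L,x])$ is isometric to $\CMT(g,\bU_g)$ for a Brownian excursion $g$ of duration $\sigma$. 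So the claim reduces to showing that $d_g(0,\cdot)$ is not left-continuous at $\sigma$ in this sub-CMT. The crucial input is the identification from Section~\ref{sub:distribution_CMT} (used to prove Theorem~\ref{thm:additive_coalescent}): every local minimum $t$ of an excursion is identified in the tree with its join point $\ju(t)$. Applied to $\sigma$ (the right endpoint of $g$, viewed as a local minimum of $g$), writing $J := U_\sigma \sigma = \ju_g(\sigma) \in (0,\sigma)$, this gives $\pi(J) = \pi(\sigma)$ and hence $d_g(0,\sigma) = d_g(0,J)$.

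Now inspect the recursive construction of $\llb 0,\sigma \rrb_g$ via \eqref{def:0x}: at level $1$ the only face is $[0,\sigma]$ itself, so the next interval is $[0, J]$, and all subsequent recursion stays inside $[0, J]$. Therefore
\[
\llb 0,\sigma \rrb_g \subseteq [0,J] \cup \{\sigma\}.
\]
Consequently, for $y' \in (J,\sigma)$, the point $y'$ lies off this geodesic, and by Lemma~\ref{lem:intersection_arcs} the meet $y' \wedge \sigma$ belongs to $[0,J]$. As $y' \to \sigma^-$ through $(J,\sigma)$, the recursive structure tells us that $y'$ and $\sigma$ are already separated at level $1$ of the construction (since $y' \notin [0, J]$), so their branching point in the tree is determined by the level-$2$ convex minorant of $g$ on $[0,J]$ (which does not depend on $y'$) and a point determined by how $y'$ enters the recursion through the left-over sub-excursion above $[J,\sigma]$. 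This gives $\limsup_{y' \to \sigma^-} d_g(0, y' \wedge \sigma) \le d_g(0, J') < d_g(0,J) = d_g(0,\sigma)$ for some $J' < J$ (with $J'$ depending on $\bU_g$ but independent of the approach), and $d_g(y'\wedge\sigma, y') \to 0$ by compactness of the relevant sub-tree. This yields the required strict gap and proves (ii).

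\textbf{Strategy for (iii) and (i).} For (iii), let $t \in \sL \cap (0,1)$ and $z = \ri(t)$. Non-left-continuity at $z$ is obtained from (ii) applied to the right endpoint of the intercept excursion $h$ of duration $z - \li(t)$: by Lemma~\ref{lem:def_consistency}(ii), $\pi([\li(t),z])$ is isometric to $\CMT(h,\bU_h)$, and the identification $\pi(z) = \pi(\ju_h(z))$ via Theorem~\ref{thm:additive_coalescent} drives the same argument. Non-right-continuity uses Lemma~\ref{lem:intercept}(ii): for $y > z$, neither $t$ nor $\li(t)$ belongs to $\cV_y$, so the whole face $[\li(t),z]$ of $c_z$ (and the sub-tree grafted on it) vanishes from $\llb 0,y\rrb$; comparing $\llb 0,y\rrb$ to $\llb 0,z\rrb$ via Lemma~\ref{lem:intersection_arcs} shows the meet $y \wedge z$ is stuck in $\llb 0,\li(t)\rrb$ as $y \to z^+$, giving a jump equal to $d(0,z) - d(0,\li(t)) > 0$. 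Finally, (i) follows immediately: the discontinuity set is contained in $\sL \cup \ri(\sL)$, which is countable and hence Lebesgue-null; at any other point $y$, Lemma~\ref{lem:convex_minorants_intersection} gives local stability of the convex minorant structure, and the distance tail $d(t_{\mathrm{last}}(y_n),y_n) \to 0$ by the convergence of the martingale in Lemma~\ref{lem:dist_martingale}.

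\textbf{Main obstacle.} The delicate step is making rigorous the claim $\limsup_{y'\to \sigma^-} d_g(0, y'\wedge \sigma) < d_g(0,J)$ in (ii)---that is, that the branching level of $y'$ and $\sigma$ in the recursive tree remains uniformly bounded as $y' \to \sigma^-$. The intuition is clear from the identification $\pi(\sigma) = \pi(J)$ and the level-$1$ split of the construction, but a clean proof requires tracking which sub-interval of the recursion contains $y'$ for $y'$ close to $\sigma$, and showing the corresponding branching point on the spine is a.s.\ strictly below $J$.
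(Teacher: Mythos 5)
Your plan hinges, in part (ii), on the uniform claim $\limsup_{y'\to\sigma^-} d_g(0,y'\wedge\sigma)\le d_g(0,J')<d_g(0,J)$ for a single $J'<J$ valid for \emph{all} $y'$ close to $\sigma$, together with $d_g(y'\wedge\sigma,y')\to 0$. You flag this yourself as the main obstacle, and it is indeed a genuine gap: as $y'\uparrow\sigma$, the last vertex of the convex minorant of $g$ on $[0,y']$ is a local minimum tending to $\sigma$, and each such local minimum carries its \emph{own fresh} uniform join point; there is no reason the meets $y'\wedge\sigma$ stay below a fixed $J'$ (nor that the hanging pieces $d(y'\wedge\sigma,y')$ vanish), so the limsup statement is both unproven and probably false. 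The point you are missing is that non-left-continuity only requires \emph{one} sequence $z_n\uparrow x$ along which $d(0,z_n)$ stays a fixed $\delta>0$ below $d(0,x)$. The paper gets this by choosing $z_n\in\sL$, $z_n\uparrow x$ inside the last face $(\li(x),x)$, so that $\cV_{z_n}$ agrees with $\cV_x$ below $\li(x)$ and the join points $\ju(z_n)$ are i.i.d.\ uniforms; infinitely often $\ju(z_{n_j})-\li(x)<\tfrac12(\ju(x)-\li(x))$, and since $\llb 0,x\rrb$ accumulates at $\ju(x)$ with positive length beyond that midpoint, $d(0,z_{n_j})=d(0,\ju(z_{n_j}))\le d(0,x)-\delta$. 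This use of the extra randomness in $\bU$ along an approximating sequence of local minima is the missing idea, and it makes your ``uniform gap'' claim unnecessary. (Also, your reduction invokes Lemma~\ref{lem:def_consistency}(ii) ``at the intercept $\zi(L)=x$'': in general $\zi(\li(x))>x$, so the interval $[\li(x),x]$ is a face, not an intercept interval; the reduction can be salvaged via the recursion on faces, but not as you state it.)

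For (iii) and (i) there are concrete errors as well. For $y$ slightly larger than $z=\ri(t)$ you claim neither $t$ nor $\li(t)$ lies in $\cV_y$; but a.s.\ $\zi(\li(t))>z$ strictly, so $\li(t)\in\cV_y$ for such $y$, and the new vertex $t'(y)\in(z,y]$ after $\li(t)$ comes with its own join point uniform in $(\li(t),t'(y))$ — hence the geodesic $\llb 0,y\rrb$ re-enters the face and the meet $y\wedge z$ is \emph{not} stuck in $\llb 0,\li(t)\rrb$; the claimed jump $d(0,z)-d(0,\li(t))$ is not the right quantity. The correct argument again takes a sequence of local minima $y_n\downarrow z$ and uses the independence/uniformity of their join points to exhibit a definite gap along a subsequence. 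Finally, for (i) you reduce to the (unproven) assertion that the discontinuity set is contained in the countable set $\sL\cup\ri(\sL)$; your sketch of continuity at ``any other point'' via local stability and the martingale of Lemma~\ref{lem:dist_martingale} does not give the uniform control needed simultaneously at uncountably many points. The paper avoids this: it proves continuity at a single independent uniform point $x$ (hence Lebesgue-a.e.\ by Fubini), bounding $\sup\{d(x,t):t\in(t_i,z_i)\}$ by $D_i|z_i-t_i|^{1/2}$ with $D_i$ sub-Gaussian (diameter of a CRT over the intercept interval) and applying Borel--Cantelli in $i$. In summary, the approach needs to be reorganized around sequences of local minima with fresh join points rather than uniform statements over all nearby points.
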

\begin{proof}
\emph{i)} Let $x$ be uniformly random in $[0,1]$, then a.s.\ the vertices of the convex minorant of $\exc$ on $[0,x]$, $(t_i(x))_{i\ge 0}$ and the corresponding intercepts $(z_i(x))_{i\ge 0}$ are such that $t_i(x)< x < z_i(x)$. 
Furthermore, there exists a sequence of local minima $z_n>x$ with $z_n \downarrow x$ such that the vertices of the convex minorant of $\exc$ on $[0,z_n]$ are precisely $\cV_{z_n}=\cV_x \cup \{z_n\}$. 
Then, for any $i\ge 1$, $\sup\{d(x,t): t\in (t_i,z_i)\} \le D_i \cdot |t_i-z_i|^{1/2}$, where $(D_i)_{i\ge 1}$ are random variables distributed like the diameter of a continuum random tree of unit mass (which are not independent). It follows that 
\begin{align*}
\pc{\sup\{d(x,t): t\in (t_i,z_i)\} > |z_i-t_i|^{1/4}} 
&\le \pc{D_i> |z_i-t_i|^{-1/4}} \\
&\le \exp(-|z_i-t_i|^{-1/4}/2v)\,,
\end{align*}
for some constant $v>0$. It follows by the Borel--Cantelli Lemma that a.s.\ $\sup\{d(x,t): t\in (t_i,z_i)\}\le |z_i-t_i|^{1/4}$ for all but finitely many values of $i$, so that $|d(0,t)-d(0,x)|\le d(x,t) \to 0$ as $t\to x$. 

\emph{ii)} 
Let $x\in \sL\cap (0,1)$; then a.s. there are only finitely many vertices in $\cV_x$, and $x=t_i(x)$ for some $i\ge 1$. Let $z_n$ be a sequence of local minima with $z_n \in (t_{i-1},t_i)$ and $z_n\uparrow t_i$ as $n\to\infty$. Then, for all $n_0$ large enough, the vertices of the convex minorant of $\exc$ on $[0,z_n]$ are exactly $\{t_j, j<i\} \cup \{z_n\}$. For each $n\ge n_0$, the point $\ju(z_n)$ is uniform in $(t_{i-1}, z_n)$ and $\ju(x)$ is uniform in $(t_{i-1},t_i)$. With probability one, there exists a subsequence $(n_j)_{j\ge 1}$ such that $0<\ju(z_{n_j})-t_{i-1} < \tfrac 12 (\ju(x)-t_{i-1})$. In particular, since $\llb 0,x\rrb$ a.s.\ has an accumulation point at $\ju(x)$, we have $\sup d(0,z_{n_j}) = \sup d(0,\ju(z_{n_j})) < d(0,\ju(x)) = d(0,x)$. It follows that, for any $\epsilon>0$, $\inf\{d(0,s): s\in(x-\epsilon, x)\} < d(0,x)$.   

\emph{iii)} For $x\in \sL \cap (0,1)$, the point $\ri(x)$ is some intercept, and the proof that $d(0,\cdot)$ is not left-continuous at $\ri(x)$ is the same as in \emph{ii)}. For the lack of right-continuity at $\ri(x)$, this is also similar, but relies on the fact that one may find a sequence of local minima $z_n$ in $(\ri(x),1)$ with $z_n \downarrow \ri(x)$ such that $\cV_{z_n}=\cV_x \cup \{z_n\}$. The same argument as above can then be used by considering the random points $\ju(z_n)$, which are independent, and uniform in $[x,z_n]$.
\end{proof}

\begin{lem}[Surplus and area under the curve]\label{lem:area} Let $\exc$ be a Brownian excursion. 
Consider the subset $D$ of $[0,1]^2\times \R$ of points $(x,y,\lambda)$ such that $[x,y]\cap Z^\lambda(\exc)= \varnothing$. Then, the 3-dimensional volume of $D$ is equal to $\int_0^1 \exc(x) dx$.
\end{lem}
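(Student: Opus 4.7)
The plan is to reduce $\mathrm{Vol}(D)$ to the area under the reflected process $B^\lambda := \exc^\lambda - \underline{\exc^\lambda}$ and then to differentiate this area in $\lambda$. I read the statement with the conventions that $[x,y]$ denotes the ordered closed interval ($x \le y$) and that $\lambda$ effectively ranges over $(-\infty, 0]$ (for $\lambda > 0$ one has $\underline{\exc^\lambda} \equiv 0$ and $Z^\lambda = \{0\}$, which makes the cross-section degenerate; note also that $1 \in Z^\lambda$ for every $\lambda \le 0$, since $\exc \ge 0$ and $\exc(1) = 0$ force $\inf_{r \le 1}(\exc(r) + \lambda r) = \lambda$). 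By Fubini, for fixed $\lambda \le 0$, Lemma~\ref{lem:properties_Z} gives that $[0,1] \setminus Z^\lambda$ is a countable disjoint union of open intervals $\gamma^\lambda_j = (a_j, b_j)$ of lengths $\ell_j(\lambda)$, and the condition $[x,y] \cap Z^\lambda = \varnothing$ with $x \le y$ says precisely that $x, y$ lie in the same $\gamma^\lambda_j$; the cross-section has area $\tfrac{1}{2} \sum_j \ell_j(\lambda)^2$, giving $\mathrm{Vol}(D) = \tfrac{1}{2} \int_{-\infty}^0 \sum_j \ell_j(\lambda)^2 \, d\lambda$.

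Next, set $A(\lambda) := \int_0^1 B^\lambda(s)\, ds$; the key claim is that $dA/d\lambda = \tfrac{1}{2} \sum_j \ell_j(\lambda)^2$ for a.e.\ $\lambda$. Since $\exc^\lambda(s) = \exc(s) + \lambda s$ is affine in $\lambda$ and $\underline{\exc^\lambda}(s) = \inf_{r \le s}(\exc(r) + \lambda r)$ is concave (as an infimum of affine functions), $\lambda \mapsto B^\lambda(s)$ is convex, hence a.e.\ differentiable; at such a $\lambda$ the minimizer $r^*(s, \lambda) := \arg\min_{r \le s}(\exc(r) + \lambda r)$ is unique, and the envelope theorem gives $\partial_\lambda B^\lambda(s) = s - r^*(s, \lambda)$. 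But $r^*(s, \lambda) = \sup(Z^\lambda \cap [0, s])$ is exactly the left endpoint $a_j$ of the fragment $\gamma^\lambda_j$ containing $s$ (or equals $s$ itself when $s \in Z^\lambda$, in which case $B^\lambda(s) = 0$). Integrating in $s$ and splitting over fragments yields $dA/d\lambda = \sum_j \int_{a_j}^{b_j}(s - a_j)\, ds = \tfrac{1}{2} \sum_j \ell_j(\lambda)^2$; the exchange of $d/d\lambda$ and $\int_0^1 ds$ is valid since $B^\lambda$ is convex in $\lambda$ and $\partial_\lambda B^\lambda \in [0, 1]$ is bounded.

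For the boundary values: $A(0) = \int_0^1 \exc(s)\, ds$ because $\underline{\exc^0} \equiv 0$; and $A(\lambda) \to 0$ as $\lambda \to -\infty$ follows from the a.s.\ $\alpha$-Hölder regularity of $\exc$ (any $\alpha < 1/2$), since $B^\lambda(s) = \sup_{r \le s}(\exc(s) - \exc(r) + \lambda(s-r)) \le \sup_{t \ge 0}(K_\alpha t^\alpha - |\lambda| t) = O(|\lambda|^{-\alpha/(1-\alpha)})$ uniformly in $s$. The fundamental theorem of calculus, applicable since $A$ is convex (hence absolutely continuous), then gives
\[
\int_0^1 \exc(s)\, ds = A(0) - \lim_{\lambda \to -\infty} A(\lambda) = \int_{-\infty}^0 \frac{dA}{d\lambda}\, d\lambda = \frac{1}{2}\int_{-\infty}^0 \sum_j \ell_j(\lambda)^2 \, d\lambda = \mathrm{Vol}(D).
\]
The main obstacle is the pathwise envelope-theorem step: uniqueness of $r^*(s, \lambda)$ must hold, which fails only on the countable set of $\lambda$ at which a fragmentation event occurs (Lemma~\ref{lem:properties_Z}), so it holds for a.e.\ $\lambda$, and the subsequent Fubini is routine given the $[0,1]$-valued integrand.
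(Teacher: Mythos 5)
Your proof is correct, and it takes a genuinely different route from the paper's. The paper argues geometrically: it splits $D$ into pieces $D_u$ indexed by the recursive exploration $(S_u,\tau_u,\epsilon_u)_{u\in\cU}$ of Section~\ref{sub:distribution_CMT} (each piece corresponding to one fragment $[a_u,R_{-\lambda}(a_u))$), splits the region under the graph of $\exc$ into matching ``fan'' regions $E_u$ swept by the segments of slope $-\lambda$ issued from $(a_u,\exc(a_u))$, and checks $\int_{E_u}ds\,dt=\int_{D_u}dx\,dy\,d\lambda$ piece by piece via a polar-coordinate computation (area of a fan $=\tfrac12\int\rho(\theta)^2d\theta$). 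You instead integrate the cross-sections in $\lambda$ to get $\mathrm{Vol}(D)=\tfrac12\int_{-\infty}^0\sum_j\ell_j(\lambda)^2\,d\lambda$, identify the integrand as $A'(\lambda)$ for $A(\lambda)=\int_0^1 B^\lambda(s)\,ds$ via convexity of $\lambda\mapsto B^\lambda(s)$ and the envelope theorem ($\partial_\lambda B^\lambda(s)=s-\sup(Z^\lambda\cap[0,s])$ at differentiability points), and conclude by the fundamental theorem of calculus with $A(0)=\int_0^1\exc$ and $A(\lambda)\to0$ as $\lambda\to-\infty$. Your argument is purely pathwise and self-contained (uniform continuity alone already gives the decay of $A$; H\"older only supplies a rate), avoids the exploration machinery, and exhibits the dynamic identity $A'(\lambda)=\tfrac12\sum_j\ell_j(\lambda)^2$, which is exactly the localized form invoked in Section~\ref{sec:motivation_and_history}; the paper's proof, in exchange, yields an exact area correspondence piece by piece, aligned with the recursive structure it uses throughout.

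Two small corrections. First, your stated reason for restricting to $\lambda\le 0$ is not right: for $\lambda>0$ one indeed has $Z^\lambda(\exc)=\{0\}$, but then the cross-section $\{(x,y):0<x\le y\le 1\}$ has area $\tfrac12$ --- it is not degenerate, and including $\lambda>0$ would make the volume infinite. The restriction is nonetheless the correct reading: the paper only defines $Z^\lambda(\exc)$ for $\lambda\le0$, and its own proof implicitly imposes $\lambda\le-\tau_u\le 0$ in the sets $D_u$. Second, the exceptional set for uniqueness of $r^*(s,\lambda)$ is, for fixed $s$, the (countable) set of non-differentiability points of the concave function $\lambda\mapsto\underline{\exc}^{\lambda}(s)$, which is all your argument needs; attributing it to the jump times of Lemma~\ref{lem:properties_Z} is slightly off (that lemma concerns $Z^{\lambda-}\setminus Z^{\lambda}$, whereas non-uniqueness corresponds to points of $Z^{\lambda}\setminus\bigcup_{h>0}Z^{\lambda+h}$), but this is harmless.
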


\begin{proof}Recall the recursive decomposition of  Section~\ref{sub:distribution_CMT}. Then, the set $D$ can be decomposed into countably many portions (with disjoint interior) $D_u$, $u\in \cU$, as follows: 
\[D_u:= \{(x,y,\lambda): a_u\le x<y< R_{-\lambda}(a_u), \lambda\le -\tau_u\}\,.\]
There is a corresponding decomposition of the set $\{(s,t): s\in [0,1], 0\le t\le \exc(s)\}$ also into portions with disjoint interior, $E_u=\{(s,\exc(a_u)-\lambda (s-a_u)): a_u\le s\le R_{-\lambda}(a_u), -\lambda\ge \tau_u\}$, for $u\in \cU$. We show that, for each $u\in \cU$, 
\[\int_{E_u} dsdt = \int_{D_u} dxdyd\lambda\,.\]
We treat the case $u=\varnothing$, the others are just the same, up to the more complicated notation. First observe that the left-hand side above with $u=\varnothing$ is precisely the area under the function $f$ given by, for $i\ge 1$, 
\[f(s)=\exc(a_i)+ \tau_i (s-a_i)\qquad a_i=R_{\tau_i}\le s<R_{\tau_i-}\,.\]
Now, since each point $z=(s, f(s))$ can be represented in polar coordinates as $z=\rho(\theta) e^{i\theta}$, or alternatively by the pair $(-\lambda, R_{-\lambda}(0))$, where $-\lambda=f(s)/s$ is the slope of the line from $0$ to $z$, we have 
\[\int_{E_\varnothing} dsdt = \int_0^1 f(s)ds = \frac 1 2 \int_0^{\pi/2} \rho(\theta)^2 d\theta = \int_{D_\varnothing} dxdyd\lambda\,.\]
The claim follows by summing the contributions for $u\in \cU$. 
\end{proof}

\end{document}